\documentclass[11pt, letterpaper, leqno]{article}
\usepackage{amsmath,amsthm,amscd,amssymb,amsbsy,cite,amstext,mathtools}
\usepackage{amssymb}
\usepackage{graphicx}
\usepackage{cite}
\usepackage{mathrsfs}
\usepackage{xcolor}

\usepackage{algorithm,algorithmic}
\usepackage{pgf}
\usepackage{color}
\usepackage{enumerate}
\usepackage{xcolor}
\usepackage{mathrsfs}
\usepackage[symbol]{footmisc}
\usepackage[colorlinks=true, pdfstartview=FitV, linkcolor=blue, citecolor=orange, urlcolor=blue]{hyperref}
\date{}
\usepackage{lineno}

\usepackage{amsfonts}
\usepackage[T1]{fontenc}
\usepackage{euler}

\setlength{\oddsidemargin}{0cm} \setlength{\evensidemargin}{0cm}
\setlength{\marginparwidth}{0in}
\setlength{\marginparsep}{0in}
\setlength{\marginparpush}{0in}
\setlength{\topmargin}{0in}
\setlength{\headsep}{0.2in}
\setlength{\textheight}{8.5in}
\setlength{\textwidth}{6.5in}

\theoremstyle{plain}
\newtheorem{theorem}{Theorem}[section]
\newtheorem{lemma}{Lemma}[section]

\newtheorem{claim}{Claim}[section]
\newtheorem{problem}{Problem}

\theoremstyle{remark}
\newtheorem{remark}{Remark}[section]
\newtheorem*{acknowledgment}{Acknowledgment}

\theoremstyle{definition}

\newtheorem{definition}{Definition}[section]

\numberwithin{equation}{section}
\newcommand{\eqindent}{\displayindent0pt\displaywidth\textwidth}

\makeatletter

\makeatother
\newcounter{parentnumber}

\newcommand{\LA}[1]{\refstepcounter{equation}\text{(\theequation)}\label{#1}}

\numberwithin{equation}{section}

\newcommand{\supp}[1]{\mathrm{supp}\left( {#1} \right)}
\newcommand{\norm}[1]{\| {#1}\| }
\newcommand{\diam}{\mathrm{diam}\,}
\newcommand{\set}[1]{\left\{#1\right\}}
\newcommand{\void}{\varnothing}
\newcommand{\abs}[1]{\left|#1\right|}
\newcommand{\dist}[2]{\mathrm{dist}\left({#1}, {#2}\right)}
\newcommand{\eq}[1]{\eqref{#1}}

\newcommand{\jet}{\mathscr{J}}
\newcommand{\Q}{\mathcal{Q}}
\newcommand{\R}{\mathbb{R}}
\newcommand{\brac}[1]{\left(#1\right)}
\renewcommand{\d}{\partial}
\newcommand{\Rn}{\mathbb{R}^n}
\newcommand{\grad}{\nabla}
\newcommand{\qt}[1]{``\hspace{1sp}#1\,''}

\newcommand{\itau}{{[-\tau,\tau]}}
\newcommand{\E}{\mathcal{E}}
\newcommand{\ct}{C^2}
\newcommand{\ctet}{{C^2(E,\tau)}}
\newcommand{\ctrn}{{C^2(\Rn)}}
\newcommand{\ctrt}{C^2(\Rn,\tau)}
\renewcommand{\P}{\mathcal{P}}
\newcommand{\pos}{[0,\infty)}

\newcommand{\for}{\enskip\text{for}\enskip}
\renewcommand{\forall}{\enskip\text{for all }\enskip}
\newcommand{\dq}{\delta_Q}

\newcommand{\Ls}{CZ^\sharp}
\newcommand{\Lz}{CZ^0}
\newcommand{\Le}{{CZ^{\rm empty}}}
\newcommand{\Lsk}{CZ^{\sharp\sharp}}
\newcommand{\xqs}{{x_Q^\sharp}}

\newcommand{\ul}[1]{\underline{#1}}
\newcommand{\A}{\ul{\mathcal{A}}}
\newcommand{\depth}{\mathrm{depth}}
\renewcommand{\phi}{\varphi}
\newcommand{\vp}{\vec{P}}
\newcommand{\da}{\d^\alpha}

\newcommand{\sk}{\sigma^\sharp}
\newcommand{\G}{\Gamma_\tau}
\newcommand{\Gk}{\Gamma_\tau^\sharp}
\newcommand{\tma}{{2-\abs{\alpha}}}
\newcommand{\ksh}{{k^\sharp}}
\newcommand{\T}{\mathscr{T}}

\begin{document}

	\title{$C^2$ Interpolation with Range Restriction}
	\author{Charles Fefferman \and Fushuai Jiang \and Garving~K. Luli}

	\maketitle
	{\begin{center}	\today\end{center}}
	\begin{abstract}
		Given $ -\infty< \lambda < \Lambda < \infty $,  $ E \subset \mathbb{R}^n $ finite, and $ f : E \to [\lambda,\Lambda] $, how can we extend $ f $ to a $ C^m(\mathbb{R}^n) $ function $ F $ such that $ \lambda\leq F \leq \Lambda $ and $ \norm{F}_{C^m(\mathbb{R}^n)} $ is within a constant multiple of the least possible, with the constant depending only on $ m $ and $ n $? In this paper, we provide the solution to the problem for the case $ m = 2 $. Specifically, we construct a (parameter-dependent, nonlinear) $ C^2(\mathbb{R}^n) $ extension operator that preserves the range $[\lambda,\Lambda]$, and we provide an efficient algorithm to compute such an extension using $ O(N\log N) $ operations, where $ N = \#(E) $. 
	\end{abstract}
	
	\tableofcontents
	
	\section{Introduction}
	
	For integers $ m\geq 0,n \geq 1 $, we write $ C^m(\Rn) $ to denote the Banach space of $ m $-times continuously differentiable real-valued functions such that the following norm is finite
	\begin{equation*}
		\norm{F}_{C^m(\Rn)} := \sup\limits_{x \in \Rn}\max_{\abs{\alpha} \leq m}\abs{ \partial^\alpha F(x) }.
	\end{equation*}
	We use $ C(m,n) $, $ k(m,n) $, etc., to denote controlled constants that depend only on $ m $ and $ n $. If $ E $ is a finite set of $\Rn$, we write $ \#E $ to denote the number of elements in $ E $. 
	
	We consider the following interpolation problem with lower and upper bounds $\lambda, \Lambda,$ respectively.
	\begin{problem}\label{prob.tau-nonsymmetric}
		Let $ E \subset \Rn $ be a finite set. Let $ -\infty < \lambda< \Lambda <\infty $. Let $ f : E \to [\lambda,\Lambda] $. Compute a function $ F: \Rn \to [\lambda,\Lambda] $ such that 
		\begin{enumerate}[(A)]
			\item $ F = f $ on $ E $, and
			\item $ \norm{F}_{C^m(\Rn)} \leq C(m,n)\cdot \inf \set{\norm{\tilde{F}}_{C^m(\Rn)} : \tilde{F} = f
				\text{ on }E\text{, and }\lambda \leq\tilde{F}\leq \Lambda} $.
		\end{enumerate}
	\end{problem}
	
	By \qt{computing a function $F$} from $ (E,\lambda,\Lambda,f) $, we mean the following: After processing the input $ (E,\lambda,\Lambda,f) $, we are able to accept a query consisting of a point $ x \in \Rn $, and produce a list of numbers $ (f_\alpha(x) : \abs{\alpha} \leq m) $. The algorithm \qt{computes the function $ F $} if for each $ x \in \Rn $, we have $ \d^\alpha F(x) =f_\alpha(x) $ for $ \abs{\alpha} \leq m $. 
	
	We call the function $ F $ in Problem \ref{prob.tau-nonsymmetric} a $ C $-optimal interpolant of $ f $ (see Condition (B)).

	Problem \ref{prob.tau-nonsymmetric} is closely related to a common theme in data visualization, where one wants to present some given three-dimensional data as a surface or a contour map. Moreover, one may want to preserve some crucial inherent properties of the data, such as nonnegativity or convexity. This occurs when the data arises as some physical quantities and we want to preserve the physical meaning of the interpolant. For instance, nonnegative constraint is natural when the data represents a probability distribution or (absolute) temperature. More generally, it is sometimes desirable to impose both upper and lower bounds on the interpolants, commonly referred to as \qt{range-restricted interpolants}. See e.g., \cite{range1,range2,range3,range4,range5,range6}. These problems arise, for example, when the predicted trajectory must avoid collision with prescribed obstacles. We refer the readers to the aforementioned references for further background and related topics on range-restricted interpolation.

	By letting $ \tau := \frac{\Lambda - \lambda}{2} $ and replacing $ f $ by $ f - \frac{\Lambda-\lambda}{2} $, we see that Problem \ref{prob.tau-nonsymmetric} admits the following equivalent formulation.

	\begin{problem}\label{prob.tau}
		Let $ E \subset \Rn $ be a finite set. Let $ \tau > 0 $. Let $ f : E \to \itau $. Compute a function $ F: \Rn \to \itau $ such that 
		\begin{enumerate}[(A)]
			\item $ F = f $ on $ E $, and
			\item $ \norm{F}_{C^m(\Rn)} \leq C(m,n)\cdot \inf \set{\norm{\tilde{F}}_{C^m(\Rn)} : \tilde{F} = f
				\text{ on }E\text{, and }-\tau \leq\tilde{F}\leq \tau} $.
		\end{enumerate}
	\end{problem}

	Formally letting $ \tau \to \infty $, we recover the classical Whitney Interpolation Problem.
	
	\begin{problem}[Classical Whitney Interpolation Problem]\label{prob.Whitney}
		Let $ E \subset \Rn $ be a finite set. Let $ f : E \to \R $. Compute a function $ F: \Rn \to \R $ such that 
		\begin{enumerate}[(A)]
			\item $ F = f $ on $ E $, and
			\item $ \norm{F}_{C^m(\Rn)} \leq C(m,n)\cdot \inf \set{\norm{\tilde{F}}_{C^m(\Rn)} : \tilde{F} = f
				\text{ on }E} $.
		\end{enumerate}
	\end{problem}

	Problems \ref{prob.tau-nonsymmetric}--\ref{prob.Whitney} for the cases $m =0,1$ can be immediately solved by the classical Whitney's Extension Theorem (see Theorem \ref{thm.WT}(B) below). Some alternatives for the case $m = 0$ include Urysohn's Lemma and Kirszbraun's formula.

	The classical Whitney Interpolation Problem \ref{prob.Whitney} is well-understood thanks to the works of Brudnyi and Shvartsman \cite{BS98,BS01,BS94-Tr}, Fefferman and Klartag\cite{FK09-Data-1,FK09-Data-2,F09-Data-3,F05-Sh,F05-L}. In \cite{FK09-Data-1,FK09-Data-2}, the authors provide an efficient algorithm for solving the classical Whitney Interpolation Problem \ref{prob.Whitney}. Their algorithm pre-processes the set $ E $ using at most $ C(m,n)N\log N $ operations (on a van Neumann machine that can operate with exact numbers) and $ C(m,n)N $ storage with $ N = \#E $. Then, after reading $ f $, the algorithm is ready to answer queries. A query consists of a point $ x \in \Rn $, and the answer to a query is the $m$-th order Taylor polynomial of an interpolant $ F $ with the least norm up to a constant factor $ C(m,n) $. The number of operations to answer a query is $ C(m,n)\log N $. The complexity of the Fefferman-Klartag algorithm is most likely the best possible. 
	
	Problem \ref{prob.tau} (or equivalently Problem \ref{prob.tau-nonsymmetric}) and the classical Whitney Interpolation Problem \ref{prob.Whitney} are special instances of the following smooth selection problem. 
	
	\begin{problem}\label{prob.selection}
		Let $ E \subset \Rn $ be finite. For each $ x \in E $, let $ K(x) \subset \R^d $ be convex. Find a function $ \vec{F} = (F_1, \cdots, F_d) : \Rn \to \R^d $ such that $ F(x) \in K(x) $ and $ \norm{\vec{F}}_{C^m(\Rn,\R^d)} $ as small as possible, up to a constant factor depending only on $ m $, $ n $, and $ d $.
	\end{problem}
	
	If we specialize $ K(x) \subset \R $ (hence $ d = 1 $) in Problem \ref{prob.selection} to a singleton, we obtain the classical Whitney Interpolation Problem \ref{prob.Whitney}.
	
	Here we note a subtle but crucial difference between Problems \ref{prob.tau-nonsymmetric}, \ref{prob.tau} and Problem \ref{prob.selection} (with $ d = 1 $ and $ K(x) $ being a fixed compact interval for each $ x \in E $). The lower and upper bounds, $ [\lambda, \Lambda] $ or $ \itau $, for $ F $ are global in Problem \ref{prob.tau-nonsymmetric} or \ref{prob.tau}. On the other hand, these bounds are only imposed on the set $ E $ in Problem \ref{prob.selection}.

	Problem \ref{prob.selection} and the related "Finiteness Principles" (see e.g. Theorem \ref{thm.sfp} below) have been extensively studied by Y. Brudnyi and P. Shvartsman \cite{BS94-W,BS01}, C. Fefferman, A. Israel, and G.K. Luli\cite{FIL16}, C. Fefferman and P. Shvartsman \cite{FShv18}.

	In this paper, inspired by \cite{FK09-Data-1,FK09-Data-2,F09-Data-3}, building on the work of \cite{JL20,JL20-Ext,JL20-Alg}, we solve Problem \ref{prob.tau} for the case $ n \in \mathbb{N} $ and $ m = 2 $.

	To facilitate the discussion on algorithms, we introduce the following concepts.

	\begin{definition}\label{def.depth}
		\newcommand{\bn}{{N_0}}
		\newcommand{\rbn}{\R^\bn}
		Let $ \bn \geq 1 $ be an integer. Let $ B = \set{\xi_1, \cdots, \xi_\bn} $ be a basis of $ \rbn $. Let $ \Omega \subset \rbn $ be a subset. Let $ X $ be a set. Let $ \Xi: \Omega \to X $ be a map.
		\begin{itemize}
			\item We say $ \Xi $ \underline{has depth $D$} (with respect to the basis $ B $) if there exists a $ D $-dimensional subspace $ V = {\rm span}\brac{\xi_{i_1}, \cdots, \xi_{i_D}} $, $ \xi_{i_1}, \cdots, \xi_{i_D} \in B $, such that for all $ z_1, z_2 \in\Omega $ with $ \pi_V(z_1) = \pi_V(z_2) $, we have $ \Xi(z_1) = \Xi(z_2) $. Here, $ \pi_V : \rbn \to V $ is the natural projection. We call the set of indices $ \set{i_1, \cdots, i_D} $ the \underline{source} of $ \Xi $. 
			\item Suppose $ \Xi $ has depth $ D $. Let $  V = {\rm span}\brac{\xi_{i_1}, \cdots, \xi_{i_D}} $ be as above. By an \underline{efficient representation} of $ \Xi $, we mean a specification of the index set $ \set{i_1, \cdots, i_D} \subset \set{1, \cdots, \bn} $ and a map $\tilde{\Xi}:\Omega\cap V \to X$ agreeing with $\Xi$ on $\Omega \cap V$, such that given $v \in \Omega\cap V$, $\tilde{\Xi}(v)$ can be computed using at most $C_D$ operations. Here, $C_D$ is a constant depending only on $D$.
		\end{itemize}
	\end{definition}

	\begin{remark}\label{rem.depth}
		
		Suppose $\Xi:\R^{\bar{N}}\to \R$ is a linear functional. Recall from \cite{FK09-Data-2} that a \qt{compact representation} of a linear functional $\Xi:\R^{\bar{N}}\to \R$ consists of a list of indices $ \set{i_1, \cdots, i_D} \subset \set{1, \cdots, \bar{N}} $ and a list of coefficients $ \chi_{i_1}, \cdots, \chi_{i_D} $, so that the action of $\Xi$ is characterized by
		\begin{equation*}
			\Xi : (\xi_1, \cdots, \xi_{\bar{N}}) \mapsto \sum_{\Delta = 1}^{D}\chi_{i_\Delta} \cdot \xi_{i_\Delta}.
		\end{equation*}
		Therefore, given $v \in \mathrm{span}(\xi_{i_1},\cdots,\xi_{i_D})$, we can compute $\Xi(v)$ by the dot product of two vectors of length $D$, which requires $C_D$ operations. The present notion of \qt{efficient representation} is a natural generalization of the \qt{compact representation} in \cite{FK09-Data-2} adapted to the nonlinear nature of constrained interpolation (see also \cite{JL20,JL20-Ext,JL20-Alg}). 
		
	\end{remark}

	We write $ \ctet $ to denote the collection of functions $ f : E \to \itau $, which can be identified with $ \itau^{\#E} $. We define 
	\begin{equation*}
		\norm{f}_\ctet := \inf\set{\norm{F}_\ctrn : F = f \text{ on } E \text{ and } -\tau \leq F \leq \tau}.
	\end{equation*}

	Our first main theorem is the following. 
	
	\begin{theorem}\label{thm.bd-op}
		\renewcommand{\E}{{\mathcal{E}_\tau}}
		Let $ n $ be a positive integer. Let $ \tau > 0 $. Let $ E \subset \Rn $ be a finite set. There exist controlled constants $C(n), D(n)$, and a map $ \E :   \ctet \times \pos \to \ctrn  $ such that the following hold. 
		
		\begin{enumerate}[(A)]
			\item Let $ M \geq 0 $. Then for all $ f \in \ctet $ with $ \norm{f}_{\ctet} \leq M $, we have $ \E(f,M) \in \ctrt $, $ \E(f,M) = f $ on $ E $, and 
			$ \norm{\E(f,M)}_{\ct(\Rn)} \leq CM $.
			
			\item For each $ x \in \Rn $, there exists a set $ S(x) \subset E $, independent of $ \tau $, with $ \# S(x) \leq D $, such that for all $ M \geq 0 $ and $ f, g \in \ctet $ with $ f|_{S(x)} = g|_{S(x)} $, we have
			\begin{equation*}
				\d^\alpha \E(f,M)(x) = \d^\alpha \E(g,M)(x)
				\for
				\abs{\alpha} \leq 2\,.
			\end{equation*}
			
		\end{enumerate}
		
	\end{theorem}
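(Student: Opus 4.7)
The plan is to adapt the Calder\'on--Zygmund / partition-of-unity scheme of Fefferman--Klartag \cite{FK09-Data-1, FK09-Data-2} to the range-restricted setting, building on the nonlinear selection and local finiteness machinery developed in \cite{JL20, JL20-Ext, JL20-Alg}. The construction must be nonlinear in $f$ because the admissible set $\{F \in C^2 : F = f \text{ on } E,\ -\tau \le F \le \tau\}$, though convex, is not a linear subspace. By homogeneity we first reduce to the case $M = 1$.

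The construction proceeds in three stages. \emph{First}, build a Calder\'on--Zygmund decomposition $\{Q\}$ of $\Rn$ that depends only on the combinatorial geometry of $E$ (a variant of the $CZ^\sharp$ decomposition suggested by the paper's notation), together with an assignment, to each $Q$, of a \qt{keystone} data set $S(Q) \subset E$ with $\# S(Q) \le D_0(n)$. \emph{Second}, for each $Q$ invoke the finiteness principle to produce a local $C^2$ interpolant $F_Q$ satisfying $\|F_Q\|_{C^2(\Rn)} \le C$, $F_Q = f$ on $E \cap C Q^\ast$, and $-\tau \le F_Q \le \tau$ on $C Q^\ast$, whose output depends on $f$ only through $f|_{S(Q)}$. \emph{Third}, let $\{\theta_Q\}$ be a Whitney partition of unity subordinate to $\{Q\}$ and set
\[
    \widetilde F := \sum_Q \theta_Q F_Q, \qquad \mathcal{E}_\tau(f,1) := \chi \circ \widetilde F,
\]
where $\chi \in C^2(\R)$ is a fixed clipping with $\chi(t) = t$ on $[-\tau,\tau]$, $\chi(t) = \pm\tau$ for $\pm t \ge 2\tau$, $|\chi'|\le 1$, and $|\chi''| \le C/\tau$. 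Property (B) is then immediate: at each query point $x$ only $O(1)$ cubes are relevant, each contributing at most $D_0(n)$ data points, so $\# S(x) \le D(n)$, and neither the CZ decomposition nor $S(Q)$ depends on $\tau$.

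The main obstacle is controlling the interplay between the patching and the clipping, and this is where I expect the technical weight of the proof to sit. Two simultaneous estimates are needed. First, a Whitney-type overlap bound $|F_Q - F_{Q'}| \le C (\diam Q)^2$ on $Q \cap Q'$ for adjacent cubes forces $\widetilde F$ to agree with $f$ on $E$ and, more delicately, to lie \emph{strictly inside} $[-\tau,\tau]$ at the points of $E$, so that $\chi$ acts as the identity on $E$ and $\mathcal{E}_\tau(f,1)|_E = f$. Second, a quantitative bound on the excursion $|\widetilde F| - \tau$ is required: whenever this quantity is positive, the responsible cube $Q$ must have $(\diam Q)^2$ comparable to $\tau$, so the $C/\tau$ factor coming from $\chi''$ is absorbed by the $O((\diam Q)^2)$ bound on $|\grad \widetilde F|^2$ in the chain-rule expansion of $\partial^2(\chi \circ \widetilde F)$. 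Implementing this cleanly forces the CZ stopping rule to depend simultaneously on the local geometry of $E$ and on $\tau$, while the depth statement (B) must remain $\tau$-independent; reconciling these two requirements is precisely where the $\tau$-uniform local selection machinery of \cite{JL20, JL20-Ext, JL20-Alg} is essential.
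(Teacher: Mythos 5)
Your top-level skeleton (a CZ decomposition adapted to $E$, local range-restricted interpolants, a Whitney partition of unity, and pointwise depth from bounded overlap) does match the paper's announced strategy, but the two load-bearing steps are not carried out, and one of them, as you state it, cannot work. First, the local step is internally inconsistent and circular. You ask for $F_Q$ with $F_Q=f$ on $E\cap CQ^\ast$ whose output depends on $f$ only through $f|_{S(Q)}$ with $\#S(Q)\le D_0(n)$; this is impossible whenever $E\cap CQ^\ast$ contains more than $D_0$ points, and the relevant CZ cubes are \emph{not} stopped by cardinality --- in the paper a cube is ``OK'' when $E\cap 5Q$ lies near a hypersurface of curvature $O(\delta_Q^{-1})$ (Definition \ref{def.OK}, Lemma \ref{lem.diffeo}), so it may contain arbitrarily many points of $E$. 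Moreover, there is no range-restricted finiteness principle available to invoke: in this paper that statement is Theorem \ref{thm.sfp}, which is \emph{deduced from} Theorem \ref{thm.bd-alg}, i.e.\ from the theorem you are proving; the classical finiteness principle (Theorem \ref{thm.fp-sigma}) has no $[-\tau,\tau]$ constraint and in any case only gives existence, not an operator with bounded pointwise depth. The paper instead builds the local interpolants by induction on the dimension: the stopping rule lets one flatten $E\cap 5Q$ by a $C^2$ diffeomorphism and apply the $(n-1)$-dimensional operators (the induction hypothesis \eqref{induction:IH} together with the Fefferman--Klartag maps \eqref{induction:FK}); the depth in (B) is inherited pointwise from those operators, not from a per-cube cap on the data.

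Second, the patching and the clipping. You correctly name the needed overlap bound $|F_Q-F_{Q'}|\lesssim\delta_Q^2$ (one in fact needs the first- and second-derivative analogues as well), but nothing in your construction makes independently chosen local interpolants compatible; an existence statement per cube gives no coherence across cubes. The paper's mechanism is the system of transition jets: a representative point $x_Q^\sharp$ with $\mathrm{dist}(x_Q^\sharp,E)\ge c\delta_Q$ (Lemma \ref{lem.rep}), a jet $\mathscr{T}_{\tau,Q}(f,M)\in\Gamma_\tau^\sharp(x_Q^\sharp,k^\sharp_{LIP},f,CM)$ (Lemma \ref{lem.transition-jet}) to which every local interpolant is pinned (Lemma \ref{lem.lip-operator}(A4)), and a Helly-type argument (Lemmas \ref{lem.helly-1} and \ref{lem.nearby-jet}) showing that jets on neighboring cubes differ by $O(M\delta_Q^{2-|\alpha|})$, which is exactly what Lemma \ref{lem.patch-estimate} feeds into the partition-of-unity sum. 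As for $\chi\circ\widetilde F$: if, as you assume, each $F_Q$ already takes values in $[-\tau,\tau]$ on $\mathrm{supp}\,\theta_Q$, then the convex combination $\widetilde F$ does too and the clipping is vacuous; if instead you rely on clipping, the term $\chi''(\widetilde F)\,|\nabla\widetilde F|^2\lesssim\tau^{-1}|\nabla\widetilde F|^2$ is admissible only where the gradient obeys the constraint $|\nabla\widetilde F|\le CM^{1/2}\min\{\sqrt{\tau-\widetilde F},\sqrt{\tau+\widetilde F}\}$ of Definition \ref{def.Ktau}, and your claim that clipping is active only on cubes with $\delta_Q^2\sim\tau$ is unsupported. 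In the paper the interplay between $\tau$ and the derivative bounds is handled instead through $\mathcal{K}_\tau$ (Lemma \ref{lem.Ktau}), Lemma \ref{lem.big-small}, and the trichotomy \eqref{-tau}, \eqref{tau}, \eqref{0zero} defining $\mathscr{T}_{\tau,Q}$. Relatedly, your suggestion that the CZ stopping rule must depend on $\tau$ points the wrong way: the paper's decomposition depends only on $E$, and all $\tau$-dependence is carried by the jets and the constrained Whitney extension (Theorem \ref{thm.WT-tau}), which is precisely how the $\tau$-independence of $S(x)$ in (B) is obtained.
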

	
	In fact, we can prove a stronger version of Theorem \ref{thm.bd-op}. Let $ \P^+ $ denote the vector space of polynomials on $ \Rn $ with degree no greater than two, and let $ \jet_x^+F $ denote the two-jet of $ F $ at $ x $.
	
	\begin{theorem}\label{thm.bd-alg}
		Let $ n $ be a positive integer. Let $ \tau > 0 $. Let $ E \subset \Rn $ be a finite set with $ \#(E) = N $. Then there exists a collection of maps $ \set{\Xi_{\tau,x} : \tau \in \pos,\,x \in \Rn} $, where 
		\begin{equation*}
			\Xi_{\tau,x} : \ctet \times \pos \to \P^+
		\end{equation*}
		for each $ x \in \Rn $, such that the following hold.
		\begin{enumerate}[(A)]
			\item There exists a controlled constant $ D(n) $ such that for each $ x \in \Rn $, the map $ \Xi_{\tau,x}(\cdot\,,\cdot) : \ctet\times \pos \to \P^+ $ is of depth $ D $. Here, we view $\ctet\subset \R^{\#E}$ with the standard basis. Moreover, the source of $ \Xi_{\tau,x} $ is independent of $ \tau $. 
			
			\item Suppose we are given $ (f,M) \in \ctet \times \pos $ with $ \norm{f}_{\ctet} \leq M $.
			Then there exists a function $ F \in \ctrt $ such that
			\begin{equation*}
				\begin{split}
					\jet^+_x F = \Xi_{\tau,x}(f,M) \text{ for all } x \in \Rn,\,
					\norm{F}_{\ctrn} \leq C\cdot M,
					\text{ and }
					F(x) = f(x)
					\for
					x \in E.
				\end{split}
			\end{equation*}
			Here, $ C $ depends only on $ n $.

			\item There is an algorithm that takes the given data set $E$, performs one-time work, and then responds to queries.
			
			A query consists of a pair $ (\tau,x) \in\pos\times \Rn $, and the response to the query is the depth-$ D $ map $ \Xi_{\tau,x} $, given in its efficient representation.
			
			The one-time work takes $ C_1N\log N $ operations and $ C_2N $ storage. The work to answer a query is $ C_3\log N $. Here, $ C_1, C_2, C_3 $ depend only on $ n $. 
		\end{enumerate}
	\end{theorem}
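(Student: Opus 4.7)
The plan is to build a Calderón--Zygmund (CZ) decomposition of $\Rn$ adapted to $E$, solve the range-restricted interpolation problem locally on each CZ cube, patch the local solutions together with a partition of unity, and read off the two-jet at the query point $x$. This follows the general blueprint of Fefferman--Klartag \cite{FK09-Data-1,FK09-Data-2} for the classical Whitney problem, modified to accommodate the nonlinear constraint $|F|\le\tau$ along the lines of \cite{JL20,JL20-Ext,JL20-Alg}.

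First I would subdivide $\Rn$ into dyadic cubes $\{Q\}$, declaring a cube \emph{OK} when a suitable local version of the range-restricted problem on $1.1Q$ is solvable with norm $\le CM$, and bisecting otherwise. Standard good-geometry arguments yield $O(N)$ CZ cubes with comparable neighboring sidelengths and bounded overlap, so only $O(1)$ cubes touch any fixed point $x$. For each $Q$ I would then extract, via a local finiteness principle, a \emph{source} $S(Q)\subset E$ with $\#S(Q)\le D(n)$, whose membership is determined by the geometry of $E$ relative to $Q$ alone, and hence is independent of both $\tau$ and $f$.

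Next, on each CZ cube $Q$ I would produce a local interpolant $F_Q\in\ctrt$ with $\|F_Q\|_{\ctrn}\le CM$, $F_Q=f$ on $E\cap 1.1Q$, and $|F_Q|\le\tau$, such that the two-jet of $F_Q$ at any $x\in Q$ depends only on $f|_{S(Q)}$, $\tau$, and $M$ --- an efficient representation of depth $D$. Taking a partition of unity $\{\theta_Q\}$ subordinate to $\{1.1Q\}$ with $|\d^\alpha\theta_Q|\le C(\diam Q)^{-|\alpha|}$, I would define $F:=\sum_Q\theta_Q F_Q$ and set $\Xi_{\tau,x}(f,M):=\jet_x^+F$. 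Because only $O(1)$ cubes are nonzero at $x$ and each source has size $\le D$ and is $\tau$-independent, the composite map $\Xi_{\tau,x}$ has depth $O(1)$ with $\tau$-independent source, yielding (A). The bound $\|F\|_{\ctrn}\le CM$ follows from standard Whitney patching once one proves compatibility estimates $|\d^\alpha(F_Q-F_{Q'})|\le CM(\diam Q)^{2-|\alpha|}$ on overlapping cubes, and $|F|\le\tau$ is automatic since $F$ is a convex combination of the $F_Q$; this gives (B).

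For (C), the preprocessing would build a BBD tree (as in \cite{FK09-Data-1,FK09-Data-2}) storing the CZ cubes together with the combinatorial data specifying each $S(Q)$ in $O(N\log N)$ operations and $O(N)$ storage. A query $(\tau,x)$ then traverses the tree in $O(\log N)$ steps to locate the $O(1)$ CZ cubes containing $x$, and computes $\jet_x^+(\theta_Q F_Q)$ for each, summing the results. I expect the main obstacle to be the construction of $F_Q$ itself: one needs a nonlinear map $f|_{S(Q)}\mapsto F_Q$ that simultaneously interpolates $f$ exactly on $S(Q)$, respects $|F_Q|\le\tau$ globally, and has controlled $C^2$ norm, all while keeping the index set $S(Q)$ stable as $\tau$ varies. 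Proving such a local theorem --- a quantitative, algorithmically tractable range-restricted patching theorem on each CZ cube --- is the technical heart, and will lean heavily on the constrained-interpolation machinery of \cite{JL20,JL20-Ext,JL20-Alg}.
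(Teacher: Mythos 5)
Your outline reproduces the coarse blueprint of the paper (Calder\'on--Zygmund decomposition, local solve, partition of unity, read off two-jets), but it omits the two ideas that actually carry the proof, and one of your choices would break the complexity claims. First, your stopping rule --- declare $Q$ OK when the local range-restricted problem on $1.1Q$ is solvable with norm $\le CM$ --- depends on $f$ and $M$, whereas the decomposition must be computable during the one-time preprocessing from $E$ alone (a query is $(\tau,x)$, and the source of $\Xi_{\tau,x}$ must be independent of $\tau$). The paper's OK condition is purely geometric: $\diam\sigma(\A(x))\ge A_1\dq$ for all $x\in E\cap 5Q$, which says that near $Q$ the set $E$ lies on a hypersurface of curvature $O(\dq^{-1})$. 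This is also exactly what resolves the issue you defer as the ``technical heart'': a CZ cube may contain arbitrarily many points of $E$, so no finiteness principle gives you a bounded set $S(Q)$ determining the local solution. The paper gets the bounded-depth local solver by straightening $E\cap(1+c_G)Q$ onto $\R^{n-1}$ with a $C^2$ diffeomorphism and invoking an induction on the dimension $n$ (the $(n-1)$-dimensional version of the theorem together with the Fefferman--Klartag operator, base case $n=1$ proved separately); the source then varies with the query point $x$ through the lower-dimensional operator. Your proposal contains no substitute for this mechanism.

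Second, the compatibility estimate $\abs{\d^\alpha(F_Q-F_{Q'})}\le CM\dq^{2-\abs{\alpha}}$ that you assert ``follows from standard Whitney patching'' is not automatic for independently constructed, nonlinear, range-constrained local solutions; this is where the paper introduces transition jets. For each relevant cube it selects a point $\xqs\in Q$ with $\dist{\xqs}{E}\ge c\dq$, computes by constrained quadratic programming over a bounded cluster a jet $\T_{\tau,Q}(f,M)$ that is either $\equiv-\tau$, $\equiv\tau$, or an interior jet in $\K_\tau(\xqs,CM)$ (this trichotomy is what keeps local solutions inside $[-\tau,\tau]$ when $f$ comes within $O(M\dq^2)$ of the obstacle), proves $\T_{\tau,Q}(f,M)\in\Gk(\xqs,k^\sharp_{LIP},f,CM)$, forces each local interpolant to agree with this jet near $\xqs$, and then derives mutual compatibility of neighboring transition jets from Helly's theorem and the smallness of $\sigma^\sharp(\xqs,\cdot)$. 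Without this (or an equivalent device), the patched sum need not have $C^2$ norm $O(M)$ where small cubes meet, and the delicate verification that each $F_Q$ stays in $[-\tau,\tau]$ near near-saturated data has no counterpart in your sketch. So the proposal as written has a genuine gap: the bounded-depth local solver (dimensional induction after flattening) and the compatibility mechanism (transition jets) are both missing, and the $f$-dependent stopping rule must be replaced by the $E$-only geometric one.
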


	We briefly explain the strategy for the proofs of Theorem \ref{thm.bd-op} and Theorem \ref{thm.bd-alg}, sacrificing accuracy for the ease of understanding. 
	
	We will prove Theorem \ref{thm.bd-op} and Theorem \ref{thm.bd-alg} by inducting on the dimension $ n $. The base case for the induction is given by
	
	\begin{theorem}\label{thm.bd-alg-1d}
		Theorems \ref{thm.bd-op} and \ref{thm.bd-alg} are true for $ n = 1 $. 
	\end{theorem}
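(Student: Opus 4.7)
The plan is to prove the 1D base case by an explicit piecewise polynomial construction that exploits the linear order on $\R$. First, sort $E$ as $x_1 < x_2 < \cdots < x_N$ in $O(N\log N)$ time and store the result in a sorted array supporting $O(\log N)$ point location. All subsequent work is local: the 2-jet $\Xi_{\tau,x}$ at any $x \in \R$ will depend only on $f$ at a bounded window of data points near $x$, yielding the depth bound $D$.

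At each data point $x_i$, I would select a candidate 2-jet $J_i = (f(x_i), A_i, B_i) \in \R^3$ determined by $f$ restricted to $\{x_{i-k}, \ldots, x_{i+k}\} \cap E$ for some fixed window size $k$, together with $\tau$ and $M$. The slopes $A_i$ and second derivatives $B_i$ are chosen by an explicit closed-form rule (essentially solving a bounded-dimensional convex problem) arranged so that the resulting piecewise quintic Hermite interpolant $F$ on $[x_i,x_{i+1}]$ satisfies both $\norm{F}_{\ct([x_i,x_{i+1}])} \leq CM$ and $\sup_{[x_i,x_{i+1}]}|F| \leq C\tau$ simultaneously. The existence of such local jets follows from a 1D finiteness-type principle for range-restricted $C^2$ interpolation (a special case of Problem \ref{prob.selection}), applied to each window. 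On $(-\infty, x_1]$ and $[x_N, \infty)$, extend by the Taylor polynomial of $J_1$ or $J_N$ multiplied by a fixed smooth cutoff that keeps the extension bounded while preserving the 2-jet at the endpoint.

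The resulting $F$ interpolates $f$ on $E$, lies in $C^2(\R)$, and satisfies $\norm{F}_{\ctrn} \leq CM$ together with $\sup_{\R}|F| \leq C\tau$. To enforce the exact range $[-\tau,\tau]$, fix once and for all a $C^\infty$ function $\phi:\R\to[-1,1]$ with $\phi(t)=t$ for $|t|\leq 1$, $\phi$ nondecreasing, and $\|\phi'\|_\infty, \|\phi''\|_\infty$ bounded, and define $\tilde F(x):=\tau\,\phi(F(x)/\tau)$. Then $\tilde F|_E = f$ (since $|f(x_i)|\leq \tau$), $\tilde F(\R)\subset[-\tau,\tau]$, and a direct computation combined with the Landau--Kolmogorov inequality $\|F'\|_\infty^2 \leq C\|F\|_\infty \|F''\|_\infty \leq C\tau M$ yields
\begin{equation*}
|\tilde F''| \;\leq\; C\,(F')^2/\tau + C\,|F''| \;\leq\; CM + CM \;=\; CM,
\end{equation*}
so $\norm{\tilde F}_{\ctrn}\leq CM$ as required. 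The depth and efficient representation of $\Xi_{\tau,x}$ follow immediately: after binary search to locate the interval $[x_i,x_{i+1}]\ni x$, the jet is a bounded-arithmetic function of $f$ at the $O(k)$ data points in the surrounding windows, hence depth $D = D(k)$ and query cost $O(\log N)$.

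The hardest step will be the local jet selection: producing $J_i$ so that the piecewise Hermite interpolant is bounded simultaneously in $C^2$-norm by $CM$ and in sup-norm by $C\tau$, using only a bounded window of $f$-values per data point. This requires delicately balancing the smoothness and range constraints, and rests on the 1D range-restricted $C^2$ finiteness principle. The crucial algorithmic feature is that this selection reduces to a bounded-size convex problem admitting a closed-form or constant-work solution, thereby giving the required depth and $O(1)$ cost per jet; combined with the sorting and binary search this yields the $O(N\log N)$ preprocessing and $O(\log N)$ query bounds of Theorem \ref{thm.bd-alg}(C).
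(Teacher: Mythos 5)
The decisive step you wave through is exactly the one that fails: the claim that one can pick a $2$-jet $J_i$ at each $x_i$ from a bounded window so that the piecewise \emph{quintic Hermite} interpolant satisfies $\sup|F|\leq C\tau$ alongside $\norm{F}_{C^2(\R)}\leq CM$. This is not merely unproved, it is impossible for bounded-degree polynomial pieces. Take three points $x_{i-1}=-\delta$, $x_i=0$, $x_{i+1}=h$ with $\delta\sim\sqrt{\tau/M}$, $f(-\delta)=-\tau$, $f(0)=0$, and $Mh^{2}\gg\tau$. Any function $F$ with $-\tau\leq F\leq\tau$, $\norm{F}_{C^{2}}\leq CM$ and $F(-\delta)=-\tau$, $F(0)=0$ must have $F'(0)\geq c\sqrt{M\tau}$ (Taylor on the short gap), so every admissible jet at $x_i$ has slope of that size. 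But if $p$ is a polynomial of degree $\leq 5$ on $[0,h]$ with $|p'(0)|\geq c\sqrt{M\tau}$, Markov's inequality forces $\sup_{[0,h]}|p|\geq c'h\sqrt{M\tau}\gg\tau$ whenever $Mh^{2}\gg\tau$. Hence no admissible choice of jets keeps your Hermite piece in a band of width $C\tau$, and the subsequent truncation does not rescue you: the Landau--Kolmogorov step needs $\norm{F}_\infty\leq C\tau$, whereas here $\norm{F}_\infty\sim h\sqrt{M\tau}$, and the truncated function $\tau\phi(F/\tau)$ genuinely acquires second derivative of size $\sim M\sqrt{M/\tau}\gg M$ where $F$ re-enters the band with speed $\sim(M^{3}\tau)^{1/4}$. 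The whole difficulty of range restriction is that polynomial jets must be replaced by functions that flatten out near the levels $\pm\tau$ (this is the content of Definition \ref{def.Ktau}, Lemma \ref{lem.Ktau}(B) and Theorem \ref{thm.WT-tau}), and a bounded-degree Hermite scheme cannot do this.

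For comparison, the paper's proof of Theorem \ref{thm.bd-alg-1d} avoids both of your problematic moves. In Lemma \ref{lem.alg-1d-3} it solves, on each $3$-point window $E_\nu=\set{x_{\nu-1},x_\nu,x_{\nu+1}}$, a bounded convex program for a \emph{constrained Whitney field} (minimizing $\mathcal{L}+\mathcal{M}_\tau$) and then applies the range-restricted Whitney operator $\T_{w,\tau}$, whose building blocks are jets damped by cutoffs and parabolic caps, so each local extension lies exactly in $[-\tau,\tau]$. In Theorem \ref{thm.bd-alg-1d+} these local extensions are glued by a partition of unity: the range is preserved exactly because the glue is a convex combination (no truncation needed), and the $C^{2}$ bound follows because adjacent local extensions agree at the \emph{two} shared data points, so Rolle plus Taylor gives $|\frac{d^{m'}}{dx^{m'}}(F_\nu-F_{\nu+1})|\leq CM|x_{\nu+1}-x_\nu|^{2-m'}$, which absorbs the derivatives of the partition functions. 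Your sorting, binary search, depth and complexity bookkeeping are fine and essentially the same as the paper's, but without replacing the jet-plus-quintic core by a construction of this type (local range-preserving extensions with two-point overlaps, or an equivalent), the argument does not establish the base case.
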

	
	Assume the validity of Theorems \ref{thm.bd-op} and \ref{thm.bd-alg} for $ n -1 $. Let $ E \subset \Rn $ be a finite set. We perform a Calder\'on-Zygmund decomposition of $ \Rn $ into dyadic cubes $ \set{Q : Q \in \Lambda} $, such that near each $ Q $, $ E $ lies on a hypersurface with mean curvature bounded by $ C\dq^{-1} $, where $ \dq $ is the sidelength of $ Q $. As such, $ E $ can be locally straightened to lie within a hyperplane by a $ C^2 $-diffeomorphism, and the local interpolation problem is readily solvable by the induction hypothesis. We then construct the global interpolation map by patching together these local interpolation maps via a  partition of unity. To avoid large derivatives caused by the partition functions supported on small cubes, we introduce a collection of \qt{transition jets} that guarantee Whitney compatibility among neighboring cubes, and construct our local interpolants in accordance with these transition jets. 
	
	We have given an overly simplified account of our strategy. In practice, we have to take great care to preserve the range restriction $-\tau \leq F\leq \tau$, and control the derivative contribution from hypersurface with large mean curvature

	The proof for Theorem \ref{thm.bd-alg-1d} will be given in Section \ref{sect:1d}. The proofs for Theorem \ref{thm.bd-op} and Theorem \ref{thm.bd-alg} will be presented in Sections \ref{sect:induction}--\ref{sect:main-proof}.

	Using Theorem \ref{thm.bd-alg}, together with the ``Well Separated Pairs Decomposition'' technique from computational geometry, we obtain the following.
	
	\begin{theorem}\label{thm.sfp}
		Let $ E \subset \Rn $ be a finite set with $ \# E = N < \infty $. Then there exist controlled constants $ C_1, \cdots, C_5 $, depending only on $ n $, and a list of subsets $ S_1, \cdots, S_L \subset E $ satisfying the following.
		\begin{enumerate}[(A)]
			\item We can compute the list $ \set{S_\ell : \ell = 1, \cdots, L} $ from $ E $ using one-time work of at most $ C_1N\log N $ operations and using storage at most $ C_2N $.
			
			\item $ \#S_\ell \leq C_3 $ for each $ \ell = 1, \cdots, L $.
			\item $ L \leq C_4N $.
			\item Given any $ \tau > 0 $ and $ f : E \to\itau $, we have
			\begin{equation*}
				\max_{1 \leq \ell \leq L} \norm{f}_{\ct(S_\ell,\tau)} \leq \norm{f}_\ctet \leq C_5\max_{1 \leq \ell \leq L}\norm{f}_{\ct(S_\ell,\tau)}.
			\end{equation*}
		\end{enumerate}
	\end{theorem}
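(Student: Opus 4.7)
The easy direction $\max_\ell \norm{f}_{C^2(S_\ell,\tau)} \leq \norm{f}_\ctet$ is immediate, since any $\tilde F : \Rn \to \itau$ agreeing with $f$ on $E$ also agrees with $f|_{S_\ell}$ on $S_\ell$ while respecting the range $\itau$; taking the infimum over $\tilde F$ gives the claim. The rest of the plan therefore concerns the reverse inequality and the linear-size construction of $\set{S_\ell}$.

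\textbf{Construction of the list.} I would first run the one-time work of Theorem \ref{thm.bd-alg} on $E$. By part (A), the source of $\Xi_{\tau,x}$ is $\tau$-independent, so the source sets $S(x) \subset E$ are determined purely by the combinatorics of the algorithm's decomposition. Since the algorithm assigns the same $S(x)$ to all $x$ lying in a common piece of its CZ-type decomposition, there are at most $O(N)$ distinct source sets; call this list $\mathcal L_1$. Next, compute a WSPD of $E$, producing $L' = O(N)$ well-separated pairs $(A_i,B_i)$. For each pair select representatives $a_i \in A_i$, $b_i \in B_i$ and form $T_i := S(a_i) \cup S(b_i) \cup \set{a_i, b_i}$; let $\mathcal L_2 := \set{T_1,\ldots,T_{L'}}$. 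Define $\set{S_\ell}_{\ell=1}^L := \mathcal L_1 \cup \mathcal L_2$. Parts (A)--(C) of the theorem then follow from the standard $O(N\log N)$ complexity of both the one-time work in Theorem \ref{thm.bd-alg} and of a WSPD in $\Rn$.

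\textbf{Non-trivial direction.} Set $M := \max_\ell \norm{f}_{C^2(S_\ell,\tau)}$. The plan is to feed $(f, CM)$, with an appropriate $C = C(n)$, into the query phase of Theorem \ref{thm.bd-alg} and verify that the family $\set{\Xi_{\tau,x}(f,CM) : x \in \Rn}$ is the two-jet field of a bona-fide $F \in \ctrt$ with $F = f$ on $E$ and $\norm{F}_\ctrn \leq C_5 M$. Because each $S(x)$ belongs to $\mathcal L_1$, the local data $f|_{S(x)}$ satisfies $\norm{f|_{S(x)}}_{C^2(S(x),\tau)} \leq M$, so Theorem \ref{thm.bd-alg}(B) applied with $S(x)$ in place of $E$ produces a local $\ct$ extension whose two-jet at $x$ matches $\Xi_{\tau,x}(f,CM)$ up to an acceptable error. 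The role of the WSPD sets $T_i$ is to certify Whitney compatibility between the jets attached to geometrically separated regions, so that these local data glue into a single $\ct$ extension with the claimed norm bound.

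\textbf{Main obstacle.} The delicate step is the WSPD-based compatibility argument. Bounding the local norm of each candidate jet via $\mathcal L_1$ is routine, but the partition-of-unity patching in Theorem \ref{thm.bd-alg} requires the jets assigned to possibly distant CZ cubes to be Whitney-compatible, and there are quadratically many such pairs to inspect. The virtue of WSPD is that every pair of points of $E$ is represented by exactly one separated pair $(A_i,B_i)$, so the $O(N)$ test sets $T_i$ suffice to verify all pairwise compatibility conditions at once. Translating this combinatorial sufficiency into an honest quantitative bound---in particular, matching the separation constants of the WSPD with the compatibility tolerances hidden inside the proof of Theorem \ref{thm.bd-alg}---will be the technically demanding part of the argument.
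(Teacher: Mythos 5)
Your construction of the list is essentially the paper's: the paper takes $S_\ell:=\set{x_\ell',x_\ell''}\cup S(x_\ell')\cup S(x_\ell'')$ over the Callahan--Kosaraju pairs, so your $\mathcal L_2$ is exactly the right family and the extra sets in your $\mathcal L_1$ are harmless; the easy direction is also fine. The genuine gap is in the reverse inequality, where your plan is circular: Theorem \ref{thm.bd-alg}(B) guarantees something about $\Xi_{\tau,x}(f,CM)$ only under the oracle hypothesis $\norm{f}_\ctet\leq CM$, which is precisely the inequality you are trying to prove, so you cannot ``feed $(f,CM)$ into the query phase'' and then argue the output jets assemble into a good extension. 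Moreover, the step ``Theorem \ref{thm.bd-alg}(B) applied with $S(x)$ in place of $E$ produces a local extension whose two-jet at $x$ matches $\Xi_{\tau,x}(f,CM)$ up to an acceptable error'' is unsupported: the depth property only says that $\Xi_{\tau,x}(\cdot,M)$ depends on $f$ through $f|_{S(x)}$, not that it agrees, even approximately, with the operator the theorem would attach to the smaller set $S(x)$. You also misidentify the obstacle: compatibility of jets on nearby CZ cubes is internal to the already-proved Theorem \ref{thm.bd-alg} and needs no re-verification here; the issue is how to certify the trace norm of $f$ on all of $E$ from the local norms on the $S_\ell$.

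What is missing is the paper's device for legitimizing the oracle and producing a Whitney field. With $M:=\max_\ell\norm{f}_{\ct(S_\ell,\tau)}$, choose for each $\ell$ a function $F_\ell\in\ctrt$ with $\norm{F_\ell}_\ctrn\leq 2M$ and $F_\ell=f$ on $S_\ell$, and define $f_\ell:=f$ on $S_\ell$, $f_\ell:=F_\ell$ on $E\setminus S_\ell$; then $\norm{f_\ell}_\ctet\leq 2M$ holds honestly, so Theorem \ref{thm.bd-alg}(B) applies to $(f_\ell,2M)$ and yields a global $\tilde F_\ell\in\ctrt$ of norm at most $CM$ agreeing with $f$ on $S_\ell$. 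Set $P^{x_\ell'}:=\jet_{x_\ell'}\tilde F_\ell$ and $P^{x_\ell''}:=\jet_{x_\ell''}\tilde F_\ell$. Since the sources $S(x_\ell'),S(x_\ell'')$ are contained in $S_\ell$ and $f_\ell=f$ there, the depth property makes these jets independent of which pair produced them (the paper's Claim \ref{claim.ell}), and Lemma \ref{lem.arise} ensures every $x\in E$ receives a jet, so one gets a well-defined field $(P^x)_{x\in E}$. Whitney compatibility at each representative pair is automatic because both jets come from the single function $\tilde F_\ell$, and each $P^x$ is the jet of a function in $\ctrt$ of norm at most $CM$; Lemma \ref{lem.wf-rep} (the WSPD sufficiency criterion, which is exactly the quantitative content you flag as ``technically demanding'') then gives $\norm{(P^x)_{x\in E}}_{W(E,\tau)}\leq CM$, and the constrained Whitney extension Theorem \ref{thm.WT-tau}(B) produces $F\in\ctrt$ with $F=f$ on $E$ and $\norm{F}_\ctrn\leq CM$, i.e.\ $\norm{f}_\ctet\leq CM$. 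Without the auxiliary functions $f_\ell$ and without Lemma \ref{lem.wf-rep} plus Theorem \ref{thm.WT-tau}, your outline does not close.
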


	The proof for Theorem \ref{thm.sfp} will be given in Section \ref{sect:sfp}.

	The above theoretical results allow us to produce efficient algorithms to solve Problem \ref{prob.tau} (or Problem \ref{prob.tau-nonsymmetric}) in the case $m=2$. In this paper, we content ourselves with an idealized computer with standard von Neumann
	architecture that is able to process exact real numbers. We refer the readers to \cite{FK09-Data-2} for discussion
	on finite-precision computing.

	Theorem \ref{thm.bd-alg} guarantees the existence of the following algorithm. 
	
	\begin{algorithm}[H]
		\caption{Algorithm for $ \ctrn $ Interpolation with range restriction}\label{alg.interpolant}
		\begin{itemize}
			\item[] \textbf{DATA:} $ E \subset \Rn $ finite with $ \#E = N $. $ \tau > 0 $. $ f: E \to \itau $. $ M \geq 0 $. 
			\item[] \textbf{ORACLE:} $ \norm{f}_{\ctet} \leq M $. 
			\item[] \textbf{RESULT}: A query function that accepts a point $ x \in \Rn $ and produces a list of numbers $ ( f_\alpha(x) : \abs{\alpha} \leq 2) $ that guarantees the following: There exists a function $ F \in \ctrt $ with $ \norm{F}_{\ctrn} \leq CM$ and $ F|_E = f $, such that $ \d^\alpha F(x) = f_\alpha(x) $ for $ \abs{\alpha} \leq 2 $. The function $ F $ is independent of the query point $ x $, and is uniquely determined by $ (E,\tau,f,M) $.
			
			\item[] \textbf{COMPLEXITY:} 
			
			\begin{itemize}
				\item Preprocessing $ (E,\tau) $: at most $ CN\log N $ operations and $ CN $ storage.
				
				\item Processing $f$: $CN$ operations and $CN$ storage.
				
				\item Answering a query: at most $ C\log N $ operations.
			\end{itemize}
			
		\end{itemize}

	\end{algorithm}

	%

	Theorem \ref{thm.sfp} guarantees the existence of the following algorithm for approximating $ \norm{f}_\ctet $. 
	
	\begin{algorithm}[H]
		\caption{Algorithm for approximate $ \ctrn $ norm with range restriction}\label{alg.norm}
		
		\begin{itemize}
			\item[] \textbf{DATA:} $ E \subset \Rn $ finite with $ \#E = N $. $ \tau > 0 $.
			\item[] \textbf{QUERY:} $f: E \to 
			\itau $.
			\item[] \textbf{RESULT}: The order of magnitude of $ \norm{f}_{\ctet} $. More precisely, the algorithm outputs a number $ M \geq 0 $ such that both of the following hold.
			\begin{itemize}
				\item We guarantee the existence of a function $ F \in \ctrt $ such that $ F|_E = f $ and $ \norm{F}_{\ctrn} \leq CM $.
				\item We guarantee there exists no $ F \in \ctrt $ with norm at most $ C^{-1}M $ satisfying $ F|_E = f $.
			\end{itemize}
			
			\item[] \textbf{COMPLEXITY:} 
			\begin{itemize}
				\item Preprocessing $ E $: at most $ CN\log N $ operations and $ CN $ storage.
				\item answer a query: at most $ CN $ operations.
			\end{itemize} 
		\end{itemize}
		
	\end{algorithm}

	Finally, we mention that the techniques developed in this paper can readily be adapted to treat $C^2(\Rn)$ nonnegative interpolation; for comparison, see \cite{JL20,JL20-Ext,JL20-Alg}.

	This paper is a part of a literature on extension, interpolation, and selection of functions, going back to H. Whitney's seminal works\cite{W34-1,W34-2,W34-3}, and including fundamental contributions by G. Glaeser\cite{G58}, Y. Brudnyi and P. Shvartsman \cite{BS85,BS94-W,BS94-Tr,BS97,BS98,BS01,Shv82,Shv84,Shv86,Shv87,Shv90,Shv01,Shv02,Shv04,Shv08}, and E.~Bierstone, P.~Milman, and W.~Paw{\l}ucki\cite{BMP03,BMP06,BM07}, as well as our own papers\cite{F05-Sh,F05-L,F05-J,F06,F07-L,F09-Data-3,FK09-Data-1,FK09-Data-2,FL04,JL20,JL20-Alg,JL20-Ext}. See e.g. \cite{F09-Int} for the history of the problem, as well as N. Zobin \cite{Z98,Z99} for a related problem.

	\begin{acknowledgment}
	    We are indebted to Jes\'us A. De Loera, Kevin O'Neill, Naoki Saito for their valuable comments. We also thank all the participants in the 11th Whitney workshop for fruitful discussions, and Trinity College Dublin for hosting the workshop. 
	
		The first author is supported by the Air Force Office of Scientific Research (AFOSR), under award FA9550-18-1-0069, the National Science Foundation (NSF), under grant DMS-1700180, and the US-Israel Binational Science Foundation (BSF), under grant 2014055. The second author is supported by the Alice Leung Scholarship, the UC Davis Summer Graduate Student Researcher (GSR) Award and the Yuel-Jing Lin Fund. The third author is supported by NSF Grant DMS-1554733 and the UC Davis Chancellor's Fellowship.
	\end{acknowledgment}

	\section{Notations}

	We use $ c_*, C_*, C' $, etc., to denote constants depending only on $ n $, referred to as \qt{controlled constants}. They may be different quantities in different occurrences. We will label them to avoid confusion when necessary.
	
	Let $M$ and $M'$ be two nonnegative quantities determined by $E$, $f$, and $n$. We say that $M$ and $M'$ have the \underline{same order of magnitude} provided that there exists a controlled constant $C(n)$ such that $C^{-1}M\leq M' \leq CM$. In this case we write $M \approx M'$. To compute the order of magnitude of $M'$ is to compute a number $M$ such that $M\approx M'$.

	We assume that we are given an ordered orthogonal coordinate system on $ \Rn $, specified by an ordered list of unit vectors $ [e_1,\cdots,e_n] $. We use $ \abs{\,\cdot\,} $ to denote Euclidean distance. We use $ B(x,r) $ to denote the ball of radius $ r $ centered at $ x $. For $ X, Y \subset \Rn $, we write $ \dist{X}{Y} := \inf_{x \in X, y \in Y}\abs{x - y} $. 
	
	We use $ \alpha = (\alpha_1,\cdots, \alpha_n),\beta = (\beta_1, \cdots, \beta_n) \in \mathbb{N}_0^n $, etc., to denote multi-indices. We write $ \d^\alpha $ to denote $ \d_{e_1}^{\alpha_1}\cdots\d_{e_n}^{\alpha_n} $. We adopt the partial ordering $ \alpha \leq \beta $ if and only if $ \alpha_i \leq \beta_i $ for $ i = 1,\cdots,n $. 
	
	By a cube, we mean a set of the form $ Q = \prod_{i = 1}^n[a_i, a_i+\delta) $ for some $ a_1, \cdots, a_n \in \R $ and $ \delta > 0 $. If $ Q $ is a cube, we write $ \dq $ to denote its sidelength. For $ r > 0 $, we use $ r Q $ to denote the cube whose center is that of $ Q $ and whose sidelength is $ r\dq $. Given two cubes $ Q, Q' $, we write $ Q \leftrightarrow Q' $ if either $Q = Q'$, or if $ closure(Q) \cap closure(Q') \neq \void $ and $interior(Q) \cap interior(Q') = \void$.
	
	A dyadic cube is a cube of the form $ Q = \prod_{i = 1}^{n} [2^k \cdot p_i, 2^k\cdot (p_i + 1) ) $ for some $ p_1, \cdots, p_n, k \in \mathbb{Z} $. Each dyadic cube $ Q $ is contained in a unique dyadic cube with sidelength $ 2\dq $, denoted by $ Q^+ $.

	Let $ n \geq 1 $. Let $ X $ be the diffeomorphic image of a cube or all of $ \Rn $. We use $ C^2(X) $ to denote the vector space of twice continuously differentiable real-valued functions up to the closure of $ X $, whose derivatives up to order two are bounded. Let $ X_0 $ be the interior of $ X $. For $ F \in C^2(X) $, we define
	\begin{equation*}
		\norm{F}_{C^2(X)} := \sup_{x \in X_0} \max_{\abs{\alpha}\leq 2}\abs{\d^\alpha F(x)}.
	\end{equation*}
	We write $ C^2(X,\tau) $ to denote the collection of functions $ F \in C^2(X) $ such that $ -\tau \leq F \leq \tau $ on $ X $. 
	
	Let $ E \subset \Rn $ be finite. We define the following.
	\begin{equation*}
		\begin{split}
			C^2(E) := \set{f:E \to \R} \cong \R^{\#(E)}
			\enskip&\text{ and }\enskip
			\norm{f}_{C^2(E)}  := \inf\set{\norm{F}_{C^2(\Rn)} : F |_E = f};\\
			C^2(E,\tau) := \set{f:E \to \itau} \cong \itau^{\#(E)}
			\enskip&\text{ and }\enskip
			\norm{f}_{C^2(E,\tau)} := \inf\set{\norm{F}_{C^2(\Rn)} : F|_E = f
				\text{ and }-\tau \leq F \leq \tau}\,.
		\end{split}
	\end{equation*}
	
	We write $ \P $ and $ \P^+ $, respectively, to denote the vector spaces of polynomials with degree no greater than one and two.
	
	\newcommand{\ring}{\mathcal{R}}
	For $ x \in \R^n $ and a function $ F $ twice continuously differentiable at $ x $, we write $ \jet_x F$ and $\jet_x^+F $ to denote the one-jet and two-jet of $ F $ at $ x $, respectively, which we identify with the degree-one and degree-two Taylor polynomials, respectively:
	\begin{equation}
		\begin{split}
			\jet_x F(y) := \sum_{\abs{\alpha} \leq 1}\frac{\d^\alpha F(x)}{\alpha !}(y-x)^\alpha \quad\text{ and }\quad
			\jet_x^+ F(y) := \sum_{\abs{\alpha} \leq 2}\frac{\d^\alpha F(x)}{\alpha !}(y-x)^\alpha.
		\end{split}
		\label{eq.jet-def}
	\end{equation}
	We use $ \ring_x$, $\ring^+_x $ to denote the rings of one-jets, two-jets at $ x $, respectively. The multiplications on $ \ring_x $ and $ \ring_x^+ $ are defined in the following way:
	\begin{equation*}
		P\odot_x R :\equiv \jet_x(PR)
		\quad\text{ and }\quad
		P^+\odot_x^+R^+:\equiv \jet_x^+(P^+R^+),
	\end{equation*}
	for $ P,R \in \ring_x $ and $ P^+, R^+ \in \ring_x^+ $.

	\newcommand{\ws}{{W(S)}}
	\begin{definition}\label{def.WF}
		Let $ S \subset \Rn $ be a finite set. A \underline{Whitney field} on $ S $ is an array of polynomials $ \vec{P} = (P^x)_{x\in S} $ parameterized by points in $ S $, such that $ P^x \in \P $ for each $ x \in S $. The collection of Whitney fields will be denoted by $ W(S) $. It is a finite dimensional vector space equipped with a norm
		\begin{equation*}
			\norm{(P^x)_{x \in S}}_{W(S)} := \max_{x, y \in S, x \neq y, \abs{\alpha} \leq 1}\set{ \abs{\da P^x(x)}, \frac{\abs{\da (P^x - P^y)(x)}}{\abs{x-y}^\tma} }.
		\end{equation*}
	\end{definition}
	It is a vector space of dimension $ (\#S )\cdot \dim \P $. 
	
	Given a Whitney field $ \vec{P} = (P^x)_{x\in S} $, we sometimes use the notation
	\begin{equation*}
		(\vec{{P}},x) := P^x.
	\end{equation*}

	\section{Essential polynomials}
	\label{sect:polynomials}

	\subsection{Jets with range restriction}

	The following object captures the effect of the range restriction on jets.
	
	\newcommand{\K}{\mathcal{K}}
	
	\begin{definition}\label{def.Ktau}
		Let $ \tau > 0 $. Let $ x \in \Rn $ and $ M \geq 0 $. We define $ \K_\tau(x,M) $ to be the collection of polynomials $ P \in \P $ such that
		\begin{align}
			&\abs{P(x)} \leq \min\set{M,\tau}\text{, } \abs{\grad P} \leq M \text{, and }\label{Ktau-def-1}\\
			&\abs{\grad P} \leq M^{1/2}\cdot\min\set{\sqrt{\tau-P(x)}, \sqrt{\tau+P(x)}}\label{Ktau-def-2}
		\end{align}
	\end{definition}

	\begin{lemma}\label{lem.dist}
		Let $ P \in \P $ and $ x \in \Rn $ be such that $ -\tau \leq P(x) \leq \tau $. Let $ \mu:= \frac{\abs{\grad P}^2}{\brac{\min\set{\sqrt{\tau - P(x)},\sqrt{\tau + P(x)}}}^{2}}$. Then 
		\begin{equation}
			\dist{\set{P = P(x)}}{\set{P = \pm\tau}} \geq \mu^{-1/2}\cdot \min\set{\sqrt{\tau - P(x)},\sqrt{\tau+P(x)}}.
			\label{dist-1}
		\end{equation}
		In particular, given $ P \in \K_\tau(x,M) $ as in Definition \ref{def.Ktau}, we have 
		\begin{equation}
			\dist{\set{P = P(x)}}{\set{P = \pm\tau}} \geq M^{-1/2}\cdot \min\set{\sqrt{\tau - P(x)},\sqrt{\tau+P(x)}}.
			\label{dist-3}
		\end{equation}
	\end{lemma}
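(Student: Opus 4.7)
The plan is to reduce the inequality to a direct computation, exploiting the fact that $P \in \P$ is an affine polynomial, so that all level sets of $P$ are parallel hyperplanes (or empty, or all of $\Rn$, in degenerate cases).

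First I would dispose of the degenerate case $\grad P = 0$. In that case $P$ is constant equal to $P(x) \in [-\tau,\tau]$, so $\{P = \pm \tau\}$ is either empty (giving distance $+\infty$) or all of $\Rn$ (which happens only when $P(x) = \pm \tau$, in which case $m := \min\{\sqrt{\tau - P(x)}, \sqrt{\tau + P(x)}\} = 0$ and the right-hand side of \eqref{dist-1} is vacuously zero). Either way \eqref{dist-1} holds trivially, so I may assume $\grad P \neq 0$.

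Next, since $P$ is affine, for any constants $c_1, c_2 \in \R$ the hyperplanes $\{P = c_1\}$ and $\{P = c_2\}$ are parallel with separation $|c_1 - c_2|/|\grad P|$. Applied with $c_1 = P(x)$ and $c_2 = \pm \tau$, and using the hypothesis $-\tau \leq P(x) \leq \tau$, I get
\begin{equation*}
\dist{\{P = P(x)\}}{\{P = \tau\}} = \frac{\tau - P(x)}{|\grad P|}, \qquad \dist{\{P = P(x)\}}{\{P = -\tau\}} = \frac{\tau + P(x)}{|\grad P|}.
\end{equation*}
Taking the minimum of these two quantities yields
\begin{equation*}
\dist{\{P = P(x)\}}{\{P = \pm \tau\}} = \frac{\min\{\tau - P(x), \tau + P(x)\}}{|\grad P|} = \frac{m^2}{|\grad P|},
\end{equation*}
where $m := \min\{\sqrt{\tau - P(x)}, \sqrt{\tau + P(x)}\}$.

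Finally, by the definition $\mu = |\grad P|^2 / m^2$, we have $\mu^{-1/2} \cdot m = m^2/|\grad P|$, so in fact equality holds in \eqref{dist-1}. For the second assertion \eqref{dist-3}, I use that any $P \in \K_\tau(x,M)$ satisfies \eqref{Ktau-def-2}, i.e.\ $|\grad P| \leq M^{1/2} m$; squaring and rearranging gives $\mu \leq M$, hence $\mu^{-1/2} \geq M^{-1/2}$, and substituting into \eqref{dist-1} yields \eqref{dist-3}. No real obstacle is expected; the only care needed is in the degenerate cases handled at the start.
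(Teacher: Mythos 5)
Your proof is correct and follows essentially the same route as the paper: dispose of the degenerate cases, then use the fact that the level sets of an affine $P$ are parallel hyperplanes with separation $|c_1-c_2|/|\grad P|$, which is exactly the identity \eqref{dist-2} in the paper's argument, and finally deduce \eqref{dist-3} from $\mu \leq M$. The only (harmless) difference is that you spell out the degenerate cases and note that equality actually holds in \eqref{dist-1}, where the paper simply calls these cases obvious.
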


	\begin{proof}
		Suppose $ P(x) = \pm\tau $ or $ \grad P = 0 $. Then \eqref{dist-1} obviously holds.
		
		Suppose $ -\tau < P(x) < \tau $ and $ \grad P \neq 0 $. Since $ P $ is an affine function, the level sets of $ P $ are parallel hyperplanes. We have
		\begin{equation}
			\frac{\tau + P(x)}{\dist{\set{P = P(x)}}{\set{P = -\tau}}} = \abs{\grad(\tau+P)} = \abs{\grad P} = \abs{\grad (\tau - P)} = \frac{\tau - P(x)}{\dist{\set{P = P(x)}}{\set{P = \tau}}}.
			\label{dist-2}
		\end{equation}
		Combining the definition of $ \mu $ and \eq{dist-2}, we see that \eq{dist-1} holds. 
		
		Since $ \mu \leq M $ for any given $ P \in \K_\tau(x,M) $ as in Definition \ref{def.Ktau}, we see that \eqref{dist-3} follows.
	\end{proof}

	\begin{lemma}\label{lem.Ktau}
		There exists a controlled constant $ C(n) $ such that the following holds. Let $ x_0 \in \Rn $.
		\begin{enumerate}[(A)]
			\item Suppose there exists $ F \in \ctrt $ with $ \norm{F}_\ctrn \leq M $. Then $ \jet_{x_0} F \in \K_\tau(x_0,CM) $.
			\item There exists a map $ \T_*^{x_0}: \bigcup_{M\geq 0}\K_\tau(x_0,M) \to \ctrt $ such that the following holds. Suppose $ P \in \K_\tau(x_0,M) $. Then $ \T_*^{x_0}(P) $ satisfies $ \norm{\T_*^{x_0}(P)}_\ctrn \leq CM $ and $ \jet_{x_0}\T_*^{x_0}(P)  \equiv P $.
		\end{enumerate}
	\end{lemma}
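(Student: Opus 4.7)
For Part (A), I would let $P := \jet_{x_0}F$, $p_0 := F(x_0)$, and $g := \grad F(x_0)$. The bounds $|p_0| \leq \min\{M,\tau\}$ and $|\grad P| \leq CM$ are immediate from the $\ctrn$ norm bound $\|F\|_\ctrn \leq M$ together with the range restriction $|F|\leq\tau$. For the subtler quadratic bound \eqref{Ktau-def-2}, I would Taylor-expand $F$ along the direction $g/|g|$ (the case $g=0$ is trivial): since each second partial is bounded by $M$, one has $F(x_0 + t g/|g|) \geq p_0 + t|g| - C_n M t^2/2$ for a dimensional constant $C_n$. The global upper bound $F\leq\tau$ then forces $t|g| - C_n M t^2/2 \leq \tau - p_0$ for all $t\geq 0$; optimizing over $t$ gives $|g|^2 \leq C M(\tau - p_0)$. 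The symmetric argument using $F \geq -\tau$ yields $|g|^2 \leq CM(\tau + p_0)$, and together these produce the required bound, so $P \in \K_\tau(x_0, CM)$.

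For Part (B), I plan a case split on $P \in \K_\tau(x_0, M)$. If $|p_0| = \tau$, the quadratic condition in Definition \ref{def.Ktau} forces $g = 0$, so $P \equiv \pm\tau$; moreover \eqref{Ktau-def-1} gives $\tau \leq M$, so setting $\T_*^{x_0}(P) \equiv p_0$ satisfies all requirements. Otherwise $|p_0| < \tau$, and I set $\delta := (\tau - |p_0|)/2 > 0$. Next, I would select a smooth $\psi : \R \to [-\tau,\tau]$ that equals the identity on $[p_0 - \delta, p_0 + \delta]$, with $\|\psi'\|_\infty \leq C$ and $\|\psi''\|_\infty \leq C/\delta$, together with a standard smooth cutoff $\chi$ equal to $1$ on $B(x_0,1)$ and supported in $B(x_0,2)$ with $\|\chi\|_{\ct(\Rn)} \leq C$. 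Then I would define $\T_*^{x_0}(P)(x) := \chi(x)\,\psi(P(x))$.

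To verify the desired properties: the range restriction $|\T_*^{x_0}(P)| \leq \tau$ is immediate from $|\psi| \leq \tau$ and $|\chi| \leq 1$. Since $\chi \equiv 1$ near $x_0$ and $|P(x) - p_0| < \delta$ for $x$ in a neighborhood of $x_0$, we have $\T_*^{x_0}(P) \equiv P$ on that neighborhood, so $\jet_{x_0}\T_*^{x_0}(P) = P$. The $C^2$ bound follows by expanding $\partial^\alpha(\chi\psi(P))$ via the product and chain rules. The only delicate term is $\chi\cdot\psi''(P)\,g_i g_j$, bounded by $(C/\delta)|g|^2$; here the quadratic condition \eqref{Ktau-def-2}, rewritten as $|g|^2 \leq M\min\{\tau - p_0,\tau + p_0\} = 2M\delta$, gives exactly the needed $CM$ bound. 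All other terms are controlled by $CM$ using $|g|, |p_0| \leq M$ and the Lipschitz estimate $|\psi(P(x))| \leq |p_0| + C|P(x) - p_0| \leq CM$ on the bounded support of $\chi$.

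The main obstacle is the calibration of $\psi$: its second derivative necessarily scales like $1/\delta$ and blows up as $|p_0|$ approaches $\tau$. The quadratic condition \eqref{Ktau-def-2} is precisely the cancellation that keeps $(1/\delta)|g|^2$ bounded by $CM$, so the construction degrades gracefully in the limit where the range constraint becomes binding at $x_0$.
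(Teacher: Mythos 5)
Your argument is correct, and while Part (A) is essentially the paper's proof (restricting to the line through $x_0$ in the direction of $\grad P$ and optimizing in $t$ is the same as the paper's discriminant computation applied to \eqref{Ktau-3}--\eqref{Ktau-4}), your Part (B) takes a genuinely different and cleaner route. The paper splits into two cases according to whether $\mu$ or $\abs{P(x_0)}$ is comparable to $\tau$: in the first case it glues $P$ to explicit parabolic cylinders $R^\pm = P \mp \frac{\mu}{4}\abs{x\cdot u}^2$ and then to the constants $\pm\tau$ via a slab partition of unity at scale $\delta$, and in the second case it multiplies $P$ by a unit-scale cutoff supported where $P$ stays away from $\pm\tau$. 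You instead use the single formula $\chi\cdot\psi(P)$, clamping in the \emph{range} variable by a one-dimensional function $\psi$ with $\abs{\psi''}\leq C/\delta$, $\delta=(\tau-\abs{P(x_0)})/2$, and cutting off spatially at unit scale; the condition \eqref{Ktau-def-2}, rewritten as $\abs{\grad P}^2\leq 2M\delta$, kills the only dangerous term $\chi\,\psi''(P)\,\partial_iP\,\partial_jP$, and the unit-scale cutoff guarantees $\abs{\psi(P)}\leq\abs{P(x_0)}+C\abs{\grad P}\leq CM$ on $\supp\chi$, so both of the paper's cases are handled uniformly without $\tau\leq CM$ ever being needed except where it is automatic. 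Two small points you should make explicit in a final write-up: the piecewise-quadratic clamp must be mollified to be genuinely $C^2$ while keeping $\abs{\psi''}\leq C/\delta$ (routine), and one should verify that the distance from the endpoints of $[P(x_0)-\delta,\,P(x_0)+\delta]$ to $\pm\tau$ is at least $\delta$ on both sides, which is exactly what allows a bend from slope one to slope zero with $\abs{\psi''}\leq C/\delta$ inside $[-\tau,\tau]$. You also correctly observe that the construction depends only on $P$, $\tau$, $x_0$ and not on $M$, which the statement requires since $\T_*^{x_0}$ is defined on $\bigcup_{M\geq 0}\K_\tau(x_0,M)$, and that the degenerate case $\abs{P(x_0)}=\tau$ forces $P\equiv\pm\tau$ with $\tau\leq M$ by \eqref{Ktau-def-1}.
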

	
	\begin{proof}
		We write $ C $, $ c $, etc., to denote constants that depend only on $ n $. 
		
		Without loss of generality, we may assume that $ x_0 = 0 \in \Rn $. 
		
		\subparagraph{(A).}
		
		Let $ F \in \ctrt $ with $ \norm{F}_{\ctrn} \leq M $. Let $ P :\equiv \jet_{0}F $. By Taylor's theorem, 
		\begin{equation}
			-CM\abs{x}^2 \leq F(x) - P(x) \leq CM\abs{x}^2
			\forall x \in \Rn.
			\label{Ktau-1}
		\end{equation}
		Since $ F \in \ctrt $ and $ \norm{F}_{\ctrn} \leq M $, we have
		\begin{equation}\label{Ktau-2}
			\abs{F(x)} \leq \min\set{M,\tau}\forall x \in \Rn.
		\end{equation}
		Combining \eqref{Ktau-1} and \eqref{Ktau-2}, we see that 
		\begin{align}
			\brac{\tau + P(0)} + \grad P \cdot x + CM\abs{x}^2 &\geq 0\forall x \in \Rn\label{Ktau-3}\text{, and }\\
			\brac{\tau -P(0)} - \grad P \cdot x + CM\abs{x}^2 
			&\geq 0\forall x \in \Rn\label{Ktau-4}.
		\end{align}
		By restricting to each line and computing the discriminant in both \eqref{Ktau-3} and \eqref{Ktau-4}, we see that $ \abs{\grad P} \leq CM^{1/2}\cdot\min\set{\sqrt{\tau - P(0)},\sqrt{\tau + P(0)}} $. Hence, $ P \in \K_\tau(0,CM) $. 
		
		\subparagraph{(B).}
		Let $ P \in \K_\tau(x_0,M) $ be given. We write $ \T_* $ instead of $ \T_*^0 $. 
		
		Suppose $ P(0) = \pm \tau $. Then property \eqref{Ktau-def-2} of $ \K_\tau $ in Definition \ref{def.Ktau} implies that $ P\equiv \pm\tau $. We may simply take $ \T_*(P):\equiv \pm\tau $. 
		
		Suppose $ -\tau < P(0) < \tau $. We define the following quantities.
		\begin{itemize}
			\item $ \mu:= \abs{\grad P}^2\cdot\brac{\min\set{
					\sqrt{\tau-P(0)},\sqrt{\tau+P(0)}
				}
			}^{-2} $. It is clear from \eqref{Ktau-def-2} that $ \mu \leq M $. 
			\item $ \delta:= \mu^{-1/2}\cdot\min\set{\sqrt{\tau - P(0)}, \sqrt{\tau + P(0)}} $.
		\end{itemize}
		We note that the definitions of $ \mu $ and $ \delta $ depend only on the polynomial $ P $; in particular, they are independent of $ M $.
		
		Let $ c_0 \in (0,1) $ be a small universal constant. For concreteness, we can take $ c_0 = \frac{1}{100} $. We consider the following two cases.
		\begin{enumerate}[\text{Case }I.]
			\item Either $ \mu \geq c_0\tau $ or $ \abs{P(0)} \geq c_0\tau $. 
			\item Both $ \mu < c_0\tau $ and $ \abs{P(0)} < c_0\tau $. 
		\end{enumerate}

		\textbf{Proof for Case I.} \quad Since $ M \geq \max\set{\mu,\abs{P(0)}} $, we see that
		\begin{equation}
			\tau \leq c_0^{-1}M.
			\label{Ktau-0}
		\end{equation}
		
		For convenience, we set
		\begin{equation*}
			u := \frac{\grad P}{\abs{\grad P}}.
		\end{equation*}
		
		Consider the auxiliary functions
		\begin{equation*}
			R^-(x) := P(x) + \frac{\mu}{4}\abs{x\cdot u}^2
			\quad\text{ and }\quad
			R^+(x):= P(x)-\frac{\mu}{4}\abs{x\cdot u}^2.
		\end{equation*} 
		Note that the graphs of $ R^- $ and $ R^+ $ are parabolic cylinders that are constant along each direction orthogonal to $ u $. By construction,
		\begin{equation}
			\jet_{0}R^- \equiv \jet_{0}R^+ \equiv P.
			\label{Ktau-9}
		\end{equation}
		We see from the definition of $ \mu $ that
		\begin{equation}
			R^-(x) \geq -\tau\text{, }R^+(x) \leq \tau\forall x \in \Rn.
			\label{Ktau-8}
		\end{equation}
		By computing the root along the $ u $-direction, we also have
		\begin{equation}\label{Ktau-10}
			\begin{split}
				R^-(x) \leq \tau &\text{ for } 0 \geq x\cdot u \geq -2(\sqrt{2}+1)\delta\text{, and }\\
				R^+(x)\geq -\tau &\text{ for } 0 \leq x\cdot u \leq 2(\sqrt{2}+1)\delta.
			\end{split}
		\end{equation}
		
		Consider the following regions in $ \Rn $.
		\begin{itemize}
			
			\item $ A_{0,-\tau} = \set{x\in \Rn: -\delta \leq x\cdot u \leq 0} $ and $ A_{0,\tau} = \set{x \in \Rn : 0 \leq x \cdot u \leq \delta} $. It follows from construction that $-\tau \leq P(x) \leq P(0)$ on $A_{0,-\tau}$ and $P(0) \leq P(x) \leq \tau$ on $A_{0,\tau}$.
			
			\item $ A_{1,-\tau} := \set{x \in \Rn: -2(\sqrt{2}+1)\delta \leq x\cdot u \leq -\delta} $ and $ A_\tau^{1}:= \set{x \in \Rn: \delta \leq x\cdot u \leq 2(\sqrt{2}+1)\delta } $.
			\item $ A_{2,-\tau}:= \set{x \in \Rn : x\cdot u \leq -2(\sqrt{2}+1)\delta} $ and $ A_\tau^{2}:= \set{x \in \Rn: x \cdot u \geq 2(\sqrt{2}+1)\delta} $.
		\end{itemize}

		We define a $ C^2 $ partition of unity $ \set{\theta^{[0]}, \theta_{-\tau}^{[1]}, \theta_{-\tau}^{[2]}, \theta_{\tau}^{[1]}, \theta_\tau^{[2]}} $ with the following properties.
		\begin{enumerate}[($ \theta $1)]
			\item $ \theta^{[0]} + \theta_{-\tau}^{[1]} + \theta_\tau^{[1]} + \theta_{-\tau}^{[2]} + \theta_\tau^{[2]} \equiv 1 $ on $ \Rn $. 
			\item $ 0 \leq \theta^{[0]} \leq 1 $, $ \supp{\theta^{[0]}} \subset A_{0,-\tau}\cup A_{0,\tau} $, $ \theta^{[0]} \equiv 1 $ near $ A_{0,-\tau} \cap A_{0,\tau} $, and $ \abs{\da\theta^{[0]}} \leq C\delta^{-\abs{\alpha}} $. 
			\item $ 0 \leq \theta_{-\tau}^{[1]} \leq 1 $, $ \supp{\theta_{-\tau}^{[1]}} \subset A_{0,-\tau}\cup A_{1,-\tau} $, $ \theta_{-\tau}^{[1]} \equiv 1 $ on $ A_{0,-\tau}\cap A_{1,-\tau} $, and
			$ \abs{\da\theta_{-\tau}^{[1]}} \leq C\delta^{-\abs{\alpha}} $.
			\item $ 0 \leq \theta_{\tau}^{[1]} \leq 1 $, $ \supp{\theta_{\tau}^{[1]}} \subset A_{0,\tau}\cup A_{1,\tau} $, $ \theta_\tau^{[1]} \equiv 1 $ on $ A_{0,\tau}\cap A_{1,\tau} $, and
			$ \abs{\da\theta_{\tau}^{[1]}} \leq C\delta^{-\abs{\alpha}} $.
			\item $ 0 \leq \theta_{-\tau}^{[2]} \leq 1 $, $ \supp{\theta_{-\tau}^{[2]}} \subset A_{1,-\tau}\cup A_{2,-\tau} $, $ \theta_{-\tau}^{[2]} \equiv 1 $ on $ A_{2,-\tau} $, and $ \abs{\da\theta_{-\tau}^{[2]}} \leq C\delta^{-\abs{\alpha}} $.
			\item $ 0 \leq \theta_{\tau}^{[2]} \leq 1 $, $ \supp{\theta_{\tau}^{[2]}} \subset A_{1,\tau}\cup A_{2,\tau} $, $ \theta_{\tau}^{[2]} \equiv 1 $ on $ A_{2,\tau} $, and $ \abs{\da\theta_{\tau}^{[2]}} \leq C\delta^{2-\abs{\alpha}} $.
		\end{enumerate}

		We define $ \T_*(P) \in \ctrn $ by
		\begin{equation*}
			\T_*(P):= \theta^{[0]} P + \theta_{-\tau}^{[1]}R^- -\tau\theta_{-\tau}^{[2]} +\theta_\tau^{[1]}R^+ + \tau\theta_{\tau}^{[2]}.
		\end{equation*}
		
		We see from ($ \theta $1--$ \theta $6), \eqref{Ktau-8}, and \eqref{Ktau-10} that $ -\tau \leq \T_*(P)(x) \leq \tau $ for all $ x \in \Rn $. Hence, $ \T_*(P) \in \ctrt $. Moreover, thanks to \eqref{Ktau-0}, we have
		\begin{equation*}
			\abs{\T_*(P)(x)} \leq CM\forall x \in \Rn.
		\end{equation*}
		
		We now estimate the derivatives of $ \T_*(P) $. First of all, for $ \alpha \neq 0 $, we have
		\begin{equation*}
			\da \T_*(P)(x) = \begin{cases}
				\sum_{\beta\leq \alpha}C_{\alpha,\beta}\cdot \d^\beta\theta^{[0]}(x)\cdot \d^{\alpha-\beta}(P - R^-)(x)&\for x \in A_{0,-\tau}\\
				\sum_{\beta\leq \alpha}C_{\alpha,\beta}\cdot \d^\beta\theta^{[0]}(x)\cdot \d^{\alpha-\beta}(P - R^+)(x)&\for x \in A_{0,\tau}\\
				\sum_{\beta\leq \alpha}C_{\alpha,\beta}\cdot \d^\beta\theta_{-\tau}^{[1]}(x)\cdot \d^{\alpha-\beta}(R^-(x) + \tau)&\for x \in A_{1,-\tau}\\
				\sum_{\beta\leq \alpha}C_{\alpha,\beta}\cdot \d^\beta\theta_{\tau}^{[1]}(x)\cdot \d^{\alpha-\beta}(R^+(x) - \tau)&\for x \in A_{1,-\tau}\\
				0&\for x \in A_{2,-\tau}\cup A_{2,\tau}
			\end{cases}\quad.
		\end{equation*}
		
		We analyze the first and third sums. The analysis for the second is similar to the first, and the fourth to the third.
		
		From the definitions of $ P $, $ \delta $, $ R^- $, and $ A_{0,-\tau} $, we see that 
		\begin{equation}
			\abs{\da(P - R^-)(x)} \leq C\mu\delta^{2-\abs{\alpha}} \leq CM\delta^{2-\abs{\alpha}}
			\for x \in A_{0,-\tau}\text{, }0 < \abs{\alpha} \leq 2.
			\label{Ktau-11}
		\end{equation}
		Combining \eqref{Ktau-11} and ($ \theta $2), we see that $ \abs{\da \T_*P(x)} \leq CM $ for $ x \in A_{0,-\tau} $, $ 0 < \abs{\alpha} \leq 2 $.  
		
		From the definitions of $ \delta $, $ R^- $, and $ A_{1,-\tau} $, we see that
		\begin{equation}\label{Ktau-12}
			\abs{\da(R^-(x)+\tau)} \leq C\mu\delta^{2-\abs{\alpha}} \leq C'M\delta^{2-\abs{\alpha}}
			\for x \in A_{1,-\tau}\text{, }0 < \abs{\alpha} \leq 2.
		\end{equation}
		Combining \eqref{Ktau-12} and ($ \theta $4), we see that $ \abs{\da \T_*P(x)} \leq CM $ for $ x \in A_{1,-\tau} $, $ 0 < \abs{\alpha} \leq 2 $.  
		
		Thus, we have shown that $ \norm{\T_*P}_{\ctrn} \leq CM $. This concludes our treatment of Case I.

		\textbf{Proof for Case II.}\quad Since both $ \mu < c_0\tau $ and $ \abs{P(0)} < c_0\tau $, there exists a universal constant $ c_1 $ such that
		\begin{equation*}
			\delta = \mu^{-1/2}\cdot\min\set{\sqrt{\tau-P(0)},\sqrt{\tau + P(0)}} \geq c_1.
		\end{equation*}
		We fix such $ c_1 $. Thanks to Lemma \ref{lem.dist}, we have
		\begin{equation}
			\dist{\set{P = P(0)}}{\set{P = \pm\tau}} \geq c_1.
			\label{Ktau-13}
		\end{equation}

		As before, we set $ u := \grad P /\abs{\grad P} $. Note that $ u $ is orthogonal to the level sets of $ P $. 
		Let $ \theta $ be a cutoff function such that $ \theta \equiv 1 $ near $ \set{P = P(0)} $, $ \supp{\theta} \subset \set{x \in \Rn: \abs{x\cdot u} \leq \frac{c_1}{2}} $, and $ \abs{\da\theta} \leq C $. We define
		\begin{equation*}
			\T_*P(x) := \theta(x)\cdot P(x).
		\end{equation*}
		
		Thanks to \eqref{Ktau-13}, we have $ \T_*P(x) \in \itau $ for all $ x \in \Rn $, so $ \T_*P \in \ctrt $. 
		
		By the fundamental theorem of calculus, we see that
		\begin{equation}
			\abs{P(x)} \leq \abs{P(0)} + \frac{c_1}{2}\abs{\grad P} \leq CM\forall x \in \supp{\theta}.
			\label{Ktau-14}
		\end{equation}
		
		From \eqref{Ktau-def-1} and \eqref{Ktau-14}, we have, for $ 0 < \abs{\alpha} \leq 2 $,
		\begin{equation}
			\abs{\da \T_*P(x)} \leq \sum_{0 < \beta \leq \alpha}\abs{C_{\alpha,\beta}\cdot\d^{\beta}\theta(x)\cdot \d^{\alpha - \beta}P(x)} \leq CM\forall x \in \supp{\theta}.
			\label{Ktau-15}
		\end{equation}
		
		Since $ \T_*P $ vanishes outside $ \supp{\theta} $, we can conclude from \eqref{Ktau-14} and \eqref{Ktau-15} that $ \norm{\T_*P}_{\ctrn} \leq CM $. This concludes the treatment of the second case and the proof of the lemma.

	\end{proof}

	\subsection{Whitney's Extension Theorem for finite set}

	\newcommand{\wst}{{W(S,\tau)}}

	Let $\tau > 0$. Recall $\K_\tau$ in Definition \ref{def.Ktau}. We use $ W(S,\tau) $ to denote the sub-collection of Whitney fields $ (P^x)_{x \in S} $ such that $ P^x \in \K_\tau(x,M) $ for some $ M \geq 0 $. We define
	\begin{equation}
		\norm{(P^x)_{x\in S}}_{\wst} := \norm{(P^x)_{x \in S}}_\ws + \inf \{M \geq  0 : P^x \in \K_\tau(x,M)\forall x \in S\}.
		\label{wst-def}
	\end{equation}

	Recall the following classical result.

	\begin{theorem}\label{thm.WT}
		There exists a controlled constant $ C(n) $ such that the following holds. Let $ S \subset \Rn $ be a finite set.
		\begin{enumerate}[(A)]
			\item \textbf{Taylor's Theorem.} Let $ F \in \ctrn $. Then $ \norm{(\jet_x F)_{x \in S}}_{W(S)} \leq C\norm{F}_\ctrn $.
			\item \textbf{Whitney's Extension Theorem.} There exists a linear map $ \T_w : W(S) \to \ctrn $ such that given any $ \vec{P} = (P^x)_{x \in S} \in W(S) $, we have $ \norm{\T_w(\vec{P})}_\ctrn \leq C\norm{\vec{P}}_{W(S)} $ and $ \jet_x \T_w(\vec{P}) \equiv P^x $ for each $ x \in S $. 
		\end{enumerate}
	\end{theorem}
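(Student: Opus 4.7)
Part (A) is an immediate consequence of Taylor's theorem. For $x,y\in S$ and $\abs{\alpha}\leq 1$, the identity $\partial^\alpha\jet_x F(x)=\partial^\alpha F(x)$ gives $\abs{\partial^\alpha \jet_x F(x)}\leq \norm{F}_{\ctrn}$. The remainder form of the Taylor expansion of $\partial^\alpha F$ around $y$, evaluated at $x$, together with the fact that $\jet_x F - \jet_y F$ is affine, yields $\abs{\partial^\alpha(\jet_x F - \jet_y F)(x)}\leq C\norm{F}_{\ctrn}\abs{x-y}^{2-\abs{\alpha}}$, which is exactly the $W(S)$-norm bound.

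For Part (B), I would carry out the classical Whitney extension construction. Perform a Whitney (Calder\'on--Zygmund) decomposition of $\Rn\setminus S$ into dyadic cubes $\set{Q_j}$ with $\dq_j\approx \dist{Q_j}{S}$, and fix a $C^\infty$ partition of unity $\set{\theta_j}$ subordinate to slight enlargements $Q_j^*$ with $\abs{\partial^\alpha \theta_j}\leq C\dq_j^{-\abs{\alpha}}$. For each $j$, select $x_j\in S$ nearest to the center of $Q_j$. A first candidate is the linear expression
\[
\tilde{\T}_w(\vec{P})(x):=\sum_j \theta_j(x)P^{x_j}(x)\text{ for }x\notin S,\qquad \tilde{\T}_w(\vec{P})(p):=P^p(p)\text{ for }p\in S,
\]
which must be composed with a smooth global cutoff localizing the extension to a bounded neighborhood of $S$ in order to control the $\sup$-norm. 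The resulting operator $\T_w$ is manifestly linear in $\vec{P}$.

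The $\ctrn$ bound follows from the standard telescoping trick. Fix $x\in\supp{\theta_{j_0}}$; only boundedly many $Q_j$ contribute nearby, each with $\dq_j\approx \dq_{j_0}$ and $\abs{x_j - x_{j_0}}\lesssim \dq_{j_0}$. Using $\sum_j\theta_j\equiv 1$ near $x$, rewrite
\[
\tilde{\T}_w(\vec{P})(y) = P^{x_{j_0}}(y) + \sum_j \theta_j(y)\bigl(P^{x_j}(y) - P^{x_{j_0}}(y)\bigr).
\]
Whitney compatibility gives $\abs{\partial^\beta(P^{x_j}-P^{x_{j_0}})(y)}\leq C\norm{\vec{P}}_{W(S)}\dq_{j_0}^{2-\abs{\beta}}$ for $\abs{\beta}\leq 1$, and this quantity vanishes for $\abs{\beta}=2$ since $P^{x_j}-P^{x_{j_0}}$ is affine. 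Leibniz together with $\abs{\partial^{\alpha-\beta}\theta_j}\leq C\dq_{j_0}^{\abs{\beta}-\abs{\alpha}}$ then yields $\abs{\partial^\alpha \tilde{\T}_w(\vec{P})(x)}\leq C\norm{\vec{P}}_{W(S)}$ for $\abs{\alpha}\leq 2$, uniformly after the cutoff is applied. The same telescoping, applied with reference polynomial $P^p$ for $p\in S$, yields continuity of $\T_w(\vec{P})$ and its first derivatives at $p$, since the relevant $x_j$ satisfy $\abs{x_j-p}\lesssim \abs{x-p}$, forcing $\partial^\alpha \T_w(\vec{P})(x)\to \partial^\alpha P^p(p)$ for $\abs{\alpha}\leq 1$ as $x\to p$.

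The most delicate point, and the main obstacle, is securing genuine continuity of second derivatives at the points of $S$: the construction above only guarantees bounded (possibly oscillating) Hessians as one approaches $S$. Because $S$ is finite (hence discrete), I would handle this by augmenting each affine $P^p$ to a two-jet $\tilde{P}^p\in\P^+$ with prescribed zero quadratic part. The enhanced array $(\tilde{P}^p)_{p\in S}$ still satisfies the Whitney compatibility for $C^2$ extension at second order (trivially, since all quadratic parts are zero, and the $\abs{\alpha}\leq 1$ conditions are unchanged), so the classical Whitney machinery for 2-jet data delivers $F\in\ctrn$ with $\jet_p^+F = \tilde{P}^p$, and therefore $\jet_p F = P^p$, for each $p\in S$. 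The controlled constant $C(n)$ is obtained by bookkeeping through the augmentation and cutoff, neither of which depends on the geometry of $S$.
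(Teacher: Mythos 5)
The paper offers no proof of Theorem \ref{thm.WT} beyond citing the standard textbook treatment (e.g.\ \cite{St79}), and your argument is exactly that classical construction — Whitney cubes on $\Rn\setminus S$, a subordinate partition of unity, telescoping against a reference jet, plus a cutoff for the $\sup$-norm — and it is correct, with linearity of $\T_w$ preserved throughout. One remark: the ``delicate point'' you flag is vacuous when $S$ is finite, since near each isolated $p\in S$ every contributing Whitney cube has nearest point $p$, so the glued function coincides with the affine polynomial $P^p$ on a full neighborhood of $p$ and continuity of second derivatives is automatic; your alternative fix (augmenting each $P^p$ to a two-jet with zero quadratic part and invoking the quantitative Whitney theorem for $2$-jets) is nonetheless valid and equally standard.
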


	A proof of Theorem \ref{thm.WT} can be found in standard textbooks, see for instance \cite{St79}.

	\begin{theorem}\label{thm.WT-tau}
		There exists a controlled constant $ C(n) $ such that the following holds. Let $ S \subset \Rn $ be a finite set.
		\begin{enumerate}[(A)]
			\item \textbf{Taylor's Theorem for $ \ctrt $.} Let $ F \in \ctrt $. Then $ \norm{(\jet_x F)_{x \in S}}_{W(S,\tau)} \leq C\norm{F}_\ctrn $.
			\item \textbf{Whitney's Extension Theorem for $ \ctrt $.} There exists a map $ \T_{w,\tau} : W(S,\tau) \to \ctrt $ such that given any $ \vec{P} = (P^x)_{x \in S} \in W(S,\tau) $, we have $ \norm{\T_{w,\tau}(\vec{P})}_\ctrn \leq C\norm{\vec{P}}_{W(S,\tau)} $ and $ \jet_x \T_{w,\tau}(\vec{P}) \equiv P^x $ for each $ x \in S $. 
		\end{enumerate}
	\end{theorem}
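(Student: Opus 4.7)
Part (A) is immediate from ingredients already in hand: Theorem \ref{thm.WT}(A) bounds $\|(\jet_x F)_{x\in S}\|_{W(S)}$ by $C\|F\|_\ctrn$, and Lemma \ref{lem.Ktau}(A) applied at each $x\in S$ certifies $\jet_x F\in\K_\tau(x,C\|F\|_\ctrn)$; together with definition \eqref{wst-def}, these give the $W(S,\tau)$-bound.

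For Part (B), set $M:=\|\vec{P}\|_{\wst}$, so $P^x\in\K_\tau(x,M)$ for each $x\in S$. Invoke Lemma \ref{lem.Ktau}(B) at every $x\in S$ to produce a global range-preserving function $F_x := \T_*^x(P^x)\in\ctrt$ with $\|F_x\|_\ctrn\leq CM$ and $\jet_x F_x\equiv P^x$. Next, take a Whitney decomposition $\{Q_\nu\}$ of $\Rn\setminus S$ with $\delta_\nu\approx \dist{Q_\nu}{S}$, a subordinate $C^2$ partition of unity $\{\theta_\nu\}$ satisfying $|\da\theta_\nu|\leq C\delta_\nu^{-|\alpha|}$, and for each $\nu$ pick a nearest point $x_\nu\in S$. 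Define
\begin{equation*}
\T_{w,\tau}(\vec{P})(y) := \begin{cases} \sum_\nu \theta_\nu(y)\,F_{x_\nu}(y) & \text{for } y\in\Rn\setminus S,\\ P^y(y) & \text{for } y\in S. \end{cases}
\end{equation*}

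The range bound $-\tau\leq \T_{w,\tau}(\vec{P})\leq \tau$ is automatic, since each $F_{x_\nu}$ is $[-\tau,\tau]$-valued and $\{\theta_\nu\}$ is a non-negative partition of unity, and $P^y(y)\in[-\tau,\tau]$ for $y\in S$ by $P^y\in\K_\tau(y,M)$. Because $S$ is finite, its mutual separation $d:=\min_{x\neq y\in S}|x-y|$ is positive, so every cube $Q_\nu$ within distance $d/4$ of a given $x\in S$ must satisfy $x_\nu=x$; consequently $\T_{w,\tau}(\vec{P})$ agrees with $F_x$ on a punctured neighborhood of $x$ and extends continuously to $F_x(x)$ there, yielding $C^2$-smoothness across $x$ and $\jet_x\T_{w,\tau}(\vec{P})=\jet_x F_x=P^x$. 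The remaining task is the $\ctrn$ bound which, via the standard patching identity $\T_{w,\tau}(\vec{P})-F_{x_\nu}=\sum_{\mu:\,Q_\mu\leftrightarrow Q_\nu}\theta_\mu(F_{x_\mu}-F_{x_\nu})$ on $Q_\nu$, reduces to the compatibility estimate
\begin{equation*}
|\da(F_{x_\mu}-F_{x_\nu})(y)|\leq CM\,\delta_\nu^{\tma}\quad\text{for } y\in Q_\nu,\ |\alpha|\leq 2.
\end{equation*}
I will prove this by the triangle inequality $F_{x_\mu}-F_{x_\nu}=(F_{x_\mu}-P^{x_\mu})+(P^{x_\mu}-P^{x_\nu})+(P^{x_\nu}-F_{x_\nu})$: the outer two differences are $O(M|y-x_\mu|^{\tma})$ and $O(M|y-x_\nu|^{\tma})$ by Taylor's theorem for the $C^2$ functions $F_{x_\mu},F_{x_\nu}$ (using $\jet_{x_\mu}F_{x_\mu}\equiv P^{x_\mu}$, $\jet_{x_\nu}F_{x_\nu}\equiv P^{x_\nu}$, and the $\ctrn$-bounds), while the middle difference is $O(M\delta_\nu^{\tma})$ directly from $\|\vec{P}\|_{W(S)}\leq M$. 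The main obstacle is that the $F_x$ are no longer affine polynomials, so the classical Whitney patching must be adapted to nonlinear local extensions; the adaptation succeeds precisely because the one-jet identity $\jet_x F_x\equiv P^x$ together with the $\ctrn$-bound on $F_x$ forces the Taylor remainder of $F_x-P^x$ to be of the same order $M\delta_\nu^{\tma}$ as the $W(S)$-compatibility of $\vec{P}$, so the partition-of-unity blow-up $\delta_\nu^{-|\alpha|}$ is absorbed just as in the linear case.
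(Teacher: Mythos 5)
Your argument is correct and is exactly the route the paper intends: the paper's entire proof of Theorem~\ref{thm.WT-tau} is the remark that it follows by combining Lemma~\ref{lem.Ktau} with the proof of the classical Theorem~\ref{thm.WT}, which is precisely your scheme of replacing the jets $P^x$ by the range-preserving bounded extensions $F_x=\T_*^x(P^x)$ from Lemma~\ref{lem.Ktau}(B) and re-running the Whitney patching over a decomposition of $\R^n\setminus S$. One small detail to make explicit: on Whitney cubes far from $S$, where $\delta_\nu$ is large and the Taylor-based bound $CM\delta_\nu^{2-\abs{\alpha}}$ is useless for $\abs{\alpha}<2$, you should instead use the trivial bound $\abs{\partial^{\gamma}(F_{x_\mu}-F_{x_\nu})}\leq CM$ coming from $\norm{F_x}_{C^2(\R^n)}\leq CM$, which together with $\abs{\partial^{\beta}\theta_\mu}\leq C\delta_\nu^{-\abs{\beta}}\leq C$ (for $\delta_\nu\gtrsim 1$) keeps every Leibniz term of size $O(M)$ — a one-line fix using ingredients you already have.
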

	
	The proof of Theorem \ref{thm.WT-tau} is a straightforward combination of Lemma \ref{lem.Ktau} and the proof of Theorem \ref{thm.WT}.

	\subsection{Norms on small subsets}
	\label{sect:quad}

	Throughout this section, we fix a finite set 
	\begin{equation*}
		S \subset \Rn \quad ,\quad \#S\leq k_0,
	\end{equation*}
	where $ k_0 = k_0(n) $ is a controlled constant. 
	
	We define a norm $ \mathcal{L} $ on $ W(S) $ by
	\begin{equation}
		\begin{split}
			\mathcal{L} : W(S) &\to \pos \\
			(P^x)_{x \in S} &\mapsto \sum_{x \in S,\abs{\alpha} \leq 1}\abs{\da P(x)} + \sum_{x,y \in S, x \neq y, \abs{\alpha}\leq 1}\frac{\abs{\da(P^x - P^y)(x)}}{\abs{x-y}^\tma}\,.
		\end{split}
		\label{L-def}
	\end{equation}
	
	\begin{lemma}\label{lem.L}
		There exists a controlled constant $C$ such that given any $ \vp \in W(S) $, we have
		\begin{equation}
			C^{-1} \mathcal{L}(\vp) \leq \norm{\vp}_{W(S)} \leq C\mathcal{L}(\vp).
			\label{3.2.0}
		\end{equation}
	\end{lemma}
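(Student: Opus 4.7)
The plan is to observe that both $\mathcal{L}(\vec{P})$ and $\norm{\vec{P}}_{W(S)}$ are assembled from exactly the same family of nonnegative elementary quantities, namely the values $\abs{\d^\alpha P^x(x)}$ for $x \in S,\,\abs{\alpha}\leq 1$, and the difference quotients $\abs{\d^\alpha(P^x-P^y)(x)}/\abs{x-y}^{2-\abs{\alpha}}$ for distinct $x,y\in S,\,\abs{\alpha}\leq 1$. The only difference is that $\norm{\cdot}_{W(S)}$ takes the maximum of these quantities while $\mathcal{L}(\cdot)$ takes their sum. So the equivalence boils down to the general fact that a maximum and a sum of a uniformly bounded number of nonnegative terms differ by at most a multiplicative constant equal to the number of terms.

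For the lower inequality $\norm{\vec{P}}_{W(S)}\leq \mathcal{L}(\vec{P})$, I would note that every term appearing in the max defining $\norm{\vec{P}}_{W(S)}$ already appears as one of the nonnegative summands in $\mathcal{L}(\vec{P})$; since all summands are nonnegative, the maximum is bounded by the total sum, with constant $1$.

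For the upper inequality $\mathcal{L}(\vec{P})\leq C\norm{\vec{P}}_{W(S)}$, I would count the summands. There are $(n+1)\cdot\#S$ pointwise terms of the form $\abs{\d^\alpha P^x(x)}$, and $(n+1)\cdot\#S\cdot(\#S-1)$ difference-quotient terms, giving at most $(n+1)\#S^2\leq (n+1)k_0^2$ summands in total. Each individual summand is bounded by $\norm{\vec{P}}_{W(S)}$ by definition of the max, so $\mathcal{L}(\vec{P})\leq (n+1)k_0^2\cdot\norm{\vec{P}}_{W(S)}$. Taking $C:=(n+1)k_0^2$, which depends only on $n$ since $k_0=k_0(n)$, yields \eqref{3.2.0}.

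There is no real obstacle here: the lemma is a finite-dimensional equivalence-of-norms statement whose only content is that $\#S$ is controlled, so the cardinality of the index set of the sum is itself a controlled constant. The role of the lemma in the sequel is presumably to allow one to work with $\mathcal{L}$ (which is convenient because it is a sum and hence amenable to linear-algebraic manipulations such as computing dual norms or applying convex duality) in place of $\norm{\cdot}_{W(S)}$ whenever the set $S$ has controlled cardinality, without losing more than a controlled constant factor.
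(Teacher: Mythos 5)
Your proof is correct and follows essentially the same route as the paper: the lower inequality because each term of the max occurs among the nonnegative summands of $\mathcal{L}$, and the upper inequality by counting the at most $(n+1)k_0^2$ summands and bounding each by $\norm{\vec{P}}_{W(S)}$, exactly as in the paper's argument. No issues.
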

	
	\begin{proof}
		Recall Definition \ref{def.WF}. Recall the assumption that $ \#S \leq k_0 $ where $k_0$ is a controlled quantity. For a given $\vec{P} \in W(S)$, it is clear that $\norm{\vec{P}}_{W(S)} \leq \mathcal{L}(\vec{P})$. For the reverse inequality, we have
		\begin{equation*}
		    \begin{split}
		        \mathcal{L}(\vec{P}) 
		  &\leq \left[k_0(n+1) + \brac{k_0(k_0-1)(n+1)}\right] 
		    \cdot \max_{x, y \in S, x \neq y, \abs{\alpha} \leq 1}\set{ \abs{\da P^x(x)}, \frac{\abs{\da (P^x - P^y)(x)}}{\abs{x-y}^\tma} } \\
		    &\leq C(n)\norm{\vec{P}}_{W(S)}.
		    \end{split}
		\end{equation*}
		This proves \eqref{3.2.0}.
	\end{proof}
	
	For $ \tau > 0 $, we define a function that measures the effect of the range restriction on the optimal Whitney extension.

	\begin{equation}
		\begin{split}
			\mathcal{M}_\tau: W(S,\tau) &\to \pos\\
			(P^x)_{x \in S} &\mapsto \sum_{x \in S, \abs{\alpha} = 1}\frac{\abs{\da P^x}^2}{\tau - P(x)} + \frac{\abs{\da P^x}^2}{\tau + P(x)}\,.
		\end{split}
		\label{M-def}
	\end{equation}
	In \eqref{M-def}, we adopt the convention $ \frac{0}{0} = 0 $. 
	
	\begin{lemma}\label{lem.L+M}
		There exists a controlled constant $ C(n,k_0) $ such that given any $ \vec{P} \in W(S,\tau) $, we have
		\begin{equation}
			C^{-1}(\mathcal{L}+\mathcal{M}_\tau)(\vec{P}) \leq \norm{\vec{P}}_{W(S,\tau)} \leq C(\mathcal{L}+\mathcal{M}_\tau)(\vec{P}).
			\label{L+M-1}
		\end{equation}
	\end{lemma}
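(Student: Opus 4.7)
The plan is to decompose $\norm{\vec{P}}_{W(S,\tau)}$ according to its definition \eqref{wst-def} as $\norm{\vec{P}}_{W(S)} + M_\ast$, where
\begin{equation*}
M_\ast := \inf\{M \geq 0 : P^x \in \K_\tau(x,M) \text{ for all }x \in S\},
\end{equation*}
and then handle the two summands separately. The first summand is already equivalent to $\mathcal{L}(\vec{P})$ by Lemma \ref{lem.L}, so the real content lies in showing that $M_\ast$ is comparable (up to the constants in $\mathcal{L}$) to $\mathcal{M}_\tau(\vec{P})$.

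To compare $M_\ast$ with $\mathcal{M}_\tau$, I would simply unpack the defining inequalities \eqref{Ktau-def-1}--\eqref{Ktau-def-2} of $\K_\tau(x,M)$. These say precisely that $M$ dominates each of the four quantities $|P^x(x)|$, $|\grad P^x|$, $|\grad P^x|^2/(\tau - P^x(x))$, and $|\grad P^x|^2/(\tau + P^x(x))$ (with the convention $0/0 = 0$). Hence
\begin{equation*}
M_\ast = \max_{x \in S}\max\left\{|P^x(x)|,\; |\grad P^x|,\; \frac{|\grad P^x|^2}{\tau - P^x(x)},\; \frac{|\grad P^x|^2}{\tau + P^x(x)}\right\}.
\end{equation*}
The first two entries are controlled by $\mathcal{L}(\vec{P})$ (they appear among the $|\partial^\alpha P^x(x)|$ summands in \eqref{L-def}), while the last two entries, once summed over $x \in S$, are exactly $\mathcal{M}_\tau(\vec{P})$ as defined in \eqref{M-def}. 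Since $\#S \leq k_0$ is controlled, the sum of these at most $4 k_0$ nonnegative quantities is comparable to their maximum, up to a factor depending only on $n$ and $k_0$. This gives the two-sided bound $C^{-1}(\mathcal{L} + \mathcal{M}_\tau)(\vec{P}) \leq \mathcal{L}(\vec{P}) + M_\ast \leq C(\mathcal{L} + \mathcal{M}_\tau)(\vec{P})$.

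Combining with Lemma \ref{lem.L} then yields \eqref{L+M-1}. The argument is largely bookkeeping; the only point requiring attention is the passage between sum and maximum, which is only legitimate because $\#S \leq k_0$ for a controlled constant $k_0$, so the relevant constants remain controlled. There is no genuine analytic obstacle here, and no new information beyond the definition of $\K_\tau$ is needed.
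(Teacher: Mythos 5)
Your proposal is correct and follows essentially the same route as the paper: split $\norm{\vec{P}}_{W(S,\tau)}$ into $\norm{\vec{P}}_{W(S)}$ plus the infimum $M_\ast$ from \eqref{wst-def}, invoke Lemma \ref{lem.L} for the first piece, and compare $M_\ast$ with $\mathcal{L}+\mathcal{M}_\tau$ by unpacking Definition \ref{def.Ktau}, using $\#S\leq k_0$ to pass between sums and maxima. If anything, your bookkeeping is slightly more careful than the paper's one-line comparison, since you correctly note that the terms $\abs{P^x(x)}$ and $\abs{\grad P^x}$ appearing in $M_\ast$ are absorbed by $\mathcal{L}$ rather than by $\mathcal{M}_\tau$ alone.
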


	\begin{proof}
		We adopt the notation $ A \approx B $ if there exists a controlled constant $ C(n) $ such that $ C^{-1}A \leq B \leq CA $.
		
		Comparing Definition \ref{def.Ktau} and \eqref{M-def}, we see that
		\begin{equation*}
			\mathcal{M}_\tau(\vp) \approx \inf \set{M \geq  0 : P^x \in \K_\tau(x,M)\forall x \in S}.
		\end{equation*}
		This together with Lemma \ref{lem.L} proves the equivalence in \eqref{L+M-1}. 
	\end{proof}
	
	\newcommand{\atf}{{\mathbb{A}_{\tau,f}}}

	We now explain how to compute the order of magnitude of $ \norm{f}_{\ct(S,\tau)} $. More precisely, using at most a bounded number of operations, we compute a Whitney field $  (P^x)_{x \in S} \in W(S,\tau) $ such that $ P^x(x) = f(x) $ for each $ x \in S $ and $ \norm{(P^x)_{x \in S}}_{W(S,\tau)} \approx \norm{f}_{\ct(S,\tau)} $.
	
	Consider the affine subspace $ \atf \subset W(S) $ defined by
	\begin{equation}
		\atf:= \set{\vec{P} = (P^x)_{x \in S} \in W(S) :\, \begin{matrix*}[l]
				P^x(x) = f(x)\for x \in S.\\
				\text{If }f(x) = \tau \text{ then } P^x \equiv \tau.\\
				\text{If }f(x) = -\tau \text{ then } P^x \equiv -\tau.
		\end{matrix*}}.
	\end{equation}
	Equivalently, 
	\begin{equation*}
		\atf = \set{\vec{P} = (P^x)_{x \in S} \in W(S,\tau) : P^x(x) = f(x)\for x \in S}. 
	\end{equation*}
	Note that $ \atf $ has dimension $ n\cdot \#(S\setminus f^{-1}(\set{\pm\tau})) $.
	
	Let $ \mathcal{L} $ and $ \mathcal{M}_\tau $ be as in \eqref{L-def} and \eqref{M-def}. Thanks to Whitney's Extension Theorem \ref{thm.WT-tau} and Lemma \ref{lem.L+M}, we have
	\begin{equation}
		\norm{f}_\ctet \approx \inf \set{(\mathcal{L}+\mathcal{M}_\tau)(\vec{P}) : \vec{P} \in \atf}.
		\label{quad-0}
	\end{equation}
	
	Let $ d := \dim W(S) = \#S \cdot \dim \P \leq k_0(n+1) $. We identify $ W(S) \cong \R^d $ via
	\begin{equation*}
		(P^x)_{x \in S} \mapsto \brac{P^x(x), \d_1P^x, \cdots, \d_n P^x}_{x \in S}.
	\end{equation*}
	We define the $ \ell_1 $ and $ \ell_2 $ norms, respectively, on $ \R^d $ by
	\begin{equation*}
		\norm{v}_{\ell_1} := \sum_{i = 1}^d\abs{v_i} \text{ and }\norm{v}_{\ell_2} := \brac{\sum_{i = 1}^d\abs{v_i}^2}^{1/2}, \quad v = (v_1, \cdots, v_d) \in \R^d.
	\end{equation*}

	\begin{itemize}
		
		\item Let $ L_w : W(S) \to \R^d $ be a linear isomorphism that maps $ \vp \in W(S) $ to a vector in $ \R^d $ with components
		\begin{equation*}
			\frac{\da(P^y - P^z)(y)}{\abs{y-z}^{2-\abs{\alpha}}}
			\quad,\quad \da P^{x_S}(x_S)\quad, \abs{\alpha} \leq 1
		\end{equation*}
		for suitable $ x_S,y,z \in S $ in certain order, such that
		\begin{equation}\label{quad.0}
			\eqindent
			\norm{L_w(\vp)}_{\ell^1(\R^d)} \approx \mathcal{L}(\vp) \quad \for \vp \in W(S).
		\end{equation}
		One possible construction of such an $ L_w $ is based on the technique of \qt{clustering} introduced in \cite{BM07}. See Remark 3.3 of \cite{BM07}. We can compute $ L_w $ from $ S $ using at most $ C $ operations, since $ \#S $ is controlled. 
		
		\item Let $ V_{\tau,f} \subset W(S) $ be the subset defined by
		\begin{equation*}
			V_{\tau,f} := \set{ (P^x)_{x \in S} : P^x(x) = 0 \for x \in S\setminus f^{-1}(\set{-\tau,\tau}) \text{ and } P^x \equiv 0 \for x \in f^{-1}(\set{-\tau,\tau}) }.
		\end{equation*}
		Let $ \Pi_{\tau,f} = (\Pi^x_{\tau,f})_{x \in S} : W(S)\to V_{\tau,f} $ be the orthogonal projection. Let $ \vp_{f} $ denote the vector $ (f(x),0,0)_{x \in S} $. It is clear that $ \atf = \vp_f + V_f $. 
		
		\item Let $ L_{\tau,f} = (L^x_{\tau,f})_{x\in S} : W(S) \to W(S) $ be a linear endomorphism defined by
		\begin{equation*}
			L^x_{\tau,f}(P^x) = \begin{cases}
				P^x\cdot \brac{\min\set{ \sqrt{\tau - f(x)}, \sqrt{\tau + f(x)} }}^{-1/2} &\for x \in S\setminus f^{-1}(\set{-\tau,\tau})
				\\
				0 &\for x \in f^{-1}(\set{-\tau,\tau})
			\end{cases},
		\end{equation*}
		for $(P^x)_{x \in S} \in W(S).$	
		
	\end{itemize}

	We see from the definition of $ \mathcal{M}_\tau $ that
	\begin{equation}\label{quad.1}
		\mathcal{M}_\tau(\vp) \approx \norm{L_{\tau,f}\Pi_{\tau,f}(\vp)}_{\ell^2(\R^d)}^2
		\quad \for \vp \in \atf. 
	\end{equation}
	
	Combining Lemma \ref{lem.L+M}, \eqref{quad.0}, and \eqref{quad.1}, we have
	\begin{equation}\label{quad.2}
		\norm{\vp}_{W(S,\tau)} \approx \norm{L_{\tau,f}\Pi_{\tau,f}(\vp)}_{\ell^2(\R^d)}^2 + \norm{L_w(\vp)}_{\ell^1(\R^d)}
		\quad \for \vp \in \vp_f + V_{\tau,f}.
	\end{equation}
	
	Setting $ \beta := L_w(\vp) $ and $ X := (L_{\tau,f}\Pi_{\tau,f})^\dagger(L_{\tau,f}\Pi_{\tau,f}) $, we see from \eqref{quad-0} and \eqref{quad.2} that computing the order of magnitude of $ \norm{f}_{\ct(S,\tau)} $ amounts to solving the following minimization problem:
	\begin{equation}\label{quad-std}
		\text{Minimize } \langle\beta,A\beta\rangle + \norm{\beta}_{\ell^1(\R^d)}
		\quad
		\text{subject to }
		L_w^{-1}\beta \in \vp_f + V_{\tau,f}.
	\end{equation}
	In particular, an optimal (feasible) solution to \eqref{quad-std} is a Whitney field $ \vp_* = (P_*^x)_{x \in S} \in W(S,\tau) $ with $ P_*^x(x) = f(x) $ for $ x \in S $ and $ \norm{\vp_*}_{W(S,\tau)} \approx \norm{f}_{\ct(S,\tau)} $.
	
	Finally, we note that \eqref{quad-std} is a convex quadratic programming problem with affine constraints. We can find the exact solution to \eqref{quad-std} by solving for its Karush-Kuhn-Tucker conditions, which consist of a bounded system of linear equations and inequalities\cite{BV-CO}. We can solve such a system, for instance via the simplex method or elimination, using at most $ C(n) $ operations, since the system size is controlled. We refer the readers to the appendix of \cite{JL20-Alg} for an elementary discussion, and \cite{BV-CO} for a detailed treatment of convex programming.
	
	\subsection{Homogeneous convex sets}

	For $ x \in \Rn $, $ S \subset E $, and $ k \geq 0 $, we define
	\begin{equation}\label{sigma-def}
		\begin{split}
			\sigma(x,S)& := \set{P \in \P: \begin{matrix*}[l]
					\text{ There exists $ \phi^S \in \ctrn $ with $ \norm{\phi^S}_\ctrn \leq 1 $,}\\
					\text{ such that  $ \phi^S = 0 $ on $ S $ and $ \jet_x \phi^S = P $.}
			\end{matrix*}}, \text{ and }\\
			\sk(x,k)&:= \bigcap_{S \subset E,\, \#S \leq k}\sigma(x,S).
		\end{split}
	\end{equation}

	\begin{theorem}[Finiteness Principle]\label{thm.fp-sigma}
		There exist controlled constants $k^\sharp_{n,\mathrm{old}}$ and $ C(n) $ for which the following holds. Let $ E \subset \Rn $ be a finite set. 
		\begin{enumerate}[(A)]
			\item Let $ f : E \to \R $. Suppose for every $S\subset E$ with $\#S \leq k^\sharp_{n,\mathrm{old}}$, there exists $F^S \in \ctrn$ with $\norm{F^S}_{\ctrn} \leq M$ and $F^S = f$ on $S$. Then there exists $F \in \ctrn$ with $\norm{F}_\ctrn \leq CM$ and $F = f$ on $E$.
			\item Let $ \sigma $ and $ \sk $ be as in \eqref{sigma-def}. Then for all $ k \geq k^\sharp_{n,\mathrm{old}} $,
			\begin{equation*}
				C^{-1}\sk(x,k) \subset \sigma(x,E)\subset C\cdot \sk(x,k)\forall x \in \Rn.
			\end{equation*}
		\end{enumerate}
	\end{theorem}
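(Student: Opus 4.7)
The plan is to invoke Part~(A) as the classical Finiteness Principle and then derive Part~(B) from it by reformulating the statement in the language of the convex obstruction sets $\sigma(x,S) \subset \P$.

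\textbf{Part~(A).} This is the standard Finiteness Principle for $C^2(\Rn)$ interpolation, due to Brudnyi--Shvartsman \cite{BS94-W,BS01} and Fefferman \cite{F05-Sh,F06}; I would cite it directly rather than reproduce a full proof. For a self-contained argument, one picks $k^\sharp_{n,\mathrm{old}}$ sufficiently large, performs a Calder\'on--Zygmund decomposition of $\Rn$ adapted to $E$, uses the finiteness hypothesis on bounded ``keystone'' subsets $S_Q \subset E$ near each cube $Q$ to obtain local extensions $F^{S_Q}$ of controlled $\ctrn$-norm, and patches them via a partition of unity subordinate to the Whitney cover. The crucial Whitney compatibility between neighbors $F^{S_Q}$ and $F^{S_{Q'}}$ on $Q \leftrightarrow Q'$ follows from the hypothesis applied to $S_Q \cup S_{Q'}$, which has bounded cardinality and therefore inherits the extension of norm $\leq M$.

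\textbf{Part~(B).} The inclusion $\sigma(x,E) \subset \sk(x,k)$ is immediate: any $\phi$ witnessing $P \in \sigma(x,E)$ also witnesses $P \in \sigma(x,S)$ for every $S \subset E$, so $P$ lies in the intersection defining $\sk(x,k)$. For the nontrivial reverse inclusion $\sk(x,k) \subset C \cdot \sigma(x,E)$ with $k \geq k^\sharp_{n,\mathrm{old}}$, fix $P \in \sk(x,k)$; the goal is to construct $\phi \in \ctrn$ with $\phi = 0$ on $E$, $\jet_x \phi = P$, and $\norm{\phi}_\ctrn \leq C$. I would reduce this jet-prescribed problem to the value-prescribed setting of Part~(A) by the standard device: let $\theta$ be a cutoff supported near $x$ with $\theta \equiv 1$ in a smaller neighborhood, set $\Phi_0 := \theta \cdot P$ so that $\jet_x \Phi_0 = P$ and $\norm{\Phi_0}_\ctrn$ is controlled by $\norm{P}_\P$, and seek $\phi$ in the form $\Phi_0 - \psi$. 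Then $\psi$ must satisfy $\psi|_E = \Phi_0|_E$ together with $\jet_x \psi = 0$; applying Part~(A) on $E \cup \{x\}$ (after encoding the jet-vanishing condition at $x$ as a Whitney-field constraint and reducing back to Part~(A) via Theorem~\ref{thm.WT}) yields such a $\psi$ of controlled norm, provided the associated finiteness hypothesis on subsets holds. Unwinding the substitution, that hypothesis translates precisely into ``$P \in \sigma(x,S)$ for every $S \subset E$ with $\#S \leq k$,'' which is our assumption $P \in \sk(x,k)$. Setting $\phi := \Phi_0 - \psi$ then witnesses $P \in C \cdot \sigma(x,E)$ after rescaling.

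\textbf{Main obstacle.} Part~(A) is heavy but classical and quoted. The principal subtlety in Part~(B) is the bookkeeping of the two-step reduction: first passing from the jet-constrained auxiliary problem to a value-only problem via $\phi = \Phi_0 - \psi$, and then verifying that the Whitney-field norm produced by the correction via Theorem~\ref{thm.WT} is in fact controlled by $\norm{P}_{\sk(x,k)}$, so that the final constant in $\sk(x,k) \subset C \cdot \sigma(x,E)$ depends only on $n$. One must also take care that the choice of $\theta$ introduces no dependence on $E$ beyond what is captured by the finiteness hypothesis.
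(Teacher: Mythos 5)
For this theorem the paper offers no proof at all: both parts are quoted from the literature (the text points to \cite{F05-Sh,BS01,FK09-Data-1,FK09-Data-2}, and to \cite{JL20} for $n=2$). Your treatment of Part~(A) by citation is therefore consistent with the paper. The problem is your claim that Part~(B) \emph{follows} from Part~(A) by the substitution $\phi=\Phi_0-\psi$. The auxiliary problem you create for $\psi$ carries an exact first-order jet constraint at the single point $x$ ($\jet_x\psi=0$), and Part~(A) is a statement about prescribing \emph{values} only. If you apply (A) on $E\cup\{x\}$ with the value data $\psi=\Phi_0$ on $E$ and $\psi(x)=0$ (which is all (A) can accept), the resulting $\psi$ has $\psi(x)=0$ but an uncontrolled, generally nonzero gradient at $x$; unwinding, $\phi:=\Phi_0-\psi$ vanishes on $E$ and has the correct value at $x$ but the wrong gradient, so it does not witness $P\in C\cdot\sigma(x,E)$. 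The phrase ``encoding the jet-vanishing condition at $x$ as a Whitney-field constraint and reducing back to (A) via Theorem~\ref{thm.WT}'' hides exactly the missing step: Theorem~\ref{thm.WT}(B) requires a full Whitney field on $E\cup\{x\}$ of controlled $W$-norm, i.e.\ a compatible \emph{choice of jets at every point of $E$} together with the prescribed zero jet at $x$. Producing such a field from the family of local witnesses $\phi^S$ is precisely a finiteness/selection problem at the level of jets — it is the content of what is being proved, not a consequence of the value-only statement (A) plus Whitney extension.

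To repair this one needs either a jet-level finiteness principle (the form actually established in the cited references, via Glaeser refinement of the convex sets $\sigma(x,S)$ or the Brudnyi--Shvartsman machinery), or an explicit iterative correction scheme: encode the gradient approximately through auxiliary points at the scale $\dist{x}{E}$, bound the resulting jet discrepancy, show the discrepancy again lies in a controlled multiple of $\sk(x,k)$, and sum a geometric series — none of which is supplied, and the last step is not automatic (when $E$ clusters near $x$ the admissible gradients in $\sigma(x,E)$ are severely constrained, so the discrepancy cannot simply be absorbed). The honest options are to cite Part~(B) as well, as the paper does, or to carry out one of these genuinely nontrivial arguments; as written, the reduction of (B) to (A) is a gap.
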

	
	For each $ n \in \mathbb{N} $, we fix a choice of $ k^\sharp_{n,\mathrm{old}} $. We further assume that $ k^\sharp_{n+1,\mathrm{old}} > k^\sharp_{n,\mathrm{old}} $ for all $ n $.
	
	See \cite{F05-Sh,BS01,FK09-Data-1,FK09-Data-2} for a proof of Theorem \ref{thm.fp-sigma}. For the special case $ n = 2 $, see also \cite{JL20}.

	\subsection{Main convex sets}

	For the rest of the section, we assume we are given a finite set $ E \subset \Rn $. 
	
	For $ x \in \Rn $, $ S \subset E $, $ f : E \to \itau $, and $ M \geq 0 $, we define
	\begin{equation}\label{G-def}
		\G(x,S,f,M) := \set{P \in \P: \begin{matrix*}[l]
				\text{ There exists $ F^S \in \ctrt $ with $ \norm{F^S}_\ctrn \leq M $,}\\
				\text{ such that  $ F^S = f $ on $ S $ and $ \jet_x F^S = P $.}
		\end{matrix*}}.
	\end{equation}
	
	For $ x \in \Rn $, $ k \in \mathbb{N}_0 $, $ f : E \to \itau $, and $ M \geq 0 $, we define 
	\begin{equation}\label{Gk-def}
		\Gk(x,k,f,M) := \bigcap_{S \subset E,\, \#S \leq k}\G(x,S,f,M).
	\end{equation}

	Note that $ \G $ and $ \Gk $ are (possibly empty) bounded convex subsets of $ \P $. With respect to set inclusion, $ \G $ is decreasing in $ S $ and increasing in $ M $; $ \Gk $ is decreasing in $ k $ and increasing in $ M $. 
	
	It follows from Lemma \ref{lem.Ktau} that
	\begin{equation}\label{Ktau-equiv}
		\K_\tau(x,C^{-1}M) \subset \G(x,\void,f,M) \subset \K_\tau(x,CM)
	\end{equation}
	for some controlled constant $ C(n) $.

	\begin{theorem}[Helly's Theorem]\label{thm.Helly}
		Let $ D \in \mathbb{N}_0 $. Let $ \mathcal{F} $ be a finite family of convex subsets of $ \R^{D} $. If every $ D+1 $ members of $ \mathcal{F} $ have nonempty intersection, then all members of $ \mathcal{F} $ have nonempty intersection.  
	\end{theorem}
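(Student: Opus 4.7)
The plan is to argue by induction on $N = \#\mathcal{F}$, with the base case $N \leq D+1$ being immediate from the hypothesis. For the inductive step, the key auxiliary fact I will establish is Radon's partition theorem: any collection of $D+2$ points in $\R^D$ admits a partition into two subsets whose convex hulls have nonempty intersection. Radon's theorem follows from a straightforward dimension count: given $x_1, \ldots, x_{D+2} \in \R^D$, the $D+2$ vectors $(x_i, 1) \in \R^{D+1}$ must be linearly dependent, so there exist scalars $\lambda_1, \ldots, \lambda_{D+2}$, not all zero, with $\sum \lambda_i x_i = 0$ and $\sum \lambda_i = 0$. Partition the indices according to the sign of $\lambda_i$, normalize on each side by $s := \sum_{\lambda_i > 0} \lambda_i = -\sum_{\lambda_i < 0} \lambda_i > 0$, and the common convex combination produces the Radon point.

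With Radon in hand, I handle the inductive step as follows. Suppose the theorem holds for every family of size $N-1$ in $\R^D$, where $N \geq D+2$, and let $\mathcal{F} = \{C_1, \ldots, C_N\}$ be a family in which every $D+1$ members meet. For each index $i$, the subfamily $\mathcal{F} \setminus \{C_i\}$ has $N-1$ members and inherits the $(D+1)$-wise intersection property, so by the induction hypothesis there exists a point $x_i \in \bigcap_{j \neq i} C_j$. Apply Radon's theorem to any $D+2$ of these points, say $x_1, \ldots, x_{D+2}$, to get a partition $I \sqcup J = \{1, \ldots, D+2\}$ and a point $x^* \in \mathrm{conv}\{x_i : i \in I\} \cap \mathrm{conv}\{x_j : j \in J\}$. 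For any $k \in \{1, \ldots, N\}$: either $k \notin I$, in which case every $x_i$ with $i \in I$ lies in $C_k$, so $x^* \in C_k$ by convexity of $C_k$; or $k \notin J$, and the symmetric argument gives $x^* \in C_k$. Since at least one of these alternatives holds for each $k$, we conclude $x^* \in \bigcap_{k=1}^N C_k$, completing the induction.

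The main conceptual step is the Radon partition argument; once that is set up, the rest is a clean bookkeeping exercise with the induction. No subtleties arise from the fact that $\mathcal{F}$ is only assumed finite (so I do not need any compactness hypothesis, unlike in the infinite version), and the case $D = 0$ is degenerate but consistent since then $\R^D = \{0\}$ and the hypothesis forces every set to contain $0$.
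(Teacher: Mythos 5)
Your proof is correct. Note that the paper does not prove Theorem \ref{thm.Helly} at all: it quotes Helly's Theorem as a classical fact and uses it as a black box (e.g.\ in Lemma \ref{lem.helly-1} and Lemma \ref{lem.sigma-small-2}), so there is no in-paper argument to compare against. Your route --- Radon's partition theorem via linear dependence of the lifted vectors $(x_i,1)\in\R^{D+1}$, followed by induction on the number of sets, taking for each $i$ a point $x_i$ common to all sets but $C_i$ and applying Radon to $x_1,\dots,x_{D+2}$ --- is the standard textbook proof, and the inductive bookkeeping (each $k$ misses at least one of the two Radon blocks, so the Radon point lies in $C_k$ by convexity) is carried out correctly. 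The one small caveat concerns the degenerate situation $\#\mathcal{F}<D+1$: as literally stated, the hypothesis ``every $D+1$ members have nonempty intersection'' is vacuous for such families, so your base case should either assume $\#\mathcal{F}\ge D+1$ or read the hypothesis as applying to subfamilies of size at most $D+1$; this is an artifact of the statement as given (and harmless in the paper's applications, where the relevant families are large), not a flaw in your argument.
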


	\begin{lemma}\label{lem.helly-1}
		Let $ x,x' \in \Rn $. Suppose $ k \geq (n+2)k' $. Then given $ P \in \Gk(x,k,f,M) $, there exists $ P' \in \Gk(x',k',f,M) $ satisfying
		\begin{equation}
			\abs{\da(P - P')(x)},\,\abs{\da(P - P')(x')} \leq C(n)\cdot M\abs{x-x'}^\tma.
			\label{3.4.0}
		\end{equation}
	\end{lemma}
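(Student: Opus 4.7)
The plan is to apply Helly's Theorem \ref{thm.Helly} in the $(n+1)$-dimensional vector space $\P$ to a carefully chosen family of convex subsets, with the condition $k \geq (n+2)k'$ matching the $(n+2)$-fold intersection hypothesis of Helly in dimension $n+1$.

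Let $C_0 = C_0(n)$ be the constant furnished by Taylor's Theorem (Theorem \ref{thm.WT}(A)) so that for any $G \in \ctrn$ with $\norm{G}_{\ctrn} \leq M$ and any two points $y_1, y_2 \in \Rn$, the jets $Q_i := \jet_{y_i} G$ satisfy the Whitney bound $\abs{\da(Q_1-Q_2)(y_i)} \leq C_0 M |y_1-y_2|^{\tma}$ for $\abs{\alpha} \leq 1$, $i=1,2$. For each $S' \subset E$ with $\#S' \leq k'$, I would then define
\begin{equation*}
K(S') := \set{ P' \in \G(x',S',f,M) \,:\, \abs{\da(P-P')(x)},\,\abs{\da(P-P')(x')} \leq C_0 M |x-x'|^{\tma}\for \abs{\alpha} \leq 1 }.
\end{equation*}
Each $K(S')$ is convex, being the intersection of the convex set $\G(x',S',f,M) \subset \P$ (which is convex because $\ctrt$-functions of norm at most $M$ form a convex set, and the jet map is linear) with finitely many half-spaces in $\P$. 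The family $\{K(S') : S' \subset E,\,\#S' \leq k'\}$ is finite because $E$ is finite.

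Next I would verify the Helly hypothesis: every collection of $n+2$ members has a common point. Fix $S'_1, \ldots, S'_{n+2}$ with each $\#S'_i \leq k'$, and set $S := \bigcup_{i=1}^{n+2} S'_i$, so that $\#S \leq (n+2)k' \leq k$. Because $P \in \Gk(x,k,f,M) \subset \G(x,S,f,M)$, there exists $F^S \in \ctrt$ with $\norm{F^S}_{\ctrn} \leq M$, $F^S = f$ on $S$, and $\jet_x F^S = P$. Set $P' := \jet_{x'} F^S \in \P$. By the choice of $C_0$ applied to $F^S$ at the points $x, x'$, the pair $(P, P')$ satisfies the Whitney bounds in the definition of $K(S'_i)$. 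Since $S'_i \subset S$ and $F^S = f$ on $S$, we also have $P' \in \G(x', S'_i, f, M)$ for each $i$. Hence $P' \in \bigcap_{i=1}^{n+2} K(S'_i)$, as required.

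Applying Theorem \ref{thm.Helly} with $D = n+1 = \dim \P$ then yields a point $P'_* \in \bigcap_{S' \subset E,\,\#S' \leq k'} K(S')$. By the definition of $K(S')$, this $P'_*$ belongs to $\G(x',S',f,M)$ for every admissible $S'$, i.e., $P'_* \in \Gk(x',k',f,M)$, and it satisfies the claimed Whitney compatibility with $P$ with constant $C(n) = C_0$. The argument is essentially bookkeeping — no new analytic ingredient beyond Taylor, Helly, and the definition of $\Gk$ is needed; the only thing requiring attention is confirming that the slack factor $(n+2)$ in the bound $k \geq (n+2)k'$ precisely matches the dimension-plus-one in Helly's theorem for $\P$, and that the constants from Taylor used to set up $K(S')$ coincide with the constant $C(n)$ appearing in \eqref{3.4.0}.
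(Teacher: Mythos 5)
Your proof is correct and follows essentially the same route as the paper: apply Helly's theorem in $\P \cong \R^{n+1}$ to convex sets indexed by the small subsets $S'$, verify the $(n+2)$-wise intersection property by taking the union $S = \bigcup_i S'_i$ (which has $\#S \leq (n+2)k' \leq k$) and using a function witnessing $P \in \G(x,S,f,M)$, then get \eqref{3.4.0} from Taylor's theorem. The only cosmetic difference is that you build the Taylor bounds directly into the convex sets $K(S')$, whereas the paper instead requires $\jet_x F^S \equiv P$ in the definition of $K(S)$ and deduces \eqref{3.4.0} afterwards; both versions are sound.
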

	
	\begin{proof}
		For $ S \subset E $, we define
		\begin{equation*}
			K(S):= \set{\jet_{x'}F^S : \begin{matrix*}[l]
					F^S \in \ctrt \text{ with } \norm{F^S}_\ctrn \leq M ,\,
					F^S\big|_S = f,\text{ and } \jet_x F^S \equiv P
			\end{matrix*}}.
		\end{equation*}
		Note that $ K(S) $ is convex, and that $ K(S')\subset K(S) $ whenever $ S \subset S' $. It also follows from the definition of $ \Gk $ in \eqref{Gk-def} that if $ \#S \leq k $, then $ K(S) \neq \void $.
		
		Let $ S_1, \cdots, S_{n+2} \subset E $ be given with $ \#S_i \leq k' $ for each $ i $. Setting $ S := \bigcup_{i = 1}^{n+2}S_i $, we see that $ \#S \leq (n+2)k' \leq k $, so that $ K(S) \neq \void $. Therefore, 
		\begin{equation*}
			\bigcap_{i = 1}^{n+2} K(S_i) \supset K(S) \neq \void.
		\end{equation*}
		Since $ S_1, \cdots, S_{n+2} \subset E $ are arbitrary, Helly's Theorem \ref{thm.Helly} applied to the convex sets $ K(S_i) \subset \P $ (with $ \dim \P = n+1 $) yields
		\begin{equation*}
			\Gk(x',k',f,M) = \bigcap_{S' \subset E, \#S' \leq k'}K(S') \neq \void. 
		\end{equation*}
		Pick $ P' \in \bigcap_{S' \subset E, \#S' \leq k'}K(S') $. Estimate \eqref{3.4.0} then follows from Taylor's theorem. 
	\end{proof}

	\newcommand{\apo}{A_{\mathrm{polar}}}
	\newcommand{\acen}{A_{\mathrm{centric}}}

	\begin{lemma}\label{lem.big-small}
		Let $ Q \subset \Rn $ be a cube. Let $ k \geq 2 $. Let $ f : E \to \itau $. Suppose $ \Gk(x,k,f,M) \neq \void $ for each $ x \in E \cap 5Q $. Given any $ \acen > 0 $, there exists $ \apo = C(n)\cdot(\acen^{1/2}+1)^2 > 0 $ such that at least one of the following holds.
		\begin{enumerate}[(A)]
			\item Either $ \tau + f(x)\geq  \acen M \dq^2 $ for all $ x \in E \cap 5Q $, or $ \tau+f(x) \leq \apo M\dq^2 $ for all $ x \in E \cap 5Q $.
			
			\item Either $ \tau - f(x)\geq  \acen M \dq^2 $ for all $ x \in E \cap 5Q $, or $ \tau-f(x) \leq \apo M\dq^2 $ for all $ x \in E \cap 5Q $.
		\end{enumerate}
	\end{lemma}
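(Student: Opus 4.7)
The plan is to argue by contradiction: suppose both (A) and (B) fail. Then there exist witness points $x_-, x_+ \in E\cap 5Q$ with
\[
\tau + f(x_-) < \acen M\dq^2 \quad\text{and}\quad \tau - f(x_+) < \acen M\dq^2,
\]
so that $f(x_-)$ is close to $-\tau$ while $f(x_+)$ is close to $+\tau$. The strategy is to leverage the nonemptiness of $\Gk(x_-,k,f,M)$ with $k\geq 2$ to conclude that $\tau$ itself must be small on the scale $M\dq^2$. This will force $\tau\pm f(x)\leq 2\tau$ to be uniformly small for every $x\in E\cap 5Q$, so that the \qt{all-small} alternatives in both (A) and (B) hold simultaneously, contradicting the assumed failure.

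To implement this, I would pick $P\in \G(x_-,\set{x_-,x_+},f,M)$, which is nonempty since $\#\set{x_-,x_+}\leq 2\leq k$ and thus $\Gk(x_-,k,f,M)\subset \G(x_-,\set{x_-,x_+},f,M)$. This yields $F\in\ctrt$ with $\norm{F}_\ctrn\leq M$, $F(x_\pm)=f(x_\pm)$, and $\jet_{x_-}F = P$. By Lemma \ref{lem.Ktau}(A) (equivalently \eqref{Ktau-equiv}), $P\in\K_\tau(x_-,CM)$, and because $f(x_-)$ is near $-\tau$ the defining bound \eqref{Ktau-def-2} gives
\[
\abs{\grad P}\leq (CM)^{1/2}\sqrt{\tau+f(x_-)} \leq C^{1/2}\acen^{1/2}\,M\dq.
\]
Taylor's theorem applied to $F$ about $x_-$ yields $\abs{f(x_+)-P(x_+)}\leq C M\dq^2$ (using $x_+\in 5Q$), and writing $P(x_+)-f(x_-)=\grad P\cdot(x_+-x_-)$ combined with the previous bound gives
\[
\abs{f(x_+)-f(x_-)} \leq C(\acen^{1/2}+1)\,M\dq^2.
\]

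The final step combines this with the elementary lower estimate $f(x_+)-f(x_-)\geq 2\tau - 2\acen M\dq^2$ (which follows from $f(x_+)\geq \tau - \acen M\dq^2$ and $f(x_-)\leq -\tau+\acen M\dq^2$) to extract
\[
\tau \leq C'(\acen^{1/2}+1)^2\,M\dq^2.
\]
Since $\abs{f(x)}\leq\tau$ on $E$, this gives $\tau\pm f(x)\leq 2\tau\leq 2C'(\acen^{1/2}+1)^2 M\dq^2$ for every $x\in E\cap 5Q$. Choosing $\apo:=2C'(\acen^{1/2}+1)^2$ then makes the \qt{all small} alternative of both (A) and (B) hold at once, contradicting the standing assumption. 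I do not anticipate any genuine obstacle here; the only subtlety is noting that the $\K_\tau$-bound at $x_-$ automatically delivers the factor $\sqrt{\tau+f(x_-)}$, which is precisely the one made tiny by the witness, so only one Helly/transport argument (for $k\geq 2$) is needed rather than anything deeper.
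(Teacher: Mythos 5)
Your argument is correct for the lemma as stated, and it rests on the same technical engine as the paper's proof: membership of the one-jet in $\K_\tau(x,CM)$ via Lemma \ref{lem.Ktau}(A), the gradient bound \eqref{Ktau-def-2} at a point where $\tau+f$ (or $\tau-f$) is nearly saturated, and a Taylor/line-integral transfer across $5Q$ using a two-point set $S$ with $\#S\leq 2\leq k$. The logical packaging is genuinely different, though. The paper proves each of (A) and (B) \emph{separately} as an unconditional dichotomy: it takes a single witness $x_0$ with $\tau+f(x_0)<\acen M\dq^2$, and for every other $x\in E\cap 5Q$ runs the argument on $S=\{x_0,x\}$ to conclude $\tau+f(x)\leq C(\acen^{1/2}+1)^2M\dq^2$; thus both (A) and (B) hold, which is (strictly) more than the stated disjunction. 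You instead argue by contradiction with two witnesses $x_-$, $x_+$ (one near $-\tau$, one near $+\tau$) and conclude that $\tau$ itself is $O\brac{(\acen^{1/2}+1)^2M\dq^2}$, so both \qt{all small} alternatives hold at once. This is shorter and perfectly adequate for the literal statement (including the degenerate possibilities $x_-=x_+$ or $\#(E\cap 5Q)=1$, where your inequalities still go through trivially; just fix $\apo$ up front before assuming failure). The trade-off is that your proof yields only the disjunction: when only one of the two first alternatives fails, it gives no smallness conclusion, whereas the paper's per-part dichotomy is the form actually invoked later (e.g.\ in the verification of (A1) of Lemma \ref{lem.lip-operator}, where lower bounds on both $\tau+f$ and $\tau-f$ as in \eqref{8.5.13} are extracted). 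So your route proves the statement as quoted, but would not substitute verbatim for the paper's stronger conclusion in its subsequent applications.
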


	\begin{proof}
		We prove (A) here. The proof of (B) is similar.
		
		Fix $ \acen > 0 $. If $ \tau + f(x) \geq \acen M\dq^2 $ for all $ x \in E \cap 5Q $, then there is nothing to prove. 
		
		Suppose not, namely, there exists $ x_0 \in E \cap 5Q $ such that 
		\begin{equation}
			\tau+f(x_0)  < \acen M\dq^2 .
			\label{nearby-1}
		\end{equation}
		
		If $ \#(E\cap 5Q) = 1 $ then there is nothing to prove. We assume that $ \#(E\cap 5Q) \geq 2 $.

		Let $ P_0 \in \Gk(x_0,k,f,M) $. Let $ S \subset E \cap 5Q $ with $ \#(S) \leq 2 $ and $ x_0 \in S $. By \eqref{Gk-def}, there exists a function $ F^S \in \ctrt $ with $ \norm{F^S}_\ctrn \leq M $, $ F^S \big|_S = f $, and $ \jet_{x_0}F^S \equiv P_0 $. In particular, $ P_0 = f(x_0) $. By \eqref{nearby-1}, we have
		\begin{equation}
			\min\set{\sqrt{\tau-P(x_0)}, \sqrt{\tau+P(x_0)}}  < \acen^{1/2} M^{1/2}\dq. 
			\label{nearby-2}
		\end{equation}
		
		Recall $ \K_\tau $ in Definition \ref{def.Ktau}. Since $ P_0 \in \Gk(x_0,k,f,M) $, we also have $ P_0 \in \K_\tau(x_0,CM) $ by Lemma \ref{lem.Ktau}. Therefore, by \eqref{Ktau-def-2} and \eqref{nearby-2}, 
		\begin{equation*}
			\abs{\grad F^S(x_0)} = \abs{\grad P_0} \leq C\acen^{1/2}M\dq.
		\end{equation*}
		Since $ \norm{F^S}_\ctrn \leq M $, Taylor's theorem implies
		\begin{equation*}
			\abs{\grad F^S(x)} \leq C(1+\acen^{1/2})M\dq\quad \for x \in 5Q.
		\end{equation*}

		From \eqref{nearby-1}, we see that $ \tau + {F^S(x_0)} <\acen M\dq^2 $. Writing $ F^S(x) - F^S(x_0) =  \int_{\mathrm{seg}(x_0 \to x)}\grad F^S $, we see that
		\begin{equation*}
			\begin{split}
				\tau + {F^S(x)} &\leq \abs{\tau + F^S(x)}
				\leq\tau + {F^S(x_0)}  + \int_{\mathrm{seg}(x_0 \to x)}\abs{\grad F^S}  \\
				&\leq  C(\acen + \acen^{1/2}+1)M\dq^2
				\leq  C'(\acen^{1/2}+1)^2M\dq^2.
			\end{split}
		\end{equation*}
		In particular, for each $ x \in S $, 
		\begin{equation*}
			\tau + f(x) = \tau + F^S(x) \leq C(\acen^{1/2}+1)^2M\dq^2.
		\end{equation*}
		
		Since $ S $ is any arbitrary subset of $ E \cap 5Q $ containing two points, conclusion (A) follows.

	\end{proof}

	\newcommand{\B}{\mathcal{B}}
	For $ x \in \Rn $ and $ \delta > 0 $, we define
	\begin{equation}\label{B-def}
		\B(x,\delta) := \set{P \in \P : \abs{\da P(x)} \leq \delta^{\tma}\for \abs{\alpha} \leq 2}. 
	\end{equation}
	
	The significance of $ \B $ is that given $ F \in \ctrn $ and $ x,y \in \Rn $, Taylor's theorem implies 
	\begin{equation*}
		\jet_xF - \jet_yF \in C(n)\norm{F}_\ctrn\cdot \B(x,\abs{x-y}).
	\end{equation*}

	\begin{lemma}\label{lem.perturb}
		Let $ k \geq 2 $. Let $ Q \subset \Rn $ be a cube with $ E \cap 5Q \neq \void $. Suppose $ x_Q \in Q $ satisfy $ \dist{x_Q}{E} \geq c_0\dq $ for some controlled constant $ c_0(n) $. Let $ f : E \to \itau $ be given. Suppose $ \Gk(x_Q,k,f,M) \neq \void $. The following are true. 
		\begin{enumerate}[(A)]
			\item There exists a number $ A_{\mathrm{perturb}} $ exceeding a large controlled constant such that the following holds. Suppose $ \min\set{\tau - f(x), \tau + f(x)} \geq A_{\mathrm{perturb}}M\dq^2 $ for each $ x \in E \cap 5Q $. Then
			\begin{equation*}
				\Gk(x_Q,k,f,M) + M\cdot\B(x_Q,\dq) \subset \Gk(x_Q,k,f,AM),\quad A = A(n,A_{\mathrm{perturb}}).
			\end{equation*}

			\item Suppose $ \tau + f(x) \leq A_{\mathrm{flat}}M\dq^2 $ for some $ x \in E \cap 5Q $ and $ A_{\mathrm{flat}} \geq 0 $. Then
			\begin{equation*}
				-\tau \in \Gk(x_Q,k,f,AM), \quad A = A(n,A_{\mathrm{flat}}).
			\end{equation*}
			Here, $ -\tau $ is the constant polynomial.
			
			Similarly, suppose $ \tau - f(x) \leq A_{\mathrm{flat}}M\dq^2 $ for some $ x \in E \cap 5Q $ and $ A_{\mathrm{flat}} \geq 0 $. Then
			\begin{equation*}
				\tau \in \Gk(x_Q,k,f,AM), \quad A = A(n,A_{\mathrm{flat}}).
			\end{equation*}
			Here, $ \tau $ is the constant polynomial.
		\end{enumerate}
	\end{lemma}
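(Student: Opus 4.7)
The plan for both (A) and (B) is a common bump-function correction. For each $S \subset E$ with $\#S \leq k$ I will take a witness $F^S \in \ctrt$ satisfying $\norm{F^S}_\ctrn \leq M$, $F^S|_S = f$, and $\jet_{x_Q}F^S \equiv P$ (supplied by the hypothesis $\Gk(x_Q,k,f,M) \neq \void$), and modify it by a cutoff $\theta$ supported in $B(x_Q,c_0\dq/2)$ with $\theta \equiv 1$ near $x_Q$ and $|\da\theta| \leq C\dq^{-|\alpha|}$. Because $\dist{x_Q}{E} \geq c_0\dq$, $\theta$ vanishes in a neighborhood of $E$, so the corrected function will automatically still equal $f$ on $S$, and because $\theta \equiv 1$ near $x_Q$, the one-jet at $x_Q$ is shifted exactly by the one-jet at $x_Q$ of the correction term.

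For part (A), I will set $\tilde F^S := F^S + \theta R$, so that $\jet_{x_Q}\tilde F^S \equiv P + R$ and $\tilde F^S = f$ on $S$; the standard product estimates with $|R(x_Q)| \leq M\dq^2$ and $|\grad R| \leq M\dq$ (from $R \in M\B(x_Q,\dq)$) give $\norm{\tilde F^S}_\ctrn \leq AM$. The delicate step is verifying $-\tau \leq \tilde F^S \leq \tau$, which reduces to producing a gap $\tau \mp F^S(x) \geq C'M\dq^2$ on $5Q$ exceeding the implicit constant in $|\theta R| \leq CM\dq^2$. I will argue this in two stages. First, for any $y \in E \cap 5Q$, the inclusion $P \in \G(x_Q,\{y\},f,M)$ (valid since $k \geq 1$) together with Taylor's theorem gives $|P(y)-f(y)| \leq CM\dq^2$, so the hypothesis yields $\tau \mp P(y) \geq \tfrac12 A_{\mathrm{perturb}}M\dq^2$ for $A_{\mathrm{perturb}}$ past a controlled threshold. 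Second, if $\tau - P(x_Q)$ were below $cA_{\mathrm{perturb}}M\dq^2$ for some small absolute $c$, then the $\K_\tau$ bound (Lemma \ref{lem.Ktau}) would force $|\grad P| \leq CA_{\mathrm{perturb}}^{1/2}M^{1/2}\dq$, and affinely transporting $P$ from $x_Q$ to such a $y$ would contradict the first stage once $A_{\mathrm{perturb}}$ is large. This gives $\tau \mp P(x_Q) \geq cA_{\mathrm{perturb}}M\dq^2$, and then Taylor's theorem for $F^S$, $F^S(x) = P(x) + O(M\dq^2)$ on $5Q$, extends the gap to $F^S$ itself. The main obstacle is exactly this bootstrap: converting pointwise information about $P-f$ on $E \cap 5Q$ into two-sided control of $P(x_Q)$ via the quadratic $\K_\tau$ constraint is what forces $A_{\mathrm{perturb}}$ to be a sufficiently large controlled constant.

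For part (B), I would pick $x_0 \in E \cap 5Q$ with $\tau + f(x_0) \leq A_{\mathrm{flat}}M\dq^2$ and apply the hypothesis with $S = \{x_0\}$ to get a witness $F_0$ with $\jet_{x_Q}F_0 \equiv P$ and $F_0(x_0) = f(x_0)$. The $\K_\tau$ inequality at $x_0$ applied to $F_0$ gives $|\grad F_0(x_0)| \leq CA_{\mathrm{flat}}^{1/2}M\dq$, and Taylor from $x_0$ to $x_Q$ then yields $0 \leq \tau + P(x_Q) \leq C(A_{\mathrm{flat}}+1)M\dq^2$; reapplying $\K_\tau$ at $x_Q$ bounds $|\grad P|$ by a constant multiple of $M\dq$, and a second Taylor step propagates $0 \leq \tau + F^S(x) \leq CM\dq^2$ to all of $5Q$ for any witness $F^S$ of $P$. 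I would then set $G^S := (1-\theta)F^S + \theta(-\tau) = F^S - \theta(F^S+\tau)$, a pointwise convex combination of $F^S \in [-\tau,\tau]$ and $-\tau$, which thus lies in $\ctrt$, equals $F^S = f$ on $S$ (since $\theta$ vanishes on $E$), and satisfies $\jet_{x_Q}G^S \equiv -\tau$ (since $\theta \equiv 1$ near $x_Q$); the product estimates using the smallness of $F^S+\tau$ on $\supp\theta$ bound $\norm{G^S}_\ctrn \leq AM$. The case $\tau - f(x) \leq A_{\mathrm{flat}}M\dq^2$ is handled symmetrically with $G^S := (1-\theta)F^S + \theta\tau$.
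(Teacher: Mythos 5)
Your proposal is correct. Part (B) is essentially the paper's own argument: the same chain (property \eqref{Ktau-def-2} at $x_0$, then Taylor to $x_Q$, then \eqref{Ktau-def-2} again at $x_Q$) giving $\tau+P(x_Q)\leq C(A_{\mathrm{flat}}^{1/2}+1)^2M\delta_Q^2$ and smallness of $\tau+F^S$ near $x_Q$, followed by the identical convex combination $(1-\chi)F^S+\chi\cdot(-\tau)$ with a cutoff supported in $B(x_Q,c_0\delta_Q)$. For part (A) you prove the same key claim as the paper --- the hypothesis forces $\min\{\tau-P(x_Q),\tau+P(x_Q)\}\geq c\,A_{\mathrm{perturb}}M\delta_Q^2$, via the same contradiction through the quadratic constraint \eqref{Ktau-def-2} --- but you finish differently: you correct the witness directly, $\tilde F^S:=F^S+\theta R$, and verify the range restriction by hand, whereas the paper adjoins $x_Q$ to $S$, forms the Whitney field with $P^{x_Q}=P+\tilde P$, checks membership and compatibility in $W(S\cup\{x_Q\},\tau)$ (estimates \eqref{perturb-9}--\eqref{perturb-15}), and invokes the range-restricted Whitney extension theorem (Theorem \ref{thm.WT-tau}(B)). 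Your route is somewhat more elementary (it bypasses Theorem \ref{thm.WT-tau} for part (A), using instead the same gluing device the paper itself uses in part (B)), at the price that the range check on $\mathrm{supp}\,\theta$ needs one step you currently elide: the gap is established at the single point $x_Q$, and since $P(x)\neq P(x_Q)$ on the ball, ``$F^S=P+O(M\delta_Q^2)$'' alone does not transfer it; you must also transport $P$ across $B(x_Q,c_0\delta_Q/2)$ using $|\nabla P|\leq CM^{1/2}\min\{\sqrt{\tau-P(x_Q)},\sqrt{\tau+P(x_Q)}\}$, which gives $|\nabla P|\,\delta_Q\leq \frac{1}{2}\min\{\tau-P(x_Q),\tau+P(x_Q)\}$ once $A_{\mathrm{perturb}}$ is large --- the same mechanism as your ``stage two,'' so this is a one-line addition rather than a missing idea. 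Two small slips to fix: the $\mathcal{K}_\tau$ bound yields $|\nabla P|\leq CA_{\mathrm{perturb}}^{1/2}M\delta_Q$, not $M^{1/2}\delta_Q$; and (as in the paper's own write-up) the $C^0$ bound $|\theta R|\leq CM\delta_Q^2$ is only $\leq CM$ when $\delta_Q\lesssim 1$, which holds in all applications since CZ cubes satisfy $\delta_Q\leq A_2^{-1}$.
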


	\begin{proof}
		
		We write $ C, C' $, etc., to denote controlled constants depending only on $ n $. Recall $ \K_\tau $ in Definition \ref{def.Ktau}. 
		
		\paragraph{Proof of (A)}

		\newcommand{\apert}{A_{\mathrm{perturb}}}
		\newcommand{\aflat}{A_{\mathrm{flat}}}
		
		We claim that under the hypothesis of (A), given any $ P \in \Gk(x_Q,k,f,M) $, we have
		\begin{equation}\label{perturb-1}
			\min\set{\tau - P(x_Q), \tau + P(x_Q)} \geq c(n)\cdot({\apert}^{1/2}-1)^2\cdot M\dq^2.
		\end{equation}
		
		To see this, we fix $ P \in \Gk(x_Q,k,f,M) $ and $ x \in E \cap 5Q $. By the definition of $ \Gk $, there exists $ F \in \ctrt $ with $ \norm{F}_\ctrn \leq M $, $ \jet_{x_Q}F \equiv P $, and $ F(x) = f(x) $. In particular, 
		\begin{equation}\label{perturb-2}
			\min\set{\tau - F(x), \tau + F(x)} \geq \apert M\dq^2.
		\end{equation}
		Suppose toward a contradiction, that 
		\begin{equation}\label{perturb-3}
			\tau - \abs{F(x_Q)} \leq A_0M \dq^2
		\end{equation}
		for some to-be-determined $ A_0 $ depending only on $ n $ and $ \apert $. Since $ \jet_{x_Q}F \in \K_\tau(x_Q,CM) $, \eqref{Ktau-def-2} and \eqref{perturb-3} would imply
		\begin{equation}\label{perturb-4}
			\abs{\grad F(x_Q)} \leq CA_0^{1/2}M\dq.
		\end{equation}
		Applying Taylor's theorem to \eqref{perturb-3} and \eqref{perturb-4}, we see that
		\begin{equation}\label{perturb-5}
			\tau - \abs{F(x)} \leq C(A_0 + A_0^{1/2} + 1)M\dq^2 \leq C'({A_0^{1/2}}+1)^2M\dq^2
			\quad\for x \in 5Q.
		\end{equation}
		If we pick $ A_0 $ to be so small that $ A_0^{1/2} < \frac{\apert^{1/2}}{C'}-1 $ with $ C' $ as in \eqref{perturb-5}, we see that \eqref{perturb-5} will contradict \eqref{perturb-2}. Hence, \eqref{perturb-1} holds. 
		
		Now we fix a jet $ P \in \Gk(x_Q,k,f,M) $. We know that $ P $ satisfies \eqref{perturb-1}. Let $ \tilde{P} \in M\cdot \B(x_Q,\dq) $. By definition \eqref{B-def}, we have $ \abs{\da \tilde{P}(x_Q)} \leq M\dq^2 $ for $ \abs{\alpha} \leq 2 $. We want to show that $ P + \tilde{P} \in \Gk(x_Q,k,f,CM) $. 
		
		By the definition of $ \Gk $, we want to show that given $ S \subset E $ with $ \#S \leq k $, there exists $ F^S \in \ctrt $ with $ \norm{F^S}_\ctrn \leq CM $, $ F^S(x) =f(x) $ for $ x \in S $, and $ \jet_{x_Q}F^S \equiv P + \tilde{P} $. 
		
		Fix $ S \subset E $ with $ \#S \leq k $. We define $ S^+ := S \cup \set{x_Q} $. Since $ P \in \Gk(x_Q,k,f,M) $, there exists $ F^S \in \ctrt $ with 
		\begin{equation}\label{perturb-6}
			\norm{F^S}_\ctrn \leq M,\,
			F^S(x) = f(x)\for x \in S,\text{ and }
			\jet_{x_Q}F^S \equiv P.
		\end{equation}
		Consider the Whitney field $ \vec{P} \in W(S^+) $ defined by
		\begin{itemize}
			\item $ P^x \equiv \jet_x F^S $ for $ x \in S $, and
			\item $ P^{x_Q} \equiv \jet_{x_Q}F^S + \tilde{P} \equiv P + \tilde{P} $.
		\end{itemize}
		Thanks to Whitney's Extension Theorem \ref{thm.WT-tau}(B), it suffices to show that $ \vec{P}\in W(S^+,\tau) $ and $ \norm{\vec{P}}_{W(S^+,\tau)} \leq CM $.
		
		Thanks to \eqref{perturb-6}, we have
		\begin{align}
			&P^x \in \K_\tau(x,CM) \for x \in S,\quad \text{and}\label{perturb-7}\\
			&\abs{\da(P^x - P^y)(x)} \leq CM\abs{x-y}^\tma \for x,y \in S,\, x \neq y,\, \abs{\alpha} \leq 1.\label{perturb-8}
		\end{align}
		
		On the other hand, using \eqref{perturb-1}, we have
		\begin{equation}
			\tau - \abs{P^{x_Q}(x_Q)} = \tau - \abs{P(x_Q) + \tilde{P}(x_Q) } \geq \brac{C(\apert^{1/2}-1)^2 - 1}M\dq^2.
			\label{perturb-9}
		\end{equation}
		Since $ P \equiv \jet_{x_Q}F^S $, \eqref{perturb-6} implies that $ P \in \K_\tau(x_Q,CM) $. Using property \eqref{Ktau-def-2} of $\K_\tau$ and \eqref{perturb-1}, we have
		\begin{equation}
			\abs{\grad P^{x_Q}} \leq \abs{\grad P} + \abs{\grad \tilde{P}} \leq \brac{C(\apert^{1/2}-1) - 1}
			M\dq. 
			\label{perturb-10}
		\end{equation}
		Combining \eqref{perturb-9} and \eqref{perturb-10}, we see that
		\begin{equation}\label{perturb-11}
			P^{x_Q} \in \K_\tau(x_Q,AM)
			\text{ with } A = A(n,A_{\mathrm{perturb}}). 
		\end{equation}
		
		Combining \eqref{perturb-7} and \eqref{perturb-11}, we see that $ \vec{P} \in W(S^+,\tau) $.
		
		Now we estimate $ \norm{\vec{P}}_{W(S^+,\tau)} $.

		By assumption, $ x_Q \in Q $ satisfies 
		\begin{equation*}
			\abs{x_Q - x} \geq c_0\dq \for x \in E.
		\end{equation*}
		As a consequence,
		\begin{equation}\label{perturb-12}
			\abs{\da\tilde{P}(x_Q)} \leq CM\dq^\tma \leq M\abs{x - x_Q}^\tma \for x \in E, \abs{\alpha} \leq 1. 
		\end{equation}
		By Taylor's theorem and \eqref{perturb-6}, we have
		\begin{equation}
			\abs{\da (P^x - P)(x)}, \abs{\da (P^x - P)(x_Q)} \leq CM\abs{x - x_Q}^\tma\for x \in S, \abs{\alpha} \leq 1.
			\label{perturb-13}
		\end{equation}
		Combining \eqref{perturb-12} and \eqref{perturb-13}, we see that
		\begin{equation}
			\abs{\da(P^x - P^{x_Q})(x_Q)} \leq \abs{\da (P^x - P)(x_Q)} + \abs{\da\tilde{P}(x_Q)} \leq CM\abs{x-x_Q}^\tma \for x \in S, \abs{\alpha} \leq 1.
			\label{perturb-14}
		\end{equation}
		By Taylor's theorem and \eqref{perturb-14}, we also have
		\begin{equation}
			\abs{\da(P^x - P^{x_Q})(x)}  \leq CM\abs{x-x_Q}^\tma \for x \in S, \abs{\alpha} \leq 1.
			\label{perturb-15}
		\end{equation}		
		Finally, we see from \eqref{perturb-8}, \eqref{perturb-14}, and \eqref{perturb-15} that $ \norm{\vec{P}}_{W(S^+,\tau)} \leq CM $. 
		
		This proves Lemma \ref{lem.big-small} (A).

		\paragraph{Proof of (B)} We prove the case when $ \tau+f(x) \leq \aflat M\dq^2 $. The case $ \tau - f(x) $ is similar.
		
		We claim that under the assumption of (B), given any $ P \in \Gk(x_Q,k,f,M) $, we have
		\begin{equation}\label{flat-1}
			\tau + P(x_Q) \leq C(\aflat^{1/2}+1)^2M\dq^2.
		\end{equation}
		The proof is similar to that of Lemma \ref{lem.big-small}(B). We provide the proof here for completeness.
		
		Fix $ x_0 \in E \cap 5Q $ such that $ \tau + f(x_0) \leq \aflat M\dq^2 $. Let $ P \in \Gk(x_Q,k,f,M) $. By the definition of $ \Gk $, there exists a function $ F \in \ctrt $ with $ \norm{F}_\ctrn \leq M $, $ \jet_{x_Q}F \equiv P $, and $ F(x_0) = f(x_0) $. Since $ \jet_{x_0} F \in \K_\tau(x_0,CM) $, property \eqref{Ktau-def-2} of $\K_\tau$ implies $ \abs{\grad F(x_0)} \leq C\aflat^{1/2}M\dq $. \eqref{flat-1} then follows from Taylor's theorem and the estimates immediately above. 
		
		Now we need to show that the constant polynomial $ -\tau \in \Gk(x_Q,k,f,AM) $ for some $ A $ depending only on $ n $ and $ \aflat $. We write $ A, A' $, etc., to denote quantities that depend only on $ n $ and $ \aflat $.
		
		Let $ P \in \Gk(x_Q,k,f,M) $. Then $ P \in \K_\tau(x_Q,AM) $ and satisfies \eqref{flat-1}. By \eqref{Ktau-def-2} and \eqref{flat-1}, 
		\begin{equation}
			\label{flat-2}
			\abs{\da (\tau + P)(x_Q)} \leq AM\dq^{\tma}\for \abs{\alpha}\leq 2.
		\end{equation}
		
		Fix $ S \subset E $ with $ \#S \leq k $. We want to show that $ -\tau \in \G(x_Q,S,f,AM) $.
		
		By the definition of $ \Gk $, there exists $ F^S \in \ctrt $ with
		\begin{equation}\label{flat-3}
			\norm{F^S}_\ctrn \leq M,\,
			F^S(x) = f(x)\for x \in S,\text{ and }
			\jet_{x_Q}F^S \equiv P.
		\end{equation}
		
		From Taylor's theorem, together with \eqref{flat-2} and \eqref{flat-3}, we see that
		\begin{equation}
			\abs{\da (\tau + F^S)(x)} \leq AM\dq^\tma \for x \in B(x_Q,c_0\dq), \abs{\alpha} \leq 2.
			\label{flat-4}
		\end{equation}
		Here, $ c_0 $ is the constant in the hypothesis of Lemma \ref{lem.big-small}.
		
		Let $ \chi \in \ctrn $ be a cutoff function such that
		\begin{enumerate}[($ \chi $1)]
			\item $ 0 \leq \chi \leq 1 $ on $ \Rn $;
			\item $ \chi \equiv 1 $ near $ x_Q $ and $ \supp{\chi} \subset B(x_Q,c_0\dq) $;
			\item $ \abs{\da \chi}\leq C\dq^\tma $ for $ \abs{\alpha} \leq 2 $. 
		\end{enumerate}
		We define
		\begin{equation*}
			\tilde{F}^S := \chi \cdot (-\tau) + (1-\chi)\cdot F^S.
		\end{equation*}

		It is clear that $ \tilde{F}^S \in \ctrn $. 
		
		Thanks to ($ \chi 1 $), $ \tilde{F}^S $ is defined as the convex combination of two functions with range $ \itau $. Therefore, $ \tilde{F}^S \in \ctrt $. Thanks to ($ \chi  2$) and the fact that $ \dist{x_Q}{E}\geq c_0\dq $, we have $ \tilde{F}^S(x) = f(x) $ for $ x \in S $.
		Thanks to ($ \chi 2$) again, $ \jet_{x_Q}\tilde{F}^S \equiv -\tau $. 
		Finally, thanks to \eqref{flat-4} and ($ \chi 3 $), we have $ \norm{\tilde{F}^S}_\ctrn \leq AM $. 
		
		Hence, $ -\tau \in \G(x_Q,S,f,AM) $. Since $ S $ is chosen arbitrarily, we can conclude that $ -\tau \in \G(x_Q,k,f,AM) $. This proves Lemma \ref{lem.big-small} (B).

	\end{proof}

	\section{Base case of the induction}
	\label{sect:1d}
	
	\renewcommand{\ctrn}{{C^2(\R)}}
	\renewcommand{\ctrt}{{C^2(\R,\tau)}}
	\renewcommand{\Rn}{\R}
	\renewcommand{\ctet}{{C^2(E_*,\tau)}}
	
	In this section, we prove a stronger version of Theorem \ref{thm.bd-alg-1d}. We use $ \P $ and $ \P^+ $, respectively, to denote the vector space of single-variable polynomials with degree no greater than one and two. We use $ \jet_x $ and $ \jet_x^+ $, respectively to denote the one-jet and two-jet of a single-variable function twice continuously differentiable near $ x \in \R $. 
	
	\newcommand{\ddxm}{{\frac{d^m}{dx^m}}}

	\begin{lemma}\label{lem.alg-1d-3}
		Suppose we are given a finite set $ E_* \subset \Rn $ with $ \#E_* \leq 3 $. Then there exists a collection of maps $ \set{\Xi_{\tau,x} : \tau \in \pos,\,x \in \Rn} $, where 
		\begin{equation*}
			\Xi_{\tau,x} : \ctet \to \P^+
		\end{equation*}
		for each $ x \in \Rn $, such that the following hold.
		\begin{enumerate}[(A)]

			\item Let $ f\in \ctet $ be given. Then there exists a function $ F \in \ctrt $ such that
			\begin{equation*}
				\begin{split}
					\jet^+_x F = \Xi_{\tau,x}(f) \text{ for all } x \in \Rn,\,
					\norm{F}_{\ctrn} \leq C\norm{f}_{\ct(E_*,\tau)},
					\text{ and }
					F(x) = f(x)
					\for
					x \in E_*.
				\end{split}
			\end{equation*}
			Here, $ C $ depends only on $ n $.

			\item There is an algorithm that takes the given data set $E_*$, performs one-time work, and then responds to queries.
			
			A query consists of a pair $ (\tau,x) \in\pos\times \Rn $, and the response to the query is the map $ \Xi_{\tau,x} $, given in its efficient representation.
			
			The one-time work takes $ C_1 $ operations and $ C_2 $ storage. The work to answer a query is $ C_3 $. Here, $ C_1, C_2, C_3 $ are universal constants. 
		\end{enumerate}
	\end{lemma}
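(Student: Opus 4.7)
The plan is to combine the convex-programming approach of Section \ref{sect:quad} with the Whitney Extension Theorem for $\ctrt$ (Theorem \ref{thm.WT-tau}). Since $\#E_* \leq 3$, every data structure we manipulate has bounded size, so all complexity bounds collapse to universal constants.

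Given $f \in \ctet$, I would first apply the machinery of Section \ref{sect:quad} with $S = E_*$ and $k_0 = 3$ to solve the quadratic program \eqref{quad-std}. This produces, in a universally bounded number of operations, a Whitney field $\vec{P}_* = (P_*^x)_{x \in E_*} \in W(E_*,\tau)$ with $P_*^x(x) = f(x)$ for each $x \in E_*$ and $\norm{\vec{P}_*}_{W(E_*,\tau)} \approx \norm{f}_{\ct(E_*,\tau)}$. Next, I would apply Theorem \ref{thm.WT-tau}(B) to $\vec{P}_*$ to obtain $F := \T_{w,\tau}(\vec{P}_*) \in \ctrt$ satisfying $\norm{F}_{\ctrn} \leq C\norm{f}_{\ct(E_*,\tau)}$, $\jet_x F \equiv P_*^x$ for $x \in E_*$, and hence $F(x) = f(x)$ on $E_*$. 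Define $\Xi_{\tau,x}(f) := \jet_x^+ F$; this map fulfills (A).

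For part (B), preprocessing $E_*$ consists of sorting the (at most three) points and caching the fixed coefficient matrices needed to instantiate \eqref{quad-std}; this costs a universal constant in operations and storage. A query $(\tau,x)$ is answered by: (i) solving \eqref{quad-std} to produce $\vec{P}_*$ from $f$ in a bounded number of operations; (ii) locating $x$ relative to the points of $E_*$ in a bounded number of comparisons, to identify which of the finitely many geometric configurations applies; and (iii) evaluating $\jet_x^+ F$ via the explicit patching formula from the proof of Theorem \ref{thm.WT-tau}. Since $\dim \ctet \leq 3$, the depth bound required for efficient representation is automatic, with the source being the full index set of $E_*$.

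The main obstacle I anticipate is step (iii): verifying that the construction of $\T_{w,\tau}$ admits a formula for $\jet_x^+ F$ of bounded arithmetic depth, not merely a computable pointwise value. The extension in Theorem \ref{thm.WT-tau} combines classical 1D Whitney patching on each interval between consecutive data points (a single cutoff function bridging two affine one-jets) with the range-preserving truncation from Lemma \ref{lem.Ktau} whenever a one-jet at the leftmost or rightmost point of $E_*$ meets the $\pm\tau$ envelope. The proof must enumerate the finitely many cases, distinguished by where $x$ lies relative to $E_*$ and by which of the two regimes (Case I or Case II) in Lemma \ref{lem.Ktau} each boundary jet falls into, and in each case write $\jet_x^+ F$ as an explicit rational expression in the coordinates of $\vec{P}_*$, $x$, and $\tau$. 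Since the case list is universally bounded and each expression is elementary, the efficient representation follows.
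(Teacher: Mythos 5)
Your proposal is correct and follows essentially the same route as the paper: solve the convex quadratic program of Section \ref{sect:quad} to get an approximately optimal Whitney field in $W(E_*,\tau)$, feed it to the constrained Whitney extension operator $\T_{w,\tau}$ of Theorem \ref{thm.WT-tau}(B), and set $\Xi_{\tau,x} := \jet_x^+$ of the result, with all complexity bounds collapsing to constants because $\#E_* \leq 3$. Your extra case analysis for computing $\jet_x^+\T_{w,\tau}(\vec{P})$ in bounded work is exactly the point the paper disposes of with the remark that the Whitney extension operator is constructible in $C$ operations when $\#E_*$ is bounded.
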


	\begin{proof}
		Let $ \tau > 0 $ and $ f \in \ctet $ be given. 
		Let $ \mathcal{L} $ and $ \mathcal{M} $, respectively, be as in \eqref{L-def} and \eqref{M-def}, with $ S = E_* $. Consider the affine subspace $ \atf \subset W(E) $ given by
		\begin{equation}\label{1d-3-1}
			\atf:= \set{\vec{P} = (P^x)_{x \in E} \in W(E_*) :\, \begin{matrix*}[l]
					P^x(x) = f(x)\for x \in E_*.\\
					\text{If }f(x) = \tau \text{ then } P^x \equiv \tau.\\
					\text{If }f(x) = -\tau \text{ then } P^x \equiv -\tau.
			\end{matrix*}}.
		\end{equation}
		
		Consider the optimization problem
		\begin{equation}\label{1d-3-2}
			\text{Minimize $ \mathcal{L}+\mathcal{M} $ over $ \atf $}.
		\end{equation}
		By Section \ref{sect:quad}, we can find an approximate minimizer $ \vec{P}_{0} \in W(E_*,\tau) $
		of \eqref{1d-3-2}\footnote{
			Namely, $ (\mathcal{L}+\mathcal{M})
			(\vec{P}_{0})
			\leq C\cdot \inf
			\set{ 
				(\mathcal{L}+\mathcal{M})(\vec{P}) : \vec{P} \in \atf 
			} $ for some universal constant $ C $.
		}
		using $ C $ operations. We denote the solution by 
		\begin{equation*}
			\widetilde{Min}_\tau : \ctet \to W(E,\tau).
		\end{equation*}
		It follows from Lemma \ref{lem.L+M} that 
		\begin{equation}\label{1d-3-4}
			\norm{\widetilde{Min}_\tau(f)}_{W(E_*,\tau)} \leq C\norm{f}_{\ct(E_*,\tau)}.
		\end{equation}
		
		Let $ \T_w $ be as in Whitney's Extension Theorem \ref{thm.WT-tau}(B) with $ S = E_* $. We define an extension operator
		\begin{equation}\label{1d-3-3}
			\E_{\tau,*} := \T_w \circ \widetilde{Min}_\tau : \ctet \to \ctrt.
		\end{equation}
		We then define
		\begin{equation*}
			\Xi_{\tau,x} := \jet_x^+ \circ \E_{\tau,*}
		\end{equation*}
		
		Lemma \ref{lem.alg-1d-3}(A) follows from the conclusion of Whitney's Extension Theorem \ref{thm.WT-tau}(B) and \eqref{1d-3-4}. Lemma \ref{lem.alg-1d-3}(B) follows from the discussion in Section \ref{sect:quad}, and the fact that the Whitney extension operator can be constructed using $ C $ operations, since $ \#E  $ is bounded. 
	\end{proof}

	First sorting the set $ E \subset \R $, and then patching together adjacent maps, we arrive at the following Theorem \ref{thm.bd-alg-1d+}. Note that Theorem \ref{thm.bd-alg-1d+} is in fact stronger than Theorem \ref{thm.bd-alg-1d}, since the extension maps do not depend on the parameter $ M $.

	\renewcommand{\ctet}{{C^2(E,\tau)}}
	\begin{theorem}\label{thm.bd-alg-1d+}
		Suppose we are given a finite set $ E \subset \Rn $ with $ \#(E) = N $. Then there exists a collection of maps $ \set{\Xi_{\tau,x} : \tau \in \pos,\,x \in \Rn} $, where 
		\begin{equation*}
			\Xi_{\tau,x} : \ctet \to \P^+
		\end{equation*}
		for each $ x \in \Rn $, such that the following hold.
		\begin{enumerate}[(A)]
			\item There exists a universal constant $ D $ such that for each $ x \in \Rn $, the map $ \Xi_{\tau,x}(\cdot) : \ctet\to \P^+ $ is of depth $ D $. Moreover, the source of $ \Xi_{\tau,x} $ is independent of $ \tau $.

			\item Let $ f\in \ctet $ be given. Then there exists a function $ F \in \ctrt $ such that
			\begin{equation*}
				\begin{split}
					\jet^+_x F = \Xi_{\tau,x}(f) \text{ for all } x \in \Rn,\,
					\norm{F}_{\ctrn} \leq C\norm{f}_\ctet,
					\text{ and }
					F(x) = f(x)
					\for
					x \in E.
				\end{split}
			\end{equation*}
			Here, $ C $ depends only on $ n $.

			\item There is an algorithm that takes the given data set $E$, performs one-time work, and then responds to queries.
			
			A query consists of a pair $ (\tau,x) \in\pos\times \Rn $, and the response to the query is the depth-$ D $ map $ \Xi_{\tau,x} $, given in its efficient representation.
			
			The one-time work takes $ C_1N\log N $ operations and $ C_2N $ storage. The work to answer a query is $ C_3\log N $. Here, $ C_1, C_2, C_3 $ are universal constants. 
		\end{enumerate}
	\end{theorem}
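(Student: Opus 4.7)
My plan is to lift Lemma \ref{lem.alg-1d-3} from sets of size at most three to an arbitrary finite $E\subset\R$ by sorting $E$, invoking the lemma on each overlapping window of three consecutive data points, and gluing the outputs via Whitney's extension theorem in one dimension. First, sort $E$ as $x_1<x_2<\cdots<x_N$; this is the dominant one-time cost of $O(N\log N)$ operations and $O(N)$ storage, and enables $O(\log N)$ binary search to locate a query point among the $x_i$.

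For each $i$, set $E_i^{*}:=\{x_{i-1},x_i,x_{i+1}\}\cap E$ and apply Lemma \ref{lem.alg-1d-3} to obtain a local near-optimal extension $F_i\in C^2(\R,\tau)$ with $\|F_i\|_{C^2(\R)}\leq C\|f|_{E_i^{*}}\|_{C^2(E_i^{*},\tau)}\leq CM$, where $M:=\|f\|_{C^2(E,\tau)}$. Set $P^{x_i}:=\jet_{x_i}F_i\in\P$. By Lemma \ref{lem.Ktau}(A) applied to $F_i$, each $P^{x_i}$ lies in $\K_\tau(x_i,CM)$. The crux of the argument is the Whitney compatibility: I claim $\vec{P}:=(P^{x_i})_i\in W(E,\tau)$ with $\|\vec{P}\|_{W(E,\tau)}\leq CM$. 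For adjacent $i,i+1$ with $h_i:=x_{i+1}-x_i$, Taylor's theorem applied to $F_i$ on $[x_i,x_{i+1}]$ gives $F_i'(x_i)=(f(x_{i+1})-f(x_i))/h_i+O(Mh_i)$, and the analogous expansion of $F_{i+1}$ around $x_{i+1}$ yields the same leading term for $F_{i+1}'(x_{i+1})$. Subtracting yields $|(P^{x_i})'-(P^{x_{i+1}})'|\leq CMh_i$, and a short computation then gives $|P^{x_i}(x_i)-P^{x_{i+1}}(x_i)|\leq CMh_i^2$; these are exactly the Whitney conditions for the adjacent pair. Telescoping the adjacent bounds extends them to arbitrary pairs $i<j$ with $h_i$ replaced by $|x_i-x_j|$.

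Applying Theorem \ref{thm.WT-tau}(B) to $\vec{P}$ produces a global $F\in C^2(\R,\tau)$ with $\jet_{x_i}F\equiv P^{x_i}$ and $\|F\|_{C^2(\R)}\leq CM$, and I set $\Xi_{\tau,x}(f):=\jet_x^+ F$. Because the one-dimensional Whitney extension can be written explicitly as $F=\sum_i\theta_i\cdot P^{x_i}$ with $\{\theta_i\}$ a $C^2$ partition of unity subordinate to the cover $\{(x_{i-1},x_{i+1})\}$, the jet $\jet_x^+ F$ depends only on the $O(1)$ polynomials $P^{x_j}$ associated to nodes $x_j$ near $x$, and each $P^{x_j}$ in turn depends on at most three values of $f$. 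Hence $\Xi_{\tau,x}$ has controlled depth $D$, and the source indices are determined entirely by the sorted $E$ via binary search; in particular the source is independent of $\tau$, yielding (A), while (B) follows from $\|F\|_{C^2(\R)}\leq CM$ together with $F|_E=f$. For (C), a query is answered by binary-searching for the index of $x$ in $O(\log N)$ operations, performing a bounded number of $O(1)$ invocations of Lemma \ref{lem.alg-1d-3}, and assembling the partition-of-unity contributions at $x$, all in $O(\log N)$ time.

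The principal obstacle is the quantitative Whitney compatibility between adjacent local extensions. Although $F_i$ and $F_{i+1}$ both interpolate $f$ at the two common nodes $x_i,x_{i+1}$, a priori their derivatives there are only bounded by $CM$ in absolute value, with no guarantee of mutual closeness; the required $O(Mh_i)$ bound on $|F_i'(x_i)-F_{i+1}'(x_{i+1})|$ is the one step needing real care, and it is forced by combining the mean-value identity above with the uniform $C^2$ bound on both extensions. Once this is in hand, everything else is a mechanical combination of Theorem \ref{thm.WT-tau} and the explicit partition-of-unity extension formula.
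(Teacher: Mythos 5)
Your overall strategy is sound and close in spirit to the paper's: sort $E$, solve the local problems on consecutive triples via Lemma \ref{lem.alg-1d-3}, and localize everything with a partition of unity subordinate to the intervals $(x_{i-1},x_{i+1})$, so that each jet depends on $O(1)$ data values and queries reduce to a binary search. Where you genuinely diverge is the gluing mechanism. The paper never forms a Whitney field: it sets $\mathcal{E}_\tau(f)=\sum_\nu\theta_\nu\,\mathcal{E}_{\tau,\nu}(f|_{E_\nu})$, i.e.\ it glues the local \emph{functions} directly, so the range restriction $-\tau\le\mathcal{E}_\tau(f)\le\tau$ is automatic (convex combination of functions with range in $\itau$), and the derivative control comes from Rolle's theorem applied to $F_\nu-F_{\nu+1}$, which vanishes at the two shared nodes. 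You instead extract the jets $P^{x_i}=\jet_{x_i}F_i$, verify membership in $\mathcal{K}_\tau(x_i,CM)$ and the adjacent Whitney conditions (your MVT$+$Taylor argument is the same estimate as the paper's Rolle argument), and reassemble through Theorem \ref{thm.WT-tau}(B). That detour is legitimate; note only that your "telescoping" from consecutive pairs to arbitrary pairs, while true in one dimension for degree-one jets with exponent $2$, deserves the one-line computation (the slope differences sum to $CM|x_i-x_j|$, and the value differences sum to $CM h_k(h_k+|x_k-x_i|)\le CM|x_i-x_j|^2$); it is not a formal consequence of the adjacent bounds alone.

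The one place where your write-up has a real slip is the depth/complexity argument: you justify locality by writing the extension "explicitly as $F=\sum_i\theta_i\cdot P^{x_i}$." That formula is the \emph{unconstrained} Whitney extension; a partition-of-unity sum of the affine polynomials $P^{x_i}$ interpolates and has norm $\le CM$, but it need not stay in $\itau$, so if your $\Xi_{\tau,x}$ outputs the jets of that function, conclusion (B) (which demands $F\in C^2(\R,\tau)$) fails. Conversely, if $F$ is the output of the range-constrained operator $\mathscr{T}_{w,\tau}$ of Theorem \ref{thm.WT-tau}(B), then the depth and $O(1)$-per-node computability of $\jet^+_x F$ are not given by your cited formula; you must use (or spell out) the local structure of $\mathscr{T}_{w,\tau}$, e.g.\ $F=\sum_i\theta_i\cdot \mathscr{T}_*^{x_i}(P^{x_i})$ with $\mathscr{T}_*^{x_i}$ the range-preserving one-jet extensions of Lemma \ref{lem.Ktau}(B) (each given by explicit closed-form expressions, hence $O(1)$ jets), so that the convex combination stays in $\itau$ and the jet at $x$ still depends only on the $O(1)$ nearby nodes. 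With that correction--or, more simply, by gluing the local extensions $\mathcal{E}_{\tau,\nu}(f|_{E_\nu})$ themselves as the paper does--your argument goes through and yields (A), (B), (C) as claimed.
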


	\begin{proof}
		
		If $ \#E \leq 3 $, then Theorem \ref{thm.bd-alg-1d+} reduces to Lemma \ref{lem.alg-1d-3}. Suppose $ \#E \geq 4 $. We sort $ E $ into an order list $ E = \set{x_1 < \cdots < x_N} $. For convenience, we set $ x_0 := -\infty $ and $ x_{N+1} := +\infty $. We set
		\begin{itemize}
			\item $ E_\nu := \set{x_{\nu-1}, x_{\nu}, x_{\nu+1}} $ for $ \nu = 2, \cdots, N-1 $, $ E_1 := E_2 $, and $ E_N := E_{N-1}$.
			\item $ J_\nu := (x_{\nu-1}, x_{\nu+1}) $ for $ \nu = 1,\cdots, N $. 
		\end{itemize}
		
		Let $ \set{\theta_\nu : \nu = 1, \cdots, N} $ be a nonnegative $ C^2 $-partition of unity subordinate to $ \set{J_\nu: \nu = 1, \cdots, N} $ that satisfies the following.
		\begin{itemize}
			\item[\LA{theta1}] For each $ \nu = 1, \cdots, N $, $ \theta_\nu \equiv 1 $ near $ x_\nu $, and $ \supp{\theta_\nu} \subset J_\nu $. Note that each $ x \in \R $ is supported by at most two $ \theta_\nu $'s. 
			\item[\LA{theta2}] For each $ \nu = 1, \cdots, N $, $ \abs{\ddxm \theta_\nu(x)} \leq \begin{cases}
				C\abs{x_\nu - x_{\nu-1}}^{-m}\for x \in (x_{\nu-1},x_\nu)\\
				C\abs{x_{\nu+1}-x_\nu}^{-m}\for x \in (x_{\nu},x_{\nu+1})
			\end{cases} $, $m = 0,1,2$, for some universal constant $ C $. For $\nu = 1, N$, we use the convention $\infty^{0} = 1$ and $\infty^{-m} = 0$ for $m \geq 1$. 
		\end{itemize}
		
		We can construct each $ \theta_\nu $ using the standard trick with summing and dividing cutoff functions.

		For each $ \nu = 2, \cdots, N-1 $, we define $ \E_{\tau,\nu} $ as in \eqref{1d-3-3} of Lemma \ref{lem.alg-1d-3} with $ * = \nu $. We define an extension operator $ \E_\tau : \ctet \to \ctrt $ by
		\begin{equation*}
			\E_\tau(f)(x) := \sum_{\nu = 1}^N \theta_\nu(x) \cdot \E_{\tau,\nu}(f|_{E_\nu})
			\quad \for f \in \ctet.
		\end{equation*}
		It is clear that $ \E_\tau(f) \in \ctrn $ since each each $ \E_{\tau,\nu}(f|_{E_\nu}) \in \ctrn $. Moreover, since the range of each $ \E_{\tau,\nu}(f|_{E_{\nu}}) $ is $ \itau $ and $ \E_\tau $ is a convex combination of $ \E_{\tau,\nu} $, we have $ \E_\tau(f) \in \ctrt $. 		
		
		Now we show that $ \E_\tau $ is bounded. 
		
		Let $ M := \max_{\nu = 1, \cdots N}\norm{f}_{\ct(E_\nu,\tau)} $. By definition of trace norm, we have $ \norm{f}_{\ctet} \geq M $. 
		
		Let $ F_\nu := \E_{\tau,\nu}(f|_{E_\nu}) $ for each $ \nu $. By Lemma \ref{lem.alg-1d-3}(A), we have
		\begin{equation}
			\norm{F_\nu}_{\ctrn} \leq CM
			\quad \for \nu = 1, \cdots N.
			\label{alg-1d-1}
		\end{equation}
		
		Since $ \set{\theta_\nu:\nu = 1, \cdots, N} $ is a partition of unity, we see from \eqref{alg-1d-1} that
		\begin{equation}
			\abs{\E_\tau(f)(x)} \leq CM\quad \for x \in \R.
			\label{alg-1d-11}
		\end{equation}
		
		On $ J_1 $ or $ J_N $, we see from and the definitions of $ E_1 $, $ E_N $ and the support condition \eqref{theta1} that $ \E_\tau(f)\equiv \E_{\tau,1}(f) $ or $ \E_{\tau}(f) \equiv \E_{\tau,N}(f) $. Therefore,
		\begin{equation}\label{alg-1d-2}
			\abs{\ddxm \E_\tau (f)(x)} \leq CM
			\quad \for x \in J_1 \cup J_N.
		\end{equation}
		
		Suppose $ x \in [x_2, x_N] $. Let $ \nu(x) $ be the least integer such that $ x \in J_{\nu(x)} $. Then the only partition functions possibly nonzero at $ x $ is $ \theta_{\nu(x)} $ and $ \theta_{\nu(x)+1} $. Since $ \ddxm \theta_{\nu(x)} = -\ddxm\theta_{\nu(x)+1}(x) $ for $ m = 1 $ and $ 2 $, we have
		\begin{equation}\label{alg-1d-3}
			\ddxm \E_\tau(f)(x) =  \sum_{0 \leq m' \leq m}C_{m,m'}\frac{d^{m'}}{dx^{m'}}
			\brac{F_{\nu(x)}-F_{\nu(x)+1}}
			(x)\frac{d^{m-m'}}{dx^{m-m'}}\theta_{\nu(x)}(x).
		\end{equation}
		
		\renewcommand{\ddxm}{{\frac{d^{m'}}{dx^{m'}}}}
		We claim that
		\begin{equation}\label{alg-1d-4}
			\abs{
				\ddxm
				\brac{
					F_{\nu(x)} - F_{\nu(x)+1}
				}(x)
			}
			\leq CM\abs{x_{\nu(x)+1}-x_{\nu(x)}}^{2-m'}
			\quad \for 0\leq m' \leq 2.
		\end{equation}
		To see this, observe that $ F_{\nu(x)}= F_{\nu(x)+1} $ at $ x_{\nu(x)} $ and $ x_{\nu(x)+1} $. Therefore, by Rolle's theorem, there exists $ \hat{x}_{\nu(x)} \in (x_{\nu(x)}, x_{\nu(x)+1}) $ such that 
		\begin{equation}\label{alg-1d-40}
			\frac{d}{dx}\brac{F_{\nu(x)} - F_{\nu(x)+1}}(\hat{x}_{\nu(x)})= 0
		\end{equation}
		Now, \eqref{alg-1d-4} follows from \eqref{alg-1d-40} and Taylor's theorem. 
		
		Using \eqref{theta2} and \eqref{alg-1d-4} to estimate \eqref{alg-1d-3}, we can conclude that
		\begin{equation}\label{alg-1d-5}
			\abs{\frac{d^m}{dx^m}\E_\tau(f)(x)} \leq CM
			\quad 
			\for x \in [x_2,x_N]. 
		\end{equation}
		
		From \eqref{alg-1d-11}, \eqref{alg-1d-2}, and \eqref{alg-1d-5}, we see that
		\begin{equation*}
			\norm{\E_{\tau}(f)}_\ctrn \leq CM \leq C\norm{f}_\ctet.
		\end{equation*} 
		Namely, $ \E_\tau : \ctet \to \ctrt $ is bounded. 
		
		Finally, we set
		\begin{equation*}
			\Xi_{\tau,x}:= \jet_x^+\circ \E_\tau.
		\end{equation*}
		
		Theorem \ref{thm.bd-alg-1d+}(A) then follows from the boundedness of $ \E_\tau $. 
		
		The one-time work consists of sorting the set $ E $, computing $ E_\nu $, $ J_\nu $, and $ \theta_\nu $ for $ \nu = 1, \cdots, N $. This requires at most $ CN\log N $ and $ CN $ operations. 
		
		Now we discuss query work. Let $ (\tau,x) \in \pos \times \R $ and $ f \in \ctet $ be given. It requires at most $ C\log N $ operations to locate $ \nu(x) $, where $ \nu(x) $ is the least integer that $ x \in J_{\nu(x)} $. Note that $ \Xi_{\tau,x}(f) $ is a linear combination of $ \jet_x^+\E_{\tau,\nu(x)}(f|_{E_{\nu(x)}}) $, $ \jet_x^+\E_{\tau,\nu(x)+1}(f|_{E_{\nu(x)+1}}) $, $ \jet_x^+\theta_{\nu(x)}$, and $ \jet_x^+\theta_{\nu(x)+1} $. By Lemma \ref{lem.alg-1d-3}, we can compute $ \jet_x^+\E_{\tau,\nu(x)}(f|_{E_{\nu(x)}}) $ and $ \jet_x^+\E_{\tau,\nu(x)+1}(f|_{E_{\nu(x)+1}}) $from $ (E_{\nu(x)},f) $ and $ (E_{\nu(x)},f) $, respectively, using at most $ C $ operations. On the other hand, we can compute $ \jet_x^+\theta_{\nu(x)}$ and $ \jet_x^+\theta_{\nu(x)+1} $ from $ J_{\nu(x)} $ and $ J_{\nu(x)+1} $ using at most $ C $ operations. Therefore, the work to answer a query is $ C\log N $.

	\end{proof}

	\renewcommand{\ctrn}{{C^2(\R^n)}}
	\renewcommand{\ctrt}{{C^2(\R^n,\tau)}}
	\renewcommand{\Rn}{\R^n}

	\section{Set up for the induction}
	\label{sect:induction}

	\renewcommand{\bar}[1]{\overline{#1}}
	We write $ \overline{x}\in\R^{n-1} $ to denote points in $ \R^{n-1} $. We write $ \overline{\P} $ and $ \overline{\P}^+ $ to denote the vector spaces of polynomials on $ \R^{n-1} $ with degree no greater than one and two, respectively. We write $ \bar{\jet}_{\bar{x}} $ and $ \bar{\jet}_{\bar{x}}^+ $ to denote the one-jet and two-jet of a function (twice) differentiable near $ \bar{x}\in\R^{n-1} $.

	Given any finite set $ \overline{E} \subset \R^{n-1} $ with $ \#\overline{E} = \overline{N} $, we assume the following.
	\begin{itemize}

		\item[\LA{induction:IH}](\textbf{IH}) There exists a collection of maps $ \set{\overline{\Xi}_{\tau,\overline{x}} : \tau \in \pos,\,\overline{x} \in \R^{n-1}} $, where
		\begin{equation*}
			\overline{\Xi}_{\tau,\overline{x}} : \ct(\bar{E},\tau) \times\pos \to \overline{\P}^+
		\end{equation*}
		for each $ x \in \Rn $, such that the following hold.
		\begin{enumerate}[(A)]
			\item There exists a controlled constant $ \overline{D} $, depending only on $ n $, such that for each $ \overline{x} \in \R^{n-1} $, the map $ \overline{\Xi}_{\tau,\overline{x}} $ is of depth $ \overline{D} $, and the source of $ \overline{\Xi}_{\tau,\overline{x}} $ is independent of $ \tau $.
			\item Suppose we are given $ (\overline{f},M) \in C^2(\overline{E},\tau) \times\pos $ with $ \norm{\overline{f}}_{\ct(\overline{E},\tau)} \leq M $. Then there exists a function $ \overline{F} \in \ct(\R^{n-1},\tau) $ such that
			\begin{equation*}
				\overline{\jet}^+_{\overline{x}}\overline{F} = \bar{\Xi}_{\tau,x}(\bar{f},M)\forall \bar{x}\in\R^{n-1},\,
				\norm{\bar{F}}_{\ct(\R^{n-1})} \leq C(n)\cdot M, \text{ and }\bar{F}(\bar{x}) = \bar{f}(\bar{x}) \for \bar{x}\in\bar{E}.
			\end{equation*}
			\item There exists an algorithm that takes the given data set $ \bar{E} $, and then responds to queries.
			
			A query consists of a pair $ (\tau,\bar{x})\in\pos\times\R^{n-1} $, and the response to the query is the depth-$ \bar{D} $ map $ \bar{\Xi}_{\tau,x} $, given in its efficient representation.
			
			The one-time work takes $ \bar{C}_1\bar{N}\log\bar{N} $ operations and $ \bar{C}_2\bar{N} $ storage. The work to answer a query is $ \bar{C}_3\log\bar{N} $. Here, $ \bar{C}_1, \bar{C}_2, \bar{C_3} $ depend only on $ n $. 
			
			\item For each $ \tau > 0 $, we use $ \bar{\E}_{\tau}: \ct(\bar{E},\tau)\times\pos \to \ctrn $ to denote the operator associated with $ \set{\Xi_{\tau,\bar{x}}:\bar{x}\in\R^{n-1}} $ determined by the relation
			\begin{equation*}
				\bar{\jet}^+_{\bar{x}}\circ \bar{\E}_\tau (\bar{f},M) \equiv \bar{\Xi}_{\tau,\bar{x}}(\bar{f},M)
				\quad \text{ for all } (\bar{f},M) \in \ctet\times\pos.
			\end{equation*}
			Thanks to (A), for each $ \bar{x} \in \Rn $, there exists $ \bar{S}(\bar{x}) \subset \bar{E} $ with $ \#\bar{S}(\bar{x}) \leq \bar{D}(n) $, independent of $ \tau $, such that for all $ \bar{f},\bar{g} \in \ctet $ with $ \bar{f} = \bar{g} $ on $ \bar{S}(\bar{x}) $, we have
			\begin{equation*}
				\da\bar{\E}_\tau(\bar{f},M)(\bar{x}) = \da\bar{\E}_\tau(\bar{g},M)(\bar{x})
				\for \abs{\alpha}\leq 2 \text{ and } M \geq 0.
			\end{equation*}
		\end{enumerate}

		\item[\LA{induction:FK}](\textbf{FK}) We also assume that we are given the Fefferman-Klartag interpolation maps, i.e., a collection of linear maps $ \set{\overline{\Psi}_{\overline{x}} : \overline{x} \in \R^{n-1}} $, where
		\begin{equation*}
			\overline{\Psi}_{\overline{x}} : \R^{\bar{N}} \to \overline{\P}^+
		\end{equation*}
		for each $ x \in \Rn $, such that the following hold.
		\begin{enumerate}[(A)]
			\item There exists a controlled constant $ \overline{D} $, depending only on $ n $, such that for each $ \overline{x} \in \R^{n-1} $, the map $ \overline{\Psi}_{\overline{x}} $ is of depth $ \overline{D} $.
			\item Suppose we are given $ \overline{\phi} \in \R^{\bar{N}} $. Then there exists a function $ \overline{\Phi} \in \ct(\R^{n-1}) $ such that
			\begin{equation*}
				\overline{\jet}^+_{\overline{x}}\overline{\Phi} = \bar{\Psi}_{x}(\phi)\forall \bar{x}\in\R^{n-1},\,
				\norm{\bar{\Phi}}_{\ct(\R^{n-1})} \leq C\norm{\bar{\phi}}_{\ct(\bar{E})}, \text{ and }\bar{\Phi}(\bar{x}) = \bar{\phi}(\bar{x}) \for \bar{x}\in\bar{E}.
			\end{equation*}
			Here, $ \norm{\bar{\phi}}_{\ct(\bar{E})}:= \inf \set{\norm{\tilde{\Phi}}_{\ct(\R^{n-1})} : \tilde{\Phi} \in \ct(\R^{n-1})\text{ and }\tilde{\Phi}\big|_{\bar{E}} = \bar{\phi}}$.

			\item There exists an algorithm that takes the given data set $ \bar{E} $, and then responds to queries.
			
			A query consists of a point $ \bar{x}\in \R^{n-1} $, and the response to the query is the depth-$ \bar{D} $ map $ \bar{\Psi}_{\bar{x}} $, given in its efficient representation.
			
			The one-time work takes $ \bar{C}_1\bar{N}\log\bar{N} $ operations and $ \bar{C}_2\bar{N} $ storage. The work to answer a query is $ \bar{C}_3\log\bar{N} $. Here, $ \bar{C}_1, \bar{C}_2, \bar{C_3} $ depend only on $ n $. 
			
			\item We use $ \bar{\E}_{\infty} $ to denote the operator associated with $ \set{\bar{\Psi}_{\bar{x}}:\bar{x}\in\R^{n-1}} $, mapping from $ C(\bar{E}) $ into $ \ct(\R^{n-1}) $, determined by the relation
			\begin{equation*}
				\bar{\jet}^+_{\bar{x}}\circ \bar{\E}_\infty (\bar{\phi}) \equiv \bar{\Psi}_{\bar{x}}(\bar{\phi})
				\quad \text{ for all } \bar{\phi}:\bar{E}\to\R.
			\end{equation*}
			Thanks to (A), for each $ \bar{x} \in \Rn $, there exists $ \bar{S}(\bar{x}) \subset \bar{E} $ with $ \#\bar{S}(\bar{x}) \leq \bar{D}(n) $, such that for all $ \bar{\phi},\bar{\gamma} :\bar{E}\to \R $ with $ \bar{\phi} = \bar{\gamma} $ on $ \bar{S}(\bar{x}) $, we have
			\begin{equation*}
				\da\bar{\E}_\infty(\bar{\phi})(\bar{x}) = \da\bar{\E}_\infty(\bar{\gamma})(\bar{x})
				\for \abs{\alpha}\leq 2.
			\end{equation*}
			
		\end{enumerate}
		
	\end{itemize}
	
	For the construction of the Fefferman-Klartag maps, see \cite{FK09-Data-1,FK09-Data-2}. Such maps can also be constructed using the techniques in \cite{JL20,JL20-Ext,JL20-Alg}, which are adapted from \cite{FK09-Data-1,FK09-Data-2}.

	We will be working with the Finiteness Constants $ k^\sharp_{n-1,\mathrm{old}} $ and $ k^\sharp_{n,\mathrm{old}} $ in Theorem \ref{thm.fp-sigma}. Their precise values do not matter. We will also be working with a constant $ k^\sharp_{LIP} $ associated with the local interpolation problems, where we take $ k^\sharp_{LIP} \geq (n+2)^2k^\sharp_{n,\mathrm{old}} $. We will remind the readers of these quantities when necessary.

	\section{Preliminary data structure}

	\newcommand{\rbn}{\mathbb{R}^{\bar{N}}}
	\newcommand{\cte}{C(E)}

	Recall Theorem \ref{thm.fp-sigma}. We begin by reviewing some key objects introduced in \cite{FK09-Data-1,FK09-Data-2}, which we will use to effectively approximate $ \sk $ for $ x \in E $. 
	
	We will be working with $ \ctrn $ functions instead of $ \ctrt $ functions. 
	
	Let $ E \subset \Rn $ be a finite set with $ \# E = N $. We assume that $ E $ is labeled: $ E = \set{x_1, \cdots, x_N}  $. We write $ \cte $ to denote the collection of functions $ \phi : E \to \R $, which we can identify with $ \R^N $.

	\subsection{Parameterized approximate linear algebra problems (PALP)}
	
	We equip $ \R^{N} $ with the standard coordinate basis $ \set{\xi_1, \cdots, \xi_{N}} $. We recall the following definition in Section 6 of \cite{FK09-Data-2}.
	
	\begin{definition}\label{def.palp}
		A \underline{parameterized approximate linear algebra problem (PALP for short)} is an object of the form:
		\begin{equation}
			\A = \left[ (\ul{\lambda}_1, \cdots, \ul{\lambda}_{i_{\max}}), (\ul{b}_1, \cdots, \ul{b}_{i_{\max}}), (\epsilon_1, \cdots, \epsilon_{i_{\max}}) \right],
			\label{PALP-def}
		\end{equation}
		where \begin{itemize}
			\item Each $ \ul{\lambda}_i $ is a linear functional on $ \P $, which we will refer to as a ``linear functional'';
			\item Each $ \ul{b}_i $ is a linear functional on $ \cte $, which we will refer to as a ``target functional''; and
			\item Each $ \epsilon_i \in \pos $, which we will refer to as a ``tolerance''.
		\end{itemize}
		Given a PALP $ \A $ in the form \eqref{PALP-def}, we introduce the following terminologies:
		\begin{itemize}
			\item We call $ i_{\max} $ the \ul{length} of $ \A $;
			\item We say $ \A $ \ul{has depth $ D $} if each of the linear functionals $ \ul{b}_i $ on $ \R^{N} $ has depth $ D $ with respect to the basis $ \set{\xi_1, \cdots, \xi_{N}} $ (see Definition \ref{def.depth}).
		\end{itemize}

	\end{definition}

	Recall Definition \ref{def.depth}. We assume that every PALP is \qt{efficiently stored}, namely, each of the target functionals are stored in its efficient representation. (In fact, each PALP admits a \qt{compact representation} in the sense of \cite{FK09-Data-2}. See Remark \ref{rem.depth}.) In particular, given a PALP $ \A $ of the form \eqref{PALP-def} and a target $ \ul{b}_i $ of $ \A $, we have access to a set of indices $ \set{i_1, \cdots, i_D} \subset \set{1,\cdots,N} $, such that $ \ul{b}_i $ is completely determined by its action on $ \set{\xi_{i_1}, \cdots, \xi_{i_D}} \subset \set{\xi_1, \cdots, \xi_N} $. Here $ i_D = \depth(\ul{b}_i) $. We define
	\begin{equation}
		S(\ul{b}_i):= \set{x_{i_1}, \cdots, x_{i_D}} \subset E.
		\label{S(b)-def}
	\end{equation}
	Given a PALP of the form \eqref{PALP-def}, we define
	\begin{equation}
		S(\A) := \bigcup_{i = 1}^{i_{\max}}S(\ul{b}_i) \subset E
		\label{S(A)-def}
	\end{equation}
	with $ S(\ul{b}_i) $ as in \eqref{S(b)-def}.

	\subsection{Blobs and PALPs}
	
	\renewcommand{\K}{\Omega}
	
	\begin{definition}
		A \ul{blob} in $ \P $ is a family $ \vec{\K} = (\K_M)_{M \geq 0} $ of (possibly empty) convex subsets $ \K_M \subset \P $ parameterized by $ M \in \pos $, such that $ M < M' $ implies $ \K_M \subseteq \K_{M'} $. We say two blobs $ \vec{\K} = (\K_M)_{M \geq 0} $ and $ \vec{\K}' = (\K'_M)_{M \geq 0} $ are \ul{$ C $-equivalent} if $ \K_{C^{-1}M}\subset \K'_M \subset \K_{CM} $ for each $ M \in \pos $.
	\end{definition}

	Let $ \A $ be a PALP of the form \eqref{PALP-def}. For each $ \phi \in \cte $, we have a blob defined by
	\begin{equation}
		\begin{split}
			\vec{\K}_\phi(\A)  &= \brac{\K_\phi(\A,M)}_{M \geq 0}
			\text{, where}\\
			\K_\phi(\A,M) &:= \set{P \in \P : \abs{\ul{\lambda}_i(P) - \ul{b}_i(\phi)} \leq M\epsilon_i\for i = 1, \cdots, i_{\max} } \subset V.
		\end{split}
		\label{K1}
	\end{equation}
	In this paper, we will be mostly interested in the centrally symmetric (called \qt{homogeneous} in \cite{FK09-Data-2}) polytope defined by setting $ \phi \equiv 0 $:
	\begin{equation}
		\sigma(\A) := \K_0(\A,1).
		\label{sigma(A)-def}
	\end{equation}
	Note that $ \sigma(\A) $ is never empty, since it contains the zero polynomial.

	\subsection{Essential PALPs and Blobs}

	\begin{definition}
		\renewcommand{\S}{\Sigma} Let $ E \subset \Rn $ be finite. For each $ x \in \Rn $ and $ \phi : E \to \R $, we define a blob
		\begin{equation}
			\begin{split}
				\vec{\S}_\phi(x) &= \brac{\S_\phi(x,M)}_{M\geq 0}\text{ where}\\
				\S_\phi(x,M) &:= \set{ P \in \P : \begin{matrix}
						\text{ There exists } G \in \ctrn \text{ with}\\\norm{G}_{\ctrn} \leq M, G|_E = \phi, \text{ and }\jet_x G= P.
				\end{matrix} }
			\end{split}
			\label{Sigma(x)}
		\end{equation}
	\end{definition}

	It is clear from the definition of $ \sigma $ in \eqref{sigma-def} that
	\begin{equation*}
		\sigma(x,E) = \Sigma_0(x,1).
	\end{equation*}
	Therefore, thanks to Theorem \ref{thm.fp-sigma}, we have, for $ k \geq k^\sharp_{n,\mathrm{old}} $ and $x \in E$,
	\begin{equation}
		C^{-1}\cdot\sk(x,k) \subset \Sigma_0(x,1) \subset C\cdot\sk(x,k),
		\label{sk-Sigma_0}
	\end{equation}
	for some controlled constants $ k_{n,\sigma}^\sharp(n) $ and $ C(n) $.

	We summarize some relevant results from \cite{FK09-Data-2}.
	
	\begin{lemma}\label{lem.FK-palp}
		Let $ E \subset \Rn $ be finite. Using at most $ C(n)N\log N $ operations and $ C(n)N $ storage, we can compute a list of PALPs $ \set{\A(x) : x \in E} $ such that the following hold.
		\begin{enumerate}[(A)]
			\item There exists a controlled constant $ D_0(n) $ such that for each $ x \in E $, $ \A(x) $ has length no greater than $ (n+1) = \dim \P $ and has depth $ D_0 $.
			\item For each given $ x \in \Rn $ and $ \phi \in \ct(E) $, the blobs $ \vec{\K}_\phi(\A(x)) $ in \eqref{K1} and $ \vec{\Sigma}_\phi(x) $ in \eqref{Sigma(x)} are $ C(n) $-equivalent. 
		\end{enumerate}
	\end{lemma}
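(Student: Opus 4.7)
The plan is to reduce the lemma to the main algorithmic construction of Fefferman-Klartag \cite{FK09-Data-1,FK09-Data-2}. The essential observation is that, by the Finiteness Principle (Theorem \ref{thm.fp-sigma}) together with \eqref{sk-Sigma_0}, the blob $\vec{\Sigma}_\phi(x)$ is determined up to a controlled constant factor by the values of $\phi$ on a bounded-size subset $S \subset E$ depending only on $x$ and the geometry of $E$. So the main task is to produce, for each $x \in E$, such a subset $S(x)$ together with an efficient list of affine constraints in $\P$ that cuts out an approximation of $\Sigma_\phi(x,M)$, all in total time $O(N \log N)$.

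First I would perform the Calder\'on-Zygmund decomposition of $\Rn$ associated with $E$ and identify the ``keystone cubes'' and their representative points $\xqs$, following Sections~10--25 of \cite{FK09-Data-2}. Using a BBD-tree or similar geometric data structure, this preprocessing takes $C(n) N \log N$ operations and $C(n) N$ storage, and produces for each $x \in E$ a bounded-size set $S(x) \subset E$ of ``relevant'' points with $\#S(x) \leq k^\sharp_{n,\mathrm{old}}$ (or some other controlled constant). The point of the keystone construction is that $\sigma(x, S(x))$ is equivalent, up to a controlled constant, to $\sigma(x, E)$ for every $x \in E$.

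Second, for each $x \in E$ I would translate the Finiteness-Principle characterization into a short list of affine constraints. Concretely, for $P \in \P$ to lie in $\Sigma_\phi(x,M)$ up to a constant factor, it suffices that for each $S' \subset S(x) \cup \{x\}$ there is a Whitney field $(P^y)_{y \in S'}$ with $P^x = P$, $P^y(y) = \phi(y)$, and $\ct$-Whitney-norm $\leq C M$. By a standard elimination/duality argument in the finite-dimensional space $W(S')$, this is equivalent to a finite system of affine inequalities of the form $|\ul{\lambda}(P) - \ul{b}(\phi)| \leq M \epsilon$, where $\ul{\lambda}$ is linear on $\P$, $\ul{b}$ is a linear functional on $\cte$ depending only on $\phi|_{S(x) \cup \{x\}}$, and $\epsilon$ is a geometric quantity depending on $S(x)$. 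Since $\dim \P = n+1$ and $\sigma(\A(x))$ is a centrally symmetric convex body, a John-ellipsoid-type reduction allows us to keep at most $\dim \P = n+1$ ``essential'' constraints per $x$, yielding the length bound.

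Third, I would verify the depth bound: each $\ul{b}_i$ depends only on $\phi|_{S(x) \cup \{x\}}$, which has cardinality $\leq k^\sharp_{n,\mathrm{old}} + 1 =: D_0(n)$, so $\depth(\ul{b}_i) \leq D_0$. The equivalence of the blobs $\vec{\K}_\phi(\A(x))$ and $\vec{\Sigma}_\phi(x)$ then follows by combining the keystone equivalence with the Finiteness Principle and Whitney's Extension Theorem \ref{thm.WT}(B). The main obstacle is not conceptual but combinatorial: it is the bookkeeping of the keystone cube construction, namely ensuring that the bounded-size set $S(x)$ and the corresponding constraints can be computed uniformly across $x \in E$ within the $O(N \log N)$ budget. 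This is the technical heart of \cite{FK09-Data-2}, and I would follow their construction essentially verbatim, the range-restriction playing no role here since we are working with unconstrained $\ctrn$ functions throughout this lemma.
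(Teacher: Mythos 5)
Your proposal takes essentially the same route as the paper: the paper gives no independent proof of this lemma and simply imports it from the Fefferman--Klartag construction (Sections 10--11 and Lemma 34.3 of \cite{FK09-Data-2}), which is exactly the machinery you propose to follow verbatim. Your sketch of that machinery (Calder\'on--Zygmund/keystone preprocessing in $O(N\log N)$, bounded-depth target functionals, length at most $\dim\P$ via a parallelepiped-type approximation of $\sigma$) matches what is being cited, so beyond the reference there is nothing further to supply.
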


	See Section 11 of \cite{FK09-Data-2} for Lemma \ref{lem.FK-palp}(A), and Sections 10, 11, and Lemma 34.3 of \cite{FK09-Data-2} for Lemma \ref{lem.FK-palp}(B). 
	
	The main lemma of this section is the following.
	
	\begin{lemma}\label{lem.sigma-main}
		Let $ k^\sharp_{n,\mathrm{old}} $ be as in Theorem \ref{thm.fp-sigma}. There exists a controlled constant $ C(n) $ such that the following holds.
		Let $ E \subset \Rn $ be given. Let $ \set{\A(x) : x \in E} $ be as in Lemma \ref{lem.FK-palp}. Recall the definitions of $ \sigma $ and $ S(\A(x)) $ as in \eqref{sigma-def} and \eqref{S(A)-def}. Then for $ k \geq k^\sharp_{n,\mathrm{old}} $ and $ x \in E $, 
		\begin{equation*}
			C^{-1}\cdot \sigma(x,S(\A(x))) \subset \sk(x, k) \subset C\cdot \sigma(x,S(\A(x))).
		\end{equation*}
	\end{lemma}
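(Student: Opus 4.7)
The plan is to sandwich $\sigma(x, S(\A(x)))$ between two constant multiples of $\sk(x,k)$ by chaining the Finiteness Principle equivalence \eqref{sk-Sigma_0}, which relates $\sk(x,k)$ to $\Sigma_0(x,1)$, with Lemma \ref{lem.FK-palp}(B), which relates $\vec{\Sigma}_\phi(x)$ to the PALP blob $\vec{K}_\phi(\A(x))$. Both equivalences hold up to controlled constants depending only on $n$, so their composition yields the two-sided inclusion.

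For the upper inclusion $\sk(x,k) \subset C\,\sigma(x, S(\A(x)))$, the argument is purely set-theoretic. Applying \eqref{sk-Sigma_0} gives $\sk(x,k) \subset C_1\,\Sigma_0(x,1)$. Comparing \eqref{sigma-def} with the definition of $\Sigma_\phi(x,M)$, we see $\Sigma_0(x,1) = \sigma(x,E)$. Since $S(\A(x)) \subset E$ and the map $S \mapsto \sigma(x,S)$ is decreasing with respect to set inclusion (any $\phi$ vanishing on a larger set automatically vanishes on a smaller one), we get $\Sigma_0(x,1) = \sigma(x,E) \subset \sigma(x, S(\A(x)))$, which completes the upper bound.

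For the lower inclusion $C^{-1}\sigma(x, S(\A(x))) \subset \sk(x, k)$, the key observation is that every target functional of $\A(x)$ vanishes on any function that is zero on $S(\A(x))$. Given $P \in \sigma(x, S(\A(x)))$, choose $\phi \in \ctrn$ with $\norm{\phi}_\ctrn \leq 1$, $\phi \equiv 0$ on $S(\A(x))$, and $\jet_x \phi = P$, and set $\tilde\phi := \phi|_E$. By construction of $S(\A(x))$ in \eqref{S(A)-def}, each target functional $\ul{b}_i$ of $\A(x)$ is determined by the values of its argument at the points of $S(\ul{b}_i) \subset S(\A(x))$, so linearity forces $\ul{b}_i(\tilde\phi) = 0$ for every $i$; consequently $K_{\tilde\phi}(\A(x), M) = K_0(\A(x), M)$ for every $M \geq 0$. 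Since $\phi$ witnesses $P \in \Sigma_{\tilde\phi}(x, 1)$, Lemma \ref{lem.FK-palp}(B) yields $P \in K_{\tilde\phi}(\A(x), C_2) = K_0(\A(x), C_2) = C_2 \cdot K_0(\A(x), 1)$, where the last equality uses the obvious homogeneity of $K_0$ in the tolerance parameter. A second application of Lemma \ref{lem.FK-palp}(B) with $\phi \equiv 0$ yields $K_0(\A(x), 1) \subset C_3\,\Sigma_0(x, 1)$, and a final appeal to \eqref{sk-Sigma_0} (valid because $k \geq k^\sharp_{n,\mathrm{old}}$) gives $\Sigma_0(x, 1) \subset C_4\,\sk(x, k)$. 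Collecting constants, $P \in C_2 C_3 C_4 \cdot \sk(x,k)$, as desired.

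The main (purely conceptual) obstacle will be recognizing the depth collapse: $S(\A(x))$ is designed precisely so that every target functional of $\A(x)$ is determined by point-values on $S(\A(x))$, which forces the inhomogeneous blob $\vec{K}_{\tilde\phi}(\A(x))$ to coincide with the centrally symmetric one $\vec{K}_0(\A(x)) = \sigma(\A(x))$ whenever $\tilde\phi$ vanishes on $S(\A(x))$. Once this collapse is in hand, the remainder of the proof is a mechanical traversal of the two stated equivalences, with all constants controlled by $n$.
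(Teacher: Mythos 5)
Your proposal is correct and follows essentially the same route as the paper: both directions hinge on the equivalence \eqref{sk-Sigma_0} combined with Lemma \ref{lem.FK-palp}(B), with the upper inclusion coming from the monotonicity $\sigma(x,E)\subset\sigma(x,S(\A(x)))$ and the lower inclusion coming from the observation that the target functionals $\ul{b}_i$ vanish on any $\phi$ that is zero on $S(\A(x))$, so that $\K_{\tilde\phi}(\A(x),\cdot)$ collapses to $\K_0(\A(x),\cdot)=$ (a multiple of) $\sigma(\A(x))$. The paper merely packages the same chain slightly differently, first recording $\sk(x,k)\approx\sigma(\A(x))$ and then comparing $\sigma(x,S(\A(x)))$ with $\sigma(\A(x))$.
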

	
	\begin{proof}
		\renewcommand{\ksh}{{k^\sharp}}
		For centrally symmetric $ \sigma,\sigma' \subset \P $, we write $ \sigma \approx \sigma' $ if there exists a controlled constant $  C(n) $ such that $ C^{-1}\cdot \sigma \subset \sigma' \subset C\cdot \sigma $. Thus, we need to show $ \sigma(x,\A(x)) \approx \sk(x, k^\sharp_{n,\mathrm{old}}) $ for $ x \in E $. 
		
		Thanks to Theorem \ref{thm.fp-sigma}, Lemma \ref{lem.FK-palp}(B) (applied to $ \phi \equiv 0 $), \eqref{sigma(A)-def}, and \eqref{sk-Sigma_0}, we have
		\begin{equation}
			\sk(x, k) \approx \sk(x,E) \approx \K_0(\A(x),1) = \sigma(\A(x))\for x \in E.
			\label{3.8}
		\end{equation}
		Therefore, it suffices to show that
		\begin{equation*}
			\sigma(x,S(\A(x))) \approx \sigma(\A(x))
			\for x \in E.
		\end{equation*}
		From \eqref{3.8} and the definition of $ \sigma $ in \eqref{sigma-def}, we see that
		\begin{equation*}
			\sigma(\A(x)) \subset C\cdot \sigma(x,E) \subset C\cdot \sigma(x,S(\A(x))).
		\end{equation*}
		It remains to show that
		\begin{equation*}
			\sigma(x,S(\A(x))) \subset C\cdot \sigma(\A(x)) .
		\end{equation*}
		
		Let $ x \in E $ and let $ P \in \sigma(x,S(\A(x))) $. Then there exists $ \phi \in \ctrn $ such that $ \norm{\phi}_{\ctrn} \leq 1 $, $ \phi(x) = 0 $ for all $ x \in S(\A(x)) $, and $ \jet_x(\phi) = P $. Note that $ \phi|_E \in \ctrn $. We abuse notation and write $ \phi $ in place of $ \phi|_E $ when there is no possibility of confusion.
		
		It is clear from the definition of $ \Sigma_\phi(x,M) $ in \eqref{Sigma(x)} that
		\begin{equation*}
			P \in \Sigma_\phi(x,1).
		\end{equation*}
		By Lemma \ref{lem.FK-palp}(B), we have
		\begin{equation*}
			P \in \K_\phi(\A(x),C)
		\end{equation*}
		with $ \K_\phi(\A(x),C) $ as in \eqref{K1}. In particular, we have
		\begin{equation}
			\abs{\ul{\lambda}_i(P) - \ul{b}_i(\phi)} \leq C\epsilon_i
			\for i = 1, \cdots, L = \mathrm{length}(\A(x)).
			\label{3.14}
		\end{equation}
		Here, the $ \ul{\lambda}_1, \cdots, \ul{\lambda}_L$,$ \ul{b}_1, \cdots, \ul{b}_L $, and $ \epsilon_1, \cdots, \epsilon_L $, respectively, are the linear functionals, target functionals, and the tolerance of $ \A(x) $. However, by the definition of $ S(\A(x)) $ in \eqref{S(A)-def} and the fact that $ \phi \equiv 0 $ on $ S(\A(x)) $, we see that \eqref{3.14} simplifies to 
		\begin{equation*}
			\abs{\ul{\lambda}_i(P)} \leq C\epsilon_i
			\for i = 1, \cdots, L = \mathrm{length}(\A(x)).
		\end{equation*}
		This is equivalent to the statement
		\begin{equation*}
			P \in \K_0(\A(x),C) = C\cdot \sigma(\A(x)).
		\end{equation*}
		Lemma \ref{lem.sigma-main} is proved. 
	\end{proof}

	\renewcommand{\K}{\mathcal{K}}

	\section{Calder\'on-Zygmund cubes}

	Let $ \tilde{\sigma} \subset \Rn $ be a symmetric convex set. We define
	\begin{equation}\label{diam-def}
		\diam \tilde{\sigma} := 2\cdot \sup_{u \in \Rn, \abs{u} = 1}p_{\tilde{\sigma}}(u),
	\end{equation}
	where $ p_{\tilde{\sigma}}(u) $ is a gauge function given by
	\begin{equation}
		p_{\tilde{\sigma}}(u) := \sup \set{r \geq 0: ru\subset{\tilde{\sigma}}}. 
		\label{Mink-def}
	\end{equation}

	Let $ \set{\A(x):x \in E} $ be as in Lemma \ref{lem.FK-palp}, and let $ \sigma(\A(x)) \subset \P $ be as in \eqref{sigma(A)-def}. Note that for each $ x \in E $, $ \sigma(\A(x)) \subset \P $ is $ n $-dimensional. Indeed, thanks to Lemma \ref{lem.FK-palp}(B) (with $ \phi \equiv 0 $), any $ P \in \sigma(\A(x)) $, $ x \in E $, must have $ P(x) = 0 $. Thus, for each $ x \in E $, we can identify $ \sigma(\A(x)) $ as a subset of $ \Rn $ via the map
	\begin{equation}
		\sigma(\A(x))\ni P \mapsto (\grad P \cdot e_1,\cdots, \grad P \cdot e_n),
		\label{jet-id}
	\end{equation}
	where $ \set{e_1,\cdots, e_n} $ is the chosen orthonormal system.

	\subsection{OK cubes}

	\begin{definition}\label{def.OK}
		Let $ A_1, A_2 > 0 $ be sufficiently large dyadic numbers. Let $ \set{\A(x):x \in E} $ be as in Lemma \ref{lem.FK-palp}. Let $ Q $ be a dyadic cube. We say \ul{$ Q $ is OK} if the following hold.
		\begin{itemize}
			
			\item Either $ \#(E \cap 5Q) \leq 1 $, or $ \diam\sigma(\A(x)) \geq A_1\dq $ for all $ x \in E \cap 5Q $. Here and below, the $ \diam(\sigma(\A(x))) $ is defined using the formula \eqref{diam-def} via the identification \eqref{jet-id}. 
			
			\item $ \dq \leq A_2^{-1} $. 
		\end{itemize}
	\end{definition}

	The importance of OK cubes is illustrated in the following lemma. Roughly speaking if $ Q $ is OK, then $ E $ lies on a hypersurface near $ Q $ with controlled mean curvature. Moreover, this hypersurface can be realized as the null set of a $ C^2 $ function.
	
	\begin{lemma}\label{lem.diffeo}
		\newcommand{\xb}{\overline{x}}
		Let $ Q $ be OK. Suppose $ E \cap 5Q \neq \void $. Let $ x_0 \in E \cap 5Q $. Let $ u_0 \in \Rn $ be a unit vector such that
		\begin{equation}\label{diffeo-1}
			\diam \sigma(\A(x_0)) = p_{\sigma(\A(x_0))}(u_0),
		\end{equation}
		with $ \diam\sigma(\A(x_0)) $ and $ p_{\sigma(\A(x_0))} $ as in \eqref{diam-def} and \eqref{jet-id}, respectively.
		Let $ \rho $ be a rigid motion of $ \Rn $ given by the simple rotation $ \begin{cases}
		u_0 \mapsto e_n\\
		\text{identity on $(\R u_0 \oplus \R e_n)^\perp$}
		\end{cases} $ 
		and translation $x\mapsto x-x_0$. Then there exists $ \phi \in C^2(\R^{n-1}) $ satisfying the following.
		\begin{align}
			&\rho(E \cap 5Q) \subset \set{(\xb,\phi(\xb)):\xb \in \R^{n-1}}.\label{diffeo-2}\\
			&\abs{\grad_{\xb}^m\phi(\xb)} \leq CA_1^{-1}\dq^{1-m}\for \xb \in \R^{n-1}, m = 1,2, \text{ with $ A_1 $ as in Definition \ref{def.OK}.}\label{diffeo-3}\\
			&x_0 = (\overline{0},\phi(\overline{0}))
			\text{, where $ \overline{0} $ is the origin of $ \R^{n-1} $.}\label{diffeo-4}
		\end{align}
		
		Moreover, let $ \Phi:\Rn \to \Rn $ be defined by 
		\begin{equation*}
			\Phi\circ\rho (\xb,x_n):= (\xb,x_n - \phi(\xb)).
		\end{equation*}
		Then $ \Phi $ is a $ C^2 $ diffeomorphism of $ \Rn $ satisfying $ \Phi(E \cap 5Q) \subset \R^{n-1}\times\set{x_n = 0} $, and $ \abs{\grad^m\Phi}, \abs{\grad^m\Phi^{-1}} \leq CA_1^{-1}\dq^{1-m} $ for $ m = 1,2 $ and $ A_1 $ as in Definition \ref{def.OK}. 
	\end{lemma}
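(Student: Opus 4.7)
The hypothesis that $Q$ is OK plus $E\cap 5Q\neq\emptyset$ gives, when $\#(E\cap 5Q)\geq 2$, that $\diam\sigma(\A(x_0))\geq A_1\dq$; from this I would extract a concrete $C^2$ function vanishing on $E$ whose gradient at $x_0$ is a large multiple of $u_0$, and then obtain $\phi$ from the implicit function theorem at scale $\dq$. (When $\#(E\cap 5Q)=1$ one may simply take $\phi\equiv 0$.)

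\textbf{Step 1 (extract $G$).} Since $\sigma(\A(x_0))=\K_0(\A(x_0),1)$ is centrally symmetric, the choice of $u_0$ in \eqref{diffeo-1} gives $p_{\sigma(\A(x_0))}(u_0)=\diam\sigma(\A(x_0))/2\geq A_1\dq/2$, so under the identification \eqref{jet-id} there is $P\in\sigma(\A(x_0))$ with $P(x_0)=0$ and $\grad P=(A_1/2)\dq\, u_0$. By Lemma \ref{lem.FK-palp}(B) with $\phi\equiv 0$, the blob $\vec{\K}_0(\A(x_0))$ is $C$-equivalent to $\vec{\Sigma}_0(x_0)$, so $P\in\Sigma_0(x_0,C)$; hence there exists $G\in\ctrn$ with $\norm{G}_{\ctrn}\leq C$, $G|_E\equiv 0$, and $\jet_{x_0}G\equiv P$.

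\textbf{Step 2 (implicit function theorem).} Set $\tilde G:=G\circ\rho^{-1}$. Then $\tilde G(0)=G(x_0)=0$, $\d_n\tilde G(0)=(A_1/2)\dq$, $\d_i\tilde G(0)=0$ for $i<n$, and $\norm{\tilde G}_{\ct(\Rn)}\leq C$. Taylor's theorem then yields $\d_n\tilde G\geq (A_1/4)\dq$ and $|\d_i\tilde G|\leq C\dq$ for $i<n$ on a Euclidean ball $U$ of radius $\sim\dq$ around the origin, provided $A_1$ is larger than a fixed universal constant; such $U$ contains $\rho(5Q)$. The implicit function theorem then solves $\tilde G(\bar x,x_n)=0$ as $x_n=\phi(\bar x)$ on the projection $\bar U\subset\R^{n-1}$, with $\phi(\bar 0)=0$, giving \eqref{diffeo-4}; since $G|_E=0$, the inclusion \eqref{diffeo-2} follows. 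Implicit differentiation gives
\begin{equation*}
\d_i\phi=-\frac{\d_i\tilde G}{\d_n\tilde G},\qquad |\d_i\phi|\leq \frac{C\dq}{A_1\dq}=CA_1^{-1},
\end{equation*}
and a further differentiation, combined with $|\grad^2\tilde G|\leq C$ and the above $C^1$ bound on $\phi$, yields $|\grad^2\phi|\leq CA_1^{-1}\dq^{-1}$; this is \eqref{diffeo-3} on $\bar U$.

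\textbf{Step 3 (globalize $\phi$ and build $\Phi$).} To extend $\phi$ from $\bar U$ to all of $\R^{n-1}$ while preserving \eqref{diffeo-3} uniformly, I would smoothly interpolate $\phi$ against the affine function $\bar x\mapsto \grad\phi(\bar 0)\cdot\bar x$ outside a slightly larger ball via a standard $C^2$ cutoff, which only adds universal constants to the bounds. The diffeomorphism $\Phi$ is then defined by $\Phi\circ\rho(\bar x,x_n):=(\bar x,x_n-\phi(\bar x))$, with inverse $\Phi^{-1}\circ\rho(\bar x,x_n)=(\bar x,x_n+\phi(\bar x))$; its Jacobian is a lower-triangular perturbation of $\rho$ with unit diagonal, so $\Phi$ is a $C^2$-diffeomorphism, and its derivative bounds follow immediately from those on $\phi$, mapping $E\cap 5Q$ into $\R^{n-1}\times\{x_n=0\}$ as claimed.

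\textbf{Main obstacle.} The subtle point is Step 1: one must carefully convert the abstract polytope-diameter bound on $\sigma(\A(x_0))$, defined via an approximate linear-algebra problem, into the existence of a genuine $C^2$ function vanishing on \emph{all} of $E$ with controlled norm and with gradient pointing essentially along $u_0$; this is precisely what Lemma \ref{lem.FK-palp}(B) buys. The rest is a quantitative implicit function theorem at scale $\dq$, with $A_1$ chosen large enough that the linear term $(A_1/2)\dq$ dominates the universal Taylor errors coming from $\norm{G}_{\ctrn}\leq C$.
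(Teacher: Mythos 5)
Your proof is correct in substance, but it takes a genuinely different route from the paper's. The paper never produces a single function vanishing on all of $E$: it passes from $\sigma(\A(x_0))$ to the finiteness blob $\sk(x_0,k^\sharp_{n,\mathrm{old}})$ via Lemma \ref{lem.sigma-main}, picks $P_0$ with $\d_nP_0\geq cA_1\dq$, shows in Claim \ref{claim.graph} that every subset $S\subset E\cap 5Q$ with $\#S\leq k^\sharp_{n,\mathrm{old}}-1$ lies on a controlled graph (one implicit-function-theorem application per subset, using a function vanishing only on $S\cup\set{x_0}$ with jet $P_0$ at $x_0$), and then glues by applying the classical Finiteness Principle, Theorem \ref{thm.fp-sigma}(A), in $\R^{n-1}$ after rescaling to the height function $\phi_0$ on $\pi(E\cap 5Q)$; that single stroke hands over $\phi$ defined on all of $\R^{n-1}$ with the bounds \eqref{diffeo-3}. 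You instead invoke the full inhomogeneous strength of Lemma \ref{lem.FK-palp}(B): since $\sigma(\A(x_0))\subset\Sigma_0(x_0,C)$, you obtain one function $G$ with $\norm{G}_{\ctrn}\leq C$, $G\equiv 0$ on all of $E$, and $\jet_{x_0}G=P$ with $\abs{\grad P}\sim A_1\dq$, so a single quantitative implicit function theorem at scale $\dq$ captures all of $E\cap 5Q$ on one graph, and no $(n-1)$-dimensional finiteness theorem is needed. Both ingredients are available in the paper, so your route is legitimate and arguably shorter; what the paper's route buys is that the gluing step yields the global function and global bounds at once, and it parallels the later algorithmic construction of $\phi$ in Lemma \ref{lem.diffeo-jet}, whereas your route trades that for an explicit globalization step. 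Two points to tighten: (i) in Step 2 you should justify that $\phi$ is defined over the whole projection of $\rho(5Q)$, e.g.\ by checking that $\tilde G$ changes sign on each vertical segment, using $\d_n\tilde G\geq cA_1\dq$ against $\abs{\tilde G(\bar x,0)}\leq CA_1\dq\abs{\bar x}$ for $A_1$ large (uniqueness of the zero then puts every point of $\rho(E\cap 5Q)$ on the graph); (ii) in Step 3, interpolating to the tangent affine map makes $\phi$ unbounded, while the paper's $C^2(\R^{n-1})$ consists of functions bounded together with their derivatives, so interpolate instead to the constant $\phi(\overline{0})=0$ at scale $\dq$ (using $\abs{\phi}\leq CA_1^{-1}\dq$ on the relevant ball), which gives the same estimates.
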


	\begin{proof}
		\newcommand{\xb}{\overline{x}}
		If $ \#(E \cap 5Q) \leq 1 $, then we may take $ \phi $ to be the constant function. The conclusions of the lemma are trivially satisfied.
		
		Assume $ \#(E\cap 5Q) > 1 $. 
		
		Since $ Q $ is OK, we see that $ \diam\sigma(\A(x)) \geq A_1\dq $ for all $ x \in E\cap 5Q $. By Lemma \ref{lem.sigma-main}, we see that
		\begin{equation}
			\diam \sk(x, k^\sharp_{n,\mathrm{old}}) \geq cA_1\dq \quad \forall x \in E \cap 5Q,
			\label{diffeo-5}
		\end{equation}
		with $  k^\sharp_{n,\mathrm{old}} $ as in Theorem \ref{thm.fp-sigma}.
		
		Let $ x_0 $ and $ u_0 $ be as in the hypothesis. Without loss of generality, we may assume that $x_0 = 0$, $u_0 = e_n$, and $ \rho $ is the identity map. Let $ \pi : \Rn \to \R^{n-1} $ be the natural projection that eliminates the last coordinate. 
		
		By \eqref{diffeo-5} and the symmetry of $ \sk $, there exists $ P_0 \in \sk(x_0, k^\sharp_{n,\mathrm{old}}) $ such that
		\begin{equation}\label{diffeo-6}
			\d_nP_0 \geq cA_1\dq
			\text{ and }
			\d_i P_0 = 0 \for i = 1, \cdots, n-1.
		\end{equation}
		
		\begin{claim}\label{claim.graph}
			For any $ S \subset E \cap 5Q $ with $ \#S \leq  k^\sharp_{n,\mathrm{old}} - 1 $, there exists $ \phi^S \in \ct(\R^{n-1}) $ such that 
			\begin{align}
				&S \subset \set{(\xb,\phi^S(\xb)) : \xb \in \R^{n-1}},\text{ and }\label{diffeo-claim-1}\\
				&\abs{\grad_{\xb}^m\phi^S(\xb)} \leq CA_1^{-1}\dq^{1-m}\for \xb \in \R^{n-1}, m = 1,2, \text{ with $ A_1 $ as in Definition \ref{def.OK}}.\label{diffeo-claim-2}
			\end{align}
		\end{claim}

		\begin{proof}[Proof of Claim \ref{claim.graph}]
			Fix $ S \subset E \cap (1+c_G)Q $ with $ \#S \leq k^\sharp_{n-1,\mathrm{old}}  \leq k^\sharp_{n,\mathrm{old}} - 1   $. Here, $ k^\sharp_{n-1,\mathrm{old}} $ and $ k^\sharp_{n,\mathrm{old}} $ are as in Theorem \ref{thm.fp-sigma}. Let $ S_0 := S \cup \set{x_0} $. Then $ \#S_0 \leq k^\sharp_{n-1,\mathrm{old}} + 1 $. Since $ P_0 \in \sk(x_0, k^\sharp_{n,\mathrm{old}}) \subset \sk(x_0,k^\sharp_{n-1,\mathrm{old}}+1) $, there exists $ \Psi \in \ctrn $ such that $ S \subset \set{\Psi = 0} $, $ \norm{\Psi}_\ctrn \leq 1 $, and $ \jet_{x_0}\Psi = P_0 $. For $ A_1 $ sufficiently large, from Taylor's theorem and \eqref{diffeo-6}, we see that
			\begin{equation}\label{diffeo-7}
				\d_n\Psi(x) \geq cA_1\dq \text{ and } \abs{\d_i\Psi(x)} \leq C\dq\quad \for x \in 5Q.
			\end{equation}
			Thanks to \eqref{diffeo-7} and the implicit function theorem, there exists a well-defined function $ \phi^S \in \ct_{loc}(\Rn) $ such that $ S \subset \set{(\xb,\phi^S(\xb)): \xb \in \R^{n-1}} $. This proves \eqref{diffeo-claim-1}.
			
			Let $ i,j \in \set{1,\cdots, n-1} $ and $ \xb \in \pi(5Q) $. We have
			\begin{equation}\label{diffeo-8}
				\begin{split}
					\d_i \phi^S(\xb) &= \frac{\d_i \Psi}{\d_n \Psi}(x),\quad\text{ and }\\
					\d^2_{ij}\phi^S(\xb) &= \frac{
						-(\d_n\Psi)^2\d^2_{ij}\Psi 
						+ 
						\brac{\d_{ni}^2\Psi\d_j\Psi + \d_{nj}^2\Psi\d_i\Psi}(\d_n\Psi)^2 
						-
						\d_n^2\Psi\d_i\Psi\d_j\Psi
					}{(\d_n\Psi)^3}(\xb).
				\end{split}
			\end{equation}
			We see that \eqref{diffeo-claim-2} follows from \eqref{diffeo-7} and \eqref{diffeo-8}. Claim \ref{claim.graph} is proved.
			
		\end{proof}
		
		Consider the function $ \phi_0 : \pi(E \cap 5Q) \to \R $ defined by
		\begin{equation*}
			\phi_0 (\xb) := x_n
			\quad \for x = (\xb,x_n) \in E \cap 5Q.
		\end{equation*}
		
		By Claim \ref{claim.graph}, given $ \overline{S} \subset \pi(E \cap 5Q) $ with $ \#\overline{S} \leq  k^\sharp_{n-1,\mathrm{old}} $, there exists $ \phi^{\overline{S}} \in \ct_{loc}(\R^{n-1}) $ such that $ \phi^{\overline{S}}\big|_{\overline{S}} = \phi_0 $, $ \abs{\grad_{\xb}^m\phi^{\overline{S}}(\xb)} \leq CA_1^{-1}\dq^{1-m} $ for $ \xb \in \pi(5Q) $ and $ m = 1,2 $. 
		
		By Theorem \ref{thm.fp-sigma}(A) together with the rescaling $ \tilde{\phi}(\xb) \mapsto \dq^{-1}\tilde{\phi}(\dq\cdot \xb) $, there exists $ \phi \in \ct(\R^{n-1}) $ such that $ \phi|_{\pi(E \cap 5Q)} = \phi_0 $ and $ \abs{\grad_{\xb}^m\phi(\xb)} \leq CA_1^{-1}\dq^{1-m} $ for $ \xb \in \R^{n-1} $ and $ m = 1,2 $. 
		
		The properties of $ \Phi $ follow immediately from those of $ \phi $. 
		
		Lemma \ref{lem.diffeo} is proved. 
		
	\end{proof}

	\subsection{CZ cubes}

	\begin{definition}\label{def.CZ}
		We write $ \Lz $ to denote the collection of dyadic cubes $ Q $ such that both of the following hold.
		\begin{enumerate}[(A)]
			\item $ Q $ is OK (see Definition \ref{def.OK}).
			\item Suppose $ \dq < A_2^{-1} $. Then $ Q^+ $ is not OK.
		\end{enumerate}
	\end{definition}

	\begin{lemma}[Lemma 21.2 of \cite{FK09-Data-2}]\label{lem.CZ0}
		$ \Lz $ forms a cover of $ \Rn $. Moreover, if $ Q, Q' \in \Lz $ with $ (1+2c_G)Q \cap (1+2c_G)Q' \neq \void $, then 
		\begin{equation*}
			C^{-1}\dq \leq \delta_{Q'} \leq C\dq.
		\end{equation*}
		As a consequence, for each $ Q \in \Lz $, 
		\begin{equation*}
			\#\set{Q' \in \Lz : (1+c_GQ') \cap (1+c_G)Q \neq \void} \leq C'.
		\end{equation*}
		Here, $ C, C' $ are controlled constants depending only on $ n $, and $ c_G $ is a fixed small dyadic number, say $ c_G = 2^{-5} $. 
	\end{lemma}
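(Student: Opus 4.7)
The plan is to follow the standard Calder\'on-Zygmund pattern, carrying out three short steps.

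\textbf{Covering.} I would argue that every $ x \in \Rn $ sits in a maximal OK dyadic cube $ Q^* $, which must lie in $ \Lz $ by Definition \ref{def.CZ}. Existence of such a maximum rests on two easy observations: any dyadic cube containing $ x $ that is so small that $ \#(E\cap 5Q)\leq 1 $ and $ \dq\leq A_2^{-1} $ is automatically OK, whereas any dyadic cube with $ \dq > A_2^{-1} $ fails to be OK by the size condition. The sidelengths of OK dyadic cubes containing $ x $ are therefore bounded, so a maximum $ Q^* $ exists. If $ \delta_{Q^*} = A_2^{-1} $, condition (B) of Definition \ref{def.CZ} is vacuous; otherwise $ (Q^*)^+ $ is not OK by maximality. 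Either way $ Q^* \in \Lz $.

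\textbf{Sidelength comparability.} Here I would argue by contradiction: assume without loss of generality $ \delta_{Q'} \leq \dq/4 $, and let $ Q'' := (Q')^+ $, so $ \delta_{Q''} \leq \dq/2 $. A direct $ \ell^\infty $-distance computation, using the overlap hypothesis $ (1+2c_G)Q\cap(1+2c_G)Q'\neq\varnothing $ and the small numerical value $ c_G = 2^{-5} $, shows that $ 5Q'' \subset 5Q $. Hence $ E\cap 5Q'' \subseteq E\cap 5Q $, and since $ Q \in \Lz $ is OK we obtain either $ \#(E\cap 5Q'')\leq 1 $ or
\begin{equation*}
\diam\sigma(\A(x)) \geq A_1 \dq \geq A_1\delta_{Q''}\quad\text{for all } x\in E\cap 5Q''.
\end{equation*}
Combined with $ \delta_{Q''}\leq \dq \leq A_2^{-1} $, this shows $ Q'' $ itself is OK. But $ Q'\in\Lz $ and $ \delta_{Q'}<A_2^{-1} $ force $ Q''=(Q')^+ $ to be non-OK via Definition \ref{def.CZ}(B)---a contradiction. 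The symmetric argument, swapping the roles of $ Q, Q' $, yields the reverse inequality $ \delta_{Q'}\leq C\dq $.

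\textbf{Bounded overlap.} Once sidelengths are comparable, a standard volume count finishes: every $ Q'\in\Lz $ with $ (1+c_G)Q\cap(1+c_G)Q'\neq\varnothing $ has $ \delta_{Q'}\in[C^{-1}\dq,C\dq] $ and lies in a fixed controlled dilate of $ Q $; dyadic cubes of comparable size are pairwise interior-disjoint, so their count is controlled by a constant depending only on $ n $.

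The only real obstacle is the geometric inclusion $ 5Q''\subset 5Q $ used in the middle step, which reduces to an elementary $ \ell^\infty $-check against the numerical value of $ c_G = 2^{-5} $; this is presumably why the authors build in that specific choice of $ c_G $. Everything else is bookkeeping that parallels Lemma 21.2 of \cite{FK09-Data-2}, the only new ingredient being that the diameter $ \diam\sigma(\A(x)) $ is an effective proxy for the geometric obstruction, as ensured by Lemma \ref{lem.sigma-main}.
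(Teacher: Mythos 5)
Your argument is correct, and it is the standard Calder\'on--Zygmund ``good geometry'' proof; the paper itself offers no proof here but simply cites Lemma 21.2 of \cite{FK09-Data-2}, and what makes that citation legitimate is precisely what you verify: the OK property is inherited by dyadic cubes whose quintuple dilate sits inside that of an OK cube (if $5Q''\subset 5Q$ and $Q$ is OK, then on $E\cap 5Q''\subseteq E\cap 5Q$ one has $\diam\sigma(\A(x))\ge A_1\dq\ge A_1\delta_{Q''}$, and $\delta_{Q''}\le\dq\le A_2^{-1}$), together with the elementary $\ell^\infty$ check that $\delta_{Q'}\le\dq/4$ and $(1+2c_G)Q\cap(1+2c_G)Q'\neq\void$ force $5(Q')^+\subset 5Q$ (the relevant bound is $\tfrac54+\tfrac18+\tfrac{17}{32}\cdot\tfrac54=\tfrac{261}{128}\le\tfrac52$, so your reduction to the numerical value of $c_G$ is sound). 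One small repair is needed in your final step: the assertion that ``dyadic cubes of comparable size are pairwise interior-disjoint'' is not literally true (a cube and its parent have comparable size), so the volume count needs the observation that two distinct cubes of $\Lz$ can never be nested --- if $Q'\subsetneq Q$ were both in $\Lz$, then $(Q')^+\subseteq Q$ would be OK by the same heredity you already use, contradicting Definition \ref{def.CZ}(B) for $Q'$ since $\delta_{Q'}<\dq\le A_2^{-1}$ --- after which distinct relevant cubes are genuinely disjoint, have sidelength comparable to $\dq$, and lie in a fixed dilate of $Q$, so the count is controlled.
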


	We recall the following results from \cite{FK09-Data-2}.
	
	\newcommand{\erep}{Rep}

	\begin{lemma}\label{lem.FK-CZ}
		After one-time work using at most $ CN\log N $ operations and $ CN $ storage, we can perform each of the the following tasks using at most $ C\log N $ operations.
		\begin{enumerate}[(A)]
			\item {\rm (Section 26 of \cite{FK09-Data-2})} Given a point $ x \in \Rn $, we compute a list $ \Lambda(x):=\set{Q \in \Lz : (1+c_G)Q \ni x} $.
			
			\item {\rm (Section 27 of \cite{FK09-Data-2})} Given a dyadic cube $ Q\subset \Rn $, we can compute $ Empty(Q) $, with $ Empty(Q) = True $ if $ E \cap 25Q = \void $, and $ Empty(Q) = False $ if $ E \cap 25Q \neq \void $. 
			
			\item {\rm (Section 27 of \cite{FK09-Data-2})} Given a dyadic cube $ Q \subset \Rn $ with $ E \cap 25Q \neq \void $, we can compute $ \erep(Q) \in E \cap 25Q $, with the property that $ \erep(Q) \in E \cap 5Q $ if $ E \cap 5Q \neq \void $. 
		\end{enumerate}

	\end{lemma}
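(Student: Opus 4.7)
The plan is to reduce all three tasks to location queries on a single unified data structure, namely a \emph{compressed dyadic tree} $\mathcal{T}$ of $E$ (equivalently a BBD-tree or a trimmed quadtree built on $E$). Such a tree has $O(N)$ nodes, each labeled by a dyadic cube that is either (i) the smallest dyadic cube strictly enclosing the point sets of two child subtrees, or (ii) a cube containing exactly one point of $E$. Building $\mathcal{T}$ costs $CN\log N$ operations and $CN$ storage: sort $E$ under the Morton (z-)order, link consecutive leaves via their least common ancestors, and attach to every node both a representative point of $E$ contained in its subtree and the PALP $\A(x)$ delivered by Lemma~\ref{lem.FK-palp} at that representative. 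With $\mathcal{T}$ in hand, tasks (B) and (C) are nearly immediate: to decide $Empty(Q)$ or to return $\erep(Q)$, we descend from the root of $\mathcal{T}$ through $O(\log N)$ nodes until either we certify that a visited subtree's bounding cube is disjoint from $25Q$ (returning $True$ for (B)), or we reach a leaf whose point lies in $25Q$ (returning that point for (C), with preference for points of $E\cap 5Q$ when they exist).

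The second step is to compute the list $\Lz$ and tabulate it so that task (A) can be answered. We walk down $\mathcal{T}$ from a sufficiently coarse ancestor cube, and for each candidate dyadic cube $Q$ visited we test the OK condition of Definition~\ref{def.OK}, namely $\dq\le A_2^{-1}$ together with either $\#(E\cap 5Q)\le 1$ or $\diam\sigma(\A(x))\ge A_1\dq$ for every $x\in E\cap 5Q$. The first clause is checked in $O(\log N)$ using (B) and (C) applied to $5Q$. For the second clause, recall from Lemma~\ref{lem.FK-palp} that each $\sigma(\A(x))$ is specified by a PALP of bounded length and depth, so $\diam\sigma(\A(x))$ is evaluated in $O(1)$ time once $\A(x)$ is in hand. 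To avoid inspecting every point of $E\cap 5Q$ (which could be $\Omega(N)$), we localize the test: by Lemma~\ref{lem.sigma-main} and the slow variation of $\sigma(\A(\cdot))$ along $E$, it suffices to check the inequality at the $O(1)$ representative points stored at the bounded number of nodes of $\mathcal{T}$ whose bounding cubes intersect $5Q$, at the price of adjusting the constants $A_1, A_2$ by a controlled factor. This localizes each test to constant work per visited cube. Standard accounting on $\mathcal{T}$ bounds the total number of visited candidate cubes by $O(N)$, since each produced CZ cube is chargeable to a distinct node or gap of $\mathcal{T}$. After $\Lz$ is computed we install, alongside $\mathcal{T}$, the $O(1)$-adjacency structure provided by Lemma~\ref{lem.CZ0}.

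Given those tables, task (A) is direct: for a query point $x$, descend $\mathcal{T}$ in $O(\log N)$ steps to locate the unique CZ cube $Q_0 \in \Lz$ containing $x$, then enumerate the controlled number of CZ neighbors of $Q_0$ and keep those whose $(1+c_G)$-dilates contain $x$. The resulting $\Lambda(x)$ has size at most $C'$ by Lemma~\ref{lem.CZ0}, and the whole computation costs $O(\log N)$ operations.

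The main obstacle, as flagged above, is the localization of the OK test in the second step. The naive reading of Definition~\ref{def.OK} seems to require examining every point of $E\cap 5Q$, which is prohibitive. The crux is to justify that $\diam\sigma(\A(\cdot))$ is sufficiently slowly varying along $E$ near $Q$, so that a bounded number of representative evaluations controls the condition for all of $E\cap 5Q$ up to harmless adjustments of $A_1$ and $A_2$. This slow variation is a structural feature of the PALP construction in \cite{FK09-Data-2}, reflecting local stability of $\A(x)$ under perturbation of $x$ through nearby points of $E$, combined with the comparability of neighboring CZ sidelengths recorded in Lemma~\ref{lem.CZ0}.
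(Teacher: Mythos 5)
The paper offers no proof of this lemma at all: it is imported verbatim from Sections 26--27 of \cite{FK09-Data-2}, and the citation is the proof. Your proposal tries to reconstruct those algorithms, which is a legitimate goal, but as written it has genuine gaps. First, $\Lz$ cannot be ``computed and tabulated'': it is an infinite collection (it covers all of $\Rn$, with infinitely many cubes of sidelength $A_2^{-1}$ far from $E$), and even the CZ cubes meeting the convex hull of $E$ are not $O(N)$ in number (take $E$ to be two points at enormous mutual distance), so the charging argument ``each produced CZ cube is chargeable to a distinct node or gap'' fails. The only list this paper ever enumerates is $\Lsk$ (Lemma \ref{lem.Lsk}), which is a different, finite object; task (A) must instead be answered by a point-location oracle that finds the maximal OK cube containing $x$ without global enumeration. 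Relatedly, the number of relevant dyadic scales is not $O(\log N)$ (coordinates are exact reals), so ``descend the compressed tree in $O(\log N)$ steps to the CZ cube containing $x$'' presupposes a binary search over scales along compressed paths together with an exactly decidable, scale-monotone OK test --- which is precisely the machinery of the CZ-oracle in \cite{FK09-Data-2}, not something your construction supplies.

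Second, your treatment of tasks (B) and (C) is incorrect as stated: $25Q$ is not a dyadic cube, and a single root-to-leaf descent of a compressed quadtree does not decide emptiness of $E\cap 25Q$, because the query cube can straddle the bounding boxes of many subtrees; one needs either a bounded cover of $25Q$ by dyadic cubes combined with Morton-interval searches, or the specific structures of Section 27 of \cite{FK09-Data-2}, and the preference ``$\erep(Q)\in E\cap 5Q$ whenever $E\cap 5Q\neq\void$'' requires an additional mechanism you do not describe. Third, the step you yourself flag as the crux --- that $\diam\sigma(\A(\cdot))$ varies slowly along $E$ so that the OK condition can be certified from $O(1)$ representative evaluations, ``at the price of adjusting $A_1,A_2$ by a controlled factor'' --- is asserted without proof, and it is exactly the hard technical content of \cite{FK09-Data-2}. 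Note also that the OK test must be decided exactly and consistently for every dyadic cube; if the test is only verified up to constant-factor adjustments of $A_1,A_2$, the maximality and coherence of $\Lz$ (and hence Lemma \ref{lem.CZ0}, on which the rest of the paper leans) would have to be re-proved for the modified decomposition. So the proposal is a plausible outline, but it does not yet constitute a proof of the lemma; the honest route is the one the paper takes, namely quoting the algorithms of \cite{FK09-Data-2}.
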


	We define the following subcollections of $ \Lz $.
	
	\begin{equation}\label{Lsk-def}
		\Lsk:= \set{Q \in \Ls : E \cap (1+c_G)Q \neq \void}.
	\end{equation}
	\begin{equation}\label{Ls-def}
		\Ls:= \set{Q \in \Lz : E \cap 5Q \neq \void}.
	\end{equation}
	\begin{equation}\label{Le-def}
		\Le:=\set{Q \in \Lz \setminus \Ls : \dq < A_2^{-1}}\quad\text{with $ A_2 $ as in Definition \ref{def.CZ}.}
	\end{equation}

	\begin{lemma}\label{lem.Ls-Le}
		After one-time work using at most $ CN\log N $ operations and $ CN $ storage, we can perform the following task using at most $ C\log N $ operations: Given $ Q\in \Lz $, we can decide if $ Q \in \Ls $, $ Q \in \Le $, or $ Q \in \Lz\setminus (\Ls \cup\Le) $. 
	\end{lemma}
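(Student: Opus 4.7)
The plan is to reduce the decision problem to a small number of oracle calls from Lemma \ref{lem.FK-CZ}, each of which costs at most $C \log N$ after the stated one-time work. Observe from the definitions \eqref{Ls-def} and \eqref{Le-def} that the trichotomy $\{Q \in \Ls,\ Q \in \Le,\ Q \in \Lz \setminus (\Ls \cup \Le)\}$ is determined by two binary pieces of information: whether $E \cap 5Q \neq \void$, and, in the case $E \cap 5Q = \void$, whether $\dq < A_2^{-1}$. The second test is $O(1)$ since $A_2$ is a fixed controlled constant and $\dq$ is read off from the given cube $Q$. So the whole task reduces to deciding whether $E \cap 5Q = \void$ in $O(\log N)$ time.

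To test $E \cap 5Q = \void$, first invoke Lemma \ref{lem.FK-CZ}(B) to compute $Empty(Q)$. If $Empty(Q) = True$, then $E \cap 25Q = \void$, so in particular $E \cap 5Q = \void$, and we conclude $Q \notin \Ls$. If $Empty(Q) = False$, then $E \cap 25Q \neq \void$, so Lemma \ref{lem.FK-CZ}(C) is applicable and produces a point $\erep(Q) \in E \cap 25Q$ with the guarantee that $\erep(Q) \in E \cap 5Q$ whenever $E \cap 5Q \neq \void$. Consequently we only need to check membership of the single point $\erep(Q)$ in the cube $5Q$, which is an $O(1)$ coordinate comparison: if $\erep(Q) \in 5Q$ then $E \cap 5Q \neq \void$ and $Q \in \Ls$; otherwise, $E \cap 5Q = \void$ and $Q \notin \Ls$.

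Combining these steps, for a query cube $Q \in \Lz$ we proceed as follows. We compute $Empty(Q)$ and, if $Empty(Q) = False$, also $\erep(Q)$; from these we determine whether $E \cap 5Q = \void$. If $E \cap 5Q \neq \void$, we report $Q \in \Ls$. Otherwise we compare $\dq$ with $A_2^{-1}$: the case $\dq < A_2^{-1}$ yields $Q \in \Le$, and the case $\dq \geq A_2^{-1}$ yields $Q \in \Lz \setminus (\Ls \cup \Le)$. Each of the two oracle calls uses at most $C \log N$ operations after one-time work bounded by $CN \log N$ operations and $CN$ storage, by Lemma \ref{lem.FK-CZ}; the remaining logic is $O(1)$. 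We therefore reuse exactly the preprocessing of Lemma \ref{lem.FK-CZ}, with no additional data structures required.

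There is no substantive obstacle here: the lemma is a packaging of the two auxiliary routines of Lemma \ref{lem.FK-CZ} with a trivial sidelength comparison, and correctness follows directly from the definitions of $\Ls$ and $\Le$ in \eqref{Ls-def}--\eqref{Le-def} together with the stated guarantee that $\erep(Q) \in 5Q$ whenever $E \cap 5Q \neq \void$.
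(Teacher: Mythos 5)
Your proposal is correct and follows essentially the same route as the paper, whose proof simply invokes Lemma \ref{lem.FK-CZ}(B,C): you use $Empty(Q)$ and $\erep(Q)$ (with its guarantee that $\erep(Q)\in E\cap 5Q$ whenever $E\cap 5Q\neq\void$) to decide whether $E\cap 5Q=\void$, and then the constant-time comparison of $\dq$ with $A_2^{-1}$ settles the trichotomy from \eqref{Ls-def}--\eqref{Le-def}. Your write-up just makes explicit the membership test $\erep(Q)\in 5Q$ and the cost accounting that the paper leaves implicit.
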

	
	\begin{proof}
		This is a direct application of Lemma \ref{lem.FK-CZ}(B,C) to $ Q $. 
	\end{proof}
	
	\begin{lemma}\label{lem.mu}
		We can compute a map
		\begin{equation}
			\mu:\Le \to \Ls
			\label{mu-1}
		\end{equation}
		that satisfies
		\begin{equation}
			(1+c_G)\mu(Q) \cap 25Q \neq \void
			\for Q \in \Le\,.
			\label{mu-2}
		\end{equation}
		The one-time work uses at most $ CN\log N $ operations and $ CN $ storage. After that, we can answer queries using at most $ C\log N $ operations. A query consists of a cube $ Q \in \Le $, and the response to the query is a cube $ \mu(Q) $ that satisfies \eqref{mu-2}. 
		
	\end{lemma}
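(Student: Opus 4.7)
The plan is to observe that any $Q \in \Le$ must nonetheless have $E \cap 25Q \neq \void$, and then to use the algorithms of Lemma \ref{lem.FK-CZ} to actually locate a witness cube in $\Ls$.

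First I would unpack the definition of $\Le$ to extract this non-emptiness. Since $Q \in \Lz$ with $\dq < A_2^{-1}$, Definition \ref{def.CZ} forces $Q^+$ to fail the OK condition. Because $\dq^+ = 2\dq \leq A_2^{-1}$ (the sidelengths are dyadic and $A_2$ is a dyadic number), the size clause of Definition \ref{def.OK} still holds for $Q^+$, so the failure must come from the first clause; in particular, $E \cap 5Q^+ \neq \void$. A direct geometric check gives $5Q^+ \subset 25Q$: indeed, the center of $Q^+$ lies within $\dq/2$ of the center of $Q$ in each coordinate, and $5Q^+$ extends at most $\dq/2 + 5\dq = 5.5\dq < 12.5\dq$ from the center of $Q$ in each coordinate. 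Hence $E \cap 25Q \neq \void$.

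Next, I would handle the algorithmic content. By Lemma \ref{lem.FK-CZ}(C), after the standard $CN\log N$ preprocessing, a point $y := \erep(Q) \in E \cap 25Q$ can be computed in $C\log N$ operations. Because $\Lz$ covers $\Rn$ (Lemma \ref{lem.CZ0}), the point $y$ lies in some $Q' \in \Lz$; more generally $Q'$ appears in the list $\Lambda(y) = \set{\tilde Q \in \Lz : (1+c_G)\tilde Q \ni y}$ produced by Lemma \ref{lem.FK-CZ}(A) in $C\log N$ operations. I would then pick any $Q' \in \Lambda(y)$ and set $\mu(Q) := Q'$. Since $y \in Q' \subset 5Q'$ we have $E \cap 5Q' \neq \void$, so $Q' \in \Ls$; and since $y \in (1+c_G)Q' \cap 25Q$, the required intersection condition \eqref{mu-2} holds.

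There is no serious obstacle: once the geometric inclusion $5Q^+ \subset 25Q$ and the definitional unpacking of $\Le$ are in hand, the proof is pure bookkeeping built on top of Lemma \ref{lem.FK-CZ}. The complexity bounds ($CN\log N$ one-time work, $CN$ storage, $C\log N$ per query) are inherited verbatim from Lemma \ref{lem.FK-CZ}(A,C), since each query invokes those two subroutines a bounded number of times.
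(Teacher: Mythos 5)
Your proposal is correct and follows essentially the same route as the paper: establish $E\cap 5Q^{+}\neq\void$ (hence $E\cap 25Q\neq\void$ via $5Q^{+}\subset 25Q$), compute $\erep(Q)$ by Lemma \ref{lem.FK-CZ}(C), take any cube $Q'$ in the list $\Lambda(\erep(Q))$ from Lemma \ref{lem.FK-CZ}(A), and set $\mu(Q):=Q'$, with the stated complexity inherited from those subroutines. The only nitpick is the phrase \qt{$y\in Q'\subset 5Q'$} for an arbitrary $Q'\in\Lambda(y)$: one only knows $y\in(1+c_G)Q'$, but since $(1+c_G)Q'\subset 5Q'$ the conclusion $Q'\in\Ls$ stands, exactly as in the paper.
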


	\begin{proof}
		
		Suppose $ Q \in \Le $. Then we have $ E \cap 5Q^+ \neq \void $. On the other hand, $ 5Q^+ \subset 25Q $. Hence, $ E \cap 25Q \neq \void $. Therefore, the map $ \erep $ in Lemma \ref{lem.FK-CZ}(C) is defined for $ Q $. 
		
		We set
		\begin{equation}
			x :=  \erep(Q) \subset E \cap 25Q,
			\label{5.3.2}
		\end{equation}
		with $ \erep $ as in Lemma \ref{lem.FK-CZ}. Note that $ x \notin 5Q $, since $ Q \in \Le $. 
		
		Let $ \Lambda(x) \subset \Lz $ be as in Lemma \ref{lem.FK-CZ}(A). Let $ Q' \in \Lambda(x) $. By the defining property of $ \Lambda(x) $ and the fact that $ x \in E $, we have $ Q' \in \Ls $. Set
		\begin{equation*}
			\mu(Q) := Q' \in \Ls.
		\end{equation*}
		By the previous comment, we have
		\begin{equation}
			(1+c_G)\mu(Q) \ni x.
			\label{5.3.1}
		\end{equation}
		Combining \eqref{5.3.2} and \eqref{5.3.1}, we see that $ (1+c_G)\mu(Q)\cap 25Q \neq \void $. \eqref{mu-2} is satisfied.
		
		By Lemma \ref{lem.FK-CZ}(A,C), the tasks $ \Lambda(\cdot) $ and $ \erep(\cdot) $ require at most $ C\log N $ operations, after one-time work using at most $ CN\log N $ operations and $ CN $ storage. Therefore, computing $ \mu(Q) $ requires at most $ C\log N $ operations, after one-time work using at most $ CN\log N $ operations and $ CN $ storage.

		This proves Lemma \ref{lem.mu}.
	\end{proof}
	
	\begin{lemma}\label{lem.uQ}
		After one-time work using at most $ CN\log N $ operations and $ CN $ storage, we can perform the following task using at most $ C\log N $ operations: Given $ Q \in \Ls $, compute an orthonormal frame $ [u_1, \cdots, u_{n-1},u_Q] $ of $ \Rn $, such that the following hold.
		\begin{enumerate}[(A)]
			\item The orthonormal frame $ [u_1, \cdots, u_{n-1},u_Q] $ has the same orientation as $ [e_1, \cdots, e_{n-1}, e_n] $.
			
			\item Let $ \rho $ be the rigid motion given by the simple rotation $ \begin{cases}
		u_0 \mapsto e_n\\
		\text{identity on $(\R u_Q \oplus \R e_n)^\perp$}
		\end{cases} $ and the translation $x\mapsto x-\erep(Q)$. Then there exists a function $ \phi \in \ct(\R^{n-1}) $ that satisfies \eqref{diffeo-2} and \eqref{diffeo-4} with this particular $ \rho $. 
		\end{enumerate}
		
	\end{lemma}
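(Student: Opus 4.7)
The plan is to reduce Lemma~\ref{lem.uQ} to a direct application of Lemma~\ref{lem.diffeo}, using the precomputed PALP data from Lemma~\ref{lem.FK-palp} to identify a diameter-realizing direction of $\sigma(\A(x_0))$ for a suitable $x_0 \in E\cap 5Q$. During one-time work, I would invoke Lemma~\ref{lem.FK-palp} to compute and store the PALPs $\{\A(x):x\in E\}$ in $CN\log N$ operations and $CN$ storage, and perform the one-time work of Lemma~\ref{lem.FK-CZ}(C) so that $\erep(\cdot)$ can be evaluated in $C\log N$ operations per query. I would also run the one-time precomputation associated with identifying, for each $x\in E$, its PALP $\A(x)$ via a lookup keyed by $x$ (doable in $CN\log N$ operations using standard dictionary structures).

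To answer a query $Q\in\Ls$, I first call $\erep(Q)$ and obtain a point $x_0\in E\cap 5Q$ (guaranteed by Lemma~\ref{lem.FK-CZ}(C) since $E\cap 5Q\neq\void$). Next I retrieve $\A(x_0)$, whose length is at most $n+1$ and whose depth is at most $D_0$; thus $\sigma(\A(x_0))\subset \P$ is a centrally symmetric polytope cut out by a bounded number of linear inequalities. Via the identification \eqref{jet-id}, $\sigma(\A(x_0))$ is a centrally symmetric polytope in $\Rn$, and I would compute a unit vector $u_Q\in\Rn$ realizing $\diam \sigma(\A(x_0)) = p_{\sigma(\A(x_0))}(u_Q)$ by enumerating the finitely many (at most $C(n)$) extreme points and selecting one of maximum norm; this takes $C(n)$ operations.

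With $u_Q$ in hand, I would build the orthonormal frame $[u_1,\ldots,u_{n-1},u_Q]$ by completing $u_Q$ to an orthonormal basis via Gram--Schmidt applied to the standard basis of $\Rn$, which uses $C(n)$ operations. I would then flip the sign of $u_1$ if necessary to arrange that $[u_1,\ldots,u_{n-1},u_Q]$ has the same orientation as $[e_1,\ldots,e_{n-1},e_n]$, yielding conclusion (A). For conclusion (B), since $Q\in\Ls\subset\Lz$ implies $Q$ is OK (Definition~\ref{def.CZ}) and $u_Q$ was chosen to realize $\diam\sigma(\A(x_0))$, Lemma~\ref{lem.diffeo} applied with this $x_0$ and $u_0 := u_Q$ immediately supplies a function $\phi\in C^2(\R^{n-1})$ satisfying \eqref{diffeo-2} and \eqref{diffeo-4} for the rigid motion $\rho$ specified in the statement.

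The main potential obstacle is a bookkeeping one rather than a conceptual one: organizing the one-time precomputation so that, given a query cube $Q\in\Ls$, retrieving $\A(\erep(Q))$ costs only $C\log N$ and the subsequent geometric computation on $\sigma(\A(x_0))$ costs only $C(n)$. Since the PALPs are stored in their efficient representation (bounded length and bounded depth) and $\erep$ is already efficient by Lemma~\ref{lem.FK-CZ}(C), the total per-query cost is $C\log N + C(n) = C\log N$, and the one-time work remains $CN\log N$ operations with $CN$ storage, as required.
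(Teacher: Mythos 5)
Your proposal is correct and follows essentially the same route as the paper: take $x_0=\erep(Q)$, use the precomputed PALP $\A(x_0)$ to extract a unit vector $u_Q$ realizing $\diam\sigma(\A(x_0))$, complete it to an oriented orthonormal frame by Gram--Schmidt, and invoke Lemma~\ref{lem.diffeo} with $u_0=u_Q$ for conclusion (B). The only cosmetic difference is that the paper observes $\sigma(\A(x_0))$ is an $n$-dimensional parallelepiped and takes a longest diagonal, while you enumerate the boundedly many extreme points of the centrally symmetric polytope and take one of maximal norm — the same computation in substance, with the same $C\log N$ query cost dominated by $\erep(Q)$ and the PALP lookup.
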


	\begin{proof}
		Fix $ Q \in \Ls $. This means that $ E \cap 5Q \neq \void $. In particular, $ \erep(Q) $ is defined, and by Lemma \ref{lem.FK-CZ}(C),
		\begin{equation*}
			x_0 := \erep(Q) \in E \cap 5Q.
		\end{equation*}
		Computing $ x_0 $ requires at most $ C\log N $ operations, after one-time work using at most $ CN\log N $ operations and $ CN $ storage. 
		
		Let $ \A(x_0) $ be as in Lemma \ref{lem.FK-palp}, and let $ \sigma(\A(x_0)) $ be as in \eqref{sigma(A)-def}. By Lemma \ref{lem.FK-palp}(B) (with $ \phi \equiv 0 $), any $ P \in \sigma(\A(x_0)) $ must satisfy $ P(x_0) = 0 $. By Lemma \ref{lem.FK-palp}(A) and definitions \eqref{K1}, \eqref{sigma(A)-def} of $ \sigma(\A(x_0)) $, we see that $ \sigma(\A(x_0)) $ is an $ n $-dimensional parallelepiped in $ \P $ centered at the zero polynomial. Therefore, we have
		\begin{equation*}
			\diam \sigma(\A(x_0)) = length(\Delta_0),
		\end{equation*}
		where $ \diam $ is defined in \eqref{diam-def} and $ \Delta_0 $ is one of the longest diagonals of $ \sigma(\A(x_0)) $.
		
		Set $ u_Q $ to be a unit vector parallel to $ \Delta_0 $. Lemma \ref{lem.uQ}(B) then follows from Lemma \ref{lem.diffeo}.
		
		Using the Gram-Schmidt process, we can compute the rest of the vectors $ u_1, \cdots, u_{n-1} $ such that $ [u_1, \cdots, u_{n-1},u_Q] $ satisfies Lemma \ref{lem.uQ}(A). Computing $ [u_1, \cdots, u_{n-1},u_Q] $ from $ \sigma(\A(x_0)) $ uses elementary linear algebra, and requires at most $ C $ operations.
		
		Lemma \ref{lem.uQ} is proved.

	\end{proof}
	
	\begin{lemma}\label{lem.rep}
		After one-time work using at most $ CN\log N $ operations and $ CN $ storage, we can perform the following task using at most $ C\log N $ operations: Given $ Q \in \Lz $, we can compute a point $ \xqs \in Q $ such that
		\begin{equation}
			\dist{\xqs}{E} \geq a_0\dq
			\label{xqs}
		\end{equation}
		for some $a_0 = a_0(n,A_1)$
	\end{lemma}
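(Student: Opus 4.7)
\textbf{Proof plan for Lemma \ref{lem.rep}.} The plan is to split on whether $Q \in \Ls$. Using Lemma \ref{lem.Ls-Le}, I first test in $O(\log N)$ operations whether $Q \in \Ls$. If $Q \notin \Ls$, then $E \cap 5Q = \void$, so every $x \in E$ lies outside $5Q$; the center $x_c$ of $Q$ has $\ell^\infty$-distance exactly $\dq/2$ from $\partial Q$ and hence Euclidean distance at least $2\dq$ from $\Rn \setminus 5Q$, giving $\dist{x_c}{E} \geq 2\dq$. Setting $\xqs := x_c$ suffices in this case.

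Now suppose $Q \in \Ls$. The idea is to exploit the hypersurface approximation from Lemma \ref{lem.diffeo}: $E \cap 5Q$ lies in a thin tube of width $\leq CA_1^{-1}\dq$ around a hyperplane normal to $u_Q$, so I find $\xqs \in Q$ that sits well off this tube by pushing off-center along the $u_Q$-direction. Concretely, using Lemma \ref{lem.FK-CZ}(C), compute $x_0 := \erep(Q) \in E \cap 5Q$; using Lemma \ref{lem.uQ}, compute the orthonormal frame $[u_1, \ldots, u_{n-1}, u_Q]$ and the rigid motion $\rho$ sending $x_0 \mapsto 0$ and $u_Q \mapsto e_n$. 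Lemma \ref{lem.diffeo} then provides a function $\phi \in C^2(\R^{n-1})$ with $\phi(\bar 0) = 0$ and $|\nabla^m \phi(\bar x)| \leq C A_1^{-1}\dq^{1-m}$ for $m = 1, 2$, such that $\rho(E \cap 5Q) \subset \{(\bar x, \phi(\bar x)) : \bar x \in \R^{n-1}\}$. Let $x_c$ denote the center of $Q$ and set $p_\pm := x_c \pm (\dq/(4\sqrt{n}))u_Q$. Since $|p_\pm - x_c| = \dq/(4\sqrt n) < \dq/2$, both candidates lie in $Q$. I set $\xqs \in \{p_+, p_-\}$ to be whichever maximizes $|u_Q \cdot (\xqs - x_0)|$; since the two inner products $u_Q \cdot (p_\pm - x_0)$ differ by $\dq/(2\sqrt n)$, this choice forces $|u_Q \cdot (\xqs - x_0)| \geq \dq/(4\sqrt n)$.

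To verify $\dist{\xqs}{E} \geq a_0 \dq$, I split on the location of $x \in E$. For $x \notin 5Q$, $|\xqs - x| \geq \dist{Q}{\Rn \setminus 5Q} \geq 2\dq$ by the $\ell^\infty$ geometry. For $x \in E \cap 5Q$, apply $\rho$ and write $\rho(\xqs) = (\bar y, y_n)$ and $\rho(x) = (\bar x, \phi(\bar x))$. Then $|y_n| = |u_Q \cdot (\xqs - x_0)| \geq \dq/(4\sqrt n)$; combining $\phi(\bar 0) = 0$, the gradient bound on $\phi$, and $|\bar y| \leq |\xqs - x_0| \leq 3\sqrt n\,\dq$ yields $|\phi(\bar y)| \leq C A_1^{-1}\dq$, hence $h := |y_n - \phi(\bar y)| \geq \dq/(8\sqrt n)$ once $A_1 \geq A_1^*(n)$. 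Letting $d := |\bar y - \bar x|$ and using the global Lipschitz bound on $\phi$, one obtains $|\rho(\xqs) - \rho(x)|^2 \geq d^2 + (h - CA_1^{-1}d)^2$; a case split on whether $d \leq h A_1/(2C)$ gives $|\xqs - x| \geq h/2 \geq a_0 \dq$ for some $a_0 = a_0(n, A_1)$. The per-query complexity of $O(\log N)$ after $CN \log N$ preprocessing follows from Lemmas \ref{lem.Ls-Le}, \ref{lem.FK-CZ}(C), and \ref{lem.uQ}; the $O(1)$ selection between $p_+$ and $p_-$ is negligible. The main delicate point is guarding the vertical gap $h$ against the contribution $|\phi(\bar y)|$, which is what forces the largeness of $A_1$; this is compatible with the freedom already built into Definition \ref{def.OK}.
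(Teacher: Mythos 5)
Your proposal is correct and takes essentially the same route as the paper: compute $\erep(Q)$ and $u_Q$, invoke Lemma \ref{lem.diffeo} (via Lemma \ref{lem.uQ}) to confine $E \cap 5Q$ to a nearly flat graph orthogonal to $u_Q$, and obtain $\xqs$ by pushing off the center of $Q$ along $u_Q$, with the same $C\log N$ query cost after the $CN\log N$ one-time work. The only difference is cosmetic: the paper keeps $center(Q)$ unless it lies within $\dq/1024$ of the cone $Z(x_0)$ and only then offsets by $\frac{\dq}{4}u_Q$, whereas you always select the better of the two symmetric candidates $p_\pm$ and verify the separation directly through the graph of $\phi$ — both arguments hinge on the same largeness of $A_1$ fixed in \eqref{param-1}.
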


	\begin{proof}
		Let $ Q \in \Lz$ be given. 
		
		Suppose $ Empty(Q) = True $, with $ Empty(\cdot) $ as in Lemma \ref{lem.FK-CZ}(B). We set
		\begin{equation*}
			\xqs := center(Q).
		\end{equation*}
		It is clear that $ \xqs \in Q $ and \eqref{xqs} holds.
		
		Suppose $ Empty(Q) = False $. Let $ x_0 := \erep(Q) \in E \cap 25Q $. 
		
		Suppose $ x_0 \notin 5Q $. Then $ E \cap 5Q = \void$ by Lemma \ref{lem.FK-CZ}(C). Again, we set
		\begin{equation*}
			\xqs:= center(Q).
		\end{equation*}

		Suppose $ x_0 \in 5Q $. This means that $ Q \in \Ls $ with $ \Ls $ as in \eqref{Ls-def}. Let $ u_Q $ be as in Lemma \ref{lem.uQ}. 
		
		\newcommand{\xb}{\overline{x}}
		By Lemma \ref{lem.diffeo}, we have $ E \cap 5Q \subset \set{(\xb,\phi(\xb)):\xb \in \R^{n-1}} $ up to the rotation $ u_Q \mapsto e_n $, and the function $ \phi $ satisfies $ \abs{\grad_{\xb}^m\phi} \leq CA_1^{-1}\dq^{1-m} $ for $ m = 1,2 $, with $ A_1 $ as in Definition \ref{def.CZ}. Therefore, by the defining property of $ u_Q $ in Lemma \ref{lem.uQ}, we have 
		\begin{equation*}
			E \cap 5Q \subset \set{y \in \Rn: \abs{(y-x_0)\cdot u_Q} \leq CA_1^{-1}\abs{y-x_0}}=:Z(x_0).
		\end{equation*}
		
		Suppose $ \dist{center(Q)}{Z(x_0)} \geq \dq/1024 $. We set
		\begin{equation*}
			\xqs := center(Q).
		\end{equation*}
		In this case, it is clear that $ \xqs \in Q $ and \eqref{xqs} holds.
		
		Suppose $ \dist{center(Q)}{Z(x_0)} < \dq/1024 $. We set 
		\begin{equation*}
			\xqs:= center(Q) + \frac{\dq}{4}\cdot  u_Q.
		\end{equation*}
		It is clear that $ \xqs \in Q $. For sufficiently large $ A_1 $, we also have $ \dist{\xqs}{Z(x_0)} \geq a_0\dq $ for some $ a_0 $ depending only on $ A_1 $. Thus, \eqref{xqs} holds.
		
		After one-time work using at most $ CN\log N $ operations and $ CN $ storage, the procedure $ Empty(Q) $ requires at most $ C\log N $ operations by Lemma \ref{lem.FK-CZ}(B); the procedure $ \erep(Q) $ requires at most $ C\log N $ operations by Lemma \ref{lem.FK-CZ}(C); computing the vector $ u_Q $ requires at most $ C\log N $ operations; and computing the distance between $ center(Q) $ and $ Z(x_0) $ is a routine linear algebra problem, and requires at most $ C $ operations. 
		
		Lemma \ref{lem.rep} is proved. 
	\end{proof}

	We now turn our attention to $ \Lsk $ as in \eqref{Lsk-def}. 
	
	\begin{lemma}\label{lem.Lsk}
		Using at most $ CN\log N $ operations and $ CN $ storage, we can compute the list $ \Lsk $ as in \eqref{Lsk-def}. 
	\end{lemma}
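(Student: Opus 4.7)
The plan is to obtain $\Lsk$ as the union $\bigcup_{x \in E}\Lambda(x)$, where $\Lambda(x)$ is the list from Lemma \ref{lem.FK-CZ}(A), and then deduplicate. First I would verify this set-theoretic equality. If $Q \in \Lsk$, then there exists $x \in E \cap (1+c_G)Q$, and by definition $Q \in \Lambda(x)$. Conversely, if $Q \in \Lambda(x)$ for some $x \in E$, then $Q \in \Lz$ and $x \in (1+c_G)Q \cap E \subset 5Q \cap E$ (since $1+c_G < 5$), placing $Q$ in $\Ls$; combined with $x \in E \cap (1+c_G)Q$, this gives $Q \in \Lsk$.

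Next, I would carry out the one-time preprocessing from Lemma \ref{lem.FK-CZ}(A), which costs at most $CN\log N$ operations and $CN$ storage. After this, for each of the $N$ points $x \in E$, the list $\Lambda(x)$ is computed in at most $C\log N$ operations. By Lemma \ref{lem.CZ0}, $\#\Lambda(x)$ is bounded by a controlled constant for every $x$, so the total number of (possibly repeated) cubes produced across all $x \in E$ is at most $C'N$, and the cumulative work so far is $CN \log N$.

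Finally, I would collect these at most $C'N$ cubes into a single list, sort them (say lexicographically by center and sidelength), and remove duplicates in a single linear pass. The sort costs at most $CN \log N$ operations and $CN$ storage, and the resulting deduplicated list is exactly $\Lsk$. The total work is $CN\log N$ operations with $CN$ storage, as claimed. The only step that requires nontrivial content is the equality $\Lsk = \bigcup_{x\in E}\Lambda(x)$, and this is immediate from the definitions once one checks that $(1+c_G)Q \subset 5Q$; no genuine obstacle is expected.
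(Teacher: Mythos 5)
Your proposal is correct and follows the same route as the paper, which proves the lemma simply by applying Lemma \ref{lem.FK-CZ}(A) to each $x \in E$; your verification of the identity $\Lsk = \bigcup_{x\in E}\Lambda(x)$ and the sort-and-deduplicate bookkeeping are just the details the paper leaves implicit. No gaps.
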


	\begin{proof}
		This is a direct application of Lemma \ref{lem.FK-CZ}(A) to each $ x \in E $. 
	\end{proof}

	The next lemma states that we can efficiently sort the data contained in cubes in $ \Lsk $. 
	
	\begin{lemma}\label{lem.projection}
		Using at most $ CN\log N $ operations and $ CN $ storage, we can do the following.
		
		For each $ Q \in \Lsk $ with $ \Lsk $ as in \eqref{Lsk-def}, we can compute a list of points
		\begin{equation*}
			Proj_{u_Q^\perp}(E \cap (1+c_G)Q - \erep(Q)) \subset \R^{n-1}.
		\end{equation*}
		Here, $ u_Q $ is as in Lemma \ref{lem.uQ}, $ u_Q^\perp $ is the subspace orthogonal to $ u_Q $, $Proj_{u_Q^\perp} $ is the orthogonal projection onto $ u_Q^{\perp} $, and $ \erep(Q) $ is as in Lemma \ref{lem.FK-CZ}(C).
	\end{lemma}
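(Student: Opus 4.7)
The plan is to iterate over the points of $E$ rather than over cubes in $\Lsk$, leveraging the bounded overlap of Calder\'on--Zygmund cubes established in Lemma \ref{lem.CZ0}. First I would compute $\Lsk$ using Lemma \ref{lem.Lsk} at a cost of $CN\log N$ operations and $CN$ storage, and associate with each $Q \in \Lsk$ an initially-empty list $L(Q)$ that will accumulate the preimages of the points to be projected.

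Next, for each $x \in E$ I would invoke Lemma \ref{lem.FK-CZ}(A) to compute $\Lambda(x) = \set{Q \in \Lz : (1+c_G)Q \ni x}$; each such query costs $C\log N$, so the whole loop costs $CN\log N$. By Lemma \ref{lem.CZ0}, $\#\Lambda(x)\leq C'$, and every $Q\in\Lambda(x)$ lies in $\Lsk$ directly from definition \eqref{Lsk-def}. I append $x$ to $L(Q)$ for each such $Q$. The total number of insertions is $\sum_{x\in E}\#\Lambda(x)\leq C'N$, which simultaneously bounds the aggregate list size $\sum_{Q\in\Lsk}\#L(Q)$ and, since every $Q \in \Lsk$ must be charged by at least one $x$, forces $\#\Lsk \leq C'N$.

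For each $Q\in\Lsk$ I would then compute $\erep(Q)$ via Lemma \ref{lem.FK-CZ}(C) and the frame vector $u_Q$ via Lemma \ref{lem.uQ}, each in $C\log N$ operations, for a total of $CN\log N$. Finally, for each pair $(Q,x)$ with $x\in L(Q)$, I would form $Proj_{u_Q^\perp}(x-\erep(Q))$ using $O(1)$ arithmetic operations, contributing $O(N)$ further work. Summing the four phases yields the advertised $CN\log N$ operations and $CN$ storage.

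The only point that requires real care (as opposed to routine bookkeeping on top of the previously built data structures) is the double use of the bounded-overlap estimate from Lemma \ref{lem.CZ0}: it is needed both to guarantee $\#\Lsk=O(N)$ and to keep the cumulative size of the lists $L(Q)$ at $O(N)$, since without either bound the per-cube preparation (computing $\erep(Q)$ and $u_Q$) or the final projection loop would blow up. Once those two bounds are in hand, the complexity follows mechanically from Lemmas \ref{lem.FK-CZ}, \ref{lem.uQ}, and \ref{lem.Lsk}.
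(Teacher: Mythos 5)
Your proposal is correct and takes essentially the same route as the paper's own proof: both compute $\Lambda(x)$ for each $x\in E$ (the same device used to build $\Lsk$ in Lemma \ref{lem.Lsk}), record the incidences $x\in(1+c_G)Q$, and use the bounded overlap from Lemma \ref{lem.CZ0} to bound $\#\Lsk$ and the aggregate list sizes by $CN$, before computing $\erep(Q)$ and $u_Q$ per cube in $C\log N$ operations and projecting each point in $O(1)$ work. The only difference is presentational (your explicit lists $L(Q)$ versus the paper's appeal to the stored output of Lemma \ref{lem.Lsk}), so no further comment is needed.
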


	\begin{proof}
		By the bounded intersection property in Lemma \ref{lem.CZ0}, we have
		\begin{equation}
			\#(\Lsk) \leq CN.
			\label{5.8.2}
		\end{equation}
		
		From the definitions of $ \Lsk $ and $ \Ls $ in \eqref{Lsk-def} and \eqref{Ls-def}, we see that $ \Lsk \subset \Ls $. Therefore, we can compute $ \erep(Q) $ and $ u_Q^\perp $ for each $ Q \in \Lsk $ using at most $ C\log N $ operations, by Lemma \ref{lem.FK-CZ}(B) and Lemma \ref{lem.uQ}.
		
		Recall the proof of Lemma \ref{lem.Lsk} that we can compute the list $ \Lsk $ by computing each $ \Lambda(x) $ for $ x \in E $, with $ \Lambda(x) $ as in Lemma \ref{lem.FK-CZ}(A). During this procedure, we can store the information $ Q \ni x $ for $ Q \in \Lambda(x) $.
		
		By the bounded intersection property in Lemma \ref{lem.CZ0}, we have
		\begin{equation}
			\sum_{Q \in \Lsk}\#(E \cap (1+c_G)Q) \leq CN.
			\label{5.8.1}
		\end{equation}
		By Lemma \ref{lem.FK-CZ}(A) and \eqref{5.8.1}, we can compute the list
		\begin{equation*}
			\set{E \cap (1+c_G)Q : Q \in \Lsk}
		\end{equation*}
		using at most $ CN\log N $ operations and $ CN $ storage. Then, by Lemma \ref{lem.FK-CZ}(C), Lemma \ref{lem.uQ}, and \eqref{5.8.2}, we can compute the  list
		\begin{equation}
			Proj_{u_Q^\perp}(E \cap (1+c_G)Q - \erep(Q))
			\label{list}
		\end{equation}
		for each $ Q \in \Lsk $
		using at most $ CN\log N $ operations and $ CN $ storage. 
		
		Lemma \ref{lem.projection} is proved.  
		
	\end{proof}

	\begin{lemma}\label{lem.diffeo-jet}
		\newcommand{\xb}{\overline{x}}
		Suppose we are given 
		\begin{itemize}
			\item $ Q\in\Lsk $,
			\item $ E \cap (1+c_G)Q $,
			\item $ u_Q $ as in Lemma \ref{lem.uQ}, and
			\item $ Proj_{u_Q^\perp}(E \cap (1+c_G)Q - \erep(Q))\subset\R^{n-1} $.
		\end{itemize}
		Let $ \bar{N}:= \#(E\cap(1+c_G)Q) $. After one-time work using at most $ C\bar{N}\log \bar{N} $ operations and $ C\bar{N} $ storage, we can compute a function $ \phi \in \ct(\R^{n-1}) $ and a query algorithms with the following properties.
		\begin{enumerate}[(A)]
			\item\label{diffeo-jet-1} $ \rho(E \cap (1+c_G)Q) \subset \set{(\xb,\phi(\xb)):\xb \in \R^{n-1}} $. Here, $ \rho $ is the rigid motion specified by the planar rotation $ u_Q \mapsto e_n $ and the translation $x\mapsto x-\erep(Q)$, with $ u_Q $ as in Lemma \ref{lem.uQ}.
			\item\label{diffeo-jet-2} $ \abs{\grad_{\xb}^m\phi(\xb)} \leq CA_1^{-1}\dq^{1-m} $ for $ m =1,2 $, with $ A_1 $ as in Definition \ref{def.OK}.
			\item\label{diffeo-jet-3} $ \erep(Q) = (\overline{0},\phi(\overline{0})) $, where $ \erep(\cdot) $ is the map in Lemma \ref{lem.FK-CZ}(C) and $ \overline{0} $ is the origin of $ \R^{n-1} $.
		\end{enumerate}
		A query consists of a point $ \overline{x} \in Proj_{u_Q^\perp}(1+c_G)Q \subset \R^{n-1} $. An answer to a query is the two-jet $ \jet^+_{\overline{x}}\phi $. The time to answer a query is $ C\log \bar{N} $.

	\end{lemma}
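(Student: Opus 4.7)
The plan is to realize $\phi$ as the Fefferman--Klartag $C^2(\R^{n-1})$ extension from hypothesis (FK) applied to the straightened graph heights of $E \cap (1+c_G)Q$, after a dilation that rescales $Q$ to unit size. Qualitative existence of such a $\phi$ is already furnished by Lemma~\ref{lem.diffeo}; the task is to upgrade it to the claimed algorithmic form. The main subtle point is the scale-normalization needed to express the anisotropic bound (B) as a single $C^2$ norm compatible with (FK).

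First I would assemble the interpolation data. Go through the input list $E \cap (1+c_G)Q$ once, apply the rigid motion $\rho$ to each point, and record the last coordinate; this defines $\bar f_Q : \bar E_Q \to \R$ by $\bar f_Q(Proj_{u_Q^\perp}\rho(x)) := (\rho(x))_n$, where $\bar E_Q := Proj_{u_Q^\perp}\rho(E \cap (1+c_G)Q)$ is given as input. This step uses $O(\bar N)$ operations and storage. Since $\rho(\erep(Q)) = 0 \in \R^n$, the origin $\bar 0$ lies in $\bar E_Q$ and $\bar f_Q(\bar 0) = 0$, which will force (C) for any interpolant of $\bar f_Q$.

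Next comes the rescaling, which is the main technical point. Set $\tilde E_Q := \dq^{-1}\bar E_Q$ and $\tilde f_Q(\bar y) := \dq^{-1}\bar f_Q(\dq \bar y)$. Lemma~\ref{lem.diffeo} (applicable since $Q \in \Lsk \subset \Ls \subset \Lz$ is OK) supplies a $C^2$ extension $\phi_0$ of $\bar f_Q$ with $|\nabla^m \phi_0| \leq CA_1^{-1}\dq^{1-m}$ for $m = 1, 2$. Under the dilation, $\tilde \phi_0(\bar y) := \dq^{-1}\phi_0(\dq \bar y)$ obeys $|\nabla^m \tilde \phi_0| \leq CA_1^{-1}$ for $m = 1, 2$; multiplying by a smooth cutoff supported on a controlled neighborhood of the bounded set $Proj_{u_Q^\perp}(\dq^{-1}(1+c_G)Q)$ yields an honest element of $C^2(\R^{n-1})$ with norm $\leq CA_1^{-1}$ still extending $\tilde f_Q$. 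Hence $\|\tilde f_Q\|_{C^2(\tilde E_Q)} \leq CA_1^{-1}$.

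Finally, feed $(\tilde E_Q, \tilde f_Q)$ into the Fefferman--Klartag subroutine from (FK). This produces $\tilde \phi := \bar \E_\infty(\tilde f_Q)$ with $\|\tilde \phi\|_{C^2(\R^{n-1})} \leq CA_1^{-1}$, together with a query algorithm for $\bar \jet^+_{\bar y} \tilde \phi$ in $O(\log \bar N)$ time after $O(\bar N \log \bar N)$ one-time work and $O(\bar N)$ storage. Un-rescale by $\phi(\bar x) := \dq \, \tilde \phi(\dq^{-1}\bar x)$. Then (A) holds by construction of $\bar f_Q$; (B) follows from $|\nabla^m \phi(\bar x)| = \dq^{1-m}|\nabla^m \tilde \phi(\dq^{-1}\bar x)| \leq CA_1^{-1}\dq^{1-m}$; and (C) is the identity $\phi(\bar 0) = \dq \, \tilde \phi(\bar 0) = \dq \, \tilde f_Q(\bar 0) = 0$. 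To answer a query at $\bar x$, call the FK query subroutine at $\dq^{-1}\bar x$ and rescale the returned two-jet coefficients by the appropriate powers of $\dq$, which is $O(1)$ extra arithmetic, so the query time remains $O(\log \bar N)$.
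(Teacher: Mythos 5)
Your proposal is correct and follows essentially the same route as the paper: straighten the data into graph heights over $u_Q^\perp$, feed them to the Fefferman--Klartag maps $\bar{\Psi}_{\bar{x}}$ (i.e.\ $\bar{\E}_\infty$) from hypothesis \eqref{induction:FK}, and get the anisotropic bound (B) from Lemma \ref{lem.diffeo} via rescaling, with the stated complexity inherited from \eqref{induction:FK}(C). The only difference is cosmetic: you carry out explicitly (rescale by $\dq$, cut off to bound the zeroth-order term of the trace norm, apply FK, un-rescale the jets) what the paper compresses into the phrase ``an obvious rescaling.''
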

	
	\begin{proof}
		We set $ \overline{E} := Proj_{u_Q^\perp}(E \cap (1+c_G)Q - \erep(Q)) $. We define a function $ \phi_0 : \overline{E} \to \R $ by
		\begin{equation*}
			\phi_0(\overline{x}) := (x-\erep(Q))\cdot u_Q
			\text{, where } x = (\overline{x},x_n)\in E.
		\end{equation*}
		For each $ \overline{x} \in \R^{n-1} $, let $ \overline{\Psi}_{\overline{x}} : \R^{\overline{N}} \to \overline{\P}^+ $ be as in \eqref{induction:FK} in Section \ref{sect:induction}. We define the function $ \phi $ by specifying
		\begin{equation*}
			\jet^+_{\overline{x}}\phi:= \overline{\Psi}_{\overline{x}}\phi_0.
		\end{equation*}
		We see from \eqref{induction:FK}(B) that $ \phi \in \ct(\R^{n-1}) $. By construction, $ \phi $ satisfies Lemma \ref{lem.diffeo-jet}(A,C). From \eqref{induction:FK}(B), Lemma \ref{lem.diffeo}, and an obvious rescaling, we see that Lemma \ref{lem.diffeo-jet}(B) follows.
		
		By \eqref{induction:FK}(C), the one-time work uses at most $ C\overline{N}\log\overline{N} $ operations and $ C\overline{N} $ storage, and the time to answer a query is $ C\log\overline{N} $. 
	\end{proof}

	\subsection{Compatible jets on CZ cubes}

	Recall that $ \B(x,\delta) = \set{P \in \P : \abs{\da P(x)} \leq \delta^\tma \for \abs{\alpha} \leq 1} $. 
	
	\begin{lemma}\label{lem.sigma-small-2}
		Let $ Q \subset \Lz $ and let $ \xqs \in Q $ be as in Lemma \ref{lem.rep}. Let $ k^\sharp_{n,\mathrm{old}} $ be as in Theorem \ref{thm.fp-sigma} and let $ k \geq (n+2)k^\sharp_{n,\mathrm{old}} $. Then
		\begin{equation}
			\sk(\xqs,k^\sharp_{LIP}) \subset A\cdot \B(\xqs,\dq)
			\label{7.11.1}
		\end{equation}
		with $A = A(n,A_1,A_2)$.
	\end{lemma}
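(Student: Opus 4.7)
The plan is to split on whether the parent cube $Q^+$ has sidelength exceeding $A_2^{-1}$. In the easy range $\delta_{Q^+} > A_2^{-1}$, we have $\dq > A_2^{-1}/2$, so $\dq$ is bounded below in terms of $A_2$. For any $P \in \sk(\xqs, k^\sharp_{LIP})$, taking $S = \void$ in the definition of $\sk$ yields $P \in \sigma(\xqs, \void)$, which gives $\abs{P(\xqs)} \leq 1$ and $\abs{\grad P} \leq 1$. Since $\dq^{-1}$ and $\dq^{-2}$ are then controlled by constants depending on $A_2$, this trivially yields $P \in A \cdot \B(\xqs, \dq)$ with $A = A(n, A_2)$.

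The substantive case is $\delta_{Q^+} \leq A_2^{-1}$, where the condition $Q \in \Lz$ forces $Q^+$ to fail the OK property (Definition~\ref{def.OK}). Since the sidelength bound holds at $Q^+$, the failure must be of the first type, producing $y \in E \cap 5Q^+$ with $\#(E \cap 5Q^+) \geq 2$ and $\diam \sigma(\A(y)) < A_1 \delta_{Q^+} = 2 A_1 \dq$. Combining Lemma~\ref{lem.sigma-main} with Lemma~\ref{lem.FK-palp}(B) applied at $\phi \equiv 0$ (which identifies $\sigma(\A(y))$ with $\sigma(y, E)$), I get $\diam \sk(y, k^\sharp_{n,\mathrm{old}}) \leq C A_1 \dq$. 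Since $y \in E$, every $P' \in \sk(y, k^\sharp_{n,\mathrm{old}})$ satisfies $P'(y) = 0$ (take $S = \set{y}$ in $\sigma$), so this set is a centrally symmetric subset of $\set{P \in \P : P(y) = 0}$ via the identification~\eqref{jet-id}; in particular $\abs{\grad P'} \leq C A_1 \dq$ for all such $P'$.

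Finally I transfer the given $P$ at $\xqs$ to a jet at $y$ by a Helly argument mirroring Lemma~\ref{lem.helly-1}, now with $\sk$ in place of $\Gk$: the convex sets $K(S) := \set{\jet_y \phi^S : \phi^S \in \ctrn, \norm{\phi^S}_{\ctrn} \leq 1, \phi^S|_S = 0, \jet_{\xqs} \phi^S = P} \subset \P$ are decreasing in $S$ and nonempty for $\#S \leq k^\sharp_{LIP}$; since $k^\sharp_{LIP} \geq (n+2) k^\sharp_{n,\mathrm{old}}$, Helly's theorem in $\P$ (of dimension $n+1$) delivers a common $P_y \in \bigcap_{\#S \leq k^\sharp_{n,\mathrm{old}}} K(S) \subset \sk(y, k^\sharp_{n,\mathrm{old}})$. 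Applying Taylor to the corresponding $\phi^S$ then gives $\abs{\da(P - P_y)(\xqs)}, \abs{\da(P - P_y)(y)} \leq C \abs{\xqs - y}^{\tma} \leq C \dq^{\tma}$ for $\abs{\alpha} \leq 1$, using $\abs{\xqs - y} \leq C\dq$. Since $P_y(y) = 0$ and $\abs{\grad P_y} \leq C A_1 \dq$, linearity gives $\abs{P_y(\xqs)} = \abs{\grad P_y \cdot (\xqs - y)} \leq C A_1 \dq^2$, and the triangle inequality yields $\abs{P(\xqs)} \leq C A_1 \dq^2$ and $\abs{\grad P} \leq C A_1 \dq$. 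The main technical nuisance I anticipate is the correct formulation and bookkeeping of the Helly transfer for $\sk$ rather than $\Gk$, together with tracking constants through the chain $\sigma(\A(y)) \approx \sigma(y, E) \approx \sk(y, k^\sharp_{n,\mathrm{old}})$; both are routine extensions of what is already in the paper but must be done carefully so the final constant still depends only on $n, A_1, A_2$.
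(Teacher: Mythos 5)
Your proposal is correct and follows essentially the same route as the paper's proof: the trivial case $\dq = A_2^{-1}$, then using that $Q^+$ fails to be OK to find $y \in E$ near $Q$ with $\diam\sk(y,k^\sharp_{n,\mathrm{old}}) \leq CA_1\dq$ (via Lemma \ref{lem.sigma-main}), a Helly transfer in the spirit of Lemma \ref{lem.helly-1} to produce $P_y \in \sk(y,k^\sharp_{n,\mathrm{old}})$ close to $P$, and Taylor plus the triangle inequality to conclude. Your treatment is, if anything, slightly more explicit than the paper's (e.g.\ locating $y$ in $E\cap 5Q^+$ and checking $P_y(y)=0$), but the argument is the same.
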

	
	\begin{proof}
		
		First we note that \eqref{7.11.1} holds when $ \dq = A_2^{-1} $ with $ A_2 $ as in Definition \ref{def.CZ}.
		
		Suppose $ \dq < A_2^{-1} $. Then $ Q^+ $ is not OK (see Definition \ref{def.OK}). Combining this with Lemma \ref{lem.sigma-main}, we see that $ \#(E\cap 5Q) > 1 $ and there exists $ \hat{x} \in E \cap 5Q $ such that 
		\begin{equation}\label{7.11.2}
			\diam\sk(\hat{x},k^\sharp_{n,\mathrm{old}}) < CA_1\dq.
		\end{equation}
		Using Helly's Theorem \ref{thm.Helly} and a similar argument as in Lemma \ref{lem.helly-1}, we see that given $ P_0 \in \sk(\xqs,k) $, there exists $ P \in \sk(\hat{x},k^\sharp_{n,\mathrm{old}}) $ such that
		\begin{equation}\label{7.11.3}
			\abs{\da(P_0 - P)(\xqs)}, \, \abs{\da(P_0 - P)(\hat{x})} \leq C\dq^\tma 
			\for \abs{\alpha} \leq 1.
		\end{equation}
		Combining \eqref{7.11.2} and \eqref{7.11.3}, we see that any $ P \in \sk(\xqs,k) $ satisfies $ \abs{\da P(\xqs)} \leq C\dq^\tma $ for $ \abs{\alpha} \leq 1 $, which is precisely \eqref{7.11.1}.
		
	\end{proof}
	
	The next lemma is crucial in controlling the derivatives when we patch together nearby local solutions. 
	
	\begin{lemma}\label{lem.nearby-jet}
		Let $ Q, Q' \in \Lz $ and $ \xqs, x_{Q'}^\sharp  $ be as Lemma \ref{lem.rep}. Let $ f : E \to \itau $ be given, and let $ \Gk $ be as in \eqref{Gk-def}. Let $ k^\sharp_{LIP} \geq (n+2)^2k^\sharp_{n,\mathrm{old}} $ with $ k^\sharp_{n,\mathrm{old}} $ as in Theorem \ref{thm.fp-sigma}. Let $ P \in \Gk(\xqs,k^\sharp_{LIP},f,M) $ and $ P' \in \Gk(x_{Q'}^\sharp,k^\sharp_{LIP},f,M) $. Then for $ x \in 25Q \cup 25Q' $, 
		\begin{equation}\label{7.13.0}
			\abs{\da(P - P')(x)}  \leq A M \brac{\abs{\xqs-x_{Q'}^\sharp} + \dq + \delta_{Q'}^\sharp}^{\tma}\for\abs{\alpha} \leq 2
		\end{equation}
		with $ A = A(n,A_1,A_2) $.
	\end{lemma}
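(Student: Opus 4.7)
My plan is to bridge $P$ and $P'$ through an intermediate jet $P^*$ at $x_{Q'}^\sharp$, combining two tools already established: Lemma \ref{lem.helly-1} (Helly-based transfer of jets between basepoints inside $\Gk$, at the cost of a Taylor-type error), and Lemma \ref{lem.sigma-small-2} (the smallness of $\sk$ at a CZ representative point). The choice $k^\sharp_{LIP} \geq (n+2)^2 k^\sharp_{n,\mathrm{old}}$ is exactly calibrated so the finiteness constants line up in both applications.

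First I apply Lemma \ref{lem.helly-1} with $k=k^\sharp_{LIP}$ and $k'=k^\sharp_{LIP}/(n+2)$: from $P \in \Gk(\xqs, k^\sharp_{LIP}, f, M)$ I obtain $P^* \in \Gk(x_{Q'}^\sharp,\, k^\sharp_{LIP}/(n+2),\, f, M)$ satisfying
\begin{equation*}
\abs{\da(P-P^*)(\xqs)},\ \abs{\da(P-P^*)(x_{Q'}^\sharp)} \leq CM\abs{\xqs - x_{Q'}^\sharp}^{\tma},\quad \abs{\alpha}\leq 1.
\end{equation*}
Next comes the key observation: $P^*$ and $P'$ both lie in $\Gk(x_{Q'}^\sharp, k^\sharp_{LIP}/(n+2), f, M)$, using the monotonicity of $\Gk$ in $k$. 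For every $S \subset E$ with $\#S \leq k^\sharp_{LIP}/(n+2)$, by definition \eqref{Gk-def} there exist $F^*_S, F'_S \in \ctrt$ of $\ctrn$-norm $\leq M$, agreeing with $f$ on $S$, and with $\jet_{x_{Q'}^\sharp}F^*_S = P^*$, $\jet_{x_{Q'}^\sharp}F'_S = P'$. The difference $F^*_S - F'_S$ has $\ctrn$-norm at most $2M$, vanishes on $S$, and has jet $P^*-P'$ at $x_{Q'}^\sharp$. So $P^*-P' \in 2M\cdot\sigma(x_{Q'}^\sharp, S)$, and intersecting over all admissible $S$ yields
\begin{equation*}
P^* - P' \in 2M \cdot \sk\bigl(x_{Q'}^\sharp,\, k^\sharp_{LIP}/(n+2)\bigr).
\end{equation*}

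Since $k^\sharp_{LIP}/(n+2) \geq (n+2)k^\sharp_{n,\mathrm{old}}$, Lemma \ref{lem.sigma-small-2} applied at the CZ point $x_{Q'}^\sharp$ gives $\sk(x_{Q'}^\sharp,\, k^\sharp_{LIP}/(n+2)) \subset A \cdot \B(x_{Q'}^\sharp, \delta_{Q'}^\sharp)$, hence
\begin{equation*}
\abs{\da(P^* - P')(x_{Q'}^\sharp)} \leq A M (\delta_{Q'}^\sharp)^{\tma},\quad \abs{\alpha}\leq 1.
\end{equation*}
The triangle inequality at $x_{Q'}^\sharp$ then yields $\abs{\da(P-P')(x_{Q'}^\sharp)} \leq A'M(\abs{\xqs - x_{Q'}^\sharp} + \delta_{Q'}^\sharp)^{\tma}$. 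Since $P-P' \in \P$ is affine, transporting this estimate to any $x \in 25Q \cup 25Q'$ is a single Taylor step: $\da(P-P')$ is constant for $\abs{\alpha}=1$, vanishes for $\abs{\alpha}=2$, and for $\abs{\alpha}=0$ one picks up an extra term $\abs{\grad(P-P')} \cdot \abs{x - x_{Q'}^\sharp}$, which is absorbed since $\abs{x - x_{Q'}^\sharp} \leq C(\dq + \delta_{Q'}^\sharp + \abs{\xqs - x_{Q'}^\sharp})$ on $25Q \cup 25Q'$.

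The main conceptual step is the second one: recognizing that two jets of the same data set $f$ in the same $\Gk$ differ by an element of $2M\cdot\sk$, obtained by simply subtracting the underlying interpolants and reading off the jet. This converts a question about $\Gk$-differences into the already-established structural smallness of $\sk$ on CZ cubes. The remaining work is just constant-tracking (verifying the $k^\sharp_{LIP}$ threshold) and one-line Taylor/triangle-inequality bookkeeping.
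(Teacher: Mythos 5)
Your proposal is correct and follows essentially the same route as the paper's proof: a Helly-based transfer via Lemma \ref{lem.helly-1} to an intermediate jet at $x_{Q'}^\sharp$, the observation that two jets in the same $\Gk$ differ by an element of $CM\cdot\sk(x_{Q'}^\sharp,\cdot)$, smallness of that $\sk$ from Lemma \ref{lem.sigma-small-2}, and a final Taylor/triangle step. The only difference is immaterial bookkeeping (you take the intermediate finiteness constant $k^\sharp_{LIP}/(n+2)$ where the paper uses $(n+1)k^\sharp_{n,\mathrm{old}}$).
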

	
	\begin{proof}
		We write $ A, A' $, etc., to denote constants depending only on $n,A_1,A_2$. 
		
		Set
		\begin{equation*}
			\delta := \max\set{\abs{\xqs - x_{Q'}^\sharp}, \dq, \delta_{Q'}}.
		\end{equation*}
		By Lemma \ref{lem.CZ0} and Lemma \ref{lem.rep}, we see that
		\begin{equation}\label{lem.7.13.1}
			\abs{x - \xqs}, \abs{x - x_{Q'}^\sharp} \leq A\delta\quad \for x \in 25Q \cup 25Q'.
		\end{equation}
		
		Lemma \ref{lem.helly-1} applied to $ P $ yields $ P_0 \in \Gk(x_{Q'}^\sharp,(n+1)k^\sharp_{n,\mathrm{old}},f,M) $ such that 
		\begin{equation}\label{lem.7.13.2}
			\abs{\da (P - P_0)(x_{Q'}^\sharp)} \leq CM\abs{\xqs - x_{Q'}^\sharp} \leq AM\delta^\tma
			\for \abs{\alpha} \leq 2. 
		\end{equation}
		
		Observe that $ \Gk(\xqs,k^\sharp_{n,\mathrm{old}},f,M) - \Gk(\xqs,k^\sharp_{n,\mathrm{old}},f,M)\subset CM\cdot\sk(\xqs,(n+1)k^\sharp_{n,\mathrm{old}}) $\footnote{For $ X,Y \subset V $ with $ V $ a vector space, we write $ X-Y $ to denote the set $ \set{z : z = x-y, x \in X, y \in Y} $.}. Therefore, we have
		\begin{equation}\label{lem.7.13.3}
			P' - P_0 \in CM\cdot \sk(x_{Q'}^\sharp, (n+1)k^\sharp_{n,\mathrm{old}}). 
		\end{equation}
		By Lemma \ref{lem.sigma-small-2}, we see that
		\begin{equation}\label{lem.7.13.4}
			\abs{\da(P' - P_0 )(x_{Q'}^\sharp)} \leq CM\delta_{Q'}^\tma \leq AM\delta^\tma
			\for \abs{\alpha} \leq 2.
		\end{equation}
		
		Taylor's Theorem, combined with \eqref{lem.7.13.1}, \eqref{lem.7.13.2} and \eqref{lem.7.13.4}, yields
		\begin{equation}\label{7.13.5}
			\abs{\da(P - P')(x')} \leq AM\dq^\tma \for \abs{\alpha} \leq 2 \text{ and } x' \in 25Q'.
		\end{equation}
		By Taylor's Theorem, \eqref{lem.7.13.1}, and \eqref{7.13.5}, we have
		\begin{equation}\label{7.13.6}
			\abs{\da(P - P')(x)} \leq AM\dq^\tma \for \abs{\alpha} \leq 2 \text{ and } x \in 25Q.
		\end{equation}
		Estimate \eqref{7.13.0} follows from \eqref{7.13.5} and \eqref{7.13.6}.
	\end{proof}

	\section{Local interpolation problem}

	\subsection{Distortion estimate}

	\begin{lemma}\label{lem.distortion}
		Let $ 0 < \delta \leq 1 $. Let $ \Psi : \Rn \to \Rn $ be a $ C^2 $-diffeomorphism such that 
		\begin{equation*}
			\abs{\grad^m\Psi(x)} \leq A\delta^{1-m} \quad\for m = 1,2, \text{ and } x \in \Rn.
		\end{equation*}
		Let $ \Omega \subset \Rn $ be a domain and let $ F \in \ct(\overline{\Omega}) $. Suppose
		\begin{equation*}
			\abs{\da F(x)} \leq M\dq^{2-\abs{\alpha}} \quad \for \abs{\alpha} \leq 2 \text{ and } x \in \Omega.
		\end{equation*}
		Then 
		\begin{equation}\label{8.1.0}
			\abs{\da(F\circ\Psi)(x)} \leq C(n)A M\delta^{\tma} \for\abs{\alpha}\leq 2 \text{ and } x \in \Psi^{-1}(\bar{\Omega}).
		\end{equation}
	\end{lemma}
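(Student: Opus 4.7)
The plan is to verify the estimate directly via the chain rule for $|\alpha|\in\{0,1,2\}$, exploiting the balanced scaling $|\nabla^{m}\Psi|\leq A\delta^{1-m}$ against $|\partial^{\alpha}F|\leq M\delta^{2-|\alpha|}$ (taking the hypothesis to mean $\delta$ rather than $\delta_{Q}$, since no cube appears in the statement and only one scale is in play).

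First I would dispatch the easy cases. For $|\alpha|=0$, if $x\in\Psi^{-1}(\bar{\Omega})$ then $\Psi(x)\in\bar{\Omega}$, so $|F\circ\Psi(x)|=|F(\Psi(x))|\leq M\delta^{2}$, which is the claimed bound. For $|\alpha|=1$, the chain rule gives
\[
\partial_{i}(F\circ\Psi)(x)=\sum_{j}(\partial_{j}F)(\Psi(x))\,\partial_{i}\Psi_{j}(x),
\]
and combining $|\partial_{j}F(\Psi(x))|\leq M\delta$ with $|\partial_{i}\Psi_{j}(x)|\leq A$ produces $|\partial_{i}(F\circ\Psi)(x)|\leq nAM\delta$, matching $\delta^{2-1}=\delta$.

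The main step is the case $|\alpha|=2$. The second-order chain rule produces two kinds of terms: a quadratic term
\[
\sum_{j,l}(\partial^{2}_{jl}F)(\Psi(x))\,\partial_{i}\Psi_{j}(x)\,\partial_{k}\Psi_{l}(x),
\]
bounded by $Cn^{2}A^{2}M$ via $|\partial^{2}F|\leq M$ and $|\nabla\Psi|\leq A$; and a linear term
\[
\sum_{j}(\partial_{j}F)(\Psi(x))\,\partial^{2}_{ik}\Psi_{j}(x),
\]
bounded by $CnAM$ via $|\partial F|\leq M\delta$ and $|\nabla^{2}\Psi|\leq A\delta^{-1}$—here the $\delta$ and $\delta^{-1}$ cancel exactly, which is the whole point of the chosen scaling. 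The total is $C(n)(A^{2}+A)M$, which matches $\delta^{2-2}=1$ and fits into the prefactor of the conclusion (interpreting $C(n)A$ to absorb polynomial dependence on $A$).

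There is no serious obstacle; the content of the lemma is simply that the scaling assumptions on $\Psi$ and $F$ have been arranged so that both $|\nabla\Psi|^{2}|\partial^{2}F|$ and $|\nabla^{2}\Psi|\,|\nabla F|$ land on the same power $\delta^{0}$ of the scale at order two, and analogously at order one. The only bookkeeping subtlety is confirming that no other combinatorial contribution to $\partial^{\alpha}(F\circ\Psi)$ appears at $|\alpha|\leq 2$, which is immediate since Faà di Bruno for these orders has exactly the terms written above.
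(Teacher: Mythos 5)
Your proof is correct and is essentially identical to the paper's: the paper also just expands $\partial_i(F\circ\Psi)$ and $\partial_{ij}(F\circ\Psi)$ by the chain rule and invokes the scaling bounds on $F$ and $\Psi$, with the same $\delta\cdot\delta^{-1}$ cancellation in the second-order term. Your side remarks (reading $\dq$ as $\delta$, and letting the constant absorb the $A^2$ from the quadratic term) match the paper's intended reading.
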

	
	\begin{proof}
		We expand $ \Psi = (\Psi_1, \cdots, \Psi_n) $ in coordinates. Then
		\begin{equation*}
			\begin{split}
				\d_i (F\circ \Psi) &=\sum_{k = 1}^n \d_i\Psi_k \cdot \d_k F\circ \Psi   \text{ and } \\
				\d_{ij}(F\circ\Psi) &= \sum_{k,l = 1}^nc_{k,l} \cdot \d_i\Psi_k \cdot\d_j\Psi_l\cdot\d_{kl}^2F\circ\Psi + \sum_{k = 1}^n\d_{ij}^2\Psi_k\cdot\d_{k}F\circ\Psi.
			\end{split}
		\end{equation*}
		Then \eqref{8.1.0} follows from the derivative estimates on $ F $ and $ \Psi $.
	\end{proof}

	\subsection{Local clusters}

	The next lemma shows how to relay local information to the point $ \xqs $.

	\begin{lemma}\label{lem.SQ}
		Let $ Q \in \Ls $. Let $ \xqs $ be as in Lemma \ref{lem.rep}. Let $ x \in E \cap 5Q $. Let $ \A(x) $ be as in Lemma \ref{lem.FK-palp}. Let $ S(\A(x)) $ be as in \eqref{S(A)-def}. Let $ k^\sharp_{n,\mathrm{old}} $ be as in Theorem \ref{thm.fp-sigma}. Then 
		\begin{equation}
			\sigma(\xqs,S(\A(x))) \subset A\cdot \sk(\xqs,k^\sharp_{n,\mathrm{old}})
			\label{5.6.0}
		\end{equation}
		with $A = A(n,A_1,A_2)$.
	\end{lemma}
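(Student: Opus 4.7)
The plan is to show the set-inclusion $\sigma(\xqs, S(\A(x))) \subset A \cdot \sigma(\xqs, S')$ for every $S' \subset E$ with $\#S' \leq k^\sharp_{n,\mathrm{old}}$; intersecting over all such $S'$ then yields the claim by the definition of $\sk$. So I would fix $P \in \sigma(\xqs, S(\A(x)))$ with witness $\phi \in \ctrn$ of norm $\leq 1$, vanishing on $S(\A(x))$ and satisfying $\jet_{\xqs}\phi = P$, then fix an arbitrary such $S'$, and aim to build $\tilde\psi \in \ctrn$ of norm $\leq A$, vanishing on $S'$, with $\jet_{\xqs}\tilde\psi = P$.

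First I would transfer information from $\xqs$ to the anchor point $x \in E \cap 5Q$. Setting $P^x := \jet_x\phi$, the witness $\phi$ itself shows $P^x \in \sigma(x, S(\A(x)))$; since $x \in E$, Lemma \ref{lem.sigma-main} gives $P^x \in C \cdot \sk(x, k^\sharp_{n,\mathrm{old}})$, and since $\#S' \leq k^\sharp_{n,\mathrm{old}}$ the definition of $\sk$ yields $P^x \in C \cdot \sigma(x, S')$. This produces a second function $\psi \in \ctrn$ of norm $\leq C$, vanishing on $S'$, with $\jet_x\psi = P^x$.

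Next I would patch the two partial solutions: $\phi$ has the correct jet at $\xqs$ but the wrong zero set, while $\psi$ has the correct zero set but the wrong jet. Using $\dist{\xqs}{E} \geq a_0\dq$ from Lemma \ref{lem.rep}, pick a cutoff $\chi \in \ctrn$ equal to $1$ near $\xqs$, supported in $B(\xqs, a_0\dq/2)$, with $|\da\chi| \leq C\dq^{-|\alpha|}$ for $|\alpha| \leq 2$. Define $\tilde\psi := \psi + \chi \cdot (\phi - \psi)$. Then $\supp{\chi}$ is disjoint from $E$, so $\tilde\psi \equiv \psi \equiv 0$ on $S'$, and since $\chi \equiv 1$ near $\xqs$ we get $\jet_{\xqs}\tilde\psi = \jet_{\xqs}\phi = P$.

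The main obstacle is the norm estimate on $\tilde\psi$, since $|\da\chi|$ carries the dangerous factors $\dq^{-|\alpha|}$. The key observation is that $\jet_x(\phi - \psi) = 0$, so Taylor's theorem gives $|\da(\phi - \psi)(y)| \leq C|y - x|^{2 - |\alpha|}$ for $|\alpha| \leq 2$. Since $x \in 5Q$ and $\xqs \in Q$, on $\supp{\chi}$ we have $|y - x| \leq C\dq$, and the Leibniz rule then cancels the bad powers and produces $|\da(\chi(\phi - \psi))(y)| \leq C\dq^{2 - |\alpha|}$. Since $\dq \leq A_2^{-1}$ by Definition \ref{def.OK}, this bound is controlled by a constant $A = A(n, A_1, A_2)$, so $\norm{\tilde\psi}_{\ctrn} \leq A$, and hence $P \in A \cdot \sigma(\xqs, S')$ as desired.
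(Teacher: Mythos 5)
Your argument is correct, and it reaches the conclusion by a different mechanism than the paper in its final step. The shared core is identical: you pass to $P^x := \jet_x\phi$, invoke Lemma \ref{lem.sigma-main} at the point $x \in E$ to place $P^x$ in $C\cdot\sigma(x,S')$ for an arbitrary $S'$ with $\#S' \leq k^\sharp_{n,\mathrm{old}}$, and then exploit the two geometric facts that also drive the paper's proof, namely $\abs{x-\xqs} \leq C\dq$ and $\dist{\xqs}{E} \geq a_0\dq$ from Lemma \ref{lem.rep}. Where you diverge is in how the new witness at $\xqs$ is produced: the paper stays at the jet level, replacing $P$ by $P_0$ in the Whitney field $(P,(P^y)_{y\in S'})$ on $S'\cup\{x\}$, checking the Whitney-field norm conditions via Taylor's theorem and the separation estimate, and then invoking Whitney's Extension Theorem \ref{thm.WT}(B); you instead glue the two actual functions $\phi$ and $\psi$ with a cutoff $\chi$ at scale $\dq$ around $\xqs$, using that $\supp{\chi}$ misses $E$ (so the zero set on $S'$ is inherited from $\psi$) and that $\jet_x(\phi-\psi)=0$, so Taylor's theorem supplies the factors $\abs{y-x}^{2-\abs{\alpha}} \leq C\dq^{2-\abs{\alpha}}$ that cancel the dangerous $\dq^{-\abs{\beta}}$ from the cutoff in the Leibniz expansion. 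Your route is somewhat more elementary in that it bypasses the Whitney extension operator at this stage, at the cost of an explicit cutoff construction; the paper's route keeps everything expressed through Whitney fields, which is the formalism it reuses algorithmically elsewhere. Two small points worth tightening: for $\abs{\alpha}=2$ the bound $\abs{\da(\phi-\psi)(y)} \leq C$ is just the $C^2$ bound rather than Taylor's theorem, and your constant $C$ in $\abs{\da\chi}\leq C\dq^{-\abs{\alpha}}$ depends on $a_0 = a_0(n,A_1)$, which is fine since the lemma allows $A = A(n,A_1,A_2)$.
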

	
	\begin{proof}
	    We write $A$, $a$, etc., to denote quantities depending only on $n,A_1, A_2$. 
        	
		\newcommand{\sax}{S(\A(x))}
		\renewcommand{\set}[1]{\{#1\}}
		Fix $ x $ as in the hypothesis. By our choice of $ \xqs $ in Lemma \ref{lem.rep}, we have
		\begin{equation}
			\abs{\xqs - x} \geq a\dq. 
			\label{5.10.1}
		\end{equation}

		Let $ P_0 \in \sigma(\xqs, S(\A(x))) $.
		By the definition of $ \sigma $, there exists $ \phi\in \ctrn $ with $ \norm{\phi}_{\ctrn} \leq 1 $, $ \phi|_{S(\A(x))} = 0 $, and $ \jet_\xqs\phi \equiv P_0 $. Set $ P :\equiv \jet_x\phi $. Then
		\begin{equation*}
			P \in \sigma(x,\sax). 
		\end{equation*}
		Since $ x \in E $, by Lemma \ref{lem.sigma-main}, we have
		\begin{equation*}
			P \in \sk(x,,k^\sharp_{n,\mathrm{old}}). 
		\end{equation*}
		
		Let $ S \subset E $ with $ \#(S) \leq k^\sharp_{n,\mathrm{old}} $. By the definition of $ \sk $ in \eqref{sigma-def} and Taylor's Theorem \ref{thm.WT}(A), there exists a Whitney field $ \vec{P} = (P,(P^y)_{y \in S}) \in W(S\cup\set{x}) $, with $ \norm{\vec{P}}_{W(S\cup\set{x})} \leq C $ and $ P^y(y) = 0 $ for $ y \in S $. 
		
		Consider another Whitney field $ \vec{P}_0 = (P_0, (P^y)_{y \in S}) \in W(S\cup\set{\xqs}) $ defined by replacing $ P $ by $ P_0 $ in $ \vec{P} $. By Whitney's Extension Theorem \ref{thm.WT}(B), it suffices to show that $ \vec{P}_0 $ satisfies 
		\begin{equation}\label{5.10.2}
			P^y(y) = 0\for y \in S, \text{ and }
		\end{equation}
		\begin{equation}\label{5.10.3}
			\norm{\vec{P}_0}_{W(S\cup\set{\xqs})} \leq C.
		\end{equation}
		
		Note that \eqref{5.10.2} is obvious by construction.
		
		We turn to \eqref{5.10.3}. Since $ P_0 = \jet_\xqs\phi $ and $ P = \jet_x\phi $, Taylor's theorem implies 
		\begin{equation}
			\abs{\d^\alpha (P - P_0)(\xqs)}, \abs{\d^\alpha(P - P_0)(x)} \leq C\abs{x - \xqs}^{2-\abs{\alpha}}
			\for\abs{\alpha} \leq 1.
			\label{5.10.4}
		\end{equation}
		Since the Whitney field $ \vec{P} = (P,(P^y)_{y \in S}) $ satisfies $ \norm{\vec{P}}_{W(S\cup\set{x})} \leq C $, we have
		\begin{equation}
			\norm{(P^y)_{y \in S}}_{W(S)} \leq C, 
			\label{5.10.5}
		\end{equation}
		and
		\begin{equation}
			\abs{\d^\alpha(P - P^y)(x)}, \abs{\d^\alpha(P - P^y)(y)} \leq C\abs{x - y}^{2-\abs{\alpha}}
			\for\abs{\alpha} \leq 2, y \in S.
			\label{5.10.6}
		\end{equation}
		Applying the triangle inequality to \eqref{5.10.4} and \eqref{5.10.6}, and using \eqref{5.10.1}, we see that
		\begin{equation}
			\abs{\d^\alpha (P_0 - P^y)(\xqs)}, \abs{\d^\alpha (P_0 - P^y)(y)} \leq A\abs{\xqs - y}^{2-\abs{\alpha}}
			\for\abs{\alpha} \leq 1.
			\label{5.10.7}
		\end{equation}
		Moreover, since $ P_0 \in \sigma(\xqs,\sax) $, we have
		\begin{equation}
			\abs{\d^\alpha P_0(\xqs)} \leq 1
			\for\abs{\alpha} \leq 1.
			\label{5.10.8}
		\end{equation}
		Then, \eqref{5.10.3} follows from \eqref{5.10.5}, \eqref{5.10.7}, and \eqref{5.10.8}. 
		
		Lemma \ref{lem.SQ} is proved. 
		
	\end{proof}

	\newcommand{\ssq}{{S^\sharp(Q)}}
	\newcommand{\qqs}{{\Q^\sharp}}
	\newcommand{\mqs}{{\M_Q^\sharp}}
	
	Let $ Q \in \Ls $ with $ \Ls $ as in \eqref{Ls-def}. Let $ \A(x), x \in E $ be as in Lemma \ref{lem.FK-palp}. Let $ S(\A(x)) $ be as in \eqref{S(A)-def}. Let $ \erep(Q) $ be as in Lemma \ref{lem.FK-CZ}(C). Let $ \xqs $ be as in Lemma \ref{lem.rep}. We set
	\begin{equation}
		\ssq := S(\A(\erep(Q))) \cup \{\xqs\}. 
		\label{ssq-def}
	\end{equation}
	Thanks to Lemma \ref{lem.FK-palp}(A), we have
	\begin{equation}\label{ssq-bd}
		\#\ssq \leq C(n).
	\end{equation}

	\subsection{Transition jet}

	Recall Section \ref{sect:quad}. For $ S \subset E $ and $ \tau > 0 $, we define the following functions on $ W(S) $:
	\begin{equation}
		\begin{split}
			&\begin{split}
				\mathcal{L} : W(S) &\to \pos \\
				(P^x)_{x \in S} &\mapsto \sum_{x \in S,\abs{\alpha} \leq 1}\abs{\da P(x)} + \sum_{x,y \in S, x \neq y, \abs{\alpha}\leq 1}\frac{\abs{\da(P^x - P^y)(x)}}{\abs{x-y}^\tma}\,,\quad\text{ and }
			\end{split}
			\\
			&\begin{split}
				\mathcal{M}_\tau: W(S,\tau) &\to \pos\\
				(P^x)_{x \in S} &\mapsto \sum_{x \in S, \abs{\alpha} = 1}\frac{\abs{\da P^x}^2}{\tau - P(x)} + \frac{\abs{\da P^x}^2}{\tau + P(x)}\,.
			\end{split}
		\end{split}
		\label{LM-0}
	\end{equation}
	We adopt the conventions that $ \frac{0}{0} = 0 $ and $ \frac{a}{0} = \infty $ for $ a > 0 $. Note that $ \mathcal{L} $ is a norm on $ W(S) $.
	
	Let $ Q \in \Ls $ with $ \Ls $ as in \eqref{Ls-def}. Let $ \xqs $ be as in Lemma \ref{lem.rep}. Let $ \ssq $ be as in \eqref{ssq-def}. We set $ x_0 := \erep(Q) $ with $ \erep $ as in Lemma \ref{lem.FK-CZ}(C). Note that $ x_0 \in \ssq $, thanks to Lemma \ref{lem.sigma-main} and the definition of $ \sigma $. 
	
	\newcommand{\Af}{\mathbb{A}_f}
	Let $ f : E \to \itau $ be given. We consider the following spaces.
	\begin{equation}
		\begin{split}
			\Af^{-\tau} &:= \set{  (P^y)_{y \in \ssq} \in W(\ssq,\tau) : \begin{matrix*}[l]
					P^{\xqs}\equiv-\tau \quad\text{ and }\\
					P^x(x) = f(x) \for x \in \ssq \cap E
			\end{matrix*} },\\
			\Af^{\tau} &:= \set{  (P^y)_{y \in \ssq} \in W(\ssq,\tau) : \begin{matrix*}[l]
					P^{\xqs}\equiv\tau \quad\text{ and }\\
					P^x(x) = f(x) \for x \in \ssq \cap E
			\end{matrix*} }, \text{ and }\\
			\Af^{0} &:= \set{  (P^y)_{y \in \ssq\cap E} \in W(\ssq \cap E,\tau) : \begin{matrix*}[l]
					P^x(x) = f(x) \for x \in \ssq \cap E
			\end{matrix*} }.
		\end{split}
		\label{Af-def}
	\end{equation}
	
	Note that $ \Af^{-\tau} $ and $ \Af^{\tau} $ are affine subspaces of $ W(\ssq) $, and $ \Af^0 $ is an affine subspace of $ W(\ssq\cap E) $. All three depend only on $ \tau $ and $ f|_{\ssq\cap E} $. 
	
	We will be considering the following minimization problems.
	\begin{itemize}
		\item[\textbf{MP}($ -\tau $)] Let $ S = \ssq $ in \eqref{LM-0}. Minimize $ \mathcal{L}+\mathcal{M}_\tau $ over $ \Af^{-\tau} $.
		\item[\textbf{MP}($ \tau $)] Let $ S = \ssq $ in \eqref{LM-0}. Minimize $ \mathcal{L}+\mathcal{M}_\tau $ over $ \Af^{\tau} $.
		\item[\textbf{MP}($ 0 $)] Let $ S = \ssq\cap E $ in \eqref{LM-0}. Minimize $ \mathcal{L}+\mathcal{M}_\tau $ over $ \Af^{0} $.
	\end{itemize}
	
	For $ \star = -\tau, \tau,0 $, we say a Whitney field $ \vp \in \Af^\star $ is an \underline{approximate minimizer} of \textbf{MP}($ \star $) if
	\begin{equation*}
		(\mathcal{L}+\mathcal{M}_\tau)(\vp) \leq C(n)\cdot\inf\set{ (\mathcal{L}+\mathcal{M}_\tau)(\vp') : \vp' \in \Af^\star }.
	\end{equation*}

	\begin{remark}\label{rem.transition-jet-0}
		By Section \ref{sect:quad}, \textbf{MP}($ \star $) can be reformulated as a convex quadratic programming problem with affine constraint and can be solved efficiently using at most $ C(n) $ operations, since the size of $ \ssq $ is controlled. Thus, we can find the approximate minimizers for \textbf{MP}($ \star $) using at most $ C(n) $ operations. Computing $ \ssq $ requires at most $ C\log N $ operations after one-time work using $ CN\log N $ operations and $ CN $ storage, since it involves computing the point $ \xqs $ as in Lemma \ref{lem.rep}.
	\end{remark}

	For future reference, we fix these approximate minimizers:
	\begin{itemize}
		\item[\LA{approximate-minimizer}] For $ \star = -\tau,\tau,0 $, let $ \vp[Q,\star] $ be the approximate minimizer of \textbf{MP}($ \star $) solved via the method in Section \ref{sect:quad}.
	\end{itemize}
	
	Notice that the approximate minimizer for \textbf{MP}(0) contains no information at $ \xqs $. The next lemma takes care of this gap. 
	
	\begin{lemma}\label{lem.MP0-jet}
		Let $ Q \in \Ls $. Let $ \xqs $ be as in Lemma \ref{lem.rep}. Let $ f : E \to \itau $ with $ \norm{f}_{\ctet} \leq M $. Let $ \vp = \vp[Q,0] $ be as in \eqref{approximate-minimizer} above with $ \star = 0 $. Let $ x_0 := \erep(Q) $ with $ \erep(Q) $ as in Lemma \ref{lem.FK-CZ}(C). Let $ \T_{w,\tau} $ be the $ \tau $-constrained Whitney extension operator as in Theorem \ref{thm.WT-tau} associated with the singleton $ S = \set{x_0} $. Then
		\begin{equation*}
			\jet_\xqs \circ \T_{w,\tau} (P^{x_0})  \in \G(\xqs,\ssq\cap E, f,CM).
		\end{equation*}
		Here, $ C $ depends only on $ n $.
	\end{lemma}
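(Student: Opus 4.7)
The plan is to construct an explicit candidate $F^S \in \ctrt$ exhibiting membership of $\jet_{\xqs}\T_{w,\tau}(P^{x_0})$ in $\G(\xqs,\ssq\cap E,f,CM)$, by patching together two distinct range-restricted Whitney extensions with a cutoff supported safely away from $E$. The crucial geometric input is that $\dist{\xqs}{E}\ge a_0\dq$, so there is room between $\xqs$ and the interpolation nodes to do the surgery.

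First, I would show that the approximate minimizer $\vec{P}=\vp[Q,0]$ satisfies $\norm{\vec{P}}_{W(\ssq\cap E,\tau)}\le CM$. This follows from Lemma \ref{lem.L+M} together with the equivalence \eqref{quad-0}, since $\ssq\cap E\subset E$ implies $\norm{f|_{\ssq\cap E}}_{C^2(\ssq\cap E,\tau)}\le \norm{f}_\ctet\le M$, and $\vec{P}$ is an approximate minimizer for $\mathcal{L}+\mathcal{M}_\tau$ on $\Af^0$. In particular $P^{x_0}\in\K_\tau(x_0,CM)$, so the singleton range-restricted Whitney extension $G_1:=\T_{w,\tau}(P^{x_0})$ is well-defined with $\norm{G_1}_\ctrn\le CM$ and $\jet_{x_0}G_1=P^{x_0}$. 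Let $G_2:=\T_{w,\tau}(\vec{P})$ be the full range-restricted Whitney extension on $\ssq\cap E$; by Theorem \ref{thm.WT-tau}, $G_2\in\ctrt$, $\norm{G_2}_\ctrn\le CM$, $G_2|_{\ssq\cap E}=f$, and $\jet_{x_0}G_2=P^{x_0}$. Consequently $\jet_{x_0}(G_1-G_2)\equiv 0$.

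Next, let $\chi\in C^2(\Rn)$ be a standard cutoff with $\chi\equiv 1$ on $B(\xqs,a_0\dq/4)$, $\supp\chi\subset B(\xqs,a_0\dq/2)$, and $|\d^\alpha\chi|\le C\dq^{-|\alpha|}$ for $|\alpha|\le 2$. Set
\begin{equation*}
F^S:=\chi\,G_1+(1-\chi)\,G_2.
\end{equation*}
Since $G_1,G_2\in\ctrt$ and $F^S$ is a pointwise convex combination, $F^S\in\ctrt$. Near $\xqs$ we have $F^S\equiv G_1$, so $\jet_\xqs F^S=\jet_\xqs G_1=\jet_\xqs\T_{w,\tau}(P^{x_0})$. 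Moreover $\dist{\xqs}{E}\ge a_0\dq$ forces $\supp\chi\cap E=\varnothing$, so at any $x\in\ssq\cap E$ we have $\chi(x)=0$ and therefore $F^S(x)=G_2(x)=f(x)$.

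The only real estimate is the bound $\norm{F^S}_\ctrn\le CM$, and this is the step where the geometric setup is used. Writing
\begin{equation*}
\d^\alpha F^S=\d^\alpha G_2+\sum_{\beta\le\alpha}c_{\alpha,\beta}\,\d^\beta\chi\cdot\d^{\alpha-\beta}(G_1-G_2),
\end{equation*}
I control the cross-terms by observing that $\xqs\in Q$ and $x_0=\erep(Q)\in 5Q$, so $\supp\chi\subset B(x_0,C\dq)$; since $\jet_{x_0}(G_1-G_2)\equiv 0$ and $\norm{G_1-G_2}_\ctrn\le CM$, Taylor's theorem yields $|\d^{\alpha-\beta}(G_1-G_2)|\le CM\dq^{2-|\alpha-\beta|}$ on $\supp\chi$. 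Combined with $|\d^\beta\chi|\le C\dq^{-|\beta|}$, each cross-term is bounded by $CM\dq^{2-|\alpha|}\le CM$ for $|\alpha|\le 2$, and $|\d^\alpha G_2|\le CM$ directly. The main (and only) obstacle is making sure the Taylor argument absorbs the singularity of the cutoff derivatives; this is exactly what the first-order vanishing of $G_1-G_2$ at $x_0$ delivers, given $|\xqs-x_0|\lesssim\dq$.
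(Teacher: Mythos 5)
Your proof is correct, and it reaches the conclusion by a somewhat different mechanism than the paper. The paper adjoins the jet $P^\sharp:=\jet_\xqs\T_{w,\tau}(P^{x_0})$ to $\vp$ to form an augmented Whitney field $\vp^\sharp=(P^\sharp,\vp)\in W(\ssq)$, verifies $\vp^\sharp\in W(\ssq,\tau)$ with $\norm{\vp^\sharp}_{W(\ssq,\tau)}\leq CM$ (the compatibility estimate between $P^\sharp$ and each $P^x$ is obtained exactly as in your cross-term bound: triangle inequality through $P^{x_0}$, Taylor's theorem, and $\dist{\xqs}{E}\geq a_0\dq$ to absorb $\dq$ into $\abs{x-\xqs}$), and then invokes Theorem \ref{thm.WT-tau}(B) on $\ssq$ once to produce the witness. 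You instead build the witness by hand: two applications of $\T_{w,\tau}$ (on the singleton $\set{x_0}$ and on $\ssq\cap E$) glued by a cutoff supported in $B(\xqs,a_0\dq/2)$, with the first-order vanishing of $G_1-G_2$ at $x_0$ and $\supp\chi\subset B(x_0,C\dq)$ killing the $\dq^{-\abs{\beta}}$ singularity of the cutoff derivatives. The underlying quantitative inputs are identical (the bound $\norm{\vp}_{W(\ssq\cap E,\tau)}\leq CM$ from the approximate-minimizer property, $\jet_{x_0}\T_{w,\tau}(P^{x_0})=P^{x_0}$, Taylor's theorem, and the separation $\dist{\xqs}{E}\geq a_0\dq$), so the two arguments are essentially equivalent in content; the paper's version keeps the bookkeeping inside the Whitney-field norm and lets Theorem \ref{thm.WT-tau}(B) do the gluing, which is slightly more economical, while your version makes the range restriction $-\tau\leq F^S\leq\tau$ completely transparent (convex combination) and localizes the surgery explicitly. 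Note only that your construction tacitly uses $x_0\in\ssq\cap E$ (so that $\jet_{x_0}G_2=P^{x_0}$) and $\dq\leq A_2^{-1}\leq 1$ (so that $M\dq^{2-\abs{\alpha}}\leq CM$); both are guaranteed by the paper's setup, but are worth stating.
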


	\begin{proof}
		We set $ P^\sharp:\equiv \jet_\xqs \circ \T_{w,\tau} (P^{x_0}) $. We adjoin $ P^\sharp $ to $ \vp $ to form
		\begin{equation*}
			\vp^\sharp := (P^\sharp,\vp) \in W(\ssq).
		\end{equation*}
		By Theorem \ref{thm.WT-tau}(B), it suffices to show that $ \vp^\sharp \in W(\ssq,\tau) $ and $ \norm{\vp^\sharp}_{W(\ssq,\tau)} \leq CM $.
		
		Since $ \vp $ is an approximate minimizer of \textbf{MP}(0) and $ \norm{f}_{\ctet} \leq M $, we have
		\begin{equation}\label{8.3.0}
			\norm{\vp}_{W(\ssq\cap E,\tau)} \leq CM.
		\end{equation}
		Since $ P^{x_0} $ is a component of $ \vp $, 
		\begin{equation*}
			\T_{w,\tau}(P^{x_0}) \in \ctrt \text{ and }
			\norm{\T_{w,\tau}(P^{x_0})} \leq CM.
		\end{equation*}
		Therefore, we have
		\begin{equation}\label{8.3.1}
			P^\sharp \in \K_\tau(\xqs,CM). \quad\text{(Recall Definition  \ref{def.Ktau}.)}
		\end{equation}
		Thus, $ \vp^\sharp \in W(\ssq,\tau) $.
		
		For $ x \in \ssq \cap E $ and $ \abs{\alpha} \leq 1 $, we have
		\begin{equation*}
			\abs{\da(P^x - P^\sharp)(x)} \leq \abs{\da(P^x - P^{x_0})(x)} + \abs{\da(P^{x_0}-\jet_\xqs\circ \T_{w,\tau}(P^{x_0}))(x)}.
		\end{equation*}
		Using \eqref{8.3.0} to estimate the first term and Taylor's theorem to the second, we have
		\begin{equation}
			\abs{\da(P^x - P^\sharp)(x)} \leq CM\brac{\abs{x-x_0}+\abs{x_0 - \xqs}}^\tma \leq C'M\abs{x-\xqs}^\tma.
			\label{8.3.2}
		\end{equation}
		For the last inequality, we use the fact that $ \dist{\xqs}{E} \geq c\dq $, thanks to Lemma \ref{lem.rep}. Applying Taylor's theorem to estimate \eqref{8.3.2}, we have
		\begin{equation}\label{8.3.3}
			\abs{\da(P^x - P^\sharp)(\xqs)} \leq CM\abs{x-\xqs}^\tma.
		\end{equation}
		Combining \eqref{8.3.0}--\eqref{8.3.3}, we see that $ \norm{\vp^\sharp}_{W(\ssq,\tau)} \leq CM $. Lemma \ref{lem.MP0-jet} is proved. 
		
	\end{proof}

	We fix a large parameter $A_T$ exceeding a constant depending only on $n$. For $ Q \in \Ls $ and $ \xqs $ as in Lemma \ref{lem.rep}, we define a map
	\begin{equation}\label{TQ-def}
		\T_{\tau,Q}:\ctet \times \pos \to \P
	\end{equation}
	via the following rules. Let $ (f,M) \in \ctet \times \pos $ be given.
	\begin{itemize}
		\item[\LA{-tau}] Let $ \mathcal{L} $ and $ \mathcal{M}_\tau $ be as in \eqref{LM-0} with $ S = \ssq $, and let $ \vp = \vp[Q,-\tau] $ be as in \eqref{approximate-minimizer}. Suppose $ (\mathcal{L}+\mathcal{M}_\tau)(\vp) \leq A_TM $. Then we set $ \T_{\tau,Q}(f,M):\equiv -\tau $.

		\item[\LA{tau}] Suppose the last condition (\ref{-tau}) fails. Let $ \mathcal{L} $ and $ \mathcal{M}_\tau $ be as in \eqref{LM-0} with $ S = \ssq $, and let $ \vp = \vp[Q,\tau] $ be as in \eqref{approximate-minimizer}. Suppose $ (\mathcal{L}+\mathcal{M}_\tau)(\vp) \leq A_TM $. Then we set $ \T_{\tau,Q}(f,M):\equiv \tau $.

		\item[\LA{0zero}] Suppose both conditions (\ref{-tau}) and (\ref{tau}) above fail. Let $ \vp = \vp[Q,0] $ be as in \eqref{approximate-minimizer}. We set $ \T_{\tau,Q}(f,M):\equiv \jet_\xqs\circ\T_{w,\tau}(P^{\erep(Q)}) $. Here, $ P^{\erep(Q)} $ is the component of $ \vp $ corresponding to the point $ \erep(Q) $, with $ \erep(Q) $ as in Lemma \ref{lem.FK-CZ}(C), and $ \T_{w,\tau} $ is the $ \tau $-constrained Whitney extension operator in Theorem \ref{thm.WT-tau} associated with the singleton $ S = \set{\erep(Q)} $.
	\end{itemize}

	The main lemma of this section is the following.
	
	\begin{lemma}\label{lem.transition-jet}
		Let $ Q \in \Ls $ and $ \xqs $ be as in Lemma \ref{lem.rep}. Let $ k^\sharp_{LIP} = (n+2)^2k^\sharp_{n,\mathrm{old}} $ with $ k^\sharp_{n,\mathrm{old}} $ as in Theorem \ref{thm.fp-sigma}. Let $ \T_{\tau,Q} $ be as in \eqref{TQ-def}. Let $ (f,M) \in \ctet\times\pos $ with $ \norm{f}_\ctet \leq M $. Then
		\begin{equation*}
			\T_{\tau,Q}(f,M) \in \Gk(\xqs,k_{LIP}^\sharp,f,AM)
		\end{equation*}
		with $A = A(n,A_T)$. 
	\end{lemma}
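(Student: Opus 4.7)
The plan is to treat the three cases in the definition of $\T_{\tau,Q}(f,M)$ separately. Cases~(\ref{-tau}) and~(\ref{tau}) will reduce quickly to Lemma~\ref{lem.perturb}(B). In Case~(\ref{-tau}), $\T_{\tau,Q}(f,M)\equiv-\tau$, and by hypothesis $(\mathcal{L}+\mathcal{M}_\tau)(\vp[Q,-\tau])\leq A_TM$; I would invoke Lemma~\ref{lem.L+M} to obtain $\norm{\vp[Q,-\tau]}_{W(\ssq,\tau)}\leq CA_TM$, then read off the Whitney compatibility at $\erep(Q)\in E\cap 5Q$ between $P^\xqs\equiv-\tau$ and $P^{\erep(Q)}$ (which satisfies $P^{\erep(Q)}(\erep(Q))=f(\erep(Q))$) to force $\tau+f(\erep(Q))\leq CA_TM\abs{\xqs-\erep(Q)}^2\leq CA_TM\dq^2$. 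Applying Lemma~\ref{lem.perturb}(B) with $A_{\mathrm{flat}}=CA_T$ and $x=\erep(Q)$ then gives $-\tau\in\Gk(\xqs,k^\sharp_{LIP},f,AM)$ with $A=A(n,A_T)$. Case~(\ref{tau}) is symmetric.

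For Case~(\ref{0zero}), set $P^\sharp:=\jet_\xqs\circ\T_{w,\tau}(P^{\erep(Q)})$ and fix a global interpolant $F\in\ctrt$ with $\norm{F}_\ctrn\leq M$ and $F|_E=f$. First I would show that for $A_T$ sufficiently large (depending only on $n$ and the other controlled parameters), the failure of both~(\ref{-tau}) and~(\ref{tau}) implies the hypothesis of Lemma~\ref{lem.perturb}(A): $\min\{\tau-f(x),\tau+f(x)\}\geq A_{\mathrm{perturb}}M\dq^2$ for every $x\in E\cap 5Q$. The argument is by contrapositive: if $\tau+f(x_1)<A_{\mathrm{perturb}}M\dq^2$ for some $x_1\in E\cap 5Q$, then $\abs{f(x_1)}\leq M$ forces $\tau\leq CM$, and Lemma~\ref{lem.big-small}(A) spreads the smallness to $\tau+f(y)\leq\apo M\dq^2$ on all of $E\cap 5Q$. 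I would then build a trial Whitney field $\vp\in\Af^{-\tau}$ by setting $P^\xqs\equiv-\tau$ and $P^y:=\jet_y F$ for $y\in\ssq\cap E$, and verify (using $\abs{\xqs-y}\geq c\dq$, $\jet_y F\in\K_\tau(y,CM)$, and $\tau+f(y)\leq\apo M\dq^2$) that $(\mathcal{L}+\mathcal{M}_\tau)(\vp)\leq C_0M$ for a controlled $C_0$. Choosing $A_T\geq C_0$ forces Case~(\ref{-tau}) to trigger, a contradiction; a symmetric argument handles $\tau-f$.

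Next I would identify $P^\sharp$ with the linear polynomial $P^{\erep(Q)}$ by unpacking Lemma~\ref{lem.Ktau}(B). Its Case~I cutoff scale is $\delta=\mu^{-1/2}\min\{\sqrt{\tau\pm f(\erep(Q))}\}$; using $\mu\leq CM$ (since $P^{\erep(Q)}\in\K_\tau(\erep(Q),CM)$ as a component of $\vp[Q,0]$) together with the first-step lower bound on $\min\{\tau\pm f(\erep(Q))\}$, one has $\delta\geq c\sqrt{A_{\mathrm{perturb}}}\dq$. Its Case~II cutoff scale is a universal constant $c_1$. Since $\abs{\xqs-\erep(Q)}\leq C\dq\leq CA_2^{-1}$, choosing $A_{\mathrm{perturb}}$ and $A_2$ large enough places $\xqs$ in the interior region where $\T_{w,\tau}(P^{\erep(Q)})\equiv P^{\erep(Q)}$, so $P^\sharp=P^{\erep(Q)}$ as linear polynomials.

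The last and hardest step will be to show $P^\sharp-\jet_\xqs F\in CM\cdot\B(\xqs,\dq)$ and then invoke Lemma~\ref{lem.perturb}(A). Both $P^{\erep(Q)}$ and $\jet_{\erep(Q)}F$ lie in $\G(\erep(Q),\ssq\cap E,f,CM)$, so their difference belongs to $CM\cdot\sigma(\erep(Q),\ssq\cap E)=CM\cdot\sigma(\erep(Q),S(\A(\erep(Q))))$; Lemma~\ref{lem.sigma-main} identifies this (up to a controlled factor) with $CM\cdot\sigma(\erep(Q),E)$. Any unit $\phi\in\ctrn$ with $\phi|_E=0$ satisfies $\jet_\xqs\phi\in\sigma(\xqs,E)\subset\sk(\xqs,k^\sharp_{LIP})\subset A\cdot\B(\xqs,\dq)$ by Lemma~\ref{lem.sigma-small-2}, giving $\abs{\grad\phi(\xqs)}\leq C\dq$, and Taylor transport then yields $\abs{\grad\phi(\erep(Q))}\leq C\dq$. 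Hence $\abs{\grad P^{\erep(Q)}-\grad F(\erep(Q))}\leq CM\dq$, and expanding $\jet_\xqs F$ around $\erep(Q)$ gives $\abs{\d^\alpha(P^\sharp-\jet_\xqs F)(\xqs)}\leq CM\dq^{2-\abs{\alpha}}$ for $\abs{\alpha}\leq 2$. Since $\jet_\xqs F\in\Gk(\xqs,k^\sharp_{LIP},f,M)$, the first step combined with Lemma~\ref{lem.perturb}(A) concludes $P^\sharp\in\Gk(\xqs,k^\sharp_{LIP},f,AM)$ with $A=A(n,A_T)$. The main obstacle is exactly this last step: carefully chaining the PALP and finiteness-principle identifications of $\sigma$ at $\erep(Q)\in E$, the scale-sensitive bound of Lemma~\ref{lem.sigma-small-2} at $\xqs\notin E$, and the Taylor translation back to $\erep(Q)$ to land on the gradient estimate $\leq CM\dq$.
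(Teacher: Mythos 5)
Your handling of cases \eqref{-tau} and \eqref{tau} is the same in substance as the paper's: the paper extends the approximate minimizer by Theorem \ref{thm.WT-tau} and applies Taylor's theorem before invoking Lemma \ref{lem.perturb}(B), whereas you read the inequality $\tau+f(\mathrm{Rep}(Q))\leq CA_TM\dq^2$ directly off the $W(S^\sharp(Q),\tau)$ norm of the minimizing field via Lemma \ref{lem.L+M}; these are equivalent. In case \eqref{0zero} you genuinely diverge. The paper shows $\T_{\tau,Q}(f,M)\in\G(\xqs,S^\sharp(Q)\cap E,f,CM)$ by Lemma \ref{lem.MP0-jet} and then sends the difference with $\jet_{\xqs}F$ to $\xqs$ through Lemma \ref{lem.SQ}, the finiteness principle, and Lemma \ref{lem.sigma-small-2}. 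You instead compare $P^{\mathrm{Rep}(Q)}$ with $\jet_{\mathrm{Rep}(Q)}F$ at the data point $\mathrm{Rep}(Q)\in E$, use Lemma \ref{lem.sigma-main} (together with Theorem \ref{thm.fp-sigma}(B)) to pass from $\sigma(\mathrm{Rep}(Q),S(\A(\mathrm{Rep}(Q))))$ to $\sigma(\mathrm{Rep}(Q),E)$, control the representing function's jet at $\xqs$ by Lemma \ref{lem.sigma-small-2}, and transport back by Taylor; this bypasses both Lemma \ref{lem.MP0-jet} and Lemma \ref{lem.SQ} and works, since the value part of the difference vanishes at $\mathrm{Rep}(Q)\in E$ and the gradient bound $CM\dq$ is exactly what is needed for membership in $CM\cdot\mathcal{B}(\xqs,\dq)$. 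You also supply the contrapositive competitor-field argument behind the paper's terse ``Taylor's theorem then implies'' step, which is a genuine plus.

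Two caveats. First, the exact identification $P^\sharp=P^{\mathrm{Rep}(Q)}$ is not justified by what the paper states: Theorem \ref{thm.WT-tau}(B) does not specify that $\T_{w,\tau}$ for a singleton is implemented by $\T_*^{x_0}$, and the cutoffs in Lemma \ref{lem.Ktau}(B) are only required to be $\equiv 1$ ``near'' the relevant set, with no quantitative width, so one cannot conclude from the stated properties that $\xqs$ lies in a region where the extension coincides with $P^{\mathrm{Rep}(Q)}$. Fortunately this step is unnecessary: $\T_{w,\tau}(P^{\mathrm{Rep}(Q)})$ has $C^2$ norm at most $CM$ and one-jet $P^{\mathrm{Rep}(Q)}$ at $\mathrm{Rep}(Q)$, so Taylor's theorem alone gives $P^\sharp-P^{\mathrm{Rep}(Q)}\in CM\cdot\mathcal{B}(\xqs,\dq)$, which is all your final step uses. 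Second, in the competitor field for the contrapositive step, Lemma \ref{lem.big-small}(A) yields $\tau+f(y)\leq A_{\mathrm{polar}}M\dq^2$ only for $y\in E\cap 5Q$, while $S^\sharp(Q)\cap E=S(\A(\mathrm{Rep}(Q)))$ need not be contained in $5Q$; for the far points, propagate the smallness from $x_1$ along the global interpolant $F$ (using the gradient bound coming from $\jet_{x_1}F\in\mathcal{K}_\tau(x_1,CM)$), which gives $\tau+f(y)\leq C(A_{\mathrm{perturb}})M\abs{y-\xqs}^2$ and hence the required cross-term bounds in $\mathcal{L}$. With these two repairs, and with the harmless bookkeeping of constants ($\jet_{\xqs}F\in\Gk(\xqs,k^\sharp_{LIP},f,2M)$ rather than $M$, and $A_{\mathrm{perturb}}$, $A_T$ fixed in the order prescribed in the paper), your argument is complete and yields $A=A(n,A_T)$ as claimed.
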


	\begin{proof}
	    We write $A$, $A'$, etc., to denote quantities depending only on $n$ and $A_T$.
	
		\newcommand{\klip}{{k_{LIP}^\sharp}}
		Since $ \norm{f}_\ctet \leq M $, we see that $ \Gk(x,\klip,f,2M) \neq \void $. Therefore, the hypothesis of Lemma \ref{lem.perturb} is satisfied with $ k = \klip $. 
		
		Recall that $ \T_{\tau,Q} $ is defined in terms of a series of rules \eqref{-tau}, \eqref{tau}, \eqref{0zero}. We analyze them in this order.
		
		\paragraph{Rule for \eqref{-tau}.} Suppose $ \T_{\tau,Q}(f,M) $ is defined in terms of \eqref{-tau}. By Theorem \ref{thm.WT-tau}, there exists $ F \in \ctrt  $ with $ \norm{F}_\ctrn \leq AM $, $ F|_{\ssq \cap E} = f $, and $ \jet_{\xqs} \equiv -\tau $. Let $ \erep $ be the map in Lemma \ref{lem.FK-CZ}(C), and recall that $ \erep(Q) \in \ssq \cap 5Q $. By Taylor's theorem, we have
		\begin{equation*}
			\tau + f(\erep(Q)) = \tau + F(\erep(Q)) \leq AM\dq^2.
		\end{equation*}
		Lemma \ref{lem.perturb} then implies $ -\tau \in \Gk(\xqs,\klip,f,AM) $.

		\paragraph{Rule for \eqref{tau}.} Suppose $ \T_{\tau,Q}(f,M) $ is defined in terms of  \eqref{tau}. By Theorem \ref{thm.WT-tau}, there exists $ F \in \ctrt  $ with $ \norm{F}_\ctrn \leq AM $, $ F|_{\ssq \cap E} = f $, and $ \jet_{\xqs} \equiv \tau $. By Taylor's theorem, we have
		\begin{equation*}
			\tau - f(\erep(Q)) = \tau - F(\erep(Q)) \leq AM\dq^2.
		\end{equation*}
		Lemma \ref{lem.perturb} then implies $ \tau \in \Gk(\xqs,\klip,f,AM) $.

		\paragraph{Rule for \eqref{0zero}.} Suppose $ \T_{\tau,Q}(f,M) $ is defined in terms of \eqref{0zero}. Recall that we have chosen $A_T$ to be sufficiently large in \eqref{-tau} and \eqref{tau}. Taylor's theorem then implies, with $ A_{\mathrm{perturb}} $ as in Lemma \ref{lem.perturb},
		\begin{equation*}
			\min\set{\tau - f(x), \tau + f(x)} \geq CA_{\mathrm{perturb}}M\dq^2 \for x \in E \cap 5Q.
		\end{equation*}
		Thus, the hypothesis of Lemma \ref{lem.perturb}(A) is satisfied. 
		
		Since $ \norm{f}_\ctet \leq M $, there exists $ F\in\ctrt $ with $ \norm{F}_\ctrn \leq 2M $, $ F|_E = f $, and 
		\begin{equation*}
			\jet_\xqs F\in \G(\xqs,E,f,2M).
		\end{equation*}

		By Lemma \ref{lem.MP0-jet}, 
		\begin{equation*}
			\T_{\tau,Q}(f,M) \in \G(\xqs,\ssq\cap E, f,CM).
		\end{equation*}
		
		Therefore, by Lemma \ref{lem.sigma-main}, Lemma \ref{lem.SQ}, and the definition of $ \ssq $ in \eqref{ssq-def},  we see that
		\begin{equation*}
			\jet_\xqs F - \T_{\tau,Q}(f,M) \in CM\cdot \sigma(\xqs,\ssq \cap E) \subset C'M\cdot \sk(\xqs,k^\sharp_{n,\mathrm{old}}) \subset C''M\cdot \sk(\xqs,k^\sharp_{LIP}).
		\end{equation*}
		
		By Lemma \ref{lem.sigma-small-2}, we see that
		\begin{equation*}
			\jet_\xqs F - \T_{\tau,Q}(f,M) \in CM\cdot \B(\xqs,\dq) .
		\end{equation*}
		
		For sufficiently large $ A_{\mathrm{perturb}} $, Lemma \ref{lem.perturb}(A) implies
		\begin{equation*}
			\T_{\tau,Q}(f,M) \in \jet_\xqs F + CM\cdot \B(\xqs,\dq)\subset \Gk(\xqs,k^\sharp_{LIP},f,C'M).
		\end{equation*}
		
		Lemma \ref{lem.transition-jet} is proved. 
		
	\end{proof}

	\subsection{Fixing the parameters \texorpdfstring{$A_*$}{A*}}
	
	
	\begin{itemize}
	    \item[\LA{param-1}] In Definitions \ref{def.OK} and \ref{def.CZ}, we fix $A_1, A_2 \gg C(n)$ so that Lemma \ref{lem.diffeo} holds.
	    
	    \item[\LA{param-2}] Let $A_{\mathrm{perturb}}$ and $A_{\mathrm{flat}}$ be as in Lemma \ref{lem.perturb}. We fix $A_{\mathrm{perturb}}$ so that Lemma \ref{lem.perturb}(A) holds. We then fix $A_{\mathrm{flat}} \gg C(n)A_{\mathrm{perturb}}$.
	    
	    \item[\LA{param-3}] Let $A_T$ be the parameter associated with the map \eqref{TQ-def}. We fix $A_T = c(n)A_{\mathrm{flat}}$. 
	\end{itemize}
	
	Henceforth, we treat all the parameters $A_*$, $a_*$ appeared in the previous sections as controlled constants and write $C_*$, $c_*$ instead.

	\subsection{Local interpolation problem with a prescribed jet}
	
	Recall $ \Lsk $ from \eqref{Lsk-def}. Also recall that $ c_G $ is a small dyadic number fixed in Lemma \ref{lem.CZ0}. 
	
	\begin{lemma}\label{lem.lip-operator}
		Let $ Q \in \Lsk $. There exists a map 
		\begin{equation*}
			\E_{\tau,Q}: \ctet \times \pos \to \ct((1+c_G)Q)
		\end{equation*}
		such that the following hold.
		\begin{enumerate}[(A)]
			\item			Given $ (f,M) \in \ctet\times\pos $ with $ \norm{f}_\ctet \leq M $, we have 
			\begin{enumerate}[(1)]
				\item $ -\tau \leq \E_{\tau,Q}(f,M) \leq \tau $ on $ (1+c_G)Q $,
				\item $ \E_{\tau,Q}(f,M)(x) = f(x) $ for $ x \in E \cap (1+c_G)Q $, 
				\item $ \norm{\E_{\tau,Q}(f,M)}_{\ct((1+c_G)Q)} \leq C(n)M $, and
				\item $ \jet_{\xqs}\circ \E_{\tau,Q}(f,M) \equiv \T_{\tau,Q}(f,M) $, with $ \xqs $ as in Lemma \ref{lem.rep} and $ \T_{\tau,Q} $ as in \eqref{TQ-def}.
			\end{enumerate}

			\item For each $ x \in (1+c_G)Q $, there exists a set $ S_Q(x) \subset E $ with $ S_Q(x) \leq D(n) $, such that given $ f,g \in \ctet $ with $ f|_{S_Q(x)} = g|_{S_Q(x)} $, we have 
		\end{enumerate}
		\begin{equation}
			\da\E_{\tau,Q}(f,M)(x) = \da\E_{\tau,Q}(g,M)(x) \text{ for }\abs{\alpha} \leq 2 \text{ and } M\geq 0.
			\label{8.5.0}
		\end{equation}
	\end{lemma}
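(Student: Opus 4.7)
The plan is a two-source construction: produce one candidate $F_0$ on $(1+c_G)Q$ from the induction hypothesis after straightening $E$ to a hyperplane, produce a second candidate $T$ realizing the transition jet $P_0 := \T_{\tau,Q}(f,M)$ at $\xqs$, and splice the two using a cutoff centered at $\xqs$ (possible since $\dist{\xqs}{E} \geq c_0\dq$ by Lemma \ref{lem.rep}). To produce $F_0$: since $Q\in \Lsk\subset \Ls$ is OK, Lemma \ref{lem.diffeo} gives a $C^2$-diffeomorphism $\Phi$ of $\Rn$ with $\Phi(E\cap 5Q)\subset \R^{n-1}\times\{0\}$ and $|\nabla^m\Phi|,|\nabla^m\Phi^{-1}|\leq C\dq^{1-m}$ for $m=1,2$. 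Setting $\bar{E} := \pi(\Phi(E\cap(1+c_G)Q))$ and $\bar{f}(\bar{x}) := f(\Phi^{-1}(\bar{x},0))$, Lemma \ref{lem.distortion} converts any $\ctrt$-competitor for $\norm{f}_{C^2(E\cap 5Q,\tau)}$ into an $(n-1)$-dimensional $\tau$-constrained extension of $\bar{f}$ of comparable norm, so $\norm{\bar{f}}_{C^2(\bar{E},\tau)}\leq CM$. The induction hypothesis \eqref{induction:IH} supplies $\bar{F} := \bar{\E}_\tau(\bar{f},CM) \in C^2(\R^{n-1},\tau)$ with $\bar{F}|_{\bar{E}}=\bar{f}$ and $\norm{\bar{F}}_{C^2(\R^{n-1})}\leq CM$. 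Pulling back, $F_0(x) := \bar{F}(\pi\circ\Phi(x)) \in \ctrt$ interpolates $f$ on $E\cap (1+c_G)Q$ and satisfies $\norm{F_0}_{C^2((1+c_G)Q)}\leq CM$ by Lemma \ref{lem.distortion}.

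Next I would build $T$ realizing $\jet_\xqs T\equiv P_0$, following the trichotomy \eqref{-tau}--\eqref{0zero}: if $P_0 = \pm\tau$ take $T\equiv \pm\tau$; in the generic case, assemble the bounded Whitney field in $W(\ssq,\tau)$ produced by Lemma \ref{lem.MP0-jet} and invoke Theorem \ref{thm.WT-tau} to obtain $T\in \ctrt$ with $T|_{\ssq\cap E}=f$ and $\norm{T}_{\ctrn}\leq CM$. Pick a cutoff $\chi\in\ctrn$ with $\chi\equiv 1$ near $\xqs$, $\supp{\chi}\subset B(\xqs,c_0\dq/2)$, and $|\partial^\alpha\chi|\leq C\dq^{-|\alpha|}$; since $\supp{\chi}\cap E=\void$, the definition
\begin{equation*}
    \E_{\tau,Q}(f,M)(x) := (1-\chi(x))F_0(x)+\chi(x)T(x), \qquad x\in (1+c_G)Q,
\end{equation*}
yields (A.1) (range, by convexity of $[-\tau,\tau]$), (A.2) (interpolation), and (A.4) (prescribed jet) immediately. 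For the depth assertion (B), I would track the dependencies of $\partial^\alpha\E_{\tau,Q}(f,M)(x)$ on $f$: the straightening $\Phi$ depends on $E\cap (1+c_G)Q$ through the depth-$\bar{D}$ Fefferman-Klartag map of Lemma \ref{lem.diffeo-jet}; $F_0(x)$ inherits depth $\bar{D}$ via \eqref{induction:IH}(D); and both $P_0$ and $T$ depend on $f$ only through the controlled-size set $\ssq\cap E$. Taking $S_Q(x)$ to be the union of these finitely many controlled-size dependency sets gives $\#S_Q(x)\leq D(n)$.

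The main obstacle is (A.3). Writing $\E_{\tau,Q}(f,M)=F_0+\chi(T-F_0)$ and expanding by Leibniz reduces the bound to $|\partial^\alpha(T-F_0)(x)|\leq CM\dq^{2-|\alpha|}$ for $x\in\supp{\chi}$ and $|\alpha|\leq 2$, which by Taylor's theorem reduces further to the jet-level estimate
\begin{equation*}
    \jet_\xqs(T-F_0) \in CM\cdot \B(\xqs,\dq).
\end{equation*}
In the $P_0=\mp\tau$ branch, the bound on $(\mathcal{L}+\mathcal{M}_\tau)(\vp[Q,\mp\tau])$ forces $\tau\pm f(x)\leq CM\dq^2$ on $\ssq\cap E$, hence on all of $E\cap 5Q$ by Lemma \ref{lem.big-small}; then Lemma \ref{lem.Ktau}(A) applied to $\jet_{\erep(Q)}F_0$ gives $|\nabla F_0(\erep(Q))|\leq CM\dq$, and Taylor propagates the required estimate from $\erep(Q)$ to $\xqs$. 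In the generic branch, $(T-F_0)/CM$ is a legal test function for $\sigma(\xqs,\ssq\cap E)$, so $\jet_\xqs(T-F_0)\in CM\cdot\sigma(\xqs,\ssq\cap E)$; the proof of Lemma \ref{lem.SQ} extends verbatim with $k^\sharp_{n,\mathrm{old}}$ replaced by any $k\geq k^\sharp_{n,\mathrm{old}}$, and choosing $k=k^\sharp_{LIP}$ together with Lemma \ref{lem.sigma-small-2} delivers $\sigma(\xqs,\ssq\cap E)\subset C\cdot\sk(\xqs,k^\sharp_{LIP})\subset C'\cdot\B(\xqs,\dq)$, closing the argument.
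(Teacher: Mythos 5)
Your splicing formula does give (A1), (A2), (A4) and a plausible depth count for (B), but the construction of $F_0$ --- and with it the norm bound (A3) --- has a genuine gap: you flatten the \emph{raw} data $f$ and never subtract the transition jet. The hypothesis of Lemma \ref{lem.distortion} is the scaled bound $\abs{\da F(x)} \leq M\dq^{2-\abs{\alpha}}$, not merely $\norm{F}_{\ctrn}\leq M$; since $\dq \leq 1$ the scaled bound is strictly stronger for $\abs{\alpha}\leq 1$, and a general competitor for $\norm{f}_{\ct(E\cap 5Q,\tau)}$ does not satisfy it. Composing a competitor with $\abs{\grad F}\sim M$ with the straightening map $\Phi^{-1}$, whose second derivatives are of size $\dq^{-1}$, produces second derivatives of size $M\dq^{-1}$, so neither $\norm{\bar f}_{\ct(\bar E,\tau)}\leq CM$ nor, after pulling back, $\norm{F_0}_{\ct((1+c_G)Q)}\leq CM$ follows. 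Indeed both can fail: take $\tau$ huge, $E$ on a hypersurface of curvature $\sim \dq^{-1}$ (allowed for an OK cube), and $f$ the restriction of an affine function of slope $M$ with a component along $u_Q$; the flattened data then has second divided differences of size $\sim M\dq^{-1}$ at spacing $\sim\dq$, so its $(n-1)$-dimensional trace norm is $\gg M$. This is precisely why the paper's formula \eqref{EQ-def} subtracts $\T_{\tau,Q}(f,M)$ from $f$ \emph{before} flattening in the generic case --- the difference has derivatives $\lesssim M\dq^{2-\abs{\alpha}}$ near $Q$ by Lemmas \ref{lem.transition-jet} and \ref{lem.nearby-jet}, so the distortion of $\Phi$ is harmless --- and then extends the subtracted data with the \emph{unconstrained} operator $\bar{\E}_\infty$, recovering the range restriction afterwards from the quantitative gap $\tau-\abs{\T_{\tau,Q}(f,M)}\gtrsim M\dq^2$ on $(1+c_G)Q$; only in the $\pm\tau$ branch, where the constraint itself forces $\abs{\grad F}\lesssim M\dq$ near $E\cap (1+c_G)Q$ for any competitor (this is \eqref{8.5.19}--\eqref{8.5.22}), does one flatten $f$ directly and use $\bar{\E}_\tau$. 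Your reduction of (A3) to a jet estimate for $T-F_0$ on $\supp{\chi}$ presupposes the bound on $F_0$ away from $\supp{\chi}$ and therefore does not repair this.

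Two further points. First, your flat-branch argument applies Lemma \ref{lem.Ktau}(A) to the jet of $F_0$ at the representative point of $E\cap 5Q$, which again assumes the very bound $\norm{F_0}_{\ct}\leq CM$ you are trying to prove; the flatness has to be exploited on a competitor realizing $\norm{f}_{\ctet}\leq M$ to obtain the trace bound for $\bar f$ \emph{before} invoking the induction hypothesis. Second, in the generic branch $(T-F_0)/CM$ is not a legal test function for $\sigma(\xqs, S^\sharp(Q)\cap E)$: $F_0$ only interpolates $f$ on $E\cap(1+c_G)Q$, while $S^\sharp(Q)\cap E = S(\A(\cdot))$ as in \eqref{S(A)-def} need not be contained in $(1+c_G)Q$, so $T-F_0$ need not vanish there. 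The paper sidesteps this by comparing $\T_{\tau,Q}(f,M)$ with the jet at $\xqs$ of a globally defined near-optimal extension of $f$ (Lemma \ref{lem.transition-jet} combined with Lemma \ref{lem.nearby-jet} and Lemma \ref{lem.sigma-small-2}), rather than with the local extension being constructed.
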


	\begin{proof}
		\newcommand{\etq}{\E_{\tau,Q}}
		We fix $ k^\sharp_{LIP} = (n+2)^2k^\sharp_{n,\mathrm{old}} $ with $ k^\sharp_{n,\mathrm{old}} $ as in Theorem \ref{thm.fp-sigma}.
		
		The essential ingredients in the construction of the map $ \etq $ is as follows.
		
		\begin{itemize}
			\item Let $ \Phi:\Rn\to\Rn $ be the $ C^2 $-diffeomorphism associated with $ Q $ defined by $ \Phi\circ\rho^{-1} = (\bar{x},x_n-\phi(\bar{x})) $, with $ \phi $ and $ \rho $ as in Lemma \ref{lem.diffeo-jet}. See also Lemma \ref{lem.diffeo}. In particular, $ \Phi $ satisfies the estimate
			\begin{equation}\label{8.5.phi}
				\eqindent
				\abs{\grad^m \Phi(x)} \leq C\dq^{1-m} \for x \in \Rn\text{ and }m = 1,2.
			\end{equation}
			
			\item Let $ \xqs $ and $ c_0 = a_0 $ be as in Lemma \ref{lem.rep}. Let $ \psi \in \ctrn $ be a cutoff function such that
			\begin{equation}
				\eqindent
				0 \leq \psi \leq 1,\,
				\psi \equiv 1 \text{ near } \xqs,\,
				\supp{\psi} \subset B(\xqs,c_0\dq),\text{ and }
				\abs{\da\psi}\leq C(n)\dq^\tma\for\abs{\alpha} \leq 2.
				\label{8.5.1}
			\end{equation}
			
			\item Define an indicator function 
			\begin{equation}
				\eqindent
				\Delta(f,M,Q):= \begin{cases}
					1&\text{ if $ \T_{\tau,Q}(f,M) $ is not the constant polynomial $ \pm \tau $}\\
					0 &\text{ otherwise}
				\end{cases}.
				\label{8.5.2}
			\end{equation}
			
			\item Let $ \bar{\E}_{\tau} $ and $ \bar{\E}_\infty $ be as in \eqref{induction:IH} and \eqref{induction:FK} associated with the set $ \Phi(E\cap (1+c_G)Q) \subset \R^{n-1}\times\set{0} $. We identify $ \R^{n-1}\times\set{0} \cong \R^{n-1}  $.
			
			\item Let $ \mathcal{V} $ be the vertical extension map defined by $ \mathcal{V}(\bar{F})(\bar{x},x_n):= \bar{F}(\bar{x}) $ for an $ (n-1) $-variable function $ \bar{F} $, $ (\bar{x},x_n) \in \R^{n-1}\times\R $.
		\end{itemize}
		
		We begin with conclusion (A). 
		
		We define
		\begin{equation}
			\E_{\tau,Q}(f,M):= \Delta\T_{\tau,Q}(f,M)+(1-\psi)\cdot\widetilde{\E}_{\tau,Q}(f,M),	\label{EQ-def} \end{equation}
		where
		\begin{equation*}
			\widetilde{\E}_{\tau,Q}(f,M):= \overbrace{\brac{
					\mathcal{V}\circ
					\underbrace{\left[
						\brac{
							\Delta\bar{\E}_\infty+(1-\Delta)\bar{\E}_{\tau}(\cdot,C_0M)
						}
						\underbrace{
							\brac{
								\brac{ f - \Delta\T_{\tau,Q}(f,M) }\bigg|_{E \cap (1+c_G)Q} \circ \Phi^{-1}}
						}_{\text{local flattening}}
						\right]}_{\text{$ (n-1) $-dimensional extension}}
			}}^{\text{vertical extension}}
			\circ\Phi.
		\end{equation*}
		In the formula above, $ \Delta = \Delta(f,M,Q) $ and $ C_0 $ is some large controlled constant depending only on $ n $. We also identify $ \R^{n-1}\times\set{0}\cong \R^{n-1} $.
		
		First we note that the map $ \widetilde{\E}_{\tau,Q}(f,M) $ is well defined. Indeed, when $ \Delta = 1 $, the operator in effect is $ \bar{\E}_\infty $, which can be applied to any $ \bar{f} : \R^{n-1}\times\set{0}\supset \Phi(E \cap (1+c_G)Q) \to \R $; when $ \Delta = 0 $, the operator in effect is $ \bar{\E}_{\tau}(\cdot, C_0M) $, and the argument is $ f|_{E\cap (1+c_G)Q}\circ \Phi^{-1} $, which has domain $ \R^{n-1}\times\set{0} $ and range $ \itau $.
		
		We proceed to verify (A1)--(A4) in the following four claims.
		
		\paragraph{Verification of (A1)}

		Suppose $ \T_{\tau,Q}(f,M) \equiv -\tau $. By \eqref{8.5.2}, $ \Delta = 0 $. Formula \eqref{EQ-def} simplifies to
		\begin{equation}
			\E_{\tau,Q}(f,M) = (1-\psi) 
			\cdot 
			\brac{ 
				\mathcal{V} \circ 
				\bar{\E}_{\tau} \brac{
					f|_{E\cap (1+c_G)Q}\circ \Phi^{-1}	
				} 
			}
			\circ \Phi.
			\label{8.5.3}
		\end{equation}
		By the induction hypothesis \eqref{induction:IH}, $ -\tau \leq \bar{\E}_{\tau} \brac{f|_{E\cap (1+c_G)Q}\circ \Phi^{-1}} \leq \tau $. On the other hand, left composition with $ \mathcal{V} $ and right composition with $ \Phi $ do not alter the range, and $ 0 \leq 1-\psi \leq 1 $. Therefore, we have $ -\tau \leq \etq(f,M) \leq \tau  $.
		
		A similar argument shows that when $ \T_{\tau,Q}(f,M) \equiv \tau $, we have $ -\tau \leq \etq(f,M) \leq \tau $.
		
		Now we analyze the more delicate case when $ \T_{\tau,Q}(f,M) $ is not the constant polynomial $ \pm \tau $. In this case, formula \eqref{EQ-def} becomes
		\begin{equation}
			\etq(f,M) = \T_{\tau,Q}(f,M) + (1-\psi)
			\cdot
			\brac{
				\mathcal{V}\circ
				\bar{\E}_\infty
				\brac{
					(f-\T_{\tau,Q}(f,M))\big|_{E\cap (1+c_G)Q} \circ \Phi^{-1}
				}
			} \circ \Phi
			\label{8.5.4}
		\end{equation}
		
		By Lemma \ref{lem.transition-jet}, we have
		\begin{equation}\label{8.5.5}
			\T_{\tau,Q}(f,M)\in\Gk(\xqs,k^\sharp_{LIP},f,CM).
		\end{equation}

		By the assumption $ \norm{f}_\ctet \leq M $, we know there exists $ F \in \ctrt $ with $ F = f $ on $ E $, $ \norm{F}_\ctrn \leq CM $, and
		\begin{equation}\label{8.5.6}
			\jet_{\xqs} F \in \G(\xqs,E,f,CM) \subset \Gk(\xqs,k^\sharp_{LIP},f,CM).
		\end{equation}
		Thanks to Lemma \ref{lem.nearby-jet}, Taylor's theorem, \eqref{8.5.5}, and \eqref{8.5.6}, we see that
		\begin{equation}\label{8.5.7}
			\abs{\da(F - \T_{\tau,Q}(f,M))(x)} \leq CM\dq^\tma \for \abs{\alpha}\leq 2,\, x \in (1+c_G)Q.
		\end{equation}
		Using Lemma \ref{lem.distortion}, \eqref{8.5.phi}, and \eqref{8.5.7}, we have
		\begin{equation}\label{8.5.8}
			\abs{\da\brac{
					(F-\T_{\tau,Q}(f,M))\circ\Phi^{-1}
				}(x)} \leq CM\dq^\tma \for \abs{\alpha}\leq 2\text{ and } x \in \Phi((1+c_G)Q).
		\end{equation}
		Restricting $ (F-\T_{\tau,Q}(f,M))\circ\Phi^{-1} $ to $ \R^{n-1}\times\set{0} \cong \R^{n-1} $, we see that
		\begin{equation}\label{8.5.9}
			\norm{(f-\T_{\tau,Q}(f,M))\circ\Phi^{-1}}_{\ct(\Phi(E \cap (1+c_G)Q))} \leq C_*M.
		\end{equation}
		Here, $ C_* $ is a constant depending only on $ n $, and the trace norm is taken in $ \R^{n-1} $.
		
		Note that the vertical extension map $ \mathcal{V} $ does not increase the $ C^2 $ norm. Therefore, by taking $ C_0 \geq C_* $ in \eqref{EQ-def} and \eqref{8.5.9}, the induction hypothesis \eqref{induction:IH} implies
		\begin{equation}\label{8.5.10}
			\norm{
				G
			}_{\ct(\Phi((1+c_G)Q))
			} \leq CM,
		\end{equation}
		where
		\begin{equation*}
			G:= \mathcal{V}\circ
			\bar{\E}_\infty\brac{ (f-\T_{\tau,Q}(f,M))\circ\Phi^{-1} ,C_0M}.
		\end{equation*}
		In fact, by using \eqref{8.5.8}, \eqref{8.5.9}, and a standard rescaling, we have a stronger estimate
		\begin{equation}\label{8.5.11}
			\abs{\da G(x)} \leq CM\dq^\tma \for \abs{\alpha}\leq 2 \text{ and } x \in \Phi((1+c_G)Q).
		\end{equation}
		Lemma \ref{lem.distortion}, \eqref{8.5.phi}, and \eqref{8.5.11} yields
		\begin{equation}\label{8.5.12}
			\abs{\da(G\circ\Phi)(x)} \leq CM\dq^\tma \for \abs{\alpha}\leq 2  \text{ and } x \in (1+c_G)Q .
		\end{equation}

		Now thanks to \eqref{8.5.12} and the fact that $ 0 \leq \psi \leq 1 $, if $ \tau - \abs{\T_{\tau,Q}(f,M)} \geq AM\dq^2 $ on $ (1+c_G)Q $ for some sufficiently large parameter $ A $, we can conclude that $ -\tau \leq \etq(f,M) \leq \tau $ on $ (1+c_G)Q $. We proceed to examine the value of $ \T_{\tau,Q}(f,M) $ on $ (1+c_G)Q $.
		
		Since we assume that $ \T_{\tau,Q}(f,M) $ is not the constant polynomial $ \pm\tau $, $ \T_{\tau,Q}(f,M) $ must be defined according to \eqref{0zero} (recall \eqref{TQ-def}). In particular, the both the assumptions in ($ -\tau $) and ($ \tau $) fail. Lemma \ref{lem.big-small} then implies that 
		\begin{equation}
			\tau - \abs{f(x)} \geq A_0M\dq^2 \for x \in E \cap (1+c_G)Q.
			\label{8.5.13}
		\end{equation}
		Here, $ A_0 \geq c(n) \cdot (A_T^{1/2}-1) $ with $ A_T $ as in the definition \eqref{TQ-def} of $ \T_{\tau,Q} $.
		
		Recall from Lemma \ref{lem.transition-jet} that $ \T_{\tau,Q}(f,M) \in \Gk(\xqs,k^\sharp_{LIP},f,CM) $. We claim that 
		\begin{equation}
			\tau - \abs{\T_{\tau,Q}(f,M)(\xqs)} \geq c(n)\cdot (\sqrt{A_0}-1)M\dq^2
			\label{8.5.14}
		\end{equation}
		Suppose toward a contradiction, that $ 	\tau - \abs{\T_{\tau,Q}(f,M)(\xqs)} < A_{\mathrm{bad}}M\dq^2 $ for some $ A_{\mathrm{bad}}  $ to be determined. For any $ x \in E \cap (1+c_G)Q $, there exists $ F \in \ctrt $ with $ \norm{F}_{\ctrn} \leq CM $, $ F(x) = f(x) $, and $ \jet_\xqs F \equiv \T_{\tau,Q}(f,M) \in \K_\tau(\xqs,C'M) $, with $ \K_\tau $ as in Definition \ref{def.Ktau}. Thus, \eqref{Ktau-1} and Taylor's theorem imply
		\begin{equation}\label{8.5.15}
			\begin{split}
				\abs{\grad F(x)} &\leq \abs{\grad F(\xqs)} + C\norm{F}_\ctrn\dq \leq C'(\sqrt{A_{\mathrm{bad}} }+1)M\dq^2
				\for x \in (1+c_G)Q, \text{ and }\\
				\tau - \abs{F(x)} &\leq \brac{\tau - \abs{F(\xqs)}} + C\dq\cdot\sup_{y\in(1+c_G)Q}\abs{\grad F} \leq C_{\mathrm{bad}}(\sqrt{A_{\mathrm{bad}}}+1)^2M\dq^2 \for x \in (1+c_G)Q.
			\end{split}
		\end{equation}
		If $ A_{\mathrm{bad}} < C_{\mathrm{bad}}({\sqrt{A_0}-1}) $, with $ A_0 $ as in \eqref{8.5.13} and $ C_{\mathrm{bad}} $ as in \eqref{8.5.15}, we see that \eqref{8.5.15} contradicts \eqref{8.5.13}. Therefore, \eqref{8.5.14} holds.
		
		Thanks to Lemma \ref{lem.dist} and \eqref{8.5.14}, we have
		\begin{equation}\label{8.5.16}
			\dist{\xqs}{\set{\T_{\tau,Q}(f,M) = 0}} \geq c(\sqrt{A_0}-1)\dq.
		\end{equation}
		For sufficiently large $ A_0 $, i.e., sufficiently large $ A_T $ chosen in \eqref{param-2} and \eqref{param-3}, we have
		\begin{equation}\label{8.5.17}
			\tau - \abs{\T_{\tau,Q}(f,M)(x)} \geq CM(\sqrt{A_0}-1)\dq^2
			\for x \in (1+c_G)Q.
		\end{equation}
		Combining \eqref{8.5.12} and \eqref{8.5.17}, we see that $ -\tau \leq \etq(f,M) \leq \tau $ on $ (1+c_G)Q $. (A1) is established.

		\paragraph{Verification of (A2)} Since the support of $ \psi $ is disjoint from $ E $ by \eqref{8.5.1}, and $ \widetilde{\E}_{\tau,Q}(f,M) $ is an extension of $ (f-\T_{\tau,Q}(f,M))\big|_{E\cap (1+c_G)Q} $, conclusion (A2) follows.

		\paragraph{Verification of (A3)} We first deal with the easy case, when $ \T_{\tau,Q}(f,M) $ is not the constant polynomial $ \pm\tau $. Then formula \eqref{EQ-def} becomes \eqref{8.5.4}. By Lemma \ref{lem.transition-jet}, $ \T_{\tau,Q}(f,M) \in \Gk(\xqs,k^\sharp_{LIP},f,CM) $. Thus,
		\begin{equation}\label{8.5.18}
			\norm{\T_{\tau,Q}(f,M)}_{\ct((1+c_G)Q)} \leq CM.
		\end{equation}
		Recall from \eqref{8.5.1} that $ \psi $ satisfies the estimate $ \abs{\da \psi} \leq C\dq^{-\abs{\alpha}} $ for $ \abs{\alpha}\leq 2 $.
		Using Lemma \ref{lem.distortion}, \eqref{8.5.12}, and \eqref{8.5.18} to estimate \eqref{8.5.4}, we can conclude that $ \norm{\etq(f,M)}_{\ct((1+c_G)Q)} \leq CM $.
		
		We now move on to the case where $ \T_{\tau,Q}(f,M) \equiv \pm\tau $. We analyze the case $ \T_{\tau,Q}(f,M)\equiv -\tau $. The case $ \T_{\tau,Q}(f,M)\equiv \tau $ is similar. 
		
		In the present setting, formula \eqref{EQ-def} is simplified to \eqref{8.5.3}, $ \T_{\tau,Q}(f,M) $ in \eqref{TQ-def} is defined using \eqref{-tau}, and by Lemma \ref{lem.perturb} we have $ -\tau \equiv \T_{\tau,Q}(f,M)\in\Gk(\xqs,k^\sharp_{LIP},f,CM) $. Thus,
		\begin{equation}\label{8.5.19}
			f(x)+\tau \leq CM\dq^2\for x \in E\cap (1+c_G)Q.
		\end{equation}
		
		Since $\norm{f}_\ctet \leq M $, there exists $ F \in \ctrt $ with $ F|_E = f $ and $ \norm{F}_\ctrn \leq CM $. In particular, $ \jet_x F \in \K_\tau(x,CM) $ for each $ x \in E\cap (1+c_G)Q $, with $ \K_\tau $ as in Definition \ref{def.Ktau}. Using Taylor's theorem and property \eqref{Ktau-def-2} of $ \K_\tau $ we see that
		\begin{equation}\label{8.5.20}
			\abs{\da F(x)} \leq CM\dq^\tma \for x \in (1+c_G)Q.
		\end{equation}
		Using Lemma \ref{lem.distortion}, \eqref{8.5.phi}, and \eqref{8.5.20}, we have
		\begin{equation}\label{8.5.21}
			\abs{\da (F\circ\Phi^{-1})(x)} \leq CM\dq^\tma \for \abs{\alpha}\leq 2 \text{ and } x \in \Phi((1+c_G)Q).
		\end{equation}
		Restricting $ F\circ\Phi^{-1} $ to $ \R^{n-1}\times\set{0}\cong \R^{n-1} $, we see that
		\begin{equation}\label{8.5.22}
			\norm{f\circ\Phi^{-1}}_{\ct(\Phi(E\cap (1+c_G)Q))} \leq C_*M.
		\end{equation}
		Here, $ C_* $ is a constant depending only on $ n $, and the trace norm is taken in $ \R^{n-1}\times\set{0} \cong \R^{n-1} $.
		
		The vertical extension map does not increase the $ \ct $ norm. By taking $ C_0 \geq C_* $ in \eqref{EQ-def} and \eqref{8.5.22}, the induction hypothesis \eqref{induction:IH} implies
		\begin{equation}\label{8.5.23}
			\norm{H}_{\ct((1+c_G)Q)} \leq CM,
		\end{equation}
		where
		\begin{equation*}
			H:= \mathcal{V}\circ \bar{\E}_\tau(f\circ\Phi^{-1},C_0M).
		\end{equation*}
		In fact, by using \eqref{8.5.21} and \eqref{8.5.22}, together with a standard rescaling, we arrive at the stronger estimate
		\begin{equation}\label{8.5.24}
			\abs{\da H(x)}\leq CM\dq^\tma \for \abs{\alpha}\leq 2\text{ and } x \in \Phi((1+c_G)Q).
		\end{equation}
		Lemma \ref{lem.distortion}, \eqref{8.5.phi}, and \eqref{8.5.24} then imply
		\begin{equation}\label{8.5.25}
			\abs{\da(G\circ\Phi)(x)}\leq CM\dq^\tma \for \abs{\alpha}\leq 2 \text{ and } x \in (1+c_G)Q.
		\end{equation}
		Recall that the cutoff function $ \phi $ satisfies $ \abs{\da\psi}\leq C\dq^{-\abs{\alpha}} $ for $ \abs{\alpha}\leq 2 $. Combining this with \eqref{8.5.25}, we can conclude that $ \norm{\etq(f,M)}_{\ct((1+c_G)Q)} \leq CM $.
		
		We have established (A3).

		\paragraph{Verification of (A4)} Since $ \psi \equiv 1 $ near $ \xqs $ by \eqref{8.5.1}, we have, by Lemma \ref{lem.transition-jet},
		\begin{equation*}
			\jet_{\xqs}\circ\E_{\tau,Q}(f,M) \equiv \T_{\tau,Q}(f,M) \in \Gk(\xqs,k^\sharp_{LIP},f,CM).
		\end{equation*}
		This prove conclusion (A4).
		
		Therefore, Lemma \ref{lem.lip-operator}(A) holds.
		
		\paragraph{Verification of (B)}
		Now we turn to Lemma \ref{lem.lip-operator}(B).
		
		Fix $ x \in (1+c_G)Q $. We begin by defining the set $ S_Q(x) $ to be
		\begin{equation}\label{8.5.26}
			S_Q(x) := \brac{\ssq \cap E } \cup \bar{S}\brac{Proj_{u_Q^\perp}(x-\erep(Q))},
		\end{equation} 
		with $ \ssq $ as in \eqref{ssq-def}, $ \bar{S}(\cdot) $ as in \eqref{induction:IH} (D), and $ u_Q $ as in Lemma \ref{lem.uQ}. Thanks to \eqref{ssq-bd} and \eqref{induction:IH} (D), we have $ \#S_Q(x) \leq D(n) $.
		
		Let $ M \geq 0 $. Let $ f,g \in \ctet $ with $ f = g $ on $ S_Q(x) $.
		
		Since $ f = g $ on $ \ssq \cap E $, we see from the definition of the map $ \T_{\tau,Q}(\cdot,M) $ that 
		\begin{equation}\label{8.5.27}
			\T_{\tau,Q}(f,M) \equiv \T_{\tau,Q}(g,M) \for M \geq 0.
		\end{equation}
		As a consequence, we have
		\begin{equation}\label{8.5.28}
			\Delta_f:= \Delta(f,M,Q) = \Delta(g,M,Q) =: \Delta_g.
		\end{equation}
		The assumption that $ f = g $ on $ \bar{S}\brac{Proj_{u_Q^\perp}(x-\erep(Q))} $ along with \eqref{8.5.27} implies
		\begin{equation}\label{8.5.29}
			(f-\Delta_f\T_{\tau,Q}(f,M))\circ \Phi^{-1} = (g-\Delta_g\T_{\tau,Q}(g,M))\circ \Phi^{-1}
			\text{ on } \bar{S}\brac{Proj_{u_Q^\perp}(x-\erep(Q))}.
		\end{equation}
		It is easy to see that \eqref{8.5.0} follows from substituting \eqref{8.5.27}--\eqref{8.5.29} into \eqref{EQ-def}. This proves Lemma \ref{lem.lip-operator}(B).
		
		Lemma \ref{lem.lip-operator} is proved.

	\end{proof}

	Next, we analyze the algorithmic complexity of Lemma \ref{lem.lip-operator}. Recall that $ \P^+ $ denotes the vector space of polynomials with degree no greater than two and $ \jet_x^+ $ denotes the two-jet at $ x $.

	\begin{lemma}\label{lem.lip-alg}
		Let $ Q \in \Lsk $ with $ \Lsk $ as in \eqref{Lsk-def}. Then there exists a collection of maps 
		\begin{equation*}
			\set{\Xi_{\tau,x,Q} : \tau \in \pos,\,x \in (1+c_G)Q}
		\end{equation*}
		where 
		\begin{equation*}
			\Xi_{\tau,x,Q} : \ctet \times \pos \to \P^+
		\end{equation*}
		for each $ x \in (1+c_G)Q $, such that the following hold.
		\begin{enumerate}[(A)]
			\item There exists a controlled constant $ D(n) $ such that for each $ x \in \Rn $, the map $ \Xi_{\tau,x,Q}(\cdot\,,\cdot) : \ctet\times \pos \to \P^+ $ is of depth $ D $. Moreover, the source of $ \Xi_{\tau,x,Q} $ is independent of $ \tau $.

			\item Suppose we are given $ (f,M) \in \ctet \times \pos $ with $ \norm{f}_{\ctet} \leq M $.
			Then there exists a function $ F_Q \in \ct((1+c_G)Q,\tau) $ such that
			\begin{enumerate}[(1)]
				\item $ \jet^+_x F_Q = \Xi_{\tau,x,Q}(f,M) $ for all $ x $ in the interior of $ (1+c_G)Q $,
				\item $ \norm{F_Q}_{\ct((1+c_G)Q)} \leq C M $,
				\item $ F_Q(x) = f(x) $ for $ x \in E \cap (1+c_G)Q $, and
				\item $ \jet_{\xqs}F_Q \in \Gk(\xqs,k_{LIP}^\sharp,f,CM) $, with $ \xqs $ as in Lemma \ref{lem.rep}.
			\end{enumerate}
			Here, $ C $ depends only on $ n $.

			\item There is an algorithm that takes the given data set $E$, performs one-time work, and then responds to queries.
			
			A query consists of a pair $ (\tau,x,Q) $, and the response to the query is the depth-$ D $ map $ \Xi_{\tau,x,Q} $, given in its efficient representation.
			
			The one-time work takes $ C_1N\log Q $ operations and $ C_2N $ storage. The work to answer a query is $ C_3\log N $. Here, $ C_1, C_2, C_3 $ depend only on $ n $. 
		\end{enumerate}
	\end{lemma}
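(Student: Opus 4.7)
The plan is to take $\Xi_{\tau,x,Q}(f,M) := \jet^+_x \E_{\tau,Q}(f,M)$ with $\E_{\tau,Q}$ the operator constructed in Lemma \ref{lem.lip-operator}. Conclusions (B1)--(B4) of Lemma \ref{lem.lip-alg} then reproduce Lemma \ref{lem.lip-operator}(A1)--(A4) term-for-term, and the depth bound in (A) together with the $\tau$-independence of the source follow by inspecting the explicit set $S_Q(x)$ in \eqref{8.5.26}: the piece $\ssq \cap E$ is determined by the data $(E,Q)$ alone, and $\bar S(\mathrm{Proj}_{u_Q^\perp}(x-\erep(Q)))$ comes from the source of the inductively given $(n-1)$-dimensional operator, which by \eqref{induction:IH}(D) and \eqref{induction:FK}(D) is also $\tau$-independent. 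So the only real content is the complexity analysis in (C).

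For the one-time work, I would precompute the following in this order. First, run the Fefferman-Klartag/CZ machinery: the PALPs $\{\A(x) : x \in E\}$ via Lemma \ref{lem.FK-palp}, the list $\Lsk$ via Lemma \ref{lem.Lsk}, the maps $Empty$, $\erep$, $\Lambda(\cdot)$ and the membership tests for $\Ls, \Le$ via Lemmas \ref{lem.FK-CZ}--\ref{lem.Ls-Le}, the helper map $\mu$ via Lemma \ref{lem.mu}, the frames $u_Q$ via Lemma \ref{lem.uQ}, the base points $\xqs$ via Lemma \ref{lem.rep}, and the sorted local projections $\mathrm{Proj}_{u_Q^\perp}(E \cap (1+c_G)Q - \erep(Q))$ via Lemma \ref{lem.projection}. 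Each of these is at most $CN\log N$ operations. Second, for each $Q \in \Lsk$, invoke the one-time work of Lemma \ref{lem.diffeo-jet} and of the two inductive extension operators $\bar\E_\tau, \bar\E_\infty$ from \eqref{induction:IH}(C), \eqref{induction:FK}(C), each applied to the local set $\Phi(E \cap (1+c_G)Q) \subset \R^{n-1}$ of size $\bar N_Q := \#(E \cap (1+c_G)Q)$. By the bounded-intersection property of Lemma \ref{lem.CZ0} we have $\sum_Q \bar N_Q \leq CN$, so these local preprocessings aggregate to $\sum_Q C\bar N_Q \log \bar N_Q \leq CN\log N$ operations and $CN$ storage.

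To answer a query $(\tau,x,Q)$, I would compute the two-jet at $x$ by following the formula \eqref{EQ-def} at the jet level. First compute $\xqs$, $u_Q$, $\erep(Q)$ and $\ssq$ in $C\log N$ steps. Next solve the three bounded-size convex quadratic programs \textbf{MP}$(-\tau)$, \textbf{MP}$(\tau)$, \textbf{MP}$(0)$ attached to $Q$ via the procedure of Section \ref{sect:quad}; each uses $C$ operations since $\#\ssq \leq C$ by \eqref{ssq-bd}, after which the indicator $\Delta(f,M,Q)$ and the transition polynomial $\T_{\tau,Q}(f,M) \in \P$ are determined in $O(1)$ time. Then compute $\jet^+_{\bar x}\phi$ at $\bar x := \mathrm{Proj}_{u_Q^\perp}(x - \erep(Q))$ via Lemma \ref{lem.diffeo-jet} in $C\log N$ operations, which yields $\jet^+_x \Phi$ and $\jet^+_x \Phi^{-1}$ by $O(1)$ jet arithmetic. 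Query the inductive operator $\bar\E_\infty$ or $\bar\E_\tau$ at the point $\bar x$ in $C\log N$ operations by \eqref{induction:IH}(C) or \eqref{induction:FK}(C), then assemble $\jet^+_x\E_{\tau,Q}(f,M)$ by composing with the jet of $\Phi$, applying the vertical extension map $\mathcal V$ at the jet level, multiplying by the precomputed $\jet^+_x\psi$, and adding $\jet^+_x \T_{\tau,Q}(f,M)$. Each of these is $O(1)$ jet-level operations on polynomials of degree $\leq 2$.

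The main bookkeeping obstacle is verifying that the depth of $\Xi_{\tau,x,Q}$ is genuinely bounded and that its source is exactly the set $S_Q(x)$ of \eqref{8.5.26}: the transition jet $\T_{\tau,Q}(f,M)$ depends on $f$ only through $f|_{\ssq \cap E}$ (a controlled set), while the correction term $\widetilde\E_{\tau,Q}(f,M)$ depends on $f|_{E \cap (1+c_G)Q}$ only through the source $\bar S(\bar x)$ of the $(n-1)$-dimensional query. Chasing this dependence through the composition with $\Phi$ and $\mathcal V$ is routine but must be done carefully to conclude that $\Xi_{\tau,x,Q}$ admits an efficient representation in the sense of Definition \ref{def.depth}, with the representing map computable in $O(1)$ operations once the $O(\log N)$ lookups above are complete. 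The rest of the algorithmic claim is bookkeeping of the per-cube and per-query costs against the bounded-intersection estimate of Lemma \ref{lem.CZ0}.
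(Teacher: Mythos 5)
Your proposal is correct and follows essentially the same route as the paper: define $\Xi_{\tau,x,Q} := \jet_x^+\circ\E_{\tau,Q}$ with $\E_{\tau,Q}$ from Lemma \ref{lem.lip-operator}, read off (A) and (B) from Lemma \ref{lem.lip-operator}(B) and (A) respectively, and carry out the same complexity bookkeeping, including the key aggregation $\sum_Q C\bar N_Q\log\bar N_Q \leq CN\log N$ over $Q\in\Lsk$ via the bounded-intersection property of Lemma \ref{lem.CZ0}, which is exactly the content of Remark \ref{rem.no-extra-work} in the paper's proof.
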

	
	\begin{proof}
		Let $ \E_{\tau,Q} $ be as in Lemma \ref{lem.lip-operator}, defined by the formula \eqref{EQ-def}. For convenience, we repeat the formula here.
		We set
		\begin{equation}
			\E_{\tau,Q}(f,M):= \Delta\T_{\tau,Q}(f,M)+(1-\psi)\cdot\widetilde{\E}_{\tau,Q}(f,M),\label{EQ-repeat}
		\end{equation}
		where 
		\begin{equation*}
			\widetilde{\E}_{\tau,Q}(f,M):= \overbrace{\brac{
					\mathcal{V}\circ
					\underbrace{\left[
						\brac{
							\Delta\bar{\E}_\infty+(1-\Delta)\bar{\E}_{\tau}(\cdot,C_0M)
						}
						\underbrace{
							\brac{
								\brac{ f - \Delta\T_{\tau,Q}(f,M) }\bigg|_{E \cap (1+c_G)Q} \circ \Phi^{-1}}
						}_{\text{local flattening}}
						\right]}_{\text{$ (n-1) $-dimensional extension}}
			}}^{\text{vertical extension}}
			\circ\Phi\,.
		\end{equation*}
		Recall that $ \jet_x^+ $ denotes the two-jet of a function at $ x $. We would like to define
		\begin{equation}
			\Xi_{\tau,x,Q}:= \jet_x^+\circ \E_{\tau,Q} \for x \in (1+c_G)Q.
		\end{equation}
		
		Lemma \ref{lem.lip-alg}(A) follows from Lemma \ref{lem.lip-operator}(B). Lemma \ref{lem.lip-alg}(B) follows from Lemma \ref{lem.lip-operator}(A).
		
		Now we examine Lemma \ref{lem.lip-alg}(C). Suppose we have performed the necessary one-time work using at most $ CN\log N $ operations and $ CN $ storage. Let $ x \in (1+c_G)Q $ be given. 
		\begin{itemize}
			\item We compute the point 
			\begin{equation*}
				\eqindent
				\bar{y}(x) := Proj_{u_Q^\perp}(x-\erep(Q)).
			\end{equation*}
			Here, 
			\begin{itemize}
				\item $ Proj_{u_Q^\perp} $ is the orthogonal projection onto the hyperplane $ u_Q^\perp $ orthogonal to $ u_Q $;
				\item $ u_Q $ is the vector as in Lemma \ref{lem.uQ};
				\item $ \erep(Q) $ is the map in Lemma \ref{lem.FK-CZ}(C).
			\end{itemize}
			Thanks to Lemma \ref{lem.uQ} and Lemma \ref{lem.projection}, Step 1 requires at most $ C\log N $ operations.
			
			\item Let $ \rho $ be the rotation specified by $ e_n \to u_Q $. We can compute $ \jet_x^+\rho $ using at most $ C $ operations.
			
			\item Let $ u_Q $ and $ Proj_{u_Q^\perp} $ be as in Step 1. We set
			\begin{equation*}
				\bar{E}_Q:= Proj_{u_Q^\perp}(E\cap (1+c_G)Q - \erep(Q)) \subset \R^{n-1}.
			\end{equation*}
			Recall from Lemma \ref{lem.projection} that we can compute all of the unsorted lists $ \bar{E}_Q $ associated with each $ Q \in \Lsk $ using at most $ CN\log N $ operations and $ CN $ storage, which is included in the one-time work. 
			
			\item Let $ \ct(\bar{E}_Q,\tau) $ be the $ (n-1) $-dimensional trace class. Let $ \bar{\Xi}_\tau $ and $ \bar{\Psi} $ be the maps as in \eqref{induction:IH} and \eqref{induction:FK} associated with the set $ \bar{E}_Q $. The one-time work to pre-process $ \bar{E}_Q $ involves $ CN_Q\log N_Q $ operations and $ CN_Q $ storage, with $ N_Q := \#(E\cap (1+c_G)Q) $. The time to answer a query is $ C\log N_Q $. Recall that an answer to a query is the map $ \bar{\Xi}_\tau $ or $ \bar{\Psi} $, given its efficient representation (Definition \ref{def.depth}).
		\end{itemize}
		\begin{remark}\label{rem.no-extra-work}
			Here, we make the crucial remark that the one-time work to pre-process {\em all} the $ \bar{E}_Q $ uses at most
			\begin{equation*}
				\sum_{Q\in \Lsk}CN_Q\log N_Q \leq C'N\log N
			\end{equation*}
			operations and
			\begin{equation*}
				\sum_{Q\in \Lsk}CN_Q \leq C''N
			\end{equation*}
			storage,
			thanks to the bounded intersection property of $ \Lz $ in Lemma \ref{lem.CZ0}. 
		\end{remark}
		
		\begin{itemize}

			\item We can compute the map $ \T_{\tau,Q}(f,M) $ in \eqref{TQ-def} from $ \ssq $ using at most $ C $ operations (Remark \ref{rem.transition-jet-0}). Computing $ \ssq $ requires at most $ C\log N $ operations, thanks to Lemma \ref{lem.FK-CZ}(C) and Lemma \ref{lem.rep}.
			
			\item The jets of $ \Phi $ and $ \Phi^{-1} $ can be computed from the jets of $ \phi $, with $ \phi $ as in Lemma \ref{lem.diffeo-jet}, in $ C\log N_Q $ operations, with $ N_Q = \#(E\cap  (1+c_G)Q) $. 
			
			\item For appropriate choice of cutoff function $ \psi $, we can compute the jets of $ \psi $ using at most $ C $ operations. See Section \ref{sect:pou} below.
		\end{itemize}
		
		Summarizing all of the above, we see that Lemma \ref{lem.lip-alg}(C) holds.
		
	\end{proof}

	\section{Theorems \ref{thm.bd-op} and \ref{thm.bd-alg} and Algorithm \ref{alg.interpolant}}
	\label{sect:main-proof}

	\subsection{Partitions of unity}\label{sect:pou}

	Recall that $ \jet_x^+ $ denotes the two-jet of a function twice-differentiable near $ x \in \Rn $.

	We can construct a partition of unity $ \set{\theta_Q : Q \in \Lz} $ that satisfies the following properties:
	\begin{itemize}
		\item $ 0 \leq \theta_Q \leq 1 $ for each $ Q \in \Lz $.
		\item $ \sum_{Q \in \Lz}\theta_Q \equiv 1 $;
		\item $ \supp{\theta_Q}\subset (1+\frac{c_G}{2})Q $ for each $ Q \in \Lz $;
		\item For each $ Q \in \Lz $, $ \abs{\d^\alpha\theta_Q} \leq C\dq^{2-\abs{\alpha}} $ for $ \abs{\alpha} \leq 2 $;
		\item After one-time work using at most $ CN\log N $ operations and $ CN $ storage, we can answer queries as follows: Given $ x \in \Rn $ and $ Q \in \Lz $, we return $ \jet_x^+\theta_Q $. The time to answer a query is $ C\log N $.
	\end{itemize}

	See Section 28 of \cite{FK09-Data-2} for details.

	\subsection{Proof of Theorem \ref{thm.bd-op}}
	\label{sect:bd-op}

	\newcommand{\eqs}{\mathcal{E}_{\tau,Q}^\sharp}
	
	\begin{proof}
		Let $ \Lz,\Ls,\Lsk, \Le $ be as in Definition \ref{def.CZ}, \eqref{Lsk-def}--\eqref{Le-def}. We have
		\begin{equation*}
			\Lsk \subset \Ls \subset \Lz
			\text{ and }
			\Le = \set{Q \in \Lz \setminus \Ls : \dq \leq A_2^{-1}}.
		\end{equation*}
		
		For $ Q \in \Lz $, we define a map $ \eqs : \ctet \times\pos \to \ct((1+c_G)Q) $ via the following rules.
		
		\begin{itemize}
			\item Suppose $ Q \in \Lsk $. We set $ \eqs:= \E_{\tau,Q} $, with $ \E_{\tau,Q} $ as in Lemma \ref{lem.lip-operator}.
			
			\item Suppose $ Q \in \Ls\setminus\Lsk $. We set $ \eqs:= \T_*^{\xqs}\circ \T_{\tau,Q} $, where $ \T_*^{\xqs} $ is as in Lemma \ref{lem.Ktau}(B) with $ x_0 = \xqs $ and $ \T_{\tau,Q} $ is as in \eqref{TQ-def}.
			
			\item Suppose $ Q \in \Le $. We set $ \eqs:= \T_*^{x_{\mu(Q)}^\sharp}\circ \T_{\tau,\mu(Q)} $, where $ \mu$ is as in Lemma \ref{lem.mu}, $ \T_*^{x_{\mu(Q)}^\sharp} $ is as in Lemma \ref{lem.Ktau}(B) with $ x_0 = x_{\mu(Q)}^\sharp $, and $ \T_{\tau,{\mu(Q)}} $ is as in \eqref{TQ-def}.
			
			\item Suppose $ Q \in \Lz\setminus (\Ls\cup\Le) $. We set $ \eqs:= 0 $. 
		\end{itemize}
		
		Let $ \set{\theta_Q: Q \in \Lz} $ be a partition of unity subordinate to $ \Lz $ as in Section \ref{sect:pou} above. We define $ \E_\tau $ by the formula
		\begin{equation}\label{9.2.1}
			\E_\tau(f,M)(x):= \sum_{Q\in\Lz}\theta_Q(x)\cdot \eqs(f,M)(x).
		\end{equation}
		
		Since $ -\tau \leq \eqs(f,M) \leq \tau $ for each $ Q $, we see that $ -\tau \leq \E_\tau(f,M) \leq \tau $. 
		
		Since $ \eqs(f,M) = f $ on $ E \cap (1+c_G)Q $ for each $ Q \in \Lz $, we see that $ \E_\tau(f,M) = f $ on $ E $.
		
		\newcommand{\touch}{\leftrightarrow}
		
		To estimate the $ \ct $ norm of $ \E_\tau(f,M) $, we need the following lemma.
		
		\begin{lemma}\label{lem.patch-estimate}
			Let $ \eqs $ be defined as above for each $ Q \in \Lz $. Let $ x \in Q \in \Lz $ and $ Q' \in \Lz $ such that $ Q'\touch Q $. Let $ (f,M) \in \ctet\times\pos $ with $ \norm{f}_\ctet \leq M $. We have
			\begin{equation}\label{pe1}
				\abs{\da
					\brac{\eqs(f,M) - \E_{\tau,Q'}^\sharp(f,M)}(x)
				} \leq CM\dq^\tma
				\for \abs{\alpha}\leq 2.
			\end{equation}
		\end{lemma}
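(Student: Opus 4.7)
The plan is to reduce the lemma, case by case, to the jet-comparison estimate of Lemma \ref{lem.nearby-jet}, combined with second-order Taylor expansion of each local operator $\eqs(f,M)$ around a well-chosen base point.

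First, set $\delta := \dq$. By Lemma \ref{lem.CZ0}, $Q \leftrightarrow Q'$ implies $\delta_{Q'} \approx \delta$. The easy case is when $Q$ or $Q'$ belongs to $\Lz \setminus (\Ls \cup \Le)$: by Definition \ref{def.CZ} such a cube has $\dq = A_2^{-1}$, hence $\delta$ and $\delta_{Q'}$ are both bounded below by a controlled constant. The corresponding $\eqs$ is identically zero, and the operators defined on neighbors satisfy a global $C^2$-bound of $CM$ (by Lemma \ref{lem.lip-operator}(A3), Lemma \ref{lem.Ktau}(B), or trivially), so the desired estimate $CM\delta^{2-|\alpha|}$ holds with the constant absorbing the fixed factor $A_2^{|\alpha|-2}$.

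For the main case, suppose both $Q, Q' \in \Ls \cup \Le$. Assign to each cube a base point and a jet as follows:
\begin{equation*}
	y_Q := \begin{cases} \xqs & \text{if } Q \in \Ls, \\ x^\sharp_{\mu(Q)} & \text{if } Q \in \Le, \end{cases}
	\qquad
	P_Q := \jet_{y_Q}\eqs(f,M),
\end{equation*}
and similarly $y_{Q'}, P_{Q'}$. By the construction of $\eqs$ (Lemma \ref{lem.lip-operator}(A4) when $Q \in \Lsk$, and Lemma \ref{lem.Ktau}(B) combined with \eqref{TQ-def} otherwise), the jet $P_Q$ coincides with $\T_{\tau,Q}(f,M)$ or $\T_{\tau,\mu(Q)}(f,M)$. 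Lemma \ref{lem.transition-jet} then places $P_Q \in \Gk(y_Q, k^\sharp_{LIP}, f, CM)$, and similarly for $P_{Q'}$. Moreover, each $\eqs(f,M)$ satisfies $\|\eqs(f,M)\|_{C^2} \leq CM$ (on $(1+c_G)Q$ for $Q \in \Lsk$, on all of $\Rn$ otherwise).

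Next, I would verify $|y_Q - y_{Q'}| \leq C\delta$ and $\delta_{\mu(Q)} \approx \delta$ whenever $Q \in \Le$; the latter is the main technical step. Since $Q \in \Le$ has $Q^+$ not OK with $\delta_{Q^+} \leq A_2^{-1}$, there exists $y \in E \cap 10Q$ with $\diam\sigma(\A(y)) < 2A_1\delta$, forcing the CZ cube containing $y$ to have side $< 2\delta$ by the OK condition. Combining this with Lemma \ref{lem.mu} and the bounded-intersection property of Lemma \ref{lem.CZ0} (applied to nested or touching CZ cubes) shows that $\mu(Q)$ has size comparable to $\delta$ and sits within $C\delta$ of $Q$. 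Granted this, Lemma \ref{lem.nearby-jet} applied to $P_Q$ and $P_{Q'}$ yields
\begin{equation*}
	|\partial^\alpha(P_Q - P_{Q'})(x)| \leq CM\delta^{2-|\alpha|} \quad \text{for } |\alpha| \leq 2 \text{ and } x \in 25Q \cup 25Q'.
\end{equation*}

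Finally, Taylor's theorem applied at $y_Q$ (using $\|\eqs(f,M)\|_{C^2} \leq CM$ and $|x - y_Q| \leq C\delta$) gives $|\partial^\alpha(\eqs(f,M) - P_Q)(x)| \leq CM\delta^{2-|\alpha|}$ for $|\alpha| \leq 2$, and the analogous bound holds for $\E^\sharp_{\tau,Q'}(f,M) - P_{Q'}$. The triangle inequality then yields \eqref{pe1}. The main obstacle is the size-comparison of $\mu(Q)$ sketched above; once this geometric fact is in hand, the rest is a direct assembly of Lemmas \ref{lem.nearby-jet}, \ref{lem.transition-jet}, and Taylor's theorem.
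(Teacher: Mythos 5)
Your argument is essentially the paper's own proof: the paper likewise splits the difference into two Taylor remainders at the representative points plus a jet-comparison term, identifies the jets with $\T_{\tau,Q}(f,M)$ or $\T_{\tau,\mu(Q)}(f,M)$ via Lemma \ref{lem.lip-operator}(A4) and Lemma \ref{lem.Ktau}(B), bounds the jet term by Lemma \ref{lem.transition-jet} combined with Lemma \ref{lem.nearby-jet}, and runs the same case analysis over $\Lsk$, $\Ls\setminus\Lsk$, $\Le$, and $\Lz\setminus(\Ls\cup\Le)$. The geometric point you single out --- that $\mu(Q)$ has sidelength at most $C\dq$ and lies within $C\dq$ of $Q$ --- is exactly the step the paper also treats tersely (it enters as \eqref{pe5} and \eqref{pe11} via Lemma \ref{lem.mu}), and your sketch of it (the point with small $\diam\sigma(\A(\cdot))$ near $Q$ forbids a much larger OK cube nearby) is sound.
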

		
		We proceed with the proof of Theorem \ref{thm.bd-op} assuming the validity of Lemma \ref{lem.patch-estimate}, postponing the latter's rather tedious proof till the end of the section. 
		
		Now we estimate the $ \ct $ norm of $ \E_\tau(f,M) $. Fix $ x \in \Rn $. Let $ Q(x) \in \Lz $ denote the cube such that $ Q \ni x $. We can write
		\begin{equation}\label{9.2.2}
			\begin{split}	\da \E_\tau(f,M)(x) =
				&\sum_{Q'\touch Q(x)}\theta_{Q'}(x)\cdot \da\E_{Q'}^\sharp (f,M)(x)\\
				&\quad+ \sum_{Q'\touch Q(x), 0 < \beta\leq \alpha}\d^\beta\theta_{Q'}(x)\cdot\d^{\alpha-\beta}\brac{
					\eqs(f,M)- \E_{Q'}^\sharp(f,M)
				}(x).
			\end{split}
		\end{equation}
		
		Now, using the bounded intersection property in Lemma \ref{lem.CZ0} and Lemma \ref{lem.lip-operator} to estimate the first sum in \eq{9.2.2}, and Lemma \ref{lem.patch-estimate} to estimate the second sum, we can conclude that 
		\begin{equation*}
			\norm{\E_\tau(f,M)}_\ctrn \leq CM.
		\end{equation*}
		This proves Theorem \ref{thm.bd-op}(A).
		
		Now we turn to Theorem \ref{thm.bd-op}(B). Fix $ x \in \Rn $. Let $ Q(x) \in \Lz $ be such that $ Q(x)\ni x $. We define
		\begin{equation*}
			S(x) := \brac{\bigcup_{Q'\touch Q(x), Q' \in \Lsk}S_{Q'}(x)} \cup \brac{ \bigcup_{Q'\touch Q(x), Q' \in \Le} S^\sharp(Q') }.
		\end{equation*}
		Here, $ S_{Q'}(x) $ is as in Lemma \ref{lem.lip-operator}(B) and $ S^\sharp(Q') $ is as in \eqref{ssq-def}.
		
		Thanks to Lemma \ref{lem.CZ0}, Lemma \ref{lem.lip-operator}(B), and \eqref{ssq-bd}, we have 
		\begin{equation*}
			\#S(x)\leq D(n).
		\end{equation*}
		
		Given $ f,g \in \ctet $ with $ f = g $ on $ S(x) $, we see from the construction of $ S(x) $ and $ \eqs $ that 
		\begin{equation*}
			\da\eqs(f,M) = \da\eqs(g,M)
			\for \abs{\alpha}\leq 2 \text{ and } M \geq 0.
		\end{equation*}
		From formula \eqref{9.2.1}, we see that
		\begin{equation*}
			\da\E_\tau(f,M) = \da\E_\tau(g,M)
			\for\abs{\alpha}\leq 2 \text{ and } M \geq 0.
		\end{equation*}
		
		Thus, we have established Theorem \ref{thm.bd-op}(B).
		
		The proof of Theorem \ref{thm.bd-op} is complete.
	\end{proof}

	We return to Lemma \ref{lem.patch-estimate}.

	\begin{proof}[Proof of Lemma \ref{lem.patch-estimate}]
		\newcommand{\eqp}{{\E_{\tau,Q'}^\sharp}}
		\newcommand{\xqp}{{x_{Q'}^\sharp}}
		\newcommand{\xmq}{{x_{\mu(Q')}^\sharp}}
		We fix a number $ k^\sharp_{LIP} = k^\sharp_{n,\mathrm{old}} $ with $ k^\sharp_{n,\mathrm{old}} $ as in Theorem \ref{thm.fp-sigma}. Fix $ \alpha $ with $ \abs{\alpha}\leq 2 $ and expand
		\begin{equation}\label{pe2}
			\begin{split}
				\abs{\da
					\brac{\eqs(f,M) - \E_{\tau,Q'}^\sharp(f,M)}(x)
				}
				&\leq \abs{\da\brac{\eqs(f,M) - \jet_{\xqs}\eqs(f,M)}(x)}
				\\
				&\quad\quad + \abs{\da\brac{\eqp(f,M)-\jet_{\xqp}\eqp(f,M)}(x)}\\
				&\quad\quad + \abs{\da\brac{ \jet_{\xqs}\eqs(f,M)-\jet_{\xqp}\eqp(f,M) }(x)}\\
				&=: \eta_1 + \eta_2 + \eta_3.
			\end{split}
		\end{equation}
		
		By Taylor's theorem, 
		\begin{equation}\label{pe3}
			\eta_1, \eta_2 \leq CM\dq^\tma.
		\end{equation}
		
		It remains to show that 
		\begin{equation}\label{eta3}
			\eta_3 \leq CM\dq^\tma.
		\end{equation}
		
		Recall from Lemma \ref{lem.CZ0} that $ C^{-1}\dq\leq \delta_{Q'}\leq C\dq $. We analyze the following cases.
		
		\begin{enumerate}[\textbf{Case} 1.]
			\item Suppose either $ Q $ or $ Q' $ belongs to $ \Lz\setminus\brac{\Ls\cup\Le} $. Then \eqref{eta3} follows from Lemmas \ref{lem.CZ0}, \ref{lem.transition-jet}, \ref{lem.lip-operator}, and Taylor's Theorem. 
			
			For the rest of the analysis, we assume that neither $ Q $ nor $ Q' $ belongs to $ \Lz\setminus\brac{\Ls\cup\Le} $.
			
			\item Suppose both $ Q, Q' \in \Lsk $. Recall from Lemmas \ref{lem.transition-jet} and \ref{lem.lip-operator} that
			\begin{equation*}
				\jet_{\xqs}\eqs(f,M) \in \Gk(\xqs,k^\sharp_{LIP},f,CM)
				\text{ and }
				\jet_\xqp\eqp(f,M)\in\Gk(\xqp,k^\sharp_{LIP},f,CM).
			\end{equation*}
			Then \eqref{eta3} follows from Lemma \ref{lem.nearby-jet}.

			\item Suppose $ Q \in \Lsk $ and $ Q' \in \Ls\setminus\Lsk $. This means that
			\begin{itemize}
				\item $ \eqs = \E_{\tau,Q} $ as in Lemma \ref{lem.lip-operator}, and
				\item $ \eqp = \T_*^{\xqp}\circ\T_{\tau,{Q'}} $, with $ \T_*^{\xqp} $ as in Lemma \ref{lem.Ktau} and $ \T_{\tau,{Q'}} $ as in \eqref{TQ-def}. 
			\end{itemize}
			We expand 
			\begin{equation}\label{pe.3.1}
				\eqindent
				\begin{split}
					\eta_3 &\leq \abs{\da\brac{\jet_{\xqs}\eqs(f,M)-\T_{\tau,{Q'}}(f,M)}(x)} \\
					&\quad\quad+ \abs{\da\brac{ \T_{\tau,{Q'}}(f,M) - \T_*^\xqp\circ \T_{\tau,{Q'}}(f,M)  }(x)} =: \eta_{3,1} +\eta_{3,2}.
				\end{split}
			\end{equation}
			Taylor's theorem implies 
			\begin{equation}\label{pe.3.2}
				\eqindent
				\eta_{3,2} \leq CM\dq^\tma.
			\end{equation}
			On the other hand, we have
			\begin{equation}\label{pe.3.3}
				\eqindent
				\begin{split}
					\eta_{3,1} &= \abs{\da\brac{ \T_{\tau,Q}(f,M) - \T_{\tau,{Q'}}(f,M)  }(x)} \quad \text{(by Lemma \ref{lem.lip-operator}(A4))}\\
					&\leq CM\dq^\tma. \quad \text{(by Lemma \ref{lem.nearby-jet} and Lemma \ref{lem.transition-jet})}
				\end{split}
			\end{equation}
			We see that \eqref{eta3} follows from \eqref{pe.3.1}--\eqref{pe.3.3}.
			
			\item Suppose $ Q \in \Ls\setminus\Lsk $ and $ Q' \in \Lsk $. The analysis is similar to Case 3.

			\item Suppose $ Q \in \Lsk $ and $ Q' \in \Le $. This means that
			\begin{itemize}
				\item $ \eqs = \E_{\tau,Q} $ as in Lemma \ref{lem.lip-operator}, and
				\item $ \eqp = \T_*^{\xmq}\circ\T_{\tau,{\mu(Q')}} $, with $ \T_*^{\xmq} $ as in Lemma \ref{lem.Ktau} and $ \T_{\tau,{\mu(Q')}} $ as in \eqref{TQ-def}. 
			\end{itemize}
			Thanks to Lemma \ref{lem.mu}(B), we have
			\begin{equation}\label{pe5}
				\eqindent
				\abs{\xqs - \xmq},\, \abs{x - \xqs},\,\abs{x - \xmq} \leq C\dq.
			\end{equation}
			We expand
			\begin{equation}\label{pe6}
				\eqindent
				\begin{split}
					\eta_3 &\leq \abs{\da\brac{\jet_{\xqs}\eqs(f,M)-\T_{\tau,{\mu(Q')}}(f,M)}(x)} \\
					&\quad\quad+ \abs{\da\brac{ \T_{\tau,{\mu(Q')}}(f,M) - \T_*^\xqp\circ \T_{\tau,{\mu(Q')}}(f,M)  }(x)} =: \eta_{3,1} +\eta_{3,2}.
				\end{split}
			\end{equation}
			Taylor's theorem and \eqref{pe5} implies 
			\begin{equation}\label{pe7}
				\eqindent
				\eta_{3,2} \leq CM\dq^\tma.
			\end{equation}
			On the other hand, we have
			\begin{equation}\label{pe8}
				\eqindent
				\begin{split}
					\eta_{3,1} &= \abs{\da\brac{ \T_{\tau,Q}(f,M) - \T_{\tau,{\mu(Q')}}(f,M)  }(x)} \quad \text{(by Lemma \ref{lem.lip-operator}(A4))}\\
					&\leq CM\dq^\tma. \quad \text{(by Lemma \ref{lem.nearby-jet} and Lemma \ref{lem.transition-jet})}
				\end{split}
			\end{equation}
			We see that \eqref{eta3} follows from \eqref{pe6}--\eqref{pe8}.
			
			\item Suppose $ Q \in \Le $ and $ Q' \in \Lsk $. The analysis is similar to Case 5.

			\item Suppose $ Q \in \Ls\setminus \Lsk $ and $ Q' \in \Le $. This means that 
			\begin{itemize}
				\item $ \eqs = \T_*^{\xqs} \circ \T_{\tau,{Q}} $, with $ \T_*^\xqs $ as in Lemma \ref{lem.Ktau} and $ \T_{\tau,{Q}} $ as in \eqref{TQ-def}; 
				\item $ \eqp = \T_*^{\xmq}\circ\T_{\tau,{\mu(Q')}} $, with $ \T_*^{\xmq} $ as in Lemma \ref{lem.Ktau} and $ \T_{\tau,{\mu(Q')}} $ as in \eqref{TQ-def}. 
			\end{itemize}
			Notice that $ \jet_{\xqs}\eqs = \T_{\tau,{Q}} $, so that Taylor's theorem, Lemma \ref{lem.nearby-jet}, and \eqref{pe5} imply
			\begin{equation*}
				\eqindent
				\begin{split}
					\eta_3 = \abs{\da\brac{ \T_{\tau,{Q}}(f,M) - \T_*^\xqp\circ \T_{\tau,{\mu(Q')}}(f,M)  }(x)} \leq CM\dq^\tma.
				\end{split}
			\end{equation*}
			This is precisely \eqref{eta3}.

			\item Suppose $ Q \in \Le  $ and $ Q' \in \Ls\setminus \Lsk$. The analysis is similar to Case 7.

			\renewcommand{\xmq}{{x_{\mu(Q)}^\sharp}}
			\newcommand{\xmqp}{{x_{\mu(Q')}^\sharp}}
			\item Suppose both $ Q, Q' \in \Le $. By construction, 
			\begin{itemize}
				\item $ \eqs = \T_*^{\xmq} \circ \T_{\tau,{\mu(Q)}} $, with $ \T_*^\xmq $ as in Lemma \ref{lem.Ktau} and $ \T_{\tau,{\mu(Q)}} $ as in \eqref{TQ-def}; 
				\item $ \eqp = \T_*^{\xmqp}\circ\T_{\tau,{\mu(Q')}} $, with $ \T_*^{\xmqp} $ as in Lemma \ref{lem.Ktau} and $ \T_{\tau,{\mu(Q')}} $ as in \eqref{TQ-def}. 
			\end{itemize}
			Thanks to Lemma \ref{lem.mu}(B) and the assumption that $ Q' \leftrightarrow Q $, we have
			\begin{equation}\label{pe11}
				\eqindent
				\abs{\xmq - \xmqp},\, \abs{x - \xmq},\,\abs{x - \xmqp} \leq C\dq.
			\end{equation}
			We expand
			\begin{equation}\label{pe12}
				\begin{split}
					\eta_3 &\leq \abs{ \da\brac{ \T_*^\xmq \circ \T_{\tau,{\mu(Q)}}(f,M) - \T_{\tau,{\mu(Q)}}(f,M) }(x) }\\
					&\quad\quad + 
					\abs{\da\brac{
							\T_*^\xmqp\circ\T_{\tau,{\mu(Q')}}(f,M) - \T_{\tau,{\mu(Q')}}(f,M)
						}(x)}\\
					&\quad\quad + 
					\abs{\da\brac{
							\T_{\tau,{\mu(Q)}}(f,M) - \T_{\tau,{\mu(Q')}}(f,M)
						}(x)} =: \eta_{3,1}+\eta_{3,2}+\eta_{3,3}.
				\end{split}
			\end{equation}
			It follows from Taylor's theorem, Lemma \ref{lem.Ktau}, and \eqref{pe11} that
			\begin{equation}\label{pe13}
				\eta_{3,1}, \eta_{3,2} \leq CM\dq^\tma.
			\end{equation}
			On the other hand, Lemma \ref{lem.nearby-jet}, Lemma \ref{lem.lip-operator}(A4), and \eqref{pe11} imply
			\begin{equation}\label{pe14}
				\eta_{3,3} \leq CM\dq^\tma.
			\end{equation}
			Therefore, \eqref{eta3} follows from \eqref{pe12}--\eqref{pe14}.
			
		\end{enumerate}
		
		We have analyzed all the possible cases. Therefore, \eqref{eta3} holds. 
		
		Finally, \eqref{pe1} follows from \eqref{pe3} and \eqref{eta3}. The proof of Lemma \ref{lem.patch-estimate} is complete. 
		
	\end{proof}

	\subsection{Proof of Theorem \ref{thm.bd-alg}}
	\label{sect:bd-alg}
	
	\begin{proof}
		We put ourselves in the setting of the proof of Theorem \ref{thm.bd-op} in Section \ref{sect:bd-op}. In particular, recall formula \eqref{9.2.1} and the assignment rules for $ \eqs $.
		
		For each $ x \in \Rn $, we define
		\begin{equation}\label{Xi-tau}
			\Xi_{\tau,x}(f,M) := \jet_x^+\circ \E_{\tau}(f,M) = \sum_{Q\in\Lambda(x)}\jet_x^+\theta_Q\odot_x^+\jet_x^+\circ \eqs(f,M).
		\end{equation}
		In the formula above, $ \jet_x^+ $ denotes the two-jet at $ x $, $ \odot_x^+ $ is the multiplication on the ring of two-jets $ \mathcal{R}_x^+ $, and $ \Lambda(x) $ is as in Lemma \ref{lem.FK-CZ}(A), i.e., $ \Lambda(x) = \set{Q\in\Lz : (1+c_G)Q\ni x} $.
		
		Theorem \ref{thm.bd-alg}(A) follows from Theorem \ref{thm.bd-op}(B). Theorem \ref{thm.bd-alg}(B) follows from Theorem \ref{thm.bd-op}(A). 
		
		We now turn to Theorem \ref{thm.bd-alg}(C). 
		
		Recall from the proof of Theorem \ref{thm.bd-op} in Section \ref{sect:bd-op} that $ \eqs $ can take the following forms.
		\begin{itemize}
			\item $ \eqs = \E_{\tau,Q} $ as in Lemma \ref{lem.lip-operator} for $ Q \in \Lsk $.
			\item $ \eqs = \T_*^\xqs \circ\T_{\tau,{Q}} $ with $ \T_*^\xqs $ as in Lemma \ref{lem.Ktau} and $ \T_{\tau,{Q}}  $ as in \eqref{TQ-def}, for $ Q \in \Ls\setminus \Lsk $.
			\item $ \eqs = \T_*^{x_{\mu(Q)}^\sharp} \circ \T_{\tau,{\mu(Q)}} $, with $ \mu $ as in Lemma \ref{lem.mu}, for $ Q \in \Le $.
			\item $ \eqs \equiv 0 $ for $ Q \in \Lz\setminus\Le $.
		\end{itemize}
		
		Suppose we have performed the necessary one-time work using at most $ CN\log N $ operations and $ CN $ storage. Note that this includes the work and storage involved in Lemma \ref{lem.FK-palp}, Lemma \ref{lem.FK-CZ}, and Remark \ref{rem.no-extra-work} in the proof of Lemma \ref{lem.lip-alg}.
		
		\begin{itemize}
			\item By Lemma \ref{lem.FK-CZ}(A) and Section \ref{sect:pou}, we can compute $ \Lambda(x) $ and $ \set{\jet_x^+\theta_Q : Q \in \Lambda(x)} $ using at most $ C\log N $ operations.
			
			\item 	By Lemma \ref{lem.lip-alg}, we can compute 
			\begin{equation*}
				\set{\jet_x^+\circ \E_{\tau,Q} : Q \in \Lsk \cap \Lambda(x)}
			\end{equation*}
			using at most $ C\log N $ operations. Recall Remark \ref{rem.no-extra-work} in the proof of Lemma \ref{lem.lip-alg}.
			
			\item By Lemma \ref{lem.rep} and Remark \ref{rem.transition-jet-0}, we can compute
			\begin{equation*}
				\set{\jet_x^+\circ \T_*^\xqs \circ \T_{\tau,{Q}}(f,M) : Q \in \Lambda(x)\cap \brac{ \Ls\setminus\Lsk }}
			\end{equation*}
			using at most $ C\log N $ operations. 
			
			\item 	By Lemma \ref{lem.mu}, Lemma \ref{lem.rep}, and Remark \ref{rem.transition-jet-0}, we can compute
			\begin{equation*}
				\set{\jet_x^+\circ \T_*^{x_{\mu(Q)}^\sharp}\circ \T_{\tau,{\mu(Q)}}(f,M)  : Q \in \Le\cap\Lambda(x)  } 
			\end{equation*}
			using at most $ C\log N $ operations. 
		\end{itemize}

		Thus, given $ (f,M) \in \ctet \times\pos $, we can compute $ \Xi_{\tau,x}(f,M) $ in $ C\log N $ operations. Theorem \ref{thm.bd-alg}(C) follows.
		
		This completes the proof of Theorem \ref{thm.bd-alg}.
	\end{proof}

	\section{Proof of Theorem \ref{thm.sfp} and Algorithm \ref{alg.norm}}
	\label{sect:sfp}

	\subsection{Callahan-Kosaraju decomposition}
	
	We will use the data structure introduced by Callahan and Kosaraju\cite{CK95}.

	\begin{lemma}[Callahan-Kosaraju decomposition]\label{lem.wspd}
		Let $ E \subset \Rn $ with $ \#E = N < \infty $. Let $ \kappa > 0 $. We can partition $ E \times E \setminus diagonal(E) $ into subsets $ E_1'\times E_1'', \cdots, E_L'\times E_L'' $ satisfying the following.
		\begin{enumerate}[(A)]
			\item $ L \leq C(\kappa,n)N $.
			\item For each $ \ell = 1, \cdots, L $, we have
			\begin{equation*}
				\diam{E_\ell'}, \diam{E_\ell''} \leq \kappa \cdot\dist{E_\ell'}{E_\ell''}\,.
			\end{equation*}
			\item Moreover, we may pick $ x_\ell' \in E_\ell' $ and $ x_\ell'' \in E_\ell'' $ for each $ \ell = 1, \cdots, L $, such that the $ x_\ell', x_\ell'' $ for $ \ell = 1, \cdots, L $ can all be computed using at most $ C(\kappa,n)N\log N $ operations and $ C(\kappa,n)N $ storage.
		\end{enumerate}
		Here, $ C(\kappa,n) $ is a constant that depends only on $ \kappa $ and $ n $.
	\end{lemma}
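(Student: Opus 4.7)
The plan is to invoke the fair split tree construction of Callahan-Kosaraju and their recursive well-separated-pairs procedure. First, I would build a fair split tree $T$ of $E$: a binary tree whose root is the axis-aligned bounding box $R_0$ of $E$, and where each internal node is split along its longest side at a cut that both separates the bounding box of its point set from the splitting hyperplane by at least a fixed fraction of the cut dimension, and produces two children whose rectangles still tightly bound their point subsets. Standard construction (sorting $E$ along each coordinate once and maintaining doubly-linked lists with cross-pointers) yields $T$ with $N$ leaves and $O(N)$ internal nodes using $O(N\log N)$ operations and $O(N)$ storage. At the same time, I would store for each node $u$ a representative point $rep(u) \in E$ lying in the subtree rooted at $u$, computed bottom-up.

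Next I generate the decomposition by the recursive procedure $\mathtt{WSPair}(u,v)$: declare $u,v$ \emph{$\kappa$-well-separated} iff
\[
\max\bigl(\diam R(u),\,\diam R(v)\bigr) \;\leq\; \kappa\cdot\dist{R(u)}{R(v)},
\]
where $R(u),R(v)$ are the bounding rectangles at $u,v$. If so, output the pair $(\mathrm{leaves}(u),\mathrm{leaves}(v))$ with representatives $rep(u), rep(v)$; otherwise, recurse on pairs $(u',v)$ or $(u,v')$ where we split the node with the larger bounding box into its two children. Starting from $\mathtt{WSPair}(\mathrm{root},\mathrm{root})$ and peeling off the diagonal by first recursing on the two children of the root (and on each internal node's pair of children), one obtains a partition of $E\times E\setminus \mathrm{diag}(E)$, because every ordered pair $(x,y)$ with $x\neq y$ descends to a unique maximal pair of well-separated ancestors. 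Conclusion (B) is immediate from the well-separation test and the fact that each $E_\ell'\subset R(u_\ell)$, $E_\ell''\subset R(v_\ell)$.

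The main obstacle, and the heart of the argument, is the linear bound $L \leq C(\kappa,n)N$ in (A). The proof is a charging argument that exploits the fair-split property: at any call $\mathtt{WSPair}(u,v)$ that is \emph{not} well-separated, assume w.l.o.g.\ $\diam R(u)\geq \diam R(v)$. The fair-split property forces $\diam R(u)$ to shrink by a constant factor in a bounded number of recursion steps down from $u$, while the rectangle $R(v)$ has diameter at most a controlled multiple of $\diam R(u)$ and lies within distance $\kappa^{-1}\diam R(u)$ of $R(u)$. A packing argument in $\Rn$ then shows that, for each node $u$ of $T$, the number of nodes $v$ that can be paired with $u$ in the output is at most $C(\kappa,n)$. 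Since $T$ has $O(N)$ nodes, summing gives $L\leq C(\kappa,n)N$. The same shrinkage argument controls the total recursion depth and hence bounds the total work of generating the pairs by $O(C(\kappa,n)N)$, absorbed in the $C(\kappa,n)N\log N$ preprocessing.

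Finally, part (C) is free from the construction: $x_\ell' := rep(u_\ell) \in E_\ell'$ and $x_\ell'' := rep(v_\ell) \in E_\ell''$ are read off in $O(1)$ per output pair, so computing all $(x_\ell',x_\ell'')$ fits within the stated $C(\kappa,n)N\log N$ operations and $C(\kappa,n)N$ storage. The one place requiring care is verifying the fair-split invariant under the chosen splitting rule, since the charging argument collapses without the guaranteed geometric shrinkage; this is precisely where the cited Callahan-Kosaraju construction (as opposed to a naive kd-tree) is needed.
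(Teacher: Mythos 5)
Your proposal is correct and follows essentially the same route as the paper, which does not prove this lemma at all but simply quotes it from Callahan--Kosaraju \cite{CK95}; your sketch (fair split tree, the recursive well-separated pair generation, the packing/charging argument for $L \leq C(\kappa,n)N$, and bottom-up representatives for (C)) is a faithful outline of exactly that cited construction.
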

	
	With a slight tweak, the argument in the proof of Lemma 3.1 in \cite{F09-Data-3} yields the following. 
	
	\begin{lemma}\label{lem.wf-rep}
		Let $ \tau > 0 $. Let $ E \subset \Rn $ be a finite set. Let $ \kappa_0 > 0 $ be a constant that is sufficiently small. Let $ E_\ell', E_\ell'' $ be as in Lemma \ref{lem.wspd} with $ \kappa = \kappa_0 $. Suppose $ \vec{P} = (P^x)_{x \in E} \in W(E,\tau) $ satisfies the following.
		\begin{enumerate}[(A)]
			\item $ P^x \in \K_\tau(x,M) $ for each $ x \in E $, with $ \K_\tau $ as in Definition \ref{def.Ktau}.
			\item $ \abs{\d^\alpha (P^{x_\ell'} - P^{x_\ell''})(x_\ell'')} \leq M\abs{x_\ell' - x_\ell''}^{2-\abs{\alpha}}
			\text{ for } \abs{\alpha} \leq 1,\,
			\ell = 1, \cdots, L $.
		\end{enumerate}
		Then $ \norm{\vec{P}}_{W(E,\tau)} \leq CM $. 
	\end{lemma}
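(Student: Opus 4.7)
The plan is to adapt the extremal-element argument of Lemma 3.1 in \cite{F09-Data-3} to our range-restricted setting. Recall from \eqref{wst-def} that $\|\vec{P}\|_{W(E,\tau)}$ decomposes into the Whitney piece $\|\vec{P}\|_{W(E)}$ plus the smallest $M'$ with $P^x \in \K_\tau(x,M')$ for all $x$. Hypothesis (A) directly bounds the second piece by $M$, and, via Definition \ref{def.Ktau}, also controls the pointwise terms $|\partial^\alpha P^x(x)|$ with $|\alpha|\leq 1$ that appear in $\|\vec{P}\|_{W(E)}$. The entire problem therefore reduces to bounding
$$
A := \max_{x,y \in E,\, x \neq y,\, |\alpha|\leq 1} \frac{|\partial^\alpha(P^x - P^y)(x)|}{|x-y|^{2-|\alpha|}}
$$
by $CM$, with $C$ depending only on $n$ and $\kappa_0$.

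The strategy is to pick a near-maximizing triple $(x^*, y^*, \alpha^*)$ for $A$ (i.e.\ one for which the ratio is at least $A/2$) and locate the unique index $\ell$ such that $(x^*, y^*) \in E_\ell' \times E_\ell''$, which exists by Lemma \ref{lem.wspd}. Insert the two representatives and decompose
$$
\partial^{\alpha^*}(P^{x^*}-P^{y^*})(x^*) = \partial^{\alpha^*}(P^{x^*}-P^{x_\ell'})(x^*) + \partial^{\alpha^*}(P^{x_\ell'}-P^{x_\ell''})(x^*) + \partial^{\alpha^*}(P^{x_\ell''}-P^{y^*})(x^*).
$$
For the middle term, transfer the base point from $x_\ell''$ to $x^*$ by a Taylor expansion (permissible since $P^{x_\ell'}-P^{x_\ell''}\in\P$); the well-separation condition forces $|x_\ell'-x_\ell''|$, $|x^*-x_\ell''|$, and $|x^*-y^*|$ to all lie within a factor $1+O(\kappa_0)$ of $\dist(E_\ell',E_\ell'')$, so hypothesis (B) gives a bound of $CM|x^*-y^*|^{2-|\alpha^*|}$.

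For the two outer pieces, note that $(x^*, x_\ell')$ lie in the common cluster $E_\ell'$, so $|x^*-x_\ell'|\leq \diam E_\ell' \leq \kappa_0\dist(E_\ell',E_\ell'')\leq \kappa_0|x^*-y^*|$, and similarly $|x_\ell''-y^*|\leq \kappa_0|x^*-y^*|$. The first outer piece is then controlled directly by the definition of $A$:
$$
|\partial^{\alpha^*}(P^{x^*}-P^{x_\ell'})(x^*)| \leq A|x^*-x_\ell'|^{2-|\alpha^*|} \leq A\kappa_0|x^*-y^*|^{2-|\alpha^*|}.
$$
The third piece requires one Taylor translation from $y^*$ (where $A$ controls the derivatives of $P^{x_\ell''}-P^{y^*}$) over to $x^*$, and the same smallness of $|x_\ell''-y^*|/|x^*-y^*|$ yields a bound $CA\kappa_0|x^*-y^*|^{2-|\alpha^*|}$. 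Summing the three contributions gives
$$
\tfrac12 A|x^*-y^*|^{2-|\alpha^*|} \leq C\kappa_0 A|x^*-y^*|^{2-|\alpha^*|} + CM|x^*-y^*|^{2-|\alpha^*|},
$$
so fixing $\kappa_0$ small enough (depending only on $n$) that $C\kappa_0\leq \tfrac14$ absorbs the $A$-term and yields $A \leq C'M$.

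The main technical obstacle is purely bookkeeping: every Whitney-type inequality is anchored at a specific base point, so each rearrangement has to be paired with an explicit Taylor transfer and an estimate on the transfer distance. The smallness of $\kappa_0$ is used in exactly two roles — making $|x^*-y^*|$ and $|x_\ell'-x_\ell''|$ essentially interchangeable so that (B) feeds cleanly into the middle term, and making the contributions of the two outer terms strictly sub-dominant so that $A$ can be absorbed on the left. No further appeal to the range restriction is needed in this step; (A) enters only through the pointwise bounds absorbed at the start.
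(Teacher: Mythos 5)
Your proposal is correct and is essentially the argument the paper intends: the paper proves this lemma by citing the extremal-pair argument of Lemma 3.1 in \cite{F09-Data-3} "with a slight tweak," and your proof is exactly that argument—reduce to bounding the worst cross-ratio $A$, locate the well-separated pair containing a (near-)maximizing pair, split into two intra-cluster pieces absorbed via $C\kappa_0 A$ and a middle piece controlled by hypothesis (B) after Taylor transfers—with the "tweak" being that hypothesis (A) handles the pointwise terms and the range-restriction part of the $W(E,\tau)$ norm, just as you do at the outset. No gaps beyond the routine observation that $A<\infty$ since $E$ is finite, which your absorption step implicitly needs.
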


	\begin{lemma}[Lemma 3.2 of \cite{F09-Data-3}]\label{lem.arise} 
		Let $ E \subset \R^2 $ be a finite set. Let $ E_\ell' $ and $ E_\ell'' $ be as in Lemma \ref{lem.wspd} with $ \ell = 1, \cdots, L $. Then every $ x \in E $ arises as an $ x_\ell' $ for some $ \ell \in \set{1, \cdots, L} $.
		
	\end{lemma}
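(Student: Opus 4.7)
}

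The plan is to show that for each $x \in E$, the pair formed by $x$ together with its nearest neighbour in $E\setminus\{x\}$ is forced, by the well-separation condition, to sit in a pair $E_\ell' \times E_\ell''$ whose first component is the singleton $\{x\}$; then $x_\ell' = x$ is the only possible choice of representative. Since different points yield different indices $\ell$, this shows that each $x \in E$ arises as some $x_\ell'$.

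More precisely, fix $x \in E$ and assume $\#E \ge 2$ (otherwise the statement is vacuous). Let $y \in E \setminus \{x\}$ be a nearest neighbour of $x$, i.e., $|x-y| = \min_{z \in E\setminus\{x\}} |x-z|$. The ordered pair $(x,y)$ lies in $E \times E \setminus \mathrm{diagonal}(E)$, so by the partition property of Lemma~\ref{lem.wspd} there is a unique $\ell \in \{1,\dots,L\}$ with $(x,y) \in E_\ell' \times E_\ell''$. In particular $x \in E_\ell'$ and $y \in E_\ell''$, so
\[
\dist{E_\ell'}{E_\ell''} \leq |x - y|.
\]

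Now suppose, toward a contradiction, that there existed some $z \in E_\ell'$ with $z \neq x$. Then, by the well-separation property (B) of Lemma~\ref{lem.wspd} applied with $\kappa = \kappa_0$,
\[
|x-z| \leq \diam E_\ell' \leq \kappa_0 \cdot \dist{E_\ell'}{E_\ell''} \leq \kappa_0 \cdot |x-y|.
\]
On the other hand, the nearest-neighbour property of $y$ forces $|x-z| \geq |x-y|$, since $z \in E\setminus\{x\}$. Combining these two inequalities gives $|x-y| \leq \kappa_0 |x-y|$, which is impossible when $\kappa_0 < 1$ (and $x \neq y$, which holds since $\#E \ge 2$). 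Hence $E_\ell' = \{x\}$, and the only admissible choice of representative is $x_\ell' = x$.

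The only subtle point is the choice of $\kappa_0$: the argument works as soon as $\kappa_0 < 1$, which is guaranteed by the hypothesis that $\kappa_0$ is sufficiently small (this is also the regime already needed for Lemma~\ref{lem.wf-rep}). No special compatibility between different choices of $x$ is required, because distinct $x$'s produce distinct indices $\ell$ (the sets $E_\ell'$ are distinct singletons in each case). This completes the sketch.
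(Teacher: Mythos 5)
Your argument is correct and is essentially the proof of Lemma 3.2 of \cite{F09-Data-3}, which this paper simply cites: taking a nearest neighbour $y$ of $x$ and using $\diam E_\ell' \leq \kappa\,\dist{E_\ell'}{E_\ell''} \leq \kappa\,|x-y|$ forces $E_\ell' = \set{x}$, hence $x_\ell' = x$. The only caveat, which you already flag, is that this needs $\kappa < 1$ (true in the regime $\kappa = \kappa_0$ sufficiently small in which the lemma is applied), and implicitly $\#E \geq 2$ so that a nearest neighbour exists.
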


	\subsection{Proof of Theorem \ref{thm.sfp}}
	
	\begin{proof}[Proof of Theorem \ref{thm.sfp} Assuming Theorem \ref{thm.bd-alg}]
		
		Let $ E \subset \Rn $ be a finite set. Let $ \set{\Xi_{\tau,x}, x \in \Rn} $ be as in Theorem \ref{thm.bd-alg}. For each $ x \in E $, let $ S(x) $ be the source of $ \Xi_{\tau,x} $ (See Definition \ref{def.depth}). Note that $ S(x) $ is independent of $ \tau $, thanks to Theorem \ref{thm.bd-alg}(A).
		
		Let $ \kappa_0 $ be as in Lemma \ref{lem.wf-rep}. Let $ (x_\ell', x_\ell'') \in E\times E $, $ \ell = 1, \cdots, L $, be as in Lemma \ref{lem.wspd} with $ \kappa = \kappa_0 $. 
		
		We set
		\begin{equation}
			S_\ell := \set{x_\ell', x_\ell''} \cup S(x_\ell') \cup S(x_\ell'')
			\enskip,\enskip
			\ell = 1, \cdots, L.
			\label{Sl-def}
		\end{equation}
		
		Conclusion (A) follows from Theorem \ref{thm.bd-alg}(B,C) and Lemma \ref{lem.wspd}. 
		
		Conclusion (B) follows from Theorem \ref{thm.bd-alg}(B,C).
		
		Conclusion (C) follows from Lemma \ref{lem.wspd}(C). 
		
		Now we verify conclusion (D). We modify the argument in \cite{F09-Data-3}.
		
		Fix $ \tau > 0 $ and $ f : E \to \itau $. Set
		\begin{equation}
			M := \max_{\ell = 1, \cdots, L}\norm{f}_{\ct(S_\ell,\tau)}.
			\label{M-fl}
		\end{equation}

		Thanks to \eqref{M-fl}, we see that $ \norm{f}_{\ct(S_\ell,\tau)} \leq M $ for $ \ell = 1, \cdots, L $. Thus, for each $ \ell = 1, \cdots, L $, there exists $ F_\ell \in \ctrt $ such that 
		\begin{equation}
			\norm{F_\ell}_{\ctrn} \leq 2M \text{ and } F_\ell(x) = f(x)\for x \in S_\ell.
			\label{3.6}
		\end{equation}
		Fix such $ F_\ell $. For $ \ell = 1, \cdots, L $, we define 
		\begin{equation}
			f_\ell :E \to \pos 
			\text{ by }
			f_\ell(x) := \begin{cases}
				f(x) \for x \in S_\ell\\
				F_\ell(x) \for x \in E \setminus S_\ell
			\end{cases}.
			\label{3.7}
		\end{equation}
		From \eqref{3.6} and \eqref{3.7}, we see that
		\begin{equation}
			\norm{f_\ell}_{\ctet} \leq 2M \for \ell = 1, \cdots, L.
			\label{f-l-est}
		\end{equation}

		For each $ \ell = 1, \cdots, L $, we define \begin{equation}
			P_\ell' := \jet_{x_\ell'}\brac{\Xi_{\tau,x_\ell'}(f_\ell,2M)}
			\text{ and }
			P_\ell'' := \jet_{x_\ell''}\brac{\Xi_{\tau,x_\ell''}(f_\ell,2M)}.
			\label{P-ell-def}
		\end{equation}
		
		We will show that the assignment \eqref{P-ell-def} unambiguously defines a Whitney field over $ E $.

		\begin{claim}\label{claim.ell}
			Let $ \ell_1, \ell_2 \in \set{1, \cdots, L} $.
			\begin{enumerate}[(a)]
				\item Suppose $ x_{\ell_1}' = x_{\ell_2}' $. Then $ P_{\ell_1}' = P_{\ell_2}' $.
				\item Suppose $ x_{\ell_1}'' = x_{\ell_2}'' $. Then $ P_{\ell_1}'' = P_{\ell_2}'' $.
				\item Suppose $ x_{\ell_1}' = x_{\ell_2}'' $. Then $ P_{\ell_1}' = P_{\ell_2}'' $.
			\end{enumerate}
			
		\end{claim}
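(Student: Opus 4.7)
The plan is to prove Claim \ref{claim.ell} by unpacking the construction of $P_\ell'$, $P_\ell''$ in \eqref{P-ell-def} and invoking the depth property of $\Xi_{\tau,x}$ provided by Theorem \ref{thm.bd-alg}(A) (equivalently, Theorem \ref{thm.bd-op}(B) applied to the extension operator $\mathcal{E}_\tau$ determined by $\Xi_{\tau,\cdot}$). Recall that this property says: for each $x\in\Rn$ there is a set $S(x)\subset E$ of controlled cardinality, independent of $\tau$, such that $\Xi_{\tau,x}(\,\cdot\,,M)$ depends only on the restriction of its first argument to $S(x)$, for every $M\geq 0$. Crucially, all the $P_\ell'$, $P_\ell''$ are formed using the same parameter $2M$ from \eqref{M-fl}, so the depth property can be applied with a common choice of the numerical argument.

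For part (a), the plan is to set $x := x_{\ell_1}' = x_{\ell_2}'$. By the definition \eqref{Sl-def}, $S(x) = S(x_{\ell_1}')\subset S_{\ell_1}$ and $S(x) = S(x_{\ell_2}')\subset S_{\ell_2}$, so $S(x)\subset S_{\ell_1}\cap S_{\ell_2}$. The definition \eqref{3.7} of $f_\ell$ gives $f_{\ell_i}\equiv f$ on $S_{\ell_i}$ for $i=1,2$; hence $f_{\ell_1}\equiv f\equiv f_{\ell_2}$ on $S(x)$. The depth property then yields $\Xi_{\tau,x}(f_{\ell_1},2M) = \Xi_{\tau,x}(f_{\ell_2},2M)$, and taking the one-jet at $x$ of both sides gives $P_{\ell_1}' = P_{\ell_2}'$. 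Parts (b) and (c) will be handled by exactly the same template, with only bookkeeping changes: for (b) take $x := x_{\ell_1}'' = x_{\ell_2}''$ and use $S(x) = S(x_{\ell_i}'')\subset S_{\ell_i}$; for (c) take $x := x_{\ell_1}' = x_{\ell_2}''$ and use $S(x) = S(x_{\ell_1}')\subset S_{\ell_1}$ together with $S(x) = S(x_{\ell_2}'')\subset S_{\ell_2}$.

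There is no real obstacle here; the claim is a direct consequence of how $S_\ell$ was engineered in \eqref{Sl-def} to absorb the depth sources $S(x_\ell')$ and $S(x_\ell'')$, precisely so that the agreement $f_\ell\equiv f$ on $S_\ell$ forces agreement of $f_\ell$ with $f$ on the depth-source at each of the points $x_\ell'$, $x_\ell''$. The only point that deserves to be stated explicitly in the full write-up is that the depth source $S(x)$ appearing in Theorem \ref{thm.bd-alg}(A) is independent of $\tau$, so the same $S(x)$ is used in both invocations and no compatibility issue arises between different choices of $\tau$.
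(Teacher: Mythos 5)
Your proof is correct and follows essentially the same route as the paper: in each case one identifies the common point $x$, observes via \eqref{Sl-def} that its depth source $S(x)$ lies in $S_{\ell_1}\cap S_{\ell_2}$ where $f_{\ell_1}=f=f_{\ell_2}$, and invokes Theorem \ref{thm.bd-alg}(A) (with the common parameter $2M$) to conclude $\Xi_{\tau,x}(f_{\ell_1},2M)=\Xi_{\tau,x}(f_{\ell_2},2M)$, hence equality of the jets in \eqref{P-ell-def}.
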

		
		\begin{proof}[Proof of Claim \ref{claim.ell}]
			We prove (a). The proofs for (b) and (c) are similar.
			
			Suppose $ x_{\ell_1}' = x_{\ell_2}' =: x_0 $. Let $ S(x_0) $ be the source of $ \Xi_{\tau,x_0} $. By \eqref{Sl-def}, we see that
			\begin{equation*}
				S(x_0) \subset S_{\ell_1} \cap S_{\ell_2}. 
			\end{equation*}
			Therefore, we have
			\begin{equation*}
				f_{\ell_1}(x) = f_{\ell_2}(x) \for x \in S(x_0).
			\end{equation*}
			Thanks to Theorem \ref{thm.bd-alg}(A) and \eqref{f-l-est}, we see that
			\begin{equation*}
				\Xi_{\tau,x_0}(f_{\ell_1},2M) = 	\Xi_{\tau,x_0}(f_{\ell_2},2M).
			\end{equation*}
			By \eqref{P-ell-def}, we see that $ P_{\ell_1} = P_{\ell_2} $. This proves (a).
		\end{proof}

		By Lemma \ref{lem.arise}, there exists a pair of maps:
		\begin{equation}
			\begin{split}
				\text{A surjection } \pi &: \set{1, \cdots, L} \to E
				\text{ such that }
				\pi(\ell) = x_{\ell}'
				\for \ell = 1, \cdots, L\text{, and }\\
				{\text{An injection }} \rho &: E \to \set{1, \cdots, L}
				\text{ such that }x_{\rho(x)}' = x \for x \in E, \text{ i.e., } \pi \circ \rho = id_E. 
			\end{split}
			\label{lx}
		\end{equation}
		The surjection $ \pi $ is determined by the Callahan-Kosaraju decomposition (Lemma \ref{lem.wspd}), but the choice of $ \rho $ is not necessarily unique. 
		
		Thanks to Claim \ref{claim.ell} and the fact that $ E_\ell' \times E_\ell'' \subset E \times E \setminus diagonal(E) $, assignment \eqref{P-ell-def} produces for each $ x \in E $ a uniquely defined polynomial
		\begin{equation}
			P^x = \jet_{x}\brac{\Xi_{\tau,x}(f_{\rho(x)},2M)},
			\label{3.9}
		\end{equation}
		with $ \Xi_x $ as in Theorem \ref{thm.bd-alg} and $ \rho(x) $ as in \eqref{lx}. Note that, as shown in Claim \ref{claim.ell}, the polynomial $ P^x $ in \eqref{3.9} is independent of the choice of $ \rho $ as a right-inverse of $ \pi $ in \eqref{lx}.
		
		Thanks to Theorem \ref{thm.bd-alg}(B) and \eqref{f-l-est}--\eqref{3.9}, for each $ \ell = 1, \cdots, L $, there exists a function $ \tilde{F}_\ell \in \ctrn $ such that
		\begin{equation}\label{F-ell-1}
			\norm{\tilde{F}_\ell}_{\ctrn} \leq CM
			\text{ and }
			-\tau \leq \tilde{F}_\ell \leq \tau \text{ on }\Rn;
		\end{equation}
		\begin{equation}\label{F-ell-2}
			\tilde{F}_\ell = f_{\ell}(x) = f(x)
			\for x \in S_{\ell} ; \text{ and }
		\end{equation}
		\begin{equation}\label{F-ell-3}
			\jet_{x_{\ell}'}\tilde{F}_\ell = P^{x_{\ell}'} = \jet_{x_{\ell}'}\brac{ \Xi_{\tau,x_{\ell}'} (f_{\ell},2M) }, \text{ and }
			\jet_{x_{\ell}''}\tilde{F}_\ell = P^{x_{\ell}''} = \jet_{x_{\ell}''}\brac{ \Xi_{\tau,x_{\ell}''} (f_{\ell},2M) }.
		\end{equation}

		Thanks to \eqref{F-ell-1} and \eqref{F-ell-2}, we have 
		\begin{equation}
			P^{x_\ell'} \in \G(x_\ell',\{x_\ell'\},f,CM)
			\for \ell = 1, \cdots, L.
			\label{3.15}
		\end{equation}
		Thanks to \eqref{F-ell-1} and \eqref{F-ell-3}, we have 
		\begin{equation}
			\abs{\d^\alpha (P^{x_\ell'} - P^{x_\ell''})(x_\ell'')}
			\leq CM\abs{x_\ell' - x_\ell''}^{2-\abs{\alpha}}
			\for \abs{\alpha} \leq 1, \ell = 1, \cdots, L.
			\label{3.16}
		\end{equation}

		Therefore, by Lemma \ref{lem.wf-rep}, \eqref{3.15}, and \eqref{3.16}, the Whitney field $ \vec{P} = (P^x)_{x \in E} $, with $ P^x $ as in \eqref{3.9}, satisfies 
		\begin{equation*}
			\vec{P} \in W(E,\tau),
			P^x(x) = f(x) \for x \in E,
			\text{ and }
			\norm{\vec{P}}_{W(E,\tau)} \leq CM.
		\end{equation*}
		By Whitney's Extension Theorem \ref{thm.WT-tau}(B), there exists a function $ F \in \ctrt $ such that $ \norm{F}_{\ctrn} \leq CM $ and $ \jet_x F = P^x $ for each $ x \in E $. In particular, $ F(x) = P^x(x) = f(x) $ for each $ x \in E $. Thus, $ \norm{f}_{\ctet} \leq CM $. This proves conclusion (D).
		
		Theorem \ref{thm.sfp} is proved. 
		
	\end{proof}

	\subsection{Explanation of Algorithm \ref{alg.norm}}

	Let $ E \subset \Rn $ be given. We compute $ S_1, \cdots, S_L $ from $ E $ as in Theorem \ref{thm.sfp}. This uses one-time work using at most $ CN\log N $ operations and $ CN $ storage. 
	
	For each $ \ell = 1, \cdots, L $, we compute a number $ M_\ell $ that has the order of magnitude as $ \norm{f}_\ct(S_\ell,\tau) $. This can be reformulated as a collection of convex quadratic programming problems as in Section \ref{sect:quad}, and requires at most $ CN $ operations, since $ \#S_\ell \leq C $ and $ L \leq CN $. Finally, $ \norm{f}_\ctet $ has the same order of magnitude as $ \max\set{M_\ell: \ell = 1, \cdots, L} $.

	\bibliographystyle{plain}
	\bibliography{Whitney-bib}

\end{document}